\newtheorem{theorem}{Theorem}[section]
\newtheorem{corollary}[theorem]{Corollary}
\newtheorem{lemma}[theorem]{Lemma}
\newtheorem{prop}[theorem]{Proposition}
\newtheorem*{prop*}{Proposition}
\newtheorem*{theorem*}{Theorem}
\newtheorem*{corollary*}{Corollary}
\newtheorem*{lemma*}{Lemma}
\theoremstyle{definition}
\newtheorem{definition}{Definition}[section]
\theoremstyle{remark}
\newtheorem{rem}{Remark}[section]
\newcommand{\Sph}{\mathbb{S}}
\newcommand{\C}{\mathcal{C}}
\newcommand{\U}{\mathcal{U}}
\newcommand{\Or}{\mathcal{O}}
\newcommand{\Pc}{\mathcal{P}}
\newcommand{\F}{\mathcal{F}}
\newcommand{\Qc}{\mathcal{Q}}
\newcommand{\R}{\mathbb{R}}
\newcommand{\N}{\mathbb{N}}
\newcommand{\Z}{\mathbb{Z}}
\newcommand{\al}{(\alpha_i^\pm)_i}
\newcommand{\bet}{(\beta_i^\pm)_i}
\newlength{\larg}
\begin{document}
\title{Conjugacy invariants for Brouwer mapping classes}
\author{Juliette Bavard\footnote{Supported by grants from R\'{e}gion Ile-de-France.}}
\date{\today}

\maketitle
\begin{abstract} 
We give new tools for homotopy Brouwer theory. In particular, we describe a canonical reducing set (the set of \emph{walls}) which splits the plane into \emph{maximal translation areas} and \emph{irreducible areas}. We then focus on Brouwer mapping classes relatively to four orbits and describe them explicitly by adding to Handel's diagram and to the set of walls a \emph{tangle}, which is essentially an isotopy class of simple closed curves in the cylinder minus two points.
\end{abstract}

\setcounter{tocdepth}{1}
\tableofcontents

\section*{Introduction}
\subsection*{Homotopy Brouwer theory}
Homotopy Brouwer theory was introduced by Michael Handel in \cite{Handel} to prove his famous fixed point theorem for planar homeomorphism, which has many applications to surface homeomorphisms (for examples, see the introduction of \cite{LeCalvez}). This theory was mainly used by John Franks and Michael Handel, e.g. to study Hamiltonian surface diffeomorphisms in \cite{Franks-Handel-Periodic-points-of-Hamiltonian-surface-diffeomorphisms} and prove the Zimmer conjecture for area preserving diffeomorphisms of surfaces in \cite{Franks-Handel-Area-preserving-group-actions-on-surfaces}.

Homotopy Brouwer theory can be seen as the study of the elements of the mapping class group of the plane minus $\Z$ which are classes of Brouwer homeomorphisms relatively to finitely many orbits. More precisely, consider a Brouwer homeomorphism $h$, i.e. a fixed-point-free homeomorphism of the plane preserving the orientation. Choose finitely many disjoint orbits of this homeomorphism and denote by $\Or$ their union. Classical Brouwer theory tells us that each orbit of a Brouwer homeomorphism is properly embedded in the plane, i.e. intersects every compact set of the plane in only finitely many points (see for example \cite{Guillou}). Hence $\Or$ is homeomorphic to $\Z$ in the plane. Denote by $MCG(\R^2,\Or)$ the mapping class group of the plane relatively to $\Or$, i.e. the quotient of the group of orientation preserving homeomorphisms of the plane which globally fix $\Or$ by its connected component of the identity for the compact-open topology. Now we can look at the class of $h$ in $MCG(\R^2,\Or)$: since $h$ is a Brouwer homeomorphism, this class is said to be a \emph{Brouwer mapping class}. We denote it by $[h;\Or]$. Two Brouwer mapping classes $[h;\Or]$ and $[h';\Or']$ are said to be conjugate if there exists an orientation-preserving homeomorphism $\phi$ of the plane such that $\phi(\Or)=\Or'$ and $[\phi h \phi^{-1};\phi(\Or)]$ is equal to $[h';\Or']$ in $MCG(\R^2;\Or')$. One aim of homotopy Brouwer theory is to describe (up to conjugacy) every Brouwer mapping class relatively to finitely many given orbits.\\

\subsection*{Brouwer mapping classes relatively to one, two, and three orbits}

In \cite{Handel}, Michael Handel gives a complete description of Brouwer mapping classes relatively to one and two orbits. He shows that relatively to one orbit, there exists only one Brouwer mapping class up to conjugacy: the class of the translation relatively to one of its orbits. Relatively to two orbits, he proves that they are exactly three Brouwer mapping classes (up to conjugacy): the class of the translation, the class of the time one map $R$ of the Reeb flow and the class of $R^{-1}$.

In \cite{FLR2013}, Fr\' ed\' eric Le Roux gives a complete description of Brouwer mapping classes relatively to $3$ orbits and uses this description to define an index for Brouwer homeomorphisms. In particular, he shows that there are only finitely many Brouwer mapping classes relatively to $3$ orbits, and that each of them contains the time one map of a flow (see \cite{FLR2013} for more details and the complete description of this classes). 

The situation changes if we look at the Brouwer mapping classes relatively to more than $3$ orbits: indeed, if $r\geq 4$, there are infinitely many Brouwer mapping classes relatively to $r$ orbits, and only finitely many of them contain the time one map of a flow. One aim of this paper is to give a complete description of Brouwer mapping classes relatively to $4$ orbits.\\

\subsection*{Walls} In \cite{Handel}, Michael Handel defines reducing lines of a Brouwer mapping class $[h;\Or]$: such a line is homotopic to its image by $h$ and splits the set of orbits into two smaller sets. He proves that every Brouwer mapping class relatively to more than one orbits has at least one reducing line (theorem $2.7$ of \cite{Handel}). 

We propose to call the isotopy class of a reducing line $\Delta$ a \emph{wall} if every other reducing line is homotopically disjoint from $\Delta$. The set of walls is clearly a conjugacy invariant for Brouwer mapping classes. We prove the following result:

\begin{theorem*}\emph{\textbf{\ref{theorem: walls}}}
Let $[h;\Or]$ be a Brouwer mapping class. Let $\mathcal{W}$ be a family of pairwise disjoint reducing lines containing exactly one representative of each wall for $[h;\Or]$. If $Z$ is a connected component of $\R^2 - \mathcal{W}$, then exactly one of the followings holds:
\begin{itemize}
\item $Z$ is an irreducible area;
\item $Z$ is a maximal translation area;
\item $Z$ does not intersect $\Or$.
\end{itemize}
\end{theorem*}

Precise definitions of irreducible and maximal translation areas will be given in section \ref{section:statements}. A translation area $Z$ is in particular an area which is invariant under $h$ and on which $h$ has a very simple dynamics: indeed, up to conjugacy, $h$ is conjugated to a homeomorphism whose restriction to $Z$ is a translation. Contrariwise, an irreducible area has a more complex dynamics and cannot be reduced into more simple areas: it does not contain any reducing line. It will follow that if the complement of the walls of a Brouwer mapping class does not contain any irreducible area, then we will be able to understand easily the Brouwer mapping class, which will indeed be a time one map of a flow (see section \ref{section:statements}). Moreover, we will prove that if the complement of the walls of a Brouwer mapping class has an irreducible area, then it has at least also two maximal translation areas.

\subsection*{Diagrams}
Following essentially \cite{Handel} and \cite{FLR2013}, we can associate to every conjugacy class of Brouwer mapping class a unique diagram, for which a precise definition will be given in section \ref{section:statements}. This diagram is a disk with $r$ arrows, where $r$ is the number of orbits that we consider: each arrow represents an orbit. The cyclic order of the endpoints of the arrows is determined by the existence of a \emph{nice family of homotopy translation arcs} (see section \ref{section:statements}). A diagram of Brouwer mapping class is said to be \emph{determinant} if there exists only one conjugacy class of Brouwer mapping class associated to it. Every diagram for Brouwer mapping class relatively to one, two or three orbits is determinant. For $4$ orbits or more, there exist diagrams which are not determinant.

\begin{figure}[h]
\centering
\includegraphics[scale=1]{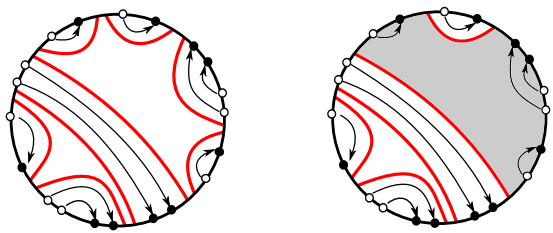}
\caption{Examples of diagrams with walls: the one on the left is determinant, and the one on the right is non determinant}
\label{figu:diag-with-walls}
\end{figure}

We can add the set of walls on this diagram: we obtain a \emph{diagram with walls}, which is again a conjugacy invariant for Brouwer mapping classes (see figure \ref{figu:diag-with-walls} for examples). This invariant is more precise than the diagram without walls, but it is still not total for Brouwer mapping classes relatively to more than $3$ orbits. Again we can define the notion of \emph{deteminant} diagram with walls (which corresponds to only one conjugacy class). We give an elementary combinatoric condition to identify the determinant diagrams among diagrams with walls without crossing arrows:
\begin{prop*}\emph{\textbf{\ref{coro:determinant diagrams with walls}}} A diagram with walls without crossing arrows is determinant if and only if the arrows of every family of arrows included in the same connected component of the complement of the walls are backward adjacent and forward adjacent.  \end{prop*}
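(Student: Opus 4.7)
The plan is to use the wall decomposition from Theorem \ref{theorem: walls} to reduce the study of a diagram with walls to the study of each component of the complement of the walls in isolation, and then to show that the adjacency condition is exactly the combinatorial criterion that forces each such local piece to be a translation area (hence determinant).

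First, I would establish the reduction to components. Since each wall is a reducing line preserved up to homotopy by $h$, the conjugacy class of $[h;\Or]$ with given diagram with walls is determined by the conjugacy class of the restriction of $h$ to each component of $\R^2-\mathcal{W}$, glued according to the combinatorial data recorded in the diagram. Components that do not intersect $\Or$ carry no orbit information and hence no data to fix. By Theorem \ref{theorem: walls}, the remaining components are either maximal translation areas or irreducible areas, so the proposition reduces to a component-by-component question: when does a component's diagram (with its arrows on the boundary) determine its local conjugacy class?

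For the ``if'' direction, suppose that in every component $Z$ the arrows are both backward and forward adjacent. I would show that this combinatorial hypothesis rules out $Z$ being an irreducible area, forcing $Z$ to be a maximal translation area. The idea is to use backward adjacency of the tails and forward adjacency of the heads to construct a nice family of pairwise disjoint homotopy translation arcs ordered consistently from tails to heads inside $Z$; such a family is precisely the obstruction to reducibility-failure, so $Z$ admits a reduction to the translation model and hence is a translation area. Once $Z$ is known to be a translation area, its conjugacy class is uniquely determined (essentially by Handel's one-orbit classification applied to the parallel orbits), so each component's class is fixed and the global conjugacy class is determined.

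For the ``only if'' direction, I would show that if some component $Z$ has non-adjacent arrows in at least one of the two senses, then $Z$ cannot be a translation area (by the previous paragraph translation areas necessarily satisfy both adjacencies), hence by Theorem \ref{theorem: walls} it is an irreducible area, and one can exhibit two non-conjugate Brouwer mapping classes sharing the given diagram with walls. The modifications would be obtained by twisting along the walls bounding $Z$ or, more intrinsically, by changing the tangle data described in the abstract; the non-adjacency provides the combinatorial room to perform a non-trivial twist that changes the conjugacy class without altering the diagram or the walls.

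The main obstacle is this ``only if'' direction: proving that the two modifications actually yield non-conjugate classes requires a genuine invariant distinguishing them, namely the tangle invariant developed in the body of the paper. Verifying that such a tangle is well-defined and changes under the proposed modifications, while the diagram with walls does not, is where the real work sits; the forward direction is then a relatively direct combinatorial characterization of translation areas.
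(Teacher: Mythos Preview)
Your ``if'' direction is essentially right in spirit, though the paper packages it more cleanly: once every wall-component with orbits satisfies the adjacency condition, it is a translation area by definition, so there are no irreducible areas; Proposition~\ref{prop:flow class iff no irreducible area} then forces any realizing class to be a flow class, and Proposition~\ref{prop:flow implies conjugate} says two flow classes with the same diagram are conjugate. You don't need your opening ``reduction to components'' (that the global conjugacy class is determined by the restrictions glued along walls); that statement is nontrivial, you don't justify it, and the paper's argument bypasses it entirely.

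The real divergence is in the ``only if'' direction. You argue by contrapositive: if adjacency fails in some component, that component is irreducible, and you then want to \emph{exhibit} two non-conjugate classes with the same diagram with walls, distinguishing them by the tangle. This is both harder and narrower than necessary. The tangle invariant in this paper is only constructed for four orbits (Section~\ref{section: 4 orbits}), while the proposition is stated for arbitrary $r$; so your plan does not cover the general case. Moreover, invoking the tangle here would be a forward reference to machinery built later, and you yourself flag verifying its well-definedness and nontriviality as ``where the real work sits.''

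The paper's argument for this direction is direct and avoids all of that. Since the diagram has no crossing arrows, lemma~1.7 of \cite{FLR2013} produces a flow class $[f;\Or]$ realizing $D$. Flow classes have no irreducible areas (one direction of Proposition~\ref{prop:flow class iff no irreducible area}), so by Theorem~\ref{theorem: walls} every wall-component containing orbits is a maximal translation area, and the adjacency condition follows immediately from the definition of translation area. No construction of competing classes, no distinguishing invariant, no restriction to $r=4$.
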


\subsection*{A total conjugacy invariant for Brouwer mapping class relatively to $4$ orbits}

For Brouwer mapping classes relatively to $4$ orbits, we add a new invariant to the non determinant diagrams with walls: the \emph{tangle}. This invariant is an isotopy class of curves on the cylinder with two marked points (up to horizontal twists). See figure \ref{figu:intro} for an example. Using in particular the set of walls and the description of determinant diagrams with $4$ orbits, we get a total conjugacy invariant:

\begin{theorem*}\emph{\textbf{\ref{theo:total invariant}}}
Two Brouwer mapping classes relatively to $4$ orbits are conjugated if and only if they have the same couple (Diagram with walls, Tangle).
\end{theorem*}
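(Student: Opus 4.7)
The forward direction is immediate: both the diagram with walls and the tangle are, by construction, conjugacy invariants of Brouwer mapping classes, so the pair attached to $[h;\Or]$ and to any conjugate coincide. The plan is to establish the nontrivial converse: assuming $[h;\Or]$ and $[h';\Or']$ share the same diagram with walls and the same tangle, I would build a conjugating homeomorphism of the plane piece by piece, using Theorem \ref{theorem: walls} to cut $\R^2$ along the walls and then installing a conjugacy on each resulting component.

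Since the diagrams with walls coincide, one can start from an ambient orientation-preserving homeomorphism $\phi_0\colon\R^2\to\R^2$ that sends $\Or$ to $\Or'$ and a wall-representative family $\mathcal{W}$ for $[h;\Or]$ to a wall-representative family $\mathcal{W}'$ for $[h';\Or']$, matching corresponding components of the complement while preserving their type (translation area, irreducible area, or component disjoint from $\Or$). I would then upgrade $\phi_0$ to a conjugacy one component at a time. Components disjoint from $\Or$ carry no dynamical data and admit any extension of the boundary map. On each maximal translation area, the restrictions of $h$ and $h'$ are by definition conjugate to the standard translation, so a fundamental-domain argument yields a conjugacy between them that agrees with the already-fixed boundary map up to an isotopy relative to $\Or$, hence, after absorbing that isotopy, on the nose.

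The heart of the argument is the treatment of each irreducible area $Z$. In the four-orbit setting, $Z$ is a subsurface of the plane with two ends along the adjacent walls, supporting at most four orbits, and the tangle attached to $Z$ is an isotopy class of simple closed curves on the twice-punctured cylinder modulo horizontal twists. I would argue that, given identical diagrams with walls, the tangle records exactly the residual isotopy information needed to reconstruct the Brouwer restriction up to conjugacy: pick a canonical representative curve in the tangle's isotopy class, cut $Z$ open along it, verify that the resulting pieces carry only simpler (translation-type) dynamics on which a canonical conjugacy exists, and reglue using the tangle as combinatorial blueprint. Performing the same construction in the matching area $Z'$ and identifying the two yields a conjugacy $\phi_Z\colon Z\to Z'$ extending $\phi_0|_{\partial Z}$.

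Finally, the piecewise conjugacies must be assembled into a single homeomorphism of the plane. Since the walls are already matched by $\phi_0$, and each piecewise conjugacy can be arranged to induce the prescribed boundary identification on $\partial Z$, the glued map is a well-defined orientation-preserving homeomorphism conjugating $h$ and $h'$, up to an isotopy relative to $\Or$ that can be absorbed. The main obstacle in this scheme is the irreducible-area step: proving that for four orbits the tangle is a complete invariant of the restriction to an irreducible area. This calls for an Alexander-method-type argument on the twice-punctured cylinder to upgrade curve-level isotopy to isotopy of homeomorphisms commuting, up to isotopy relative to $\Or$, with the Brouwer dynamics, combined with the finite enumeration of irreducible configurations on four orbits singled out by Proposition \ref{coro:determinant diagrams with walls}.
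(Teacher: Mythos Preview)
Your outline diverges from the paper precisely at the step you flag as the main obstacle, and there is a genuine gap there. The tangle is not a curve in the irreducible area $Z\subset\R^2$; it is an isotopy class in the quotient cylinder $\C_2$ obtained after forgetting the two orbits outside $Z$, conjugating the resulting two-orbit class to the translation $\tau$, and quotienting by $\tau$. There is no canonical way to lift that curve back into $Z$ so as to ``cut $Z$ open along it'', and the pieces you would obtain are not stable areas for $[h;\Or]$ (indeed $Z$ is irreducible, so no reducing line lies inside it). The assertion that the resulting pieces ``carry only simpler (translation-type) dynamics'' therefore has no clear meaning. Two smaller points: in the four-orbit case the irreducible area always contains exactly two orbits, not ``at most four''; and Proposition~\ref{coro:determinant diagrams with walls} characterises determinant diagrams (those \emph{without} irreducible areas), so it does not give a finite enumeration of irreducible configurations.

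The paper's route is quite different and does not attempt to build the conjugacy piece by piece across the walls. Given matching relative tangles (arranged via Lemma~\ref{lemma: existence of beta-i with the right tangle}), one chooses homeomorphisms $\phi,\psi$ sending the backward streamlines of the two inner orbits to horizontal lines, and then invokes the deflector machinery of Section~\ref{section: deflectors}: Proposition~\ref{prop: deflector} produces a single compactly supported product of $\tau$-free half twists $\mu$ such that both $[\phi^{-1}\mu\phi\, h;\Or]$ and $[\psi^{-1}\mu\psi\, h';\Or']$ become \emph{flow classes}. At that point Proposition~\ref{prop:flow implies conjugate} (flow classes with the same diagram are conjugate) and Lemma~\ref{lemma:generalized Alexander trick} finish the job, and composing back by the inverse deflector yields the conjugacy between $[h;\Or]$ and $[h';\Or']$. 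The key idea you are missing is this reduction-to-flow via a common deflector; without it, the irreducible-area step in your plan does not go through.
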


\begin{figure}[h]
\centering
\includegraphics[scale=1]{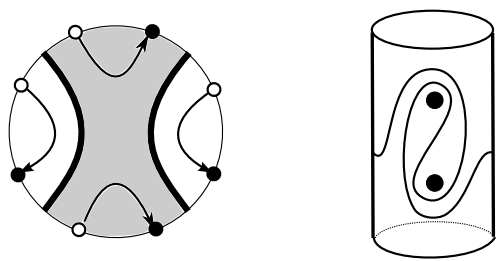}
\caption{Example of a couple (Diagram with walls, Representative of the tangle)}
\label{figu:intro}
\end{figure}

We finally get a complete description of Brouwer mapping classes relatively to four orbits.

In the first section, we recall useful tools for homotopy Brouwer theory from \cite{Handel} and \cite{FLR2013}. A precise description of the results will be given in section \ref{section:statements}. The remaining of the text is devoted to proofs.

\subsection*{Acknowledgment}

I would like to thank my advisor Fr\'{e}d\'{e}ric Le Roux for his many advices and explanations and for his careful readings of the different versions of this text.

\section{First tools of homotopy Brouwer theory}\label{section: First tools}

We recall the following definitions and properties (see \cite{Handel}, \cite{Frederic-An-introduction-to} and \cite{FLR2013}).
Let $[h;\Or]$ be a Brouwer mapping class, i.e. the isotopy class of a Brouwer homeomorphism $h$ relatively to a finite set of orbits $\Or$. Denote by $r$ the number of orbits of $\Or$. We choose a complete hyperbolic metric of the first kind on $\R^2 - \Or$. Even if not explicitly specified, we will always consider complete hyperbolic metrics \emph{of the first kind} on surfaces, i.e. such that the surface is isomorphic to $\mathbb{H}^2/\Gamma$ where $\Gamma$ is of the first kind (see Matsumoto \cite{Matsumoto} for details). 

\subsection{Examples: flows and product with a free half twist}
\noindent \textbf{Flows.} For abbreviation, we say that a homeomorphism $f$ is a flow if it is the time one map of a flow. If a Brouwer homeomorphism $f$ is isotopic to a flow relatively to $\Or$, then we say that $[f;\Or]$ is a \emph{flow class}. \\

\noindent \textbf{Example A.} The first example is the flow class of figure \ref{figu:Reeb1}. In this example, we choose $5$ streamlines of a flow $f$ and get a Brouwer mapping class relative to $5$ orbits: $\Or_1, \Or_2, \Or_3, \Or_4$ and $\Or_5$. We denote by $\Or$ their union.

\begin{figure}[h] 
\labellist
\small\hair 2pt	
\pinlabel $\Or_1$ at 390 10
\pinlabel $\Or_2$ at 390 114
\pinlabel $\Or_5$ at 390 67
\pinlabel $\Or_3$ at 335 82
\pinlabel $\Or_4$ at 445 44
\endlabellist
\centering
\vspace{0.2cm}
\includegraphics[scale=0.75]{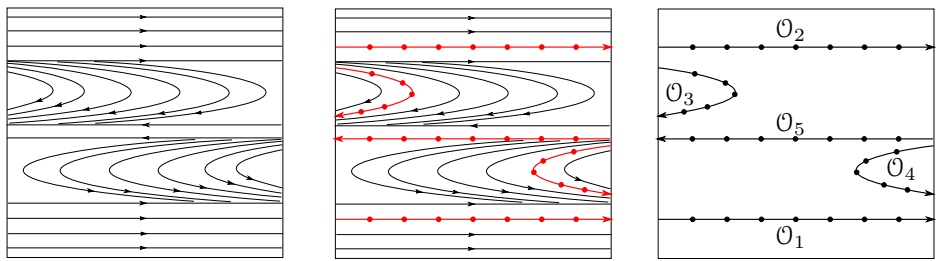}
\caption{Example of a Brouwer mapping class $[f;\Or]$ relatively to $5$ orbits.}
\label{figu:Reeb1}
\end{figure}

\noindent \textbf{Free half twist.} We call \emph{half twist} any homeomorphism which is:
\begin{itemize}
\item supported in a topological disk $D$ of $\R^2$ which contains exactly $2$ points of $\Or$, denoted by $x$ and $y$;
\item isotopic to a homeomorphism supported in $D$ which is a rotation of a half turn on a disk included in $D$, which exchanges $x$ and $y$.
\end{itemize} 

If $h$ is a Brouwer homeomorphism, we call \emph{$h$-free half twist} every half twist $\mu$ supported in a $h$-free disk, i.e. in a disk $D$ such that $h^n(D)\cap D=\emptyset$ for every non zero $n\in \Z$. Note that $\mu h$ is a Brouwer homeomorphism.\\

\begin{figure}[h]
\labellist
\small\hair 2pt	
\pinlabel $\mu$ at 68 50
\pinlabel $x_1$ at 85 12
\pinlabel $x_2$ at 85 112
\pinlabel $x_3$ at 19 84
\pinlabel $x_4$ at 120 59
\endlabellist
\centering
\vspace{0.2cm}
\includegraphics[scale=0.75]{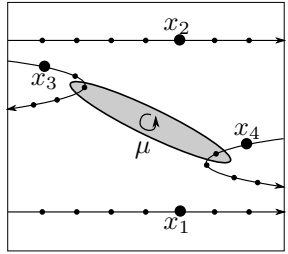}
\caption{Example $B$: the Brouwer mapping class $[g;\Or']$.}
\label{figu:ReebB1}
\end{figure}

\noindent \textbf{Example B.} Our second example is the product of $f$ with the free half twist $\mu$ of \mbox{figure \ref{figu:ReebB1}}, which exchanges the two points of the disk which are in $\Or$. We denote by $g$ the product $\mu f$. For $i=1,2,3,4$, we choose $x_i$ on $\Or_i$ as in figure \ref{figu:ReebB1} and denote by $\Or'_i$ the $g$-orbit of $x_i$, i.e. $\{g^n(x_i)\}_{n\in \Z}$. In particular, we have $\Or_1=\Or'_1$ and $\Or_2=\Or'_2$. We denote by $\Or'$ the union of the $\Or'_i$. Note that $\Or'$ coincide with $\Or_1 \cup \Or_2 \cup \Or_3 \cup \Or_4$. We consider the Brouwer mapping class $[g;\Or']$.

\subsection{Arcs and topological lines in $\R^2 - \Or$.} We call \emph{arc} an embedding $\alpha$ of $]0,1[$ in $\R^2 - \Or$ such that it can be continuously extending to $0$ and $1$ with $\alpha(0),\alpha(1) \in \Or$. By abuse of notations, we will call again arc and denote by $\alpha$ the image of the embedding $\alpha(]0,1[)$. The extensions $\alpha(0)$ and $\alpha(1)$ are said to be the \emph{endpoints} of $\alpha$. A \emph{topological line} is a proper embedding $\alpha$ of $\R$ in $\R^2$, i.e. an embedding $\alpha$ such that for every compact $K$ of the plane, there exists $t_0 \in \R$ such that if $|t|>|t_0|$, then $\alpha(t) \notin K$. Again, by abuse of notations, we call (topological) line and denote by $\alpha$ the image of $\R$ by $\alpha$ in $\R^2 - \Or$.

\subsection{Isotopy classes of arcs and lines.} We say that two arcs (respectively two lines) $\alpha$ and $\beta$ are isotopic relatively to $\Or$ if there exists a continuous and proper application $H : ]0,1[ \times [0,1] \rightarrow \R^2 - \Or$ such that:
\begin{itemize}
\item $H(\cdot,0)=\alpha(\cdot)$ and $H(\cdot,1)=\beta(\cdot)$;
\item If $\alpha$ and $\beta$ are arcs, $H$ can be continuously extending to $[0,1]\times [0,1]$ in such a way that the endpoints coincide, i.e. for every $t \in ]0,1[$, we have: $\alpha(0)=H(0,t)=\beta(0)$ and $\alpha(1)=H(1,t)=\beta(1)$. 
\end{itemize}

If $\alpha$ is an arc or a line of $\R^2 - \Or$, we denote by $\alpha_\#$ the geodesic representative in the isotopy class of $\alpha$ relatively to $\Or$. It is known that this geodesic representative is unique and that if $\alpha$ and $\beta$ are two arcs or lines, then $\alpha_\#$ and $\beta_\#$ are in minimal position. In particular, if $\alpha$ and $\beta$ are homotopically disjoint (i.e. they have disjoint representatives in their isotopy classes), then $\alpha_\#$ and $\beta_\#$ are disjoint.

\subsection{Straightening principle.}
We will need the following lemma, which is the lemma $3.5$ of \cite{Handel} (see also lemma $1.4$, corollary $1.5$ and lemma $3.2$ of \cite{FLR2013}):
\begin{lemma}[Straightening principle] Let $\F_1$ and $\F_2$ be two locally finite families of lines and arcs of $\R^2 -\Or$ such that:
\begin{itemize} \label{lemma: straightening principle}
\item The elements of $\F_1$ (respectively $\F_2$) are mutually homotopically disjoint;
\item If $\alpha \in \F_1$ and $\beta \in \F_2$, then $\alpha$ and $\beta$ are in minimal position.
\end{itemize}
Then the following statements hold.
\begin{enumerate}
\item There exists a homeomorphism $u$ isotopic to $Id$ relatively to $\Or$ such that for every element $\gamma$ in $\F_1 \cup \F_2$, $u(\gamma)=\gamma_\#$, where $\gamma_\#$ is the geodesic representative of the isotopy class of $\gamma$;
\item If $h$ is an orientation preserving homeomorphism of the plane such that $h(\Or)=\Or$, then there exists $h'\in [h;\Or]$ such that for every $\alpha$ in $\F_1 \cup \F_2$, we have $h'(\alpha_\#)=h(\alpha)_\#$.
\end{enumerate}
\end{lemma}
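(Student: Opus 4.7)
I would first prove (1) and then derive (2) as a formal consequence.

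\emph{Reduction of (2) to (1).} Suppose (1) holds. Apply it to $\F_1 \cup \F_2$ to produce $u$ with $u(\gamma)=\gamma_\#$ for every $\gamma \in \F_1 \cup \F_2$. Since $h$ is an orientation preserving homeomorphism fixing $\Or$, the families $h(\F_1), h(\F_2)$ again satisfy the hypotheses of the lemma (homotopic disjointness and minimal position are preserved by homeomorphisms), so (1) yields $v$ isotopic to $\mathrm{Id}$ rel $\Or$ with $v(h(\gamma))=h(\gamma)_\#$ for every $\gamma$. Set $h' = v \circ h \circ u^{-1}$. Then $h'\in [h;\Or]$ (composition by two homeomorphisms isotopic to the identity rel $\Or$), and $h'(\gamma_\#) = v(h(\gamma)) = h(\gamma)_\#$, as required.

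\emph{Plan for (1).} By local finiteness, exhaustion by compact subsets and a direct-limit argument reduce the problem to the case where $\F_1$ and $\F_2$ are finite. Then I would proceed in two stages.

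Stage A: straighten $\F_1$. The elements of $\F_1$ are mutually homotopically disjoint, so their geodesic representatives $\{\alpha_\#\}_{\alpha \in \F_1}$ form a pairwise disjoint family. A standard argument (a regular neighborhood of $\F_1$ is isotopic rel $\Or$ to a regular neighborhood of $\F_1^\#$, combined with the ambient isotopy theorem on the surface $\R^2 - \Or$) produces a homeomorphism $u_1$ isotopic to $\mathrm{Id}$ rel $\Or$ with $u_1(\alpha) = \alpha_\#$ for every $\alpha \in \F_1$.

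Stage B: straighten $\F_2$ while preserving $\F_1^\#$ setwise. After applying $u_1$, each $u_1(\beta)$ (for $\beta \in \F_2$) is still in minimal position with each geodesic of $\F_1^\#$, and the curves $u_1(\beta)$ remain mutually homotopically disjoint. Cut $\R^2 - \Or$ along $\F_1^\#$ into pieces. The minimal-position hypothesis forces the intersection pattern of $u_1(\beta)$ with $\F_1^\#$ to coincide (in number and cyclic order along each geodesic of $\F_1^\#$) with that of $\beta_\#$; hence the two curves traverse the same sequence of pieces via matching sub-arcs. Inside each piece the corresponding sub-arcs are homotopic rel endpoints, and a standard isotopy supported in a neighborhood of the piece straightens one onto the other. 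Performing this simultaneously for all $\beta \in \F_2$ (the disjointness of the $\beta_\#$ guarantees that the supports can be made disjoint) gives $u_2$, and $u := u_2 \circ u_1$ is the desired straightening homeomorphism.

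\emph{Main obstacle.} The delicate point is the piecewise argument in Stage B: one must verify that each sub-arc of $u_1(\beta)$ between two consecutive crossings with $\F_1^\#$ is genuinely homotopic, rel its endpoints on $\F_1^\#$ and within the corresponding piece, to the matching sub-arc of $\beta_\#$. This is exactly where the minimal-position hypothesis is essential, as otherwise crossings could be paired the wrong way and create a bigon. I would establish the required combinatorial matching by lifting to the universal cover $\mathbb{H}^2$, where lifts of geodesics are honest hyperbolic geodesics and the pairing of lifts of intersection points is transparent from elementary hyperbolic geometry; equivariance then transports the conclusion back down to $\R^2 - \Or$.
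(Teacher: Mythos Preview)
Your reduction of (2) to (1) is correct and is exactly what the paper does: its entire proof consists of the single line ``We get 2 by applying 1 to $h(\F_1 \cup \F_2)$,'' which is precisely your construction $h' = v\circ h\circ u^{-1}$ spelled out.

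The paper does not prove (1) at all; it is quoted as lemma~3.5 of Handel~\cite{Handel} (with further references to lemmas~1.4, 1.5 and~3.2 of Le~Roux~\cite{FLR2013}). So your Stages~A and~B go beyond what the paper offers. Your outline is the standard one and is sound, with one small caveat in Stage~A: the elements of $\F_1$ are only \emph{homotopically} disjoint, not actually disjoint, so before invoking an ambient-isotopy/regular-neighborhood argument you should first perform bigon removal within $\F_1$ to make the family honestly disjoint (this is routine and compatible with the locally finite setting). Once that is done, your Stage~B via cutting along $\F_1^\#$ and matching sub-arcs in the universal cover is the right mechanism, and your identification of the minimal-position hypothesis as the key ingredient there is accurate.
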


We get $2$ by applying $1$ to $h(\F_1 \cup \F_2)$.

\subsection{Homotopy translation arc.}
A \emph{homotopy translation arc} for $[h;\Or]$ is an arc $\alpha$ such that:
\begin{itemize}
\item There exists $x \in \Or$ such that $\alpha(0)=x$ and $\alpha(1)=h(x)$;
\item For every $n \in \Z$, $h^n(\alpha)$ is homotopically disjoint from $\alpha$.
\end{itemize}
In particular, every translation arc for $h$ with endpoints in $\Or$ is a homotopy translation arc for $[h;\Or]$. In general, there exist homeomorphisms $h$ and arcs which are not homotopic to translation arc for $h$, but which are homotopy translation arcs for $[h;\Or]$.\\

\begin{figure}[h]
\labellist
\small\hair 2pt	
\pinlabel $\gamma$ at 119 44
\pinlabel $\alpha$ at 65 114
\pinlabel $\beta$ at 80 80
\pinlabel $\Or_1$ at 132 12
\pinlabel $\Or_2$ at 132 112
\pinlabel $\Or_3$ at 8 88
\pinlabel $\Or_4$ at 132 49
\pinlabel $\Or_5$ at 132 66
\endlabellist
\centering
\includegraphics[scale=1]{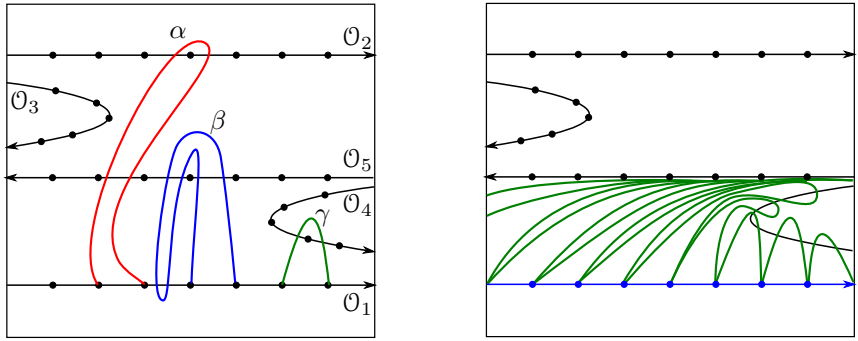}
\caption{Examples of homotopy translation arcs and homotopy streamlines.}
\label{figu:Reeb2}
\end{figure}

\noindent \textbf{Example A:} Figure \ref{figu:Reeb2} (left) shows different homotopy translation arcs for example $A$: the arcs $\beta$ and $\gamma$ are homotopy translation arcs for $[f;\Or']$. The arc $\alpha$ is not a homotopy translation arc for $[f;\Or]$. Note that however if we forget the orbit $\Or_5$, $\alpha$ is a homotopy translation arc for $[h;\Or_1 \cup \Or_2 \cup \Or_3 \cup \Or_4]$.

\subsection{Half homotopy streamlines.}
If $\alpha$ is a homotopy translation arc for $[h;\Or]$, we define the \emph{homotopy streamline}:
$$T(\alpha,h,\Or):=\bigcup_{n\in \Z} (h^n(\alpha([0,1[)))_\#.$$
Since $\alpha$ is a homotopy translation arc, the geodesic iterates are mutually disjoint, hence $T(\alpha,h,\Or)$ is an embedding of $\R$, which can eventually be non proper.

We also define the \emph{backward (respectively forward) homotopy streamline} $T^-(\alpha,h,\Or)$ (respectively $T^+(\alpha,h,\Or)$) by:
$$T^-(\alpha,h,\Or):=\bigcup_{n\leq 0} (h^n(\alpha([0,1[)))_\#.$$
$$T^+(\alpha,h,\Or):=\bigcup_{n\geq 0} (h^n(\alpha([0,1[)))_\#.$$

\noindent \textbf{Example A:} The streamline $T(\beta,f,\Or)$ of example $A$ is proper and coincides with the horizontal streamline which contains $\Or_1$. The streamline $T(\gamma,h,\Or)$ is drawn on figure \ref{figu:Reeb2}. It is not proper, but $T^+(\gamma,h,\Or)$ is proper.

\subsection{Backward proper and forward proper arcs.}
Let $\alpha$ be a homotopy translation arc. If the backward homotopy streamline $T^-(\alpha,h,\Or)$ is the image of $\R^+$ under a proper embedding, i.e. if for every compact $K$ of the plane there exists $n_0 \leq 0$ such that for every $n \leq n_0$, $(h^n(\alpha))_\#$ does not intersect $K$, then $\alpha$ is said to be a \emph{backward proper arc} for $[h;\Or]$. Similarly, if the forward homotopy streamline $T^+(\alpha,h,\Or)$ is proper, then $\alpha$ is said to be a \emph{forward proper arc} for $[h;\Or]$.\\

\begin{figure}[h]
\labellist
\small\hair 2pt	
\pinlabel $\Or'_1$ at 132 12
\endlabellist
\centering
\includegraphics[scale=1]{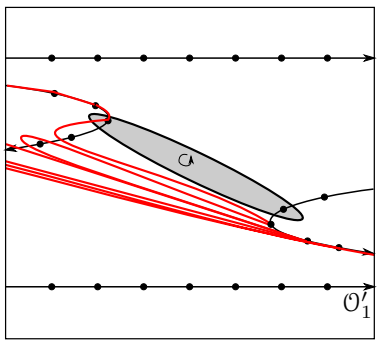}
\caption{Example of a homotopy streamline for $[g;\Or']$.}
\label{figu:ReebB2}
\end{figure}

\noindent \textbf{Example A:} In the example of figure \ref{figu:Reeb2}, $\beta$ is a backward proper and forward proper arc, and $\gamma$ is a forward proper arc but it is not a backward proper arc. Note that if $h$ is a flow, then every homotopy translation arc which lies on a flow streamline is backward proper and forward proper.\\

\noindent \textbf{Example B:} In figure \ref{figu:ReebB2}, we draw iterates of an arc lying on a streamline of $f$ which intersects the support of the free half twist $\mu$. We see that this arc is backward proper but not forward proper (the iterates are ``stuck'' by the orbit $\Or'_1$).\\

\noindent \textbf{Translation.} If a Brouwer homeomorphism $f$ is conjugate to a translation relatively to $\Or$, then we say that $[f;\Or]$ is a \emph{translation class}. In this case, every homotopy translation arc for $[f;\Or]$ is backward proper and forward proper.

\subsection{Nice family $(\alpha_i^\pm)_{1\leq i \leq r}$.}
A \emph{nice family} $(\alpha_i^\pm)_{1\leq i \leq r}$ associated to $[h;\Or]$ is a family of homotopy translation arcs for $[h;\Or]$ such that:
\begin{itemize}
\item For every $1 \leq i \leq r$:
\begin{itemize}
\item $\alpha_i^-$ is a backward proper arc;
\item $\alpha_i^+$ is a forward proper arc;
\item $\alpha_i^-$ and $\alpha_i^+$ have the same endpoints, lying in the orbit $\Or_i$;
\end{itemize}
\item The backward proper half streamlines $T^-(\alpha_i^-,h,\Or)$'s are mutually disjoint;
\item The forward proper half streamlines $T^+(\alpha_i^+,h,\Or)$'s are mutually disjoint.
\end{itemize}

Note that if $\al$ is a nice family for a Brouwer mapping class $[h;\Or]$, then the previous proper half streamlines $T^\pm(\alpha_i^\pm,h,\Or)$'s are mutually disjoint outside a topological disk of the plane.\\

\begin{figure}[h]
\labellist
\small\hair 2pt	
\pinlabel $\alpha_3^+$ at 250 90
\pinlabel $\alpha_4^-$ at 225 50
\endlabellist
\centering
\includegraphics[scale=1]{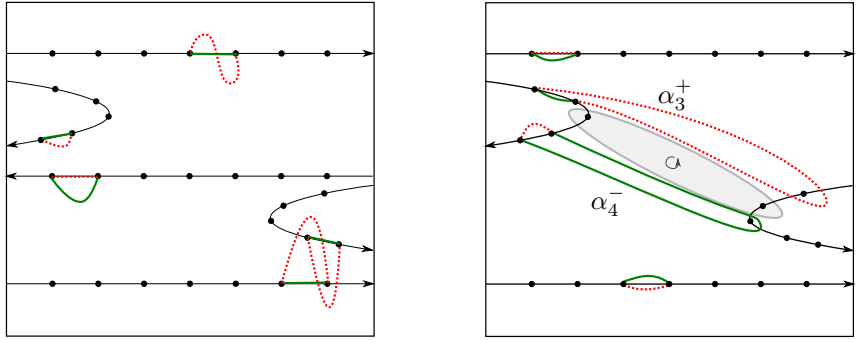}
\caption{Example of nice families for $[f;\Or]$ and $[g;\Or']$.}
\label{figu:Reeb4}
\end{figure}

\noindent \textbf{Examples:} Figure \ref{figu:Reeb4} give an example of a nice family for $[f;\Or]$ with some arcs not homotopic to arcs included in streamlines, and an example of a nice family for $[g;\Or']$. In particular, $\alpha_3^+$ (respectively $\alpha_4^-$) is constructed with an iteration by $g^{-1}$ (respectively $g$) of an arc lying on the $f$-streamline of $\Or_4$ after the support of $\mu$ (respectively of an arc lying on the $f$-streamline of $\Or_3$ before the support of $\mu$).\\

The following theorem of Handel \cite{Handel} allows us to consider a nice family for every Brouwer mapping class in the following sections.
\begin{theorem}[Handel \cite{Handel}] \label{theo: Handel} For every $[h;\Or]$, there exists a nice family associated to $[h;\Or]$.
\end{theorem}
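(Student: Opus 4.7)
The plan is to follow Handel's original strategy, which splits the construction into three stages: for each orbit $\Or_i$, first produce a homotopy translation arc, then upgrade it into backward and forward proper arcs, and finally arrange the mutual disjointness of the half streamlines.

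For stage one, I start from a Brouwer translation arc from some $x_i \in \Or_i$ to $h(x_i)$, which exists by the plane translation theorem. After a small perturbation relative to $\Or_i$ I may assume the arc avoids $\Or \setminus \Or_i$; the iterates are then pairwise disjoint embedded arcs avoiding the remaining orbits, so Lemma~\ref{lemma: straightening principle} preserves their disjointness after passing to geodesics, giving a homotopy translation arc $\alpha_i$. For stage two, given such $\alpha_i$, I consider its geodesic backward half streamline $T^-(\alpha_i,h,\Or)$: if it fails to be proper, the backward iterates accumulate on a non-empty compact set, which allows a surgery in which a long backward tail of the orbit is replaced by a geodesic segment joining two very close iterates, producing a new homotopy translation arc with strictly simpler accumulation behavior. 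Iterating this reduction terminates and yields a backward proper arc $\alpha_i^-$; the symmetric argument gives a forward proper arc $\alpha_i^+$ with the same endpoints.

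For stage three, I straighten everything simultaneously via Lemma~\ref{lemma: straightening principle} so as to work with geodesic representatives. If two backward half streamlines $T^-(\alpha_i^-,h,\Or)$ and $T^-(\alpha_j^-,h,\Or)$ intersect transversely, a cut-and-paste at an intersection point — grafting a backward tail of one streamline onto the other — yields a new backward proper arc in $\Or_j$ whose backward streamline has strictly fewer intersections with $T^-(\alpha_i^-,h,\Or)$. A finite induction on pairs eliminates all intersections between backward streamlines; the symmetric argument handles the forward streamlines.

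The hard part is stage two. Controlling the way backward geodesic iterates of a homotopy translation arc can accumulate in the hyperbolic metric on $\R^2 - \Or$, and arranging a surgery that strictly reduces that accumulation without destroying the homotopy translation arc property, is the delicate technical heart of the argument. Once stages one and two are established, stage three is essentially combinatorial, using only the intersection structure of proper half streamlines in the plane with $r$ punctures.
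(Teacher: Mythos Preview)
The paper does not give a proof of this theorem: it is cited as a result of Handel, and Remark~\ref{remark:generalized homotopy streamlines} only explains how to pass from the formulation in \cite{FLR2013}, Proposition~3.1 (stated for \emph{generalized} homotopy half streamlines), to the present one, by observing that any neighborhood of a generalized half streamline contains ordinary disjoint half streamlines through the same orbit points. So there is no detailed argument here to compare your sketch against; the paper simply invokes the literature.

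Viewed on its own, your proposal has the right overall architecture, but none of the three stages is close to a proof. In stage one, perturbing $\alpha$ so that it avoids $\Or\setminus\Or_i$ does not force the iterates $h^n(\alpha)$ to avoid $\Or\setminus\Or_i$; without that, the iterates are not even arcs in $\R^2-\Or$, and the straightening principle does not apply. In stage two you yourself flag the gap: the surgery you allude to must produce a new arc which is still a homotopy translation arc and whose accumulation behavior strictly decreases in some well-founded sense, and neither point is addressed --- this is precisely where the bulk of Handel's work lies. In stage three, grafting a backward tail of $T^-(\alpha_i^-,h,\Or)$ onto $T^-(\alpha_j^-,h,\Or)$ gives a curve with the correct endpoints, but you give no reason why it remains a homotopy translation arc, why it is still backward proper, or why the induction terminates (two proper geodesic half streamlines may a priori meet infinitely often, so ``strictly fewer intersections'' is not an obvious descent).

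Since the paper itself is content to cite the result, the cleanest write-up here would do the same and refer the reader to \cite{Handel} and to Proposition~3.1 of \cite{FLR2013}; what you have written is an outline of the strategy, not a proof.
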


\begin{rem} \label{remark:generalized homotopy streamlines}
For a statement closer to this one, see \cite{FLR2013}, proposition $3.1$. Here we describe a way to deduce our statement from proposition $3.1$ of \cite{FLR2013}, where the statement is given for \emph{generalized homotopy half streamlines}. As seen in figure \ref{figu:generalized_half_streamlines}, in any open neighborhood of a generalized homotopy half streamline there exist disjoint homotopy streamlines whose union contains every points of $\Or$ included in the generalized homotopy half streamline. It follows that the result is still true with the statement given here (i.e. when we replace disjoint generalized homotopy half streamlines by (non generalized) disjoint homotopy half streamlines).
\end{rem}

\begin{figure}[h]
\centering
\vspace{0.2cm}
\includegraphics[scale=1]{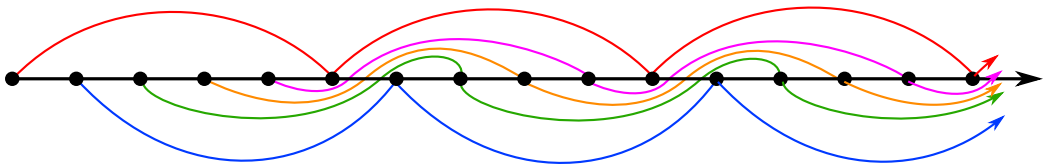}
\caption{Finding disjoint homotopy half streamlines in a neighborhood of a generalized homotopy half streamline.}
\label{figu:generalized_half_streamlines}
\end{figure}

\subsection{Reducing line.}

We say that a line of $\R^2$ \emph{splits} a given set of points $X$ included in $\R^2 -\Delta$ if both connected components of $\R^2 -\Delta$ intersect $X$.

A \emph{reducing line} $\Delta$ for $[h;\Or]$ is a line in $\R^2 - \Or$ such that $h(\Delta)$ is properly isotopic to $\Delta$ relatively to $\Or$ and such that $\Delta$ splits $\Or$. Note that all the elements of a same orbit of $\Or$ are included in the same connected component of $\R^2 -\Delta$. Indeed, according to the straightening principle \ref{lemma: straightening principle}, there exists $h'\in [h;\Or]$ such that $h'(\Delta)=\Delta$. Figure \ref{figu:ReebB4} gives examples.

\begin{figure}[h]
\centering
\includegraphics[scale=1]{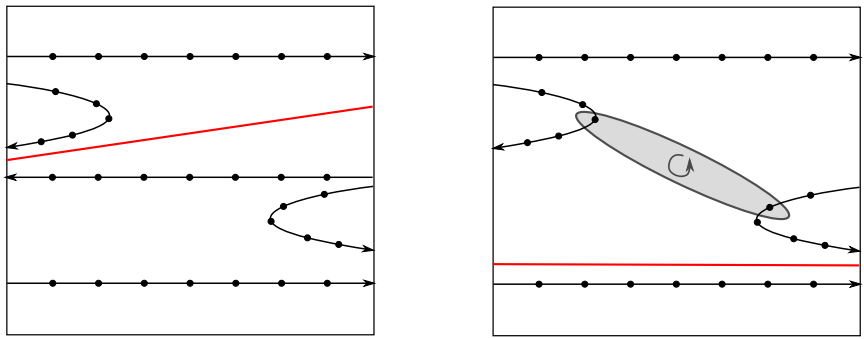}
\caption{Examples of reducing lines for $[f;\Or]$ and $[g;\Or']$.}
\label{figu:ReebB4}
\end{figure}

We will need the following Handel's theorem. See \cite{FLR2013}, proposition $3.3$ for this formulation.

\begin{theorem}[Handel \cite{Handel}]\label{theo:existence of reducing line}
Let $[h;\Or]$ be a Brouwer mapping class relatively to more than one orbits. Let $\al$ be a nice family for $[h;\Or]$. There exists a reducing line for $[h;\Or]$ which is disjoint from every backward proper half streamline $T^-(\alpha_i^-,h,\Or)$.
\end{theorem}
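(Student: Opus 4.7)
The plan is to start from a reducing line given by Handel's existence theorem (theorem 2.7, cited in the introduction) and then arrange, via the straightening principle and an intersection-minimization argument, for it to avoid the backward half streamlines. First, let $\Delta_0$ be any reducing line for $[h;\Or]$. Up to isotopy, $\Delta_0$ and each $\alpha_i^-$ can be put in minimal position. Since the backward half streamlines of a nice family are mutually disjoint, the arcs $\alpha_i^-$ are themselves mutually homotopically disjoint, and I can apply Lemma \ref{lemma: straightening principle} to $\F_1 = \{\alpha_i^-\}_{1 \leq i \leq r}$ and $\F_2 = \{\Delta_0\}$. This produces a representative $h' \in [h;\Or]$ satisfying ${h'}(\Delta_{0,\#}) = (h\Delta_0)_\# = \Delta_{0,\#}$ (the last equality uses that $\Delta_0$ is a reducing line, so $h\Delta_0$ is isotopic to $\Delta_0$) and ${h'}(\alpha_{i,\#}^-) = (h\alpha_i^-)_\#$ for every $i$. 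Thus $h'$ preserves $\Delta_{0,\#}$ setwise while iterating $\alpha_{i,\#}^-$ along its backward geodesic streamline.

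The key reduction is: it suffices to find a reducing line $\Delta$ whose geodesic representative $\Delta_\#$ is disjoint from every $\alpha_{i,\#}^-$. Indeed, the $h'$-invariance of $\Delta_\#$ combined with $T^-(\alpha_i^-,h,\Or) = \bigcup_{n \leq 0} {h'}^n(\alpha_{i,\#}^-)$ immediately yields $\Delta_\# \cap T^-(\alpha_i^-,h,\Or) = \emptyset$ for every $i$.

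To obtain this disjointness, I would minimize the total geometric intersection number $\sum_i |\Delta_\# \cap \alpha_{i,\#}^-|$ over all reducing lines $\Delta$ for $[h;\Or]$. Suppose this minimum is positive; then some $\Delta_\#$ meets some $\alpha_{i,\#}^-$ transversely at a point $p$, and by backward propriety the iterates ${h'}^{-n}(p) \in \Delta_\# \cap (h^{-n}\alpha_i^-)_\#$ escape every compact set. In particular, along $\alpha_{i,\#}^-$ (which starts at a point of $\Or_i$ and extends finitely in the forward direction) there is a well-defined outermost intersection with $\Delta_\#$, beyond which $\alpha_{i,\#}^-$ no longer returns to $\Delta_\#$. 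Rerouting $\Delta$ past this outermost intersection point, while staying on the side that still splits $\Or$, should yield a line isotopic to $\Delta$ in $\R^2 - \Or$ with strictly smaller intersection number; iterating eventually gives a reducing line disjoint from every $\alpha_{i,\#}^-$.

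The main obstacle will be verifying that this surgery genuinely preserves the reducing-line property: the new line must still split $\Or$ and still satisfy $h(\Delta) \sim \Delta$, and it must not create new intersections with the other arcs $\alpha_j^-$. One way to control this is to perform all rerouting inside a thin tubular neighborhood of the outermost subarc of $\alpha_{i,\#}^-$, which keeps the line in the same homotopy class relatively to the other arcs and to $h$. An alternative route, which sidesteps the surgery step entirely, is to apply Handel's existence theorem to the restriction of $[h;\Or]$ to a suitable open subsurface of $\R^2 - \Or$ obtained by cutting along the backward half streamlines, and then check that a reducing line for this restricted dynamics extends to a reducing line for $[h;\Or]$ automatically disjoint from the backward streamlines.
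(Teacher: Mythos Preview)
Your key reduction is correct and worth keeping: if a reducing line $\Delta$ has $\Delta_\# \cap \alpha_{i,\#}^- = \emptyset$ for each $i$, then since $[h^n\Delta]=[\Delta]$ for all $n$ and geometric intersection number is invariant under applying $h$ to both classes, one gets $|\Delta_\# \cap (h^n\alpha_i^-)_\#| = |\Delta_\# \cap \alpha_{i,\#}^-| = 0$ for every $n\le 0$, hence $\Delta_\#$ avoids the entire backward streamline. So the problem really does reduce to finding a reducing line disjoint from the finitely many arcs $\alpha_i^-$.

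The gap is in the surgery step, and you have essentially flagged it yourself without closing it. Rerouting $\Delta_\#$ near an ``outermost'' intersection with $\alpha_{i,\#}^-$ either stays in the same isotopy class relative to $\Or$ (in which case the geodesic representative is unchanged and nothing is gained, since $\Delta_\#$ and $\alpha_{i,\#}^-$ were already in minimal position) or it crosses a point of $\Or$, in which case you must check that the new line is still isotopic to its $h$-image. There is no reason this should hold: pushing $\Delta$ across a single point $x\in\Or_i$ produces a line $\Delta'$ with $x$ on the other side, but $h(x)$ and $h^{-1}(x)$ may well remain on the original side, so $\Delta'$ and $h(\Delta')$ now separate $\Or$ differently and cannot be isotopic. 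Your tubular-neighborhood suggestion does not escape this dichotomy, and the ``restrict to a subsurface cut along the backward streamlines'' alternative is not a set-up in which Handel's existence theorem applies as stated.

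For comparison, the paper does not give an independent proof of this statement at all: it attributes the result to Handel and refers to proposition~3.3 of \cite{FLR2013} for this formulation. The underlying argument (Handel's fitted-family machinery, section~5 of \cite{Handel}) is what actually produces a reducing line disjoint from the backward generalized homotopy half streamlines; it is a structural argument about how arcs accumulate on the boundary of a Brouwer subsurface, not an intersection-minimization or surgery argument. So the approach you are sketching is genuinely different from the cited one, and the missing ingredient---a surgery that preserves the reducing-line condition---does not seem to have an elementary substitute.
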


\subsection{Homotopy Brouwer line.} 
\begin{figure}[h]
\labellist
\small\hair 2pt	
\pinlabel $L$ at 80 72
\pinlabel $L$ at 263 32
\endlabellist
\centering
\includegraphics[scale=1]{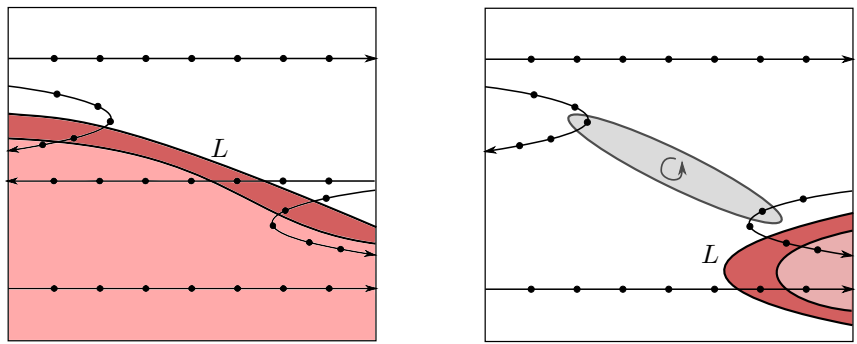}
\caption{Examples of Brouwer lines for $[f;\Or]$ and $[g;\Or']$.}
\label{figu:ReebB5}
\end{figure}

A \emph{homotopy Brouwer line} $L$ for $[h;\Or]$ is a topological line in $\R^2 - \Or$ such that:
\begin{itemize}
\item $L$ is homotopically disjoint from $h(L)$; 
\item $L$ is not isotopic to $h(L)$;
\item If we denote by $V$ the connected components of $\R^2 - L_\#$ containing $h(L)_\#$, then we have $h(V)_\# \subset V$, where $h(V)_\#$ is the connected component of $\R^2 - h(L)_\#$ which does not contain $L_\#$;
\item If $x \in \Or$ is in $V - h(V)_\#$, then $h(x) \notin V - h(V)_\#$.
\end{itemize} 
This definition does not depend on the chosen metric on $\R^2 -\Or$. Figure \ref{figu:ReebB5} gives examples of Brouwer lines for $[f;\Or]$ and $[g;\Or']$.

\section{Description of the results}\label{section:statements} Here we give the main definitions and statements of the paper. Proofs will be given in the following sections.

\subsection{Adjacency areas, diagrams and special nice families (Section \ref{section:Adjacency areas, diagrams and special nice families})}

\subsubsection{Cyclic order of a nice family}
Let $[h;\Or]$ be a Brouwer mapping class and let $(\alpha_i^\pm)_i$ be a nice family for $[h;\Or]$. There is a natural cyclic order on the elements of the nice family $\al$ given by the order of the half homotopy streamlines generated by the $\alpha_i^\pm$ at infinity: if we choose a big enough topological circle which intersects each half streamline only once, with transverse intersections, the order on the half streamlines is given by the order of these intersections (which is independent of the choice of the circle). In the following, we will call this cyclic order the \emph{cyclic order of the nice family}.

\subsubsection{Adjacency} If several forward proper arcs, respectively several backward proper arcs, are consecutive for the cyclic order of the nice family, then they are said to be \emph{adjacent}. A sub-family of the nice family consisting only of consecutive arcs of the same type (all backwards or all forwards) is said to be a \emph{sub-family of adjacency}. If two orbits have forward proper arcs (respectively backward proper arcs) in the same nice family which are adjacent, they are said to be \emph{forward adjacent} (respectively \emph{backward adjacent}). The following proposition is essentially due to Handel \cite{Handel} (a proof will be given in section \ref{section:Adjacency areas, diagrams and special nice families}).

\begin{prop}\label{prop: adjacency for nice families}
Let $[h;\Or]$ be a Brouwer mapping class. If $\al$ and $\bet$ are two nice families for this class, then they have the same cyclic order up to permutation of arcs of $\bet$ inside the same sub-families of adjacency.
\end{prop}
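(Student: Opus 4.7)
The approach is to extract from every nice family a canonical ``cyclic order at infinity'' that depends only on $[h;\Or]$, and then to analyze the exact ambiguity that arises when recovering the cyclic order of a particular nice family from this invariant.

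First, I would apply the straightening principle (Lemma \ref{lemma: straightening principle}) simultaneously to the locally finite family consisting of all $h^n$-iterates of the arcs in $\al$ and $\bet$, obtaining a representative $h' \in [h;\Or]$ for which every arc is replaced by its geodesic representative and $h'$ sends geodesic iterates to geodesic iterates. In this model, each half streamline $T^\pm(\cdot,h,\Or)$ becomes a disjoint union of geodesic arcs, the half streamlines in each nice family are pairwise disjoint, and geodesic half streamlines from different families are in minimal position.

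Next, I would prove the key invariance: for each orbit $\Or_i$, any forward proper half streamline for $\Or_i$ has a well-defined ``forward end at infinity'' in $\R^2 \setminus \Or$, depending only on $\Or_i$ and $[h;\Or]$. The point is that for any two forward proper arcs $\alpha_i^+$ and $\beta_i^+$ based at points $x,y$ of $\Or_i$, after enough positive iterations of $h$ the endpoints of $(h^n(\alpha_i^+))_\#$ and $(h^n(\beta_i^+))_\#$ both belong to the common proper tail $\{h^n(x)\}_{n \geq N}$ of the orbit (up to relabelling), and the geodesic arc between two consecutive orbit points is unique in its isotopy class; so the two half streamlines agree modulo a compact initial segment, and their common proper tail picks out a single end of the surface. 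The backward case is analogous. The $2r$ resulting ends are distinct within a single nice family since its half streamlines are pairwise disjoint and proper, and this produces a canonical cyclic order on the $2r$ pairs $(\Or_i,\pm)$.

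Finally, I would match any given nice family to this canonical order: its cyclic order on arcs, read at infinity, is exactly the canonical cyclic order on ends, with a single source of ambiguity. When the canonical order groups several consecutive same-type ends together, the corresponding arcs form a sub-family of adjacency, and inside such a block an explicit isotopy in a small disk near infinity containing the endpoints of the half streamlines (performing a braiding of the relevant ends) produces another nice family realizing the opposite order. Conversely, two same-type arcs separated at infinity by an opposite-type arc cannot be swapped: any isotopy interchanging their half streamlines would force an intersection with the separating half streamline, contradicting disjointness of the resulting family. The main obstacle is rigorously establishing the second step, namely that the ``end at infinity'' of a forward (or backward) proper half streamline depends only on the orbit and not on the choice of arc; this is where the geodesic model and the properness hypothesis must be used most carefully, and it is essentially the content that Handel develops in the relevant sections of \cite{Handel}.
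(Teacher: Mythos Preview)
Your overall strategy is sound and close in spirit to the paper's, but the justification you give for the key step~2 is incorrect. You write that for two forward proper arcs $\alpha_i^+$ and $\beta_i^+$ on the same orbit, ``the two half streamlines agree modulo a compact initial segment''. This is false in general: two forward proper arcs for $\Or_i$ need not be isotopic, and their geodesic iterates $(h^n(\alpha_i^+))_\#$ and $(h^n(\beta_i^+))_\#$ need not become isotopic for large $n$. Sharing endpoints in the orbit does not force the arcs to coincide, because there are many isotopy classes of arcs joining two consecutive orbit points in $\R^2-\Or$. So the half streamlines $T^+(\alpha_i^+,h,\Or)$ and $T^+(\beta_i^+,h,\Or)$ may remain distinct (even intersecting) all the way to infinity, and there is no ``common proper tail'' to speak of.

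What the paper does instead is to build, from one fixed nice family, a complete family of \emph{adjacency areas} (neighbourhoods of generalized half streamlines); these are well-defined regions with Brouwer-line boundaries, and their cyclic order at infinity is manifestly the cyclic order of the first nice family. The crucial fact, which is Handel's lemma~4.6 (recorded here as item~(3) of Proposition~\ref{prop:first properties of the adjacency areas}), is that \emph{any} forward proper arc for an orbit meeting a forward adjacency area $A$ has its large iterates homotopically contained in $A$. Applying this to the arcs of the second nice family $\bet$ shows that each $\beta_i^+$ eventually enters the same adjacency area as $\alpha_i^+$, and similarly backward. This immediately gives that the cyclic orders of $\al$ and $\bet$ agree up to permutations inside each adjacency block. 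Your final paragraph correctly identifies that this is ``essentially the content that Handel develops'', but the mechanism is absorption into an adjacency area, not eventual coincidence of the half streamlines themselves; you should replace your step~2 argument with this.
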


\subsubsection{Diagram associated to a Brouwer mapping class} 

\begin{figure}[h]
\centering
\vspace{0.2cm}
\includegraphics[scale=0.8]{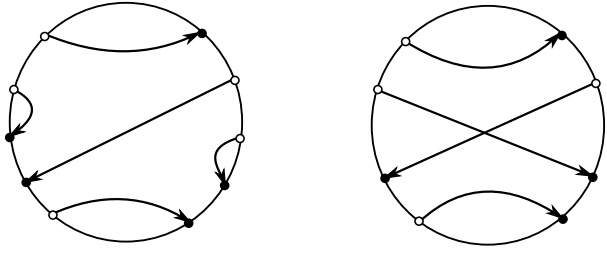}
\caption{Diagrams associated to $[f;\Or]$ and $[g;\Or']$ (examples A and B of section \ref{section: First tools}).}
\label{figu:diag-example}
\end{figure}

Using proposition \ref{prop: adjacency for nice families}, we can associate a diagram to each Brouwer mapping class (see figures \ref{figu:diag-example} and \ref{figu:diagram-example-Handel} for examples):
\begin{enumerate}
\item Let $[h;\Or]$ be a Brouwer mapping class relatively to $r$ orbits. Choose a nice family $(\alpha_i^\pm)_{1\leq i \leq r}$.
\item On the boundary component of a disk, choose one point for each arc of $\al$ in such a way that the $2r$ chosen points respect the cyclic order of $\al$.
\item For every $i$, draw an arrow from the point representing $\alpha_i^-$ to the point representing $\alpha_i^+$. Label this arrow with $i$.
\item Exchange the points in a same sub-family of adjacency if necessary to eliminate as many crossings as possible between the arrows.
\end{enumerate}

\begin{figure}[h]
\centering
\vspace{0.2cm}
\includegraphics[scale=1]{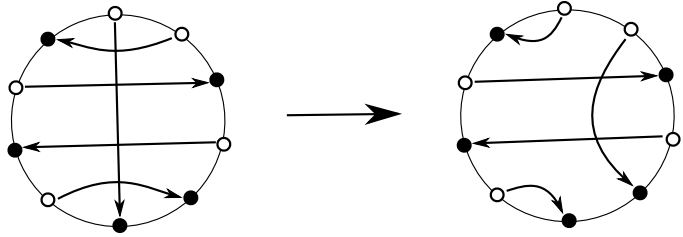}
\caption{An example of step $4$ (this is example $2.9$ of Handel \cite{Handel}).}
\label{figu:diagram-example-Handel}
\end{figure}

We identify two diagrams if they have the same combinatorics. 
\begin{prop}\label{prop: diagram is a conjugacy invariant}
The diagram associated to a Brouwer mapping class is a conjugacy invariant: if two Brouwer mapping classes are conjugated, then they have the same diagram.
\end{prop}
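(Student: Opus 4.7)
The plan is to split the statement into two independent pieces: (a) the diagram depends only on the Brouwer mapping class $[h;\Or]$, not on the nice family chosen to build it; and (b) if $\phi$ is an orientation-preserving homeomorphism conjugating $[h;\Or]$ to $[h';\Or']$, then $\phi$ sends a nice family for $[h;\Or]$ to a nice family for $[h';\Or']$ with identical cyclic and adjacency data. Combining (a) and (b) immediately yields the proposition.

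For (a), let $\al$ and $\bet$ be two nice families for $[h;\Or]$. By Proposition \ref{prop: adjacency for nice families}, their cyclic orders at infinity agree up to permutation of the arcs of $\bet$ inside sub-families of adjacency. Steps 1--3 of the construction record the cyclic order of the $2r$ endpoints on the disk together with the labelling of the $r$ arrows, while step 4 applies exactly such adjacency-permutations to reach a crossing-minimising arrangement. So both nice families normalise to the same combinatorial diagram, provided one interprets \emph{same combinatorics} as equivalence up to adjacency permutation (which step 4 is designed to fix).

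For (b), let $\phi$ be an orientation-preserving homeomorphism of $\R^2$ with $\phi(\Or)=\Or'$ and $[\phi h\phi^{-1};\Or']=[h';\Or']$, and let $\al$ be a nice family for $[h;\Or]$. The image family $(\phi(\alpha_i^\pm))_i$ has endpoints in $\phi(\Or)=\Or'$, and from the identity $(\phi h\phi^{-1})^n(\phi(\alpha_i^\pm))=\phi(h^n(\alpha_i^\pm))$ the homotopy-disjointness of iterates passes through $\phi$. Backward and forward properness of the half streamlines is a topological statement about images of proper embeddings of $\R^+$, hence preserved by any homeomorphism of $\R^2$; the mutual disjointness of the $T^\pm$'s is preserved for the same reason. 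Thus $(\phi(\alpha_i^\pm))_i$ is a nice family for $[\phi h\phi^{-1};\Or']$, i.e.\ for $[h';\Or']$. The cyclic order of this image family is read off a sufficiently large Jordan curve (for instance $\phi(C)$ where $C$ is a large circle working for $\al$); because $\phi$ preserves orientation, this cyclic order and the resulting adjacency pattern are the same as for $\al$. Applying (a) to $[h';\Or']$ concludes that the two diagrams coincide.

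The main obstacle is (a): one must check that step 4 really yields a canonical outcome, in the sense that two cyclic orders differing only by permutations within adjacency sub-families produce the same combinatorial diagram after crossing minimisation. The clean way to handle it is to build the \emph{equivalence class under adjacency permutation} directly into the definition of a diagram, so that step 4 simply picks a visual representative of this class; then the content of (a) is exactly Proposition \ref{prop: adjacency for nice families}. The remainder of the argument --- in particular (b) --- is a straightforward transport-of-structure verification that the defining properties of nice families and of the cyclic order at infinity are preserved by orientation-preserving conjugacies.
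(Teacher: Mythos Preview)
Your proof is correct and follows essentially the same approach as the paper: the paper derives Proposition~\ref{prop: diagram is a conjugacy invariant} as an immediate corollary of Proposition~\ref{prop: adjacency for nice families}, leaving the transport-of-structure argument (your part~(b)) implicit, while you spell it out carefully. Your discussion of the well-definedness of step~4 is also more explicit than the paper's, but the underlying idea is the same.
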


We say that a diagram $\mathcal D$ is \emph{determinant} if, up to conjugacy, there exists only one Brouwer mapping class whose associated diagram is $\mathcal D$. It is a natural question to ask which diagrams are determinant.

For Brouwer mapping classes relatively to one, two and three orbits, the diagram is a total invariant: every diagram with one, two or three arrows is determinant (see \cite{Handel} for one and two orbits and \cite{FLR2013} for three orbits).

However, for Brouwer mapping classes relatively to more than $3$ orbits, the diagram is not a total invariant. For example, consider the flow $f$ and the $f$-free half twist $\mu$ of examples A and B. The Brouwer mapping classes $[\mu^2 f;\Or']$ and $[f;\Or']$ have the same diagram but are not conjugated (we will prove later that they are not conjugated). In section \ref{section:determinant diagrams}, we will give combinatorial conditions on diagrams to prove that some of them are determinant. In section \ref{section: 4 orbits} we will describe all the determinant diagrams for Brouwer mapping classes relatively to $4$ orbits.

\subsubsection{Special nice families}

A \emph{reducing set} is a union of mutually disjoint and non isotopic reducing lines. Any connected component of the complement of a reducing set is said to be a \emph{stable area}. 
In particular, every stable area for $[h;\Or]$ is isotopic to its image by $h$ relatively to $\Or$. If the reducing lines of a given reducing set $\mathcal{R}$ are geodesic, then according to the straightening principle \ref{lemma: straightening principle}, there exists $h'\in [h;\Or]$ such that every stable area $Z$ of the complement of $\mathcal R$ is such that $h'(Z)=Z$. The following proposition will be proved in section \ref{section: nice families disjoint from reducing sets}.

\begin{prop} \label{prop: nice families disjoint from reducing lines}
Let $[h;\Or]$ be a Brouwer mapping class. Let $(\Delta^k)_{k}$ be a reducing set. There exists a nice family $\al$ for $[h;\Or]$ such that for every $k$, for every $i$, $\alpha_i^-$ and $\alpha_i^+$ are homotopically disjoint from $\Delta^k$.
\end{prop}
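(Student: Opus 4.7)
The strategy is to straighten the reducing set $(\Delta^k)_k$ so that each line becomes $h$-invariant, then apply Handel's existence theorem (Theorem \ref{theo: Handel}) inside each stable area and glue the resulting pieces into a global nice family.

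First, I would apply the straightening principle (Lemma \ref{lemma: straightening principle}, part 2) to the family $\F_1=\{\Delta^k_\#\}_k$: since the $\Delta^k$ are mutually disjoint, so are their geodesic representatives $\Delta^k_\#$, and each $\Delta^k_\#$ is isotopic to its image under $h$. Hence there exists $h'\in[h;\Or]$ with $h'(\Delta^k_\#)=\Delta^k_\#$ for every $k$. Each stable area $Z$, i.e.\ each connected component of $\R^2\setminus\bigcup_k \Delta^k_\#$, is simply connected, homeomorphic to $\R^2$, and $h'$-invariant: the orbits of $\Or$ lie entirely on a single side of each $\Delta^k$, and an orientation-preserving self-homeomorphism of $\R^2$ preserving the configuration of geodesics and the positions of the orbits must send each $Z$ to itself.

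Next, for each stable area $Z$, let $I_Z\subset\{1,\dots,r\}$ index the orbits contained in $Z$. Since $h'|_Z$ is a fixed-point-free orientation-preserving homeomorphism of $Z\cong\R^2$, i.e.\ a Brouwer homeomorphism, Theorem \ref{theo: Handel} applied to $[h'|_Z;\bigcup_{i\in I_Z}\Or_i]$ produces a nice family $(\alpha_i^\pm)_{i\in I_Z}$ contained in $Z$. Taking the union of these local nice families over all stable areas yields a global family of $2r$ arcs, each one lying in some $Z$ and hence homotopically disjoint from every $\Delta^k$.

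It remains to verify that this union is a nice family for $[h;\Or]$. The endpoint conditions and the homotopy-translation-arc property transfer immediately from $Z$ to the ambient $\R^2-\Or$, since isotopies or disjoint representatives witnessed inside $Z$ remain valid in the larger surface. The mutual disjointness of the backward (resp.\ forward) proper half streamlines follows from the analogous property inside each $Z$ together with the disjointness of distinct stable areas. The delicate point is checking that each half streamline $T^\pm(\alpha_i^\pm,h,\Or)$, a priori only proper inside $Z$, is proper in $\R^2$: for this one uses that the successive endpoints $h'^n(x_i)$ belong to the properly embedded orbit $\Or_i$ and therefore escape every compact of $\R^2$, combined with standard collar estimates in the hyperbolic surface $\R^2-\Or$, which prevent the geodesic arcs $(h'^n(\alpha_i^\pm))_\#$—being disjoint from each $\Delta^k_\#$—from accumulating on these boundary geodesics. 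This last verification of global properness is the main technical hurdle; within each $Z$ Handel's theorem delivers properness directly, but upgrading to properness in $\R^2$ requires the interaction between the hyperbolic geometry of $\R^2-\Or$ and the geodesic boundary of $Z$.
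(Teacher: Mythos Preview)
Your approach—restricting to each stable area $Z$ and invoking Handel's existence theorem there—is quite different from the paper's, which works adjacency-area by adjacency-area. The paper fixes a complete family of adjacency areas; for each forward area $A$ it uses Lemma~\ref{lemma: adjacency areas and reducing lines} to see that only finitely many $\Delta^k$ meet $A$ and each in finitely many components, then passes to the quotient cylinder $\C=A/h'$ (where $h'$ acts as a translation on $A$ by Corollary~\ref{cor:translation in adjacency areas}). In $\C$ the pieces $\Delta^k\cap A$ project to disjoint separating circles; after a homeomorphism of $\C$ sending these to horizontal circles, the horizontal circles through the marked points lift to the desired forward proper arcs in $A$, with properness immediate from local finiteness of $\{h'^n(\partial A)_\#\}_{n\ge 0}$ (Proposition~\ref{prop:first properties of the adjacency areas}).

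Your route has a genuine gap precisely where you flag it. Properness of $T^\pm(\alpha_i^\pm)$ in $Z$ does not give properness in $\R^2$: a half streamline that is proper in the open set $Z$ may accumulate on $\partial Z$ while remaining inside a fixed compact of $\R^2$. The ``collar estimate'' hand-wave does not resolve this—there is no collar lemma for complete geodesic \emph{lines} (as opposed to closed geodesics) in an infinite-area hyperbolic surface that forbids a sequence of disjoint geodesic arcs from approaching such a line, and the fact that the endpoints $h'^n(x_i)$ escape every compact says nothing about the interiors of the arcs. To close the gap one would have to show that the iterates of $\alpha_i^\pm$ eventually land in an adjacency area of $[h;\Or]$, which is essentially what the paper's cylinder argument delivers directly. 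There is also a smaller issue: the $h'$ produced by the straightening principle is only isotopic to $h$ relative to $\Or$ and need not be fixed-point free, so $h'|_Z$ is not obviously a Brouwer homeomorphism and Theorem~\ref{theo: Handel} cannot be applied to $[h'|_Z;\Or\cap Z]$ without further justification.
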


\subsection{Walls for a Brouwer mapping class (section \ref{section: walls})}
We define a canonical reducing set (the walls) and prove that this set splits the plane into three types of stable areas: stable areas disjoint from $\Or$, translation areas and irreducible areas. In the next section, we will give combinatorial conditions for the existence of irreducible areas.

\subsubsection{Translation areas}

\begin{definition}[Translation area]
Let $[h;\Or]$ be a Brouwer mapping class. We say that a stable area $Z$ of $[h;\Or]$ is a \emph{translation area} if all the orbits of $Z$ are backward adjacent and forward adjacent for $[h;\Or]$.

Moreover, a translation area $Z$ is said to be \emph{maximal} if there exists no translation area $Z'$ non isotopic to $Z$ and such that $Z \subset Z'$.
\end{definition}

Note that every translation area is included in a maximal translation area.

\begin{rem} The orbits of a same translation area for a Brouwer mapping class are represented by arrows which are backward and forward adjacent in the diagram associated to the Brouwer class. However, some arrows which are backward and forward adjacent do not represent orbits of a translation area. For an example, see figure \ref{figu:translation_area}: the Brouwer class that we consider is the product of a flow with a double free half twist between the two arrows that intersect the grey disk of the figure. The two arrows on the bottom of the diagram are backward and forward adjacent but not in the same translation area.
\end{rem}

\begin{figure}[h]
\centering
\includegraphics[scale=0.8]{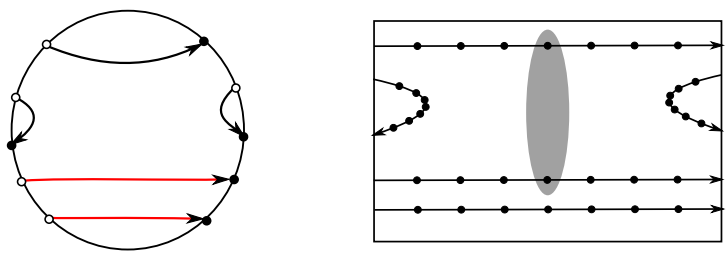}
\caption{Example of a Brouwer class whose diagram has two backward and forward adjacent arrows which are not in the same translation area.}
\label{figu:translation_area}
\end{figure}

The following proposition implies that every Brouwer class has flow streamlines on its translation areas. It will be proved in section \ref{subsection:translation areas}.

\begin{prop} \label{prop:translation area}
If $Z$ is a translation area, every backward (respectively forward) arc of a nice family which is included in $Z$ is also forward (respectively backward).
\end{prop}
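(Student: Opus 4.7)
\medskip

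\noindent\textbf{Proof plan for Proposition \ref{prop:translation area}.}

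The plan is to reduce the problem to the well-understood case of a Brouwer mapping class in the whole plane in which every orbit is both backward and forward adjacent, and then to invoke the classification of such classes as translation classes. Fix a backward proper arc $\alpha = \alpha_i^-$ of a nice family $\al$ contained in $Z$; we want to show that $\alpha$ is also forward proper.

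First, I would apply Proposition \ref{prop: nice families disjoint from reducing lines} to the finite reducing set consisting of the geodesic representatives of the reducing lines bounding $Z$, so that every arc $\alpha_j^{\pm}$ attached to an orbit lying in $Z$ is itself contained in $Z$. By the straightening principle \ref{lemma: straightening principle} we may then assume $h(Z)=Z$ and that all the relevant arcs are geodesic. Let $\mathcal{O}_{i_1},\dots,\mathcal{O}_{i_k}$ be the orbits of $\Or$ lying in $Z$. Since $Z$ is a translation area, by definition these orbits are all backward adjacent and all forward adjacent in $[h;\Or]$, so their backward arcs (resp.\ forward arcs) form a single consecutive block in the cyclic order at infinity.

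Next, I would pinch each connected component of $\R^2 \setminus \overline Z$ to a single point (or, equivalently, fill each boundary reducing line with a disk containing no orbit), obtaining a new copy of the plane $\widetilde{\R^2}$ that contains only the orbits $\mathcal{O}_{i_1},\dots,\mathcal{O}_{i_k}$. Because each boundary component of $Z$ is an $h$-invariant reducing line separating $Z$ from the rest of $\Or$, the restriction $h|_{Z}$ extends to a Brouwer homeomorphism $\tilde h$ of $\widetilde{\R^2}$, and the sub-family $(\alpha_{i_j}^\pm)_{1\le j\le k}$ remains a nice family for $[\tilde h;\{\mathcal{O}_{i_j}\}_j]$: properness of the half streamlines $T^\pm(\alpha_{i_j}^\pm,h,\Or)$ inside $Z$ is equivalent to properness of the corresponding half streamlines in $\widetilde{\R^2}$, because everything outside $Z$ has been collapsed to a disk disjoint from $\Or$. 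In this new class, all the orbits are both backward and forward adjacent.

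Finally, by Handel's classification (his results for one and two orbits in \cite{Handel}, together with an inductive cut-and-paste using his reducing line theorem \ref{theo:existence of reducing line}), a Brouwer mapping class on the plane in which every orbit is simultaneously backward and forward adjacent is a translation class. For a translation class, every homotopy translation arc is automatically both backward proper and forward proper, since the geodesic iterates escape to infinity in both directions. Hence $\alpha$, viewed as an arc in $\widetilde{\R^2}$, is forward proper for $\tilde h$, and pulling back to $Z \subset \R^2$ gives that $\alpha$ is forward proper for $h$. The symmetric argument starting from a forward proper arc gives the other half of the statement.

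The step I expect to be the main obstacle is the reduction from $[h;\Or]$ to $[\tilde h;\{\mathcal{O}_{i_j}\}_j]$: one must check that the pinching genuinely produces a Brouwer homeomorphism (fixed-point-freeness across the filled disks requires the walls to be mapped past themselves consistently by $h$) and, crucially, that properness of half streamlines transfers faithfully between the two pictures. The invocation of Handel's classification of ``everywhere adjacent'' classes as translations is standard but also needs to be stated and used carefully, since it is not explicitly singled out in the excerpt.
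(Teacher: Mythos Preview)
Your approach is different from the paper's and, while plausible, carries more overhead than necessary and has one genuine soft spot.

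The paper does not pinch or pass to a quotient plane at all. It proves a slightly stronger preliminary statement (Lemma~\ref{lemma:fitted family}): if $Z$ is any stable area whose orbits are all \emph{forward} adjacent, then every backward proper arc in $Z$ is forward proper. The proof stays inside $\R^2$ and invokes Handel's fitted family theorem (theorem~5.5 of \cite{Handel}) directly: one takes the Brouwer subsurface $W=\R^2\setminus\bigcup_k A_k^+$, and if some $\alpha_i^-\subset Z$ failed to be eventually absorbed by the unique forward adjacency area $A$ meeting $Z$, Handel's theorem would produce a fitted family element with endpoints on two distinct components of $\delta_+W$; but $\delta_+W\cap Z$ has only one component, a contradiction. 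Proposition~\ref{prop:translation area} then follows by applying this lemma to $[h;\Or]$ and to $[h^{-1};\Or]$.

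Your reduction buys a conceptually clean endgame (``translation class $\Rightarrow$ every homotopy translation arc is bi-proper''), but the price is the pinching step, and here your description is not quite right. Collapsing each complementary component of $Z$ to a point, as you first suggest, produces a \emph{fixed point} for $\tilde h$, since each boundary reducing line is $h$-invariant; so $\tilde h$ is not Brouwer. The variant ``fill with a disk'' works only if by ``disk'' you mean a topological half-plane on which you extend $h$ as a fixed-point-free translation matching $h|_{\partial Z}$; this can be done, but it should be said. You should also note that the geodesic representatives $(h^n\alpha)_\#$ are taken with respect to different hyperbolic metrics in $\R^2-\Or$ and in $\widetilde{\R^2}-\{\Or_{i_j}\}$, so the transfer of properness is not literally tautological; it goes through because both families of representatives lie in $Z$ and are isotopic there, but this deserves a sentence.

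Finally, the black box you invoke --- that a Brouwer class with a single backward and a single forward adjacency subfamily is a translation class --- is true and citable (it is the case $r'=1$ discussed after Corollary~\ref{corollary:r'=2 implies flow class}, going back to Handel and to Proposition~3.1 of \cite{FLR2013}), but your proposed inductive ``cut-and-paste using Theorem~\ref{theo:existence of reducing line}'' is not a proof of it: cutting along a reducing line does not obviously preserve the property that a \emph{given} backward arc is forward proper, which is exactly what is at stake. If you go this route, cite the $r'=1$ result rather than sketching an induction.
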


\subsubsection{Irreducible areas}

\begin{definition}[Irreducible area]
Let $[h;\Or]$ be a Brouwer mapping class. We say that a stable area $Z$ of $[h;\Or]$ is an \emph{irreducible area} if:
\begin{itemize}
\item $Z$ contains at least $2$ orbits of $\Or$;
\item There is no reducing line of $[h;\Or]$ strictly included in $Z$ (i.e. homotopically disjoint from every boundary component of $Z$ and non isotopic to any of those).
\end{itemize}
\end{definition}

\subsubsection{Walls}

\begin{definition}[Wall]
Let $[h;\Or]$ be a Brouwer mapping class. An isotopy class of a reducing line $\Delta$ for $[h;\Or]$ is called a \emph{wall for $[h;\Or]$} if every reducing line for $[h;\Or]$ is homotopically disjoint from $\Delta$.
\end{definition}

The proof of the following theorem is the aim of section \ref{section: walls}.
\begin{theorem}\label{theorem: walls}
Let $[h;\Or]$ be a Brouwer mapping class. Let $\mathcal{W}$ be a family of pairwise disjoint reducing lines containing exactly one representative of each wall for $[h;\Or]$. If $Z$ is a connected component of $\R^2 - \mathcal{W}$, then exactly one of the followings holds:
\begin{itemize}
\item $Z$ is an irreducible area;
\item $Z$ is a maximal translation area;
\item $Z$ does not intersect $\Or$.
\end{itemize}
\end{theorem}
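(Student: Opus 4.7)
The plan is to examine each connected component $Z$ of $\R^2\setminus\mathcal{W}$ and show it satisfies exactly one of the three alternatives. By the straightening principle (Lemma~\ref{lemma: straightening principle}) I may assume $\mathcal{W}$ is geodesic and that some representative of $h$ fixes each such $Z$ setwise. If $Z\cap\Or=\emptyset$, the third alternative holds; so suppose $Z\cap\Or\neq\emptyset$, and ask whether there exists a reducing line $\Delta$ for $[h;\Or]$ lying strictly inside $Z$, i.e.\ homotopically disjoint from each boundary component of $Z$ and non-isotopic to any of them. If no such $\Delta$ exists, then by definition $Z$ is irreducible as soon as it contains at least two orbits; if $Z$ contains a single orbit, the adjacency conditions are vacuous and $Z$ is a translation area, whose maximality will be addressed at the end.

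The core case is when such a strictly interior $\Delta$ exists and $Z$ contains at least two orbits. Since $\Delta$ is not isotopic to any line of $\mathcal{W}$, it is not itself a wall, so there exists a reducing line $\Delta'$ not homotopically disjoint from $\Delta$. By the very definition of wall, $\Delta'$ is homotopically disjoint from every line of $\mathcal{W}$, and so up to isotopy lies in some component of $\R^2\setminus\mathcal{W}$; since $\Delta'$ must cross $\Delta\subset Z$, that component is $Z$. Thus $Z$ contains two genuinely crossing reducing lines.

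To convert this into the translation-area conclusion I would apply Proposition~\ref{prop: nice families disjoint from reducing lines} to the reducing sets $\mathcal{W}\cup\{\Delta\}$ and $\mathcal{W}\cup\{\Delta'\}$ in turn (both are genuine reducing sets because $\Delta$ and $\Delta'$ are each homotopically disjoint from every wall), producing two nice families $\al$ and $\bet$ whose arcs avoid $\mathcal{W}$ and are respectively partitioned by $\Delta$ and by $\Delta'$. Proposition~\ref{prop: adjacency for nice families} forces the two resulting cyclic orders to coincide up to permutations within sub-families of adjacency, but the partitions of $\Or\cap Z$ imposed by $\Delta$ and by $\Delta'$ differ. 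I would exploit this tension to conclude that the restriction of the nice family to $Z$ must consist of a single backward sub-family of adjacency together with a single forward sub-family of adjacency, whence every orbit in $Z$ is both backward-adjacent and forward-adjacent to every other, so that $Z$ is a translation area. I expect this combinatorial adjacency juggling to be the principal obstacle of the proof.

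For maximality, any translation area $Z'$ strictly containing $Z$ and non-isotopic to it would contain in its interior a wall $\Delta_0\in\mathcal{W}$; since $\Delta_0$ is a genuine reducing line of $[h;\Or]$, applying Proposition~\ref{prop: nice families disjoint from reducing lines} to a nice family for $[h;\Or]$ disjoint from $\{\Delta_0\}$ would place the backward (resp.\ forward) half-streamlines of the orbits of $Z'$ on both sides of $\Delta_0$, forbidding them from lying in a single sub-family of adjacency and contradicting the assumption that $Z'$ is a translation area. Finally, the three alternatives are mutually exclusive: a translation area containing at least two orbits admits reducing lines between pairs of consecutive orbits and so is not irreducible, while both the irreducible and translation-area alternatives require $Z\cap\Or\neq\emptyset$, incompatible with the third.
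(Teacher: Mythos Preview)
Your proposal has two genuine gaps.

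\textbf{The core case.} You reduce everything to: if $Z\cap\Or\neq\emptyset$ and $Z$ contains two crossing reducing lines $\Delta,\Delta'$, then $Z$ is a translation area. You then say you would ``exploit the tension'' between the two partitions of $\Or\cap Z$ via Proposition~\ref{prop: adjacency for nice families} to force a single backward and a single forward sub-family of adjacency in $Z$, and you yourself flag this as ``the principal obstacle''. It is indeed the whole proof, and the adjacency argument as sketched does not go through. The difficulty is that $\Delta\cap\Delta'$ can consist of exactly one point (Lemma~\ref{lemma: Intersection of two reducing lines}), and in that regime the two partitions differ but there is no mechanism by which Proposition~\ref{prop: adjacency for nice families} forces all orbits of $Z$ into a single adjacency sub-family on each side: that proposition only says two nice families agree up to permutations \emph{within} adjacency sub-families, it gives no control on how a reducing line sits relative to those sub-families. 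In fact, in the paper's argument the one-point case is precisely the case where $Z$ turns out to be \emph{disjoint from~$\Or$}, not a translation area; the conclusion ``$Z$ is a translation area'' is reached only through the infinite-intersection case (Lemma~\ref{lemma: Intersection between reducing lines: infinite set case}). The paper's route is quite different from yours: it first shows (Propositions~\ref{prop: finite number of max. translation areas}--\ref{prop: boundary components of max. translation areas}) that maximal translation areas are already components of $\R^2\setminus\mathcal{W}$, then treats a component $Z$ that is \emph{not} a translation area, uses finiteness of reducing lines there (Proposition~\ref{prop: outside translation areas}) together with Lemma~\ref{lemma: Intersection between reducing lines: case with exactly one point} to build the boundary of a tubular neighbourhood of \emph{all} interior reducing lines as a reducing line, and concludes that this neighbourhood is all of $Z$, so $Z\cap\Or=\emptyset$.

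\textbf{Maximality.} Your argument here is incorrect. You claim that a wall $\Delta_0$ interior to a translation area $Z'$ would force the backward half-streamlines of orbits of $Z'$ to lie on both sides of $\Delta_0$ and hence in distinct adjacency sub-families. But this is false: take the translation with two orbits and the obvious horizontal reducing line between them. A nice family disjoint from that line has arcs on both sides, yet the two orbits are forward adjacent and backward adjacent. Being separated by a reducing line does not prevent adjacency. The correct argument is that a maximal translation area contains no wall in its interior (any interior reducing line is crossed by another, cf.\ the remark after Proposition~\ref{prop: finite number of max. translation areas}), and its boundary consists of walls (Proposition~\ref{prop: boundary components of max. translation areas}); hence it is already a full component of $\R^2\setminus\mathcal{W}$.
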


\begin{rem} Note that:
\begin{itemize}
\item The set of walls is empty if and only if $[h;\Or]$ is a translation class;
\item There exists exactly one wall for $[h;\Or]$ if and only if $[h;\Or]$ is the class of a Reeb flow.
\end{itemize}
\end{rem}

\begin{figure}[h]
\centering
\vspace{0.2cm}
\includegraphics[scale=1]{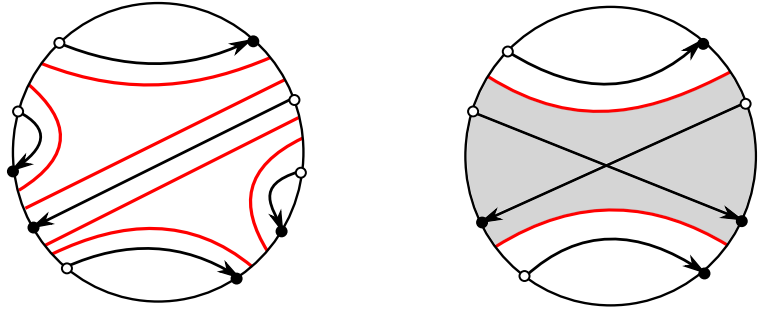}
\caption{Diagrams with walls associated to $[f;\Or]$ and $[g;\Or']$ (examples A and B of section \ref{section: First tools}).}
\label{figu:diagwithwalls}
\end{figure}

Since there exists a nice family disjoint from the walls (according to proposition \ref{prop: nice families disjoint from reducing lines}), it makes sense to add the set of walls on the diagram defined in section \ref{section:Adjacency areas, diagrams and special nice families} (see figure \ref{figu:diagwithwalls} for two examples). We can see the maximal translation areas on this diagram: the backward ends of their arrows are adjacent in the diagram and the forward ends of their arrows are adjacent in the diagram. Consequently, according to theorem \ref{theorem: walls}, we can also see the irreducible areas. To help the reader, we color the irreducible areas in grey. The resulting \emph{diagram with walls} is a conjugacy invariant of the Brouwer mapping class which is more precise than the diagram (without walls), but still not total. In the next section we will give conditions to determine which diagrams with walls are determinant.

\subsection{Determinant diagrams and irreducible areas (section \ref{section:determinant diagrams})}

\subsubsection{Determinant diagrams}
The following propositions motivate the search of necessary combinatoric conditions on diagrams (without walls) for the existence of irreducible areas. They will be proved in section \ref{subsection:determinant diagrams}.

\begin{prop} \label{prop:flow class iff no irreducible area} A Brouwer mapping class $[h;\Or]$ is a flow class if and only if no connected component of the complement of the set of walls for $[h;\Or]$ is an irreducible area.
\end{prop}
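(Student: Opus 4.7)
The only-if direction is short. Assume $[h;\Or]$ contains a flow $\phi_1$. I will show no component of $\R^2-\mathcal{W}$ is irreducible. Suppose for contradiction $Z$ is such an irreducible component, so $Z$ contains at least two distinct orbits $\Or_a,\Or_b\subset\Or\cap Z$. Since the streamlines of $\phi$ foliate $\R^2-\Or$ continuously, one can pick a streamline $\sigma\subset\R^2-\Or$ lying in the interior of $Z$ that separates $\Or_a$ from $\Or_b$. As a streamline, $\sigma$ is $\phi_t$-invariant and in particular $\phi_1$-invariant, so $\sigma$ is a reducing line for $[h;\Or]$. It is strictly inside $Z$, hence homotopically disjoint from every boundary wall; and the bipartition of $\Or$ it induces splits $\Or\cap Z$ non-trivially, while every wall in $\partial Z$ has all of $\Or\cap Z$ on a single side. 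Hence $\sigma$ is non-isotopic to any wall, contradicting the definition of an irreducible area.

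The if-direction is the real content. Assume no component of $\R^2-\mathcal{W}$ is irreducible; by Theorem \ref{theorem: walls}, every component intersecting $\Or$ is a maximal translation area, and the remaining components miss $\Or$. The plan is to construct $\tilde h\in[h;\Or]$ which is the time-one map of a flow. First I would pick a nice family disjoint from $\mathcal{W}$ (Proposition \ref{prop: nice families disjoint from reducing lines}), then straighten by Lemma \ref{lemma: straightening principle} so that every wall is geodesic and fixed by a representative $h'$ of $[h;\Or]$. On each translation area $Z$, using that all orbits of $\Or\cap Z$ are pairwise backward and forward adjacent together with Proposition \ref{prop:translation area}, the restriction $[h'|_Z;\Or\cap Z]$ should be realized (relatively to $\Or\cap Z$ and $\partial Z$) by a translation flow on the band $Z$, with the $r_Z$ orbits lying on $r_Z$ parallel streamlines. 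Next, near each wall $\Delta\subset\mathcal{W}$, I would restrict attention to the two orbits of $\Or$ closest to $\Delta$ on either side; this gives a $2$-orbit Brouwer mapping class in a neighborhood of $\Delta$ for which $\Delta$ remains a reducing line, so by Handel's classification of $2$-orbit Brouwer classes it is a Reeb class ($R$ or $R^{-1}$), providing a canonical local Reeb-flow model in a tubular neighborhood of $\Delta$. Finally I would assemble the translation flows on the maximal translation areas with the local Reeb-flow models across the walls, and extend arbitrarily on the (simply connected) components disjoint from $\Or$, producing a global flow $\phi$ on $\R^2$.

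The main obstacle lies in this last assembly. For each translation area $Z$ and each boundary wall $\Delta\subset\partial Z$, the translation direction along $\Delta$ is dictated by $h'|_Z$, and similarly for the opposite translation area bordering $\Delta$; I must show that these two prescribed directions along $\Delta$ are always compatible with one of the two local Reeb-flow models, and that the gluings around different walls bordering a common translation area are simultaneously realizable by a single translation on that area. Proposition \ref{prop:translation area} is the key technical input: since in a translation area every backward proper arc of a nice family is also forward proper, the translation model on $Z$ is essentially unique, so the direction along each boundary wall is intrinsically determined, which lets one match it against the Reeb model. Once the global flow $\phi$ is constructed, checking $\phi_1\in[h';\Or]=[h;\Or]$ amounts to comparing the action of $\phi_1$ and $h'$ on the straightened nice family, which follows from Lemma \ref{lemma: straightening principle}.
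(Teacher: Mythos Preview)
Your only-if direction is essentially the paper's argument: a flow produces invariant proper lines separating any two orbits, hence reducing lines inside any putative irreducible area. One wording issue: the streamline $\sigma$ of the flow need not literally lie in $Z$, since the walls are only isotopy classes; what you really use is that $\sigma$ is a reducing line, hence homotopically disjoint from every wall (by definition of wall), and since it separates two orbits of $Z$ it must be isotopic to a line strictly inside $Z$. With that adjustment the argument is fine.

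Your if-direction, however, takes a substantially harder route than the paper and is not completed. You try to build the flow by hand: translation models on each maximal translation area, local Reeb models across each wall, and then a global gluing. You correctly identify the gluing as ``the main obstacle'' and do not resolve it; the compatibility discussion is heuristic, and the step ``by Handel's classification of $2$-orbit Brouwer classes it is a Reeb class'' is not justified (the two nearest orbits may well be forward and backward adjacent, giving a translation class, and in any case the restriction to a neighborhood of a wall is not a $2$-orbit Brouwer class in any canonical sense). So as written there is a genuine gap.

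The paper avoids all of this with a single stroke you overlook: Lemma~\ref{lemma:flow classes iff streamlines} (Le Roux) says $[h;\Or]$ is a flow class if and only if there exist pairwise disjoint proper geodesic homotopy streamlines covering $\Or$. Once you have a nice family $(\alpha_i^\pm)_i$ disjoint from $\mathcal{W}$ (Proposition~\ref{prop: nice families disjoint from reducing lines}) and know every component containing orbits is a translation area (Theorem~\ref{theorem: walls}), Proposition~\ref{prop:translation area} tells you each backward arc $\alpha_i^-$ is also forward proper. Hence each full streamline $T(\alpha_i^-,h,\Or)$ is proper; they are pairwise disjoint because the backward half-streamlines already are and the full streamlines are obtained from them by iterating $h'$. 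Lemma~\ref{lemma:flow classes iff streamlines} then finishes immediately. You cite Proposition~\ref{prop:translation area} only to argue uniqueness of the translation model; its real role is to upgrade the backward arcs to full proper streamlines, which makes the explicit construction of a flow unnecessary.
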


\begin{prop}\label{prop:flow implies conjugate}
If two flow classes have the same diagram, then they are conjugated.
\end{prop}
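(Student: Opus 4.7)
The plan is to reduce, via the wall decomposition of Theorem \ref{theorem: walls}, to translation classes and Reeb-type gluings, each of which is determined up to conjugacy by the diagram data. I would proceed in three steps.

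\emph{Step 1: the walls are determined by the diagram.} Because both classes are flow classes, Proposition \ref{prop:flow class iff no irreducible area} says that no connected component of the complement of the walls is irreducible, so by Theorem \ref{theorem: walls} each component is a maximal translation area or is disjoint from $\Or$. For a flow class, I claim that the walls can be read off from the diagram: two orbits lie in the same maximal translation area precisely when the corresponding arrows form part of a single cyclic block of arrows that are all simultaneously backward and forward adjacent. The ``$\Rightarrow$'' direction is immediate from the definition and Proposition \ref{prop:translation area}, while the ``$\Leftarrow$'' direction uses the fact that twist-type obstructions of the kind depicted in Figure \ref{figu:translation_area} cannot occur for a flow (a flow on two adjacent orbits has no nontrivial wall). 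Consequently, two flow classes sharing a diagram have the same diagram with walls.

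\emph{Step 2: local models.} On each maximal translation area $Z$, Proposition \ref{prop:translation area} lets me choose a nice family on $Z$ whose backward and forward arcs coincide, so that the restriction of the flow class to $Z$ is a translation class. By the straightening principle (Lemma \ref{lemma: straightening principle}) together with Handel's one-orbit classification, applied iteratively to the orbits of $Z$ arranged in the cyclic order prescribed by $\partial Z$, any two translation classes on $Z$ with the same cyclic boundary data are conjugate by an orientation-preserving homeomorphism of $\overline{Z}$ that is isotopic to the identity on $\partial Z$ relative to $\Or$. A similar local model holds in a neighborhood of each wall: there, the flow class is locally the time-one map of a Reeb flow, and Handel's two-orbit classification fixes it up to conjugacy.

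\emph{Step 3: assembling the global conjugacy.} Using Proposition \ref{prop: nice families disjoint from reducing lines}, pick a nice family for each of $[f;\Or]$ and $[f';\Or']$ disjoint from its walls, and by Proposition \ref{prop: adjacency for nice families} combined with the diagram equality, align these families so that they share the same cyclic order at infinity. Build an orientation-preserving homeomorphism $\phi$ of $\R^2$ with $\phi(\Or)=\Or'$ by first matching walls to walls and nice-family arcs to nice-family arcs, then extending over each translation area via the conjugacy of Step 2, extending over each wall-neighborhood via the Reeb model, and finally extending over the (open, orbit-free and topologically trivial) remaining components by the Alexander trick. The main obstacle is the compatibility of the local conjugacies at the walls: it is resolved because the diagram together with the flow hypothesis pins down the Reeb structure on a neighborhood of each wall, so the two local conjugacies constructed on either side agree with the same local model on the wall itself and therefore glue into a global homeomorphism realizing the required conjugacy of Brouwer mapping classes.
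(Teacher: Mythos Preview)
Your approach is substantially more elaborate than the paper's, and it has a real gap in Step~1. The paper's argument is essentially three lines: by Lemma~\ref{lemma:flow classes iff streamlines}, a flow class admits a family of pairwise disjoint \emph{proper} geodesic homotopy streamlines containing $\Or$; since the two flow classes share a diagram, these streamline families have the same cyclic order at infinity, so Schoenflies produces a homeomorphism carrying one family onto the other; then Lemma~\ref{lemma:generalized Alexander trick} (the generalized Alexander trick) shows the two classes are conjugate. No walls, no gluing, no local models.

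Your Step~1 asserts that for flow classes the walls are determined by the diagram, with the ``$\Leftarrow$'' direction justified by ``a flow on two adjacent orbits has no nontrivial wall.'' That sentence does not do the work you need: restricting to two orbits changes the ambient Brouwer class, and knowing the restricted class is a translation does not, without further argument, tell you that in the \emph{full} class those two orbits lie in a common translation area. What actually proves this is precisely the existence of proper streamlines (Lemma~\ref{lemma:flow classes iff streamlines}): once you have them, a tubular neighborhood of a maximal adjacent block of streamlines is visibly a translation area. But if you invoke the streamlines, you have already obtained the key object of the paper's proof and the wall decomposition becomes superfluous. Note also that you invoke Proposition~\ref{prop:flow class iff no irreducible area}, whose proof in the paper already rests on Lemma~\ref{lemma:flow classes iff streamlines}, so you are implicitly using the streamline characterization anyway.

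Your Step~3 gluing is also underjustified: ``the diagram together with the flow hypothesis pins down the Reeb structure on a neighborhood of each wall'' is an assertion, not an argument, and compatibility of independently constructed local conjugacies along a common boundary line is exactly the kind of issue that Lemma~\ref{lemma:generalized Alexander trick} is designed to finesse globally. The clean fix is to drop the wall decomposition entirely and argue directly from the proper streamlines as the paper does.
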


We will deduce that a diagram without crossing arrows is determinant if and only if it does not have any irreducible area:
\begin{prop}\label{coro:determinant diagrams with walls}
A diagram with walls without crossing arrows is determinant if and only if the arrows of every family of arrows included in the same connected component of the complement of the walls are backward adjacent and forward adjacent. 
\end{prop}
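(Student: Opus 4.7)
The plan is to deduce the proposition from Propositions~\ref{prop:flow class iff no irreducible area} and~\ref{prop:flow implies conjugate}, using Theorem~\ref{theorem: walls} to translate the combinatorial adjacency condition on the arrows into the geometric condition that no component of $\R^2 - \mathcal{W}$ is an irreducible area.

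For the sufficiency, I would assume that in each connected component of the complement of the walls the backward (respectively forward) endpoints of the arrows form a contiguous arc in the cyclic order. By the definition of translation area, each such component that intersects $\Or$ is then a translation area; Theorem~\ref{theorem: walls} forces it to be a \emph{maximal} translation area, so the complement of the walls contains no irreducible area. Proposition~\ref{prop:flow class iff no irreducible area} therefore guarantees that any Brouwer mapping class realizing this diagram with walls is a flow class. Two such classes share the same underlying diagram and, by Proposition~\ref{prop:flow implies conjugate}, are conjugate. Hence the diagram with walls is determinant.

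For the necessity, I would argue by contrapositive: suppose some component $Z$ of $\R^2-\mathcal{W}$ contains arrows whose backward endpoints are not all contiguous, or whose forward endpoints are not all contiguous. Then $Z$ is not a translation area, and since $Z$ meets $\Or$, Theorem~\ref{theorem: walls} forces $Z$ to be an irreducible area. By Proposition~\ref{prop:flow class iff no irreducible area}, such a class $[h;\Or]$ fails to be a flow class. To exhibit a second class with the same diagram with walls, I would choose two orbits meeting $Z$, select an $h$-free topological disk $D \subset Z$ intersecting each of these orbits in exactly one point, let $\mu$ be the free half twist on $D$ exchanging the two marked points, and set $h' := \mu^2 h$. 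Since $D$ is $h$-free, $h'$ is a Brouwer homeomorphism; since $\mu^2$ is supported in the free disk $D$, a nice family for $[h;\Or]$ can be chosen to avoid $D$ and remains nice for $[h';\Or]$, so the cyclic order, the set of walls, and hence the diagram with walls all agree with those of $[h;\Or]$.

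The main obstacle is establishing that $[h;\Or]$ and $[h';\Or]$ are genuinely not conjugate: the diagram with walls is by construction insensitive to the twist factor $\mu^2$, so detecting non-conjugacy requires a finer invariant living inside the irreducible area $Z$. In the four-orbit case this is precisely the tangle invariant introduced later in the paper, and more generally it is a twist-type invariant of the mapping class action on arcs inside $Z$ that measures the failure of $\mu^2$ to be conjugated away while fixing the combinatorial data on the diagram.
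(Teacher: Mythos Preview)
Your sufficiency argument is essentially identical to the paper's: both use Theorem~\ref{theorem: walls} to conclude that each component containing orbits is a translation area, then invoke Propositions~\ref{prop:flow class iff no irreducible area} and~\ref{prop:flow implies conjugate} to deduce that any two realizations are conjugate flow classes.

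For necessity, however, your approach diverges from the paper's and runs into the gap you yourself flag. You argue by contrapositive: if some component fails the adjacency condition, construct a second class $[\mu^2 h;\Or]$ and show it is not conjugate to $[h;\Or]$. As you note, distinguishing these two classes requires a finer invariant such as the tangle, which is only developed later and only for four orbits. So as written your necessity argument is incomplete in general.

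The paper sidesteps this entirely with a much simpler direct argument. Rather than producing two non-conjugate classes, it uses the fact (lemma~1.7 of \cite{FLR2013}) that any diagram without crossing arrows is realized by a flow class $[f;\Or]$. Since $D$ is determinant, this flow class is the unique class with diagram $D$; and for a flow class every orbit lies in a maximal translation area (Proposition~\ref{prop:flow class iff no irreducible area} and Theorem~\ref{theorem: walls}), so every component of the complement of the walls containing orbits is a maximal translation area and the adjacency condition follows immediately. The key idea you are missing is that the hypothesis ``no crossing arrows'' guarantees a flow realization exists, which lets you read off the structure of the \emph{unique} class directly rather than having to distinguish two candidate classes.
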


\subsubsection{Combinatorics of irreducible areas}

If $[h;\Or]$ is a Brouwer class relatively to $r$ orbits, we denote by $2r'$ the number of adjacency subfamilies of $[h;\Or]$. If $r'=r$, then we say that the orbits of $[h;\Or]$ alternate (in this situation, every adjacency subfamily has only one element). We will prove proposition \ref{prop:combinatoric of an irreducible area} in section \ref{subsection:combinatorics of irreducible areas}.

\begin{prop}[Combinatorics of irreducible areas] \label{prop:combinatoric of an irreducible area}
Let $[h;\Or]$ be a Brouwer mapping class and let $Z$ be an irreducible area for $[h;\Or]$. Then:
\begin{enumerate}
\item The orbits of $Z$ are not all backward adjacent, neither all forward adjacent for $[h;\Or]$;
\item $Z$ has at least two boundary components;
\item The orbits of $[h;\Or \cap Z]$ do not alternate.
\end{enumerate}
\end{prop}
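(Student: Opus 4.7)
The plan is to prove each of the three statements by contradiction: assume the stated combinatorial condition fails while $Z$ remains an irreducible area, and construct a reducing line for $[h;\Or]$ strictly inside $Z$, contradicting irreducibility. In each case, begin by choosing a nice family $\al$ for $[h;\Or]$ disjoint from $\partial Z$ (Proposition \ref{prop: nice families disjoint from reducing lines}), straighten via Lemma \ref{lemma: straightening principle} so that $\partial Z$ and the nice family are geodesic and $h$-compatible, and view the arcs lying in $Z$ as a nice family for the restricted Brouwer mapping class $[h|_Z; \Or \cap Z]$.

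For part (2), if $Z$ has at most one boundary component, then $Z$ is simply connected with at most one puncture-at-infinity caused by the boundary, so the identification $Z \cong \R^2$ turns $[h|_Z;\Or \cap Z]$ into a Brouwer mapping class with at least two orbits (by the definition of irreducible area). Theorem \ref{theo:existence of reducing line} then yields a reducing line $\Delta$ inside $Z$; passing to the geodesic representative in $\R^2 \setminus \Or$ keeps it in $Z$ by minimal position with $\partial Z$, and the first-kind hyperbolic metric ensures properness in the ambient $\R^2$. This produces a reducing line for $[h;\Or]$ strictly inside $Z$, contradicting irreducibility.

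For part (1), if all orbits of $Z$ are, say, backward adjacent for $[h;\Or]$, their backward proper half streamlines are consecutive in the cyclic order at infinity and all lie in $Z$. Using this combinatorial placement together with Handel's reducing line theorem applied to the restricted class, one constructs a properly embedded line in $\R^2$ lying in $Z$ that separates a proper non-empty subset of these streamlines from the rest; the global adjacency at infinity ensures the $h$-isotopy condition, giving a reducing line for $[h;\Or]$ inside $Z$. For part (3), if the orbits of $[h|_Z; \Or \cap Z]$ alternate, then in the restricted cyclic order backward and forward arcs strictly alternate; between two consecutive arcs coming from different orbits (which exist since $|\Or \cap Z| \geq 2$ and the arcs of the same orbit share endpoints) one constructs a line that splits $\Or \cap Z$ non-trivially, and the alternation pattern forces the $h$-isotopy condition, yielding the sought reducing line.

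The main obstacle across all three parts is ensuring that the reducing line produced inside $Z$ is properly embedded in the ambient plane $\R^2$ (not merely in $Z$), since a line proper in the stable area $Z$ may in principle asymptote to a finite point of $\partial Z$. This is handled by passing to geodesic representatives in the hyperbolic metric of the first kind on $\R^2 \setminus \Or$, whose simple properly-embedded representatives remain proper in $\R^2$, and the isotopy-invariance under $h$ then follows from a final application of the straightening principle.
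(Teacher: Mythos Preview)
Your outline has the right contradiction structure, but the central step in each part is not actually carried out, and the device you propose for it --- ``pass to the geodesic representative in the first-kind metric'' --- does not do the work you claim.

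The main gap is the passage from a reducing line of the restricted class $[h|_Z;\Or\cap Z]$ to a reducing line of $[h;\Or]$. Concretely, in part (2) you identify $Z\cong\R^2$ and invoke Theorem~\ref{theo:existence of reducing line}. The line you obtain is proper in $Z$, not in $\R^2$: one or both of its ends may accumulate on the single boundary component $L=\partial Z$, which sits at finite distance in the plane. Such a line has no geodesic representative in $\R^2-\Or$ in the sense you need, so the appeal to ``first-kind metric ensures properness'' is unjustified. Even when the line happens to be proper in $\R^2$, you still owe an argument that $h(\Delta)$ is isotopic to $\Delta$ relative to all of $\Or$, not merely to $\Or\cap Z$. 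The paper handles both issues together via Lemma~\ref{lemma: criterion for reducing lines} and Corollary~\ref{corollary: reducing lines included in stable area are reducing lines relatively to every orbits}: one must first arrange that $\Delta$ is disjoint from a complete family of backward adjacency areas for $[h;\Or\cap Z]$ (this is what forces properness in $\R^2$ and the isotopy relative to the full $\Or$). Obtaining such a $\Delta$ requires a case split on the position of $L$ in the cyclic order of the restricted nice family, using Theorem~\ref{theo:reducing line disjoint from backward adjacency areas} in one case and Lemma~\ref{lemma:existence of a reducing line almost included in forward adjacency areas} in the other.

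Parts (1) and (3) have similar problems. For (1), the assertion that adjacency ``ensures the $h$-isotopy condition'' is not an argument; the paper instead invokes Lemma~\ref{lemma:fitted family} (ultimately Handel's fitted-family theorem) to show that every backward arc in $Z$ is forward proper, which directly yields a proper streamline and hence a reducing line inside $Z$. For (3), the alternation hypothesis is used in the paper via the uniqueness of homotopy translation arcs (lemma~3.6 of \cite{FLR2013}), which forces $\alpha_i^-\simeq\alpha_i^+$ relative to $\Or\cap Z$ and hence relative to $\Or$; your sketch does not explain why alternation alone ``forces the $h$-isotopy condition'' for the separating line you propose.
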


This proposition gives tools to know which diagrams are determinant. In particular, we have the following corollaries, which will be proved in section \ref{subsection:corollaries of irreducible areas}:

\begin{corollary} \label{corollary:r'=2 implies flow class}
Let $[h;\Or]$ be a Brouwer mapping class relatively to $r$ orbits. Denote by $2r'$ the number of adjacency subfamilies of $[h;\Or]$. If $r'= 1,2$ or $r$, then $[h;\Or]$ is a flow class.
\end{corollary}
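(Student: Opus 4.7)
The strategy is to apply Proposition~\ref{prop:flow class iff no irreducible area}: it suffices to show that, in each of the cases $r' = 1, 2, r$, no connected component $Z$ of the complement of the walls of $[h;\Or]$ is an irreducible area. Suppose for contradiction that such a $Z$ exists. By Proposition~\ref{prop: nice families disjoint from reducing lines}, we fix a nice family $(\alpha_i^\pm)_i$ for $[h;\Or]$ whose arcs avoid all walls; its restriction to $Z$ then yields a nice family for $[h;\Or \cap Z]$ whose cyclic order on $\partial Z$ is inherited from that of the full family. The case $r' = 1$ is immediate: with only one backward adjacency subfamily, all orbits---hence in particular those of $Z$---are pairwise backward adjacent, directly contradicting Proposition~\ref{prop:combinatoric of an irreducible area}(1).

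For $r' = r$, the arcs of the nice family alternate in type around the cyclic order. The plan is to argue that this alternation is preserved under restriction to $\partial Z$: each wall joins two inter-orbit gaps, and the two sub-sequences of arcs on the two sides of any such wall inherit an alternating structure with compatible endpoint parities. Concatenating these sub-sequences around $\partial Z$ (across walls, which carry no arcs of the nice family) still produces an alternating cyclic order. Consequently the orbits of $[h;\Or \cap Z]$ alternate, contradicting Proposition~\ref{prop:combinatoric of an irreducible area}(3).

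For $r' = 2$, label the four adjacency subfamilies cyclically $B_1, F_1, B_2, F_2$. By Proposition~\ref{prop:combinatoric of an irreducible area}(1), the orbits of $Z$ span both $B_1, B_2$ and both $F_1, F_2$, so $Z$ contains arcs from all four subfamilies. Every wall bounding $Z$ must therefore have its $Z$-side containing arcs from all four subfamilies, and by Proposition~\ref{prop:combinatoric of an irreducible area}(2) at least two such walls are required. We then verify that the admissible wall configurations---walls whose chords in the diagram either lie entirely within a single subfamily or straddle a single inter-subfamily gap---either fail to separate any orbits of $[h;\Or]$ (so are not genuine walls) or cannot together bound a single region meeting all four subfamilies. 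Either alternative is a contradiction. The main obstacle lies in this combinatorial analysis of case $r' = 2$, where one must rule out several nested and parallel wall configurations; in contrast, the case $r' = r$ is clean, as the alternating cyclic order is manifestly stable under wall-splitting.
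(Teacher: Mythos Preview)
Your overall strategy---reduce to Proposition~\ref{prop:flow class iff no irreducible area} and then invoke Proposition~\ref{prop:combinatoric of an irreducible area}---matches the paper's, and your case $r'=1$ is identical. Your argument for $r'=r$ differs from the paper (which simply cites \cite{FLR2013}) but is essentially correct and worth keeping: each wall bounding $Z$ cuts off a contiguous block of arcs in the full cyclic order, and that block contains an \emph{even} number of arcs (it is a union of complete orbits, each contributing one $+$ and one $-$); removing an even-length block from an alternating cyclic sequence preserves alternation. Spelling out this parity observation would make the paragraph rigorous.

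The case $r'=2$, however, has a genuine gap. Your analysis correctly forces each boundary wall of $Z$ to have its non-$Z$ side straddling exactly one inter-subfamily gap (a non-$Z$ side lying inside a single subfamily would contain only backward or only forward arcs, hence no complete orbit). But your conclusion---that such walls ``cannot together bound a single region meeting all four subfamilies''---is false as stated. Take two walls whose non-$Z$ sides straddle the \emph{opposite} gaps $B_1|F_1$ and $B_2|F_2$: the middle region $Z$ then contains the remaining portions of all four blocks $B_1,F_1,B_2,F_2$, so it does meet every subfamily, and none of conditions (1)--(3) of Proposition~\ref{prop:combinatoric of an irreducible area} is violated. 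Your sketch gives no mechanism to exclude this configuration.

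The paper sidesteps the case analysis entirely by proving the contrapositive in one stroke: if an irreducible area exists, then $r'\ge 3$. It picks two boundary walls $\Delta_1,\Delta_2$ of the irreducible area and, using all three parts of Proposition~\ref{prop:combinatoric of an irreducible area}, extracts four arcs in $Z$ with cyclic sub-order $(-,+,+,-)$ so that $\Delta_1$ sits between the two $+$'s and $\Delta_2$ between the two $-$'s. Since the outer regions $Z_1,Z_2$ each contain at least one orbit, a backward arc can be inserted between the two $+$'s and a forward arc between the two $-$'s, producing six arcs in the \emph{full} cyclic order with pattern $-,+,-,+,-,+$. That forces at least six adjacency subfamilies, i.e.\ $r'\ge 3$. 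The step you are missing is precisely this placement of the walls between two same-type arcs; your wall-by-wall combinatorics never establishes it.
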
 

\begin{rem} It was proved in \cite{Handel} (for $r'=1$) and \cite{FLR2013} (for $r'=r$) that if $r'=1$ or $r'=r$, then $[h;\Or]$ is a flow class (see proposition $3.1$ and lemma $3.6$ of \cite{FLR2013}).
\end{rem}

\begin{corollary}\label{cor:deux orbites bordantes}
Let $[h;\Or]$ be a Brouwer mapping class relatively to $r$ orbits.
\begin{enumerate}
\item If $r\geq 3$, there exist at least two disjoint and non isotopic reducing lines for $[h;\Or]$;
\item If $r \geq 2$, there exist at least two translation areas for $[h;\Or]$ which have exactly one boundary component;
\item There exists a nice family $\al$ and $j \neq k$ such that:
\begin{itemize}
\item Relatively to $\Or$, $\alpha_j^-$ is isotopic to $\alpha_j^+$ and $\alpha_k^-$ is isotopic to $\alpha_k^+$;
\item In the cyclic order, $\alpha_j^-$ and $\alpha_j^+$ (respectively $\alpha_k^-$ and $\alpha_k^+$) are neighbors.
\end{itemize}
\end{enumerate}
\end{corollary}

The third point of corollary \ref{cor:deux orbites bordantes} can be reformulated as follow.
\begin{corollary}\label{cor:deux orbites bordantes2}
Let $[h;\Or]$ be a Brouwer class relatively to $r\geq 2$ orbits. There exist at least two backward and forward disjoint proper streamlines $T$ and $S$ for $[h;\Or]$.

 Moreover, the orbits which are not in $T$ nor $S$ are included in the same connected component of the complement of $S \cup T$.
\end{corollary}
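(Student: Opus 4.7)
The plan is to translate Corollary \ref{cor:deux orbites bordantes} (3) directly into the language of proper streamlines: the two isotopy-pairs of arcs it produces generate, after straightening, the two streamlines $T$ and $S$ we want.

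First I would apply Corollary \ref{cor:deux orbites bordantes} (3) to obtain a nice family $\al$ and indices $j\neq k$ such that $\alpha_j^-$ is isotopic to $\alpha_j^+$ relatively to $\Or$ and these two arcs are neighbors in the cyclic order, and similarly for $\alpha_k^\pm$. Applying the straightening principle (Lemma \ref{lemma: straightening principle}) to the arcs of this nice family, I replace each arc by its geodesic representative; since isotopic arcs share the same geodesic, we may assume $\alpha_j^-=\alpha_j^+=:\alpha_j$ and $\alpha_k^-=\alpha_k^+=:\alpha_k$. The arc $\alpha_j$ is then simultaneously backward and forward proper, so $T:=T(\alpha_j,h,\Or)$ is the image of a proper embedding of $\R$; set $S:=T(\alpha_k,h,\Or)$ analogously. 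Both are proper streamlines.

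Next I would prove that $T$ and $S$ can be chosen disjoint and that $\Or\setminus(\Or_j\cup\Or_k)$ lies in a single component of $\R^2\setminus(T\cup S)$. The nice family property gives at once that the two backward half-streamlines $T^-(\alpha_j,h,\Or)$ and $T^-(\alpha_k,h,\Or)$ are disjoint, and similarly for the forward half-streamlines. The neighbor condition from Corollary \ref{cor:deux orbites bordantes} (3) forces the two ends of $T$ to be consecutive at infinity, with no end of any other half-streamline between them, and likewise for the two ends of $S$. Hence the four ends of $T\cup S$ do not interleave on the "circle at infinity", which means $T$ and $S$ are homotopically disjoint as proper topological lines; the straightening principle then yields genuinely disjoint representatives. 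The same neighbor observation shows that one of the two components of $\R^2\setminus T$ meets $\Or$ only along $\Or_j$, and analogously for $S$, so every orbit outside $T\cup S$ lies in the common "outer" component, which is connected.

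The main obstacle is the disjointness of $T$ and $S$. The nice family property only provides disjointness of same-direction half-streamlines, so the potentially troublesome intersections $T^-\cap S^+$ and $T^+\cap S^-$ must be ruled out by a separate argument. That argument has to convert the combinatorial input "ends of $T$ and $S$ are non-interleaved at infinity" into the topological statement "the full lines $T$ and $S$ admit disjoint representatives", which is where the neighbor condition of Corollary \ref{cor:deux orbites bordantes} (3) and the straightening principle are both essential.
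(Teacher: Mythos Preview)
Your approach matches the paper's intent: it states Corollary \ref{cor:deux orbites bordantes2} as a direct reformulation of Corollary \ref{cor:deux orbites bordantes}(3) and gives no separate proof, so deriving $T$ and $S$ from the arcs $\alpha_j$ and $\alpha_k$ is exactly right. You also correctly isolate the only nontrivial step, namely the disjointness of $T$ and $S$.

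The gap is in how you resolve that step. The implication ``the four ends of $T$ and $S$ do not interleave at infinity, hence $T$ and $S$ are homotopically disjoint'' is not valid in general for proper lines in a plane with marked points: a line can wind around marked points and be forced to intersect another line even when their ends are non-interleaved. The nice family only gives $T^-\cap S^-=\emptyset$ and $T^+\cap S^+=\emptyset$; nothing you have written rules out $T^+\cap S^-\neq\emptyset$, and the neighbor condition on ends is a statement about behavior outside a large disk, not about what happens inside it. The same issue affects your claim that one component of $\R^2\setminus T$ meets $\Or$ only in $\Or_j$: knowing that no \emph{end} of another half-streamline lies on that side does not by itself prevent an orbit point from sitting there.

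The clean fix is to go back one step and use part (2) of Corollary \ref{cor:deux orbites bordantes} and its proof rather than part (3) as a black box. That proof produces two \emph{disjoint} translation areas $Z_j$ and $Z_k$, each with a single boundary component and containing a single orbit, and then builds the nice family of (3) from a family disjoint from $\partial Z_j\cup\partial Z_k$ (Proposition \ref{prop: nice families disjoint from reducing lines}). With that construction, $\alpha_j$ lies in $Z_j$ and $\alpha_k$ in $Z_k$; by Proposition \ref{prop:translation area} the full streamlines $T$ and $S$ are proper and contained in $Z_j$ and $Z_k$ respectively, hence disjoint. The ``same component'' assertion is then immediate: the component of $\R^2\setminus T$ lying inside $Z_j$ contains no point of $\Or$, so all other orbits lie on the other side of $T$, and similarly for $S$.
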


\subsection{Classification relatively to $4$ orbits (section \ref{section: 4 orbits})}
The aim of section \ref{section: 4 orbits} is to give a complete description of Brouwer mapping classes relatively to $4$ orbits. We first find every diagram with walls which are not determinant. For the Brouwer mapping classes with a non determinant diagram, we define a new conjugacy invariant, \emph{the tangle} (see section \ref{subsection: tangle}). This tangle is an isotopy class of curves on the cylinder with two marked points, up to horizontal twists (see figure \ref{figu:example-invariant2} for an example). We set that the tangle of Brouwer mapping classes without irreducible area is the empty set. We claim that the couple  \texttt{(Diagram with walls, Tangle)} is a total conjugacy invariant for Brouwer mapping classes relatively to $4$ orbits:

\begin{figure}[h]
\centering
\includegraphics[scale=1]{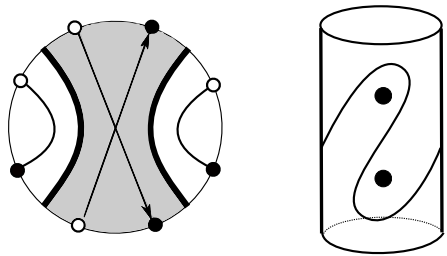}
\caption{Example of a couple (Diagram with walls, Representative of the tangle)}
\label{figu:example-invariant2}
\end{figure}

\begin{theorem}\label{theo:total invariant}
Two Brouwer mapping classes relatively to $4$ orbits are conjugated if and only if they admit the same couple (Diagram with walls, Tangle).
\end{theorem}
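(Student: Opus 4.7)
The plan is to prove both implications, with the first being straightforward and the second requiring careful case analysis. For the ``only if'' direction, I would verify that each component of the couple is a conjugacy invariant. The diagram with walls is invariant by Proposition~\ref{prop: diagram is a conjugacy invariant} together with the canonicity of the set of walls (any conjugating homeomorphism sends reducing lines to reducing lines, hence walls to walls, and sends translation areas to translation areas and irreducible areas to irreducible areas by Theorem~\ref{theorem: walls}). The tangle is engineered to be invariant: a conjugating homeomorphism sends each irreducible area of $[h;\Or]$ to the corresponding irreducible area of $[h';\Or']$ and sends any curve representing the tangle to a curve in the same isotopy class after the identification of the irreducible area with the standard cylinder with two marked points; the quotient by horizontal twists is exactly what absorbs the remaining ambiguity in this identification.

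For the ``if'' direction I would argue by case analysis on the diagram with walls. If the diagram with walls is determinant then, by definition, the diagram with walls alone determines the conjugacy class, and the convention that the tangle of such a class is empty makes the conclusion immediate. It therefore remains to treat non-determinant diagrams, and this is where the hypothesis $r=4$ is essential. Combining Proposition~\ref{coro:determinant diagrams with walls}, Proposition~\ref{prop:combinatoric of an irreducible area}, and Corollary~\ref{corollary:r'=2 implies flow class} restricts the non-determinant diagrams with walls for four orbits to a short, explicit list; in each case the complement of the walls contains exactly one irreducible area $Z$ bounded by two walls, and the remaining orbits lie in maximal translation areas on either side, so that $Z$ is naturally identified (up to a horizontal twist) with the standard cylinder with two marked points.

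Given two classes $[h;\Or]$ and $[h';\Or']$ sharing such a non-determinant diagram with walls and the same tangle, I would build a conjugacy in three steps. First, using Proposition~\ref{prop: nice families disjoint from reducing lines}, choose nice families disjoint from the walls for both classes and apply the straightening principle (Lemma~\ref{lemma: straightening principle}) so that the walls become geodesic and invariant. Second, on each maximal translation area, Proposition~\ref{prop:translation area} combined with Handel's one-orbit classification provides a homeomorphism conjugating $h|_{\text{trans}}$ to $h'|_{\text{trans}}$, which moreover can be chosen to match the prescribed nice family on the boundary walls of $Z$. Third, on $Z$ itself, identify $Z$ with the cylinder with two marked points compatibly with the boundary conjugacy already chosen; the equality of tangles then yields, after possibly correcting by a horizontal twist, a curve whose isotopy class coincides with the corresponding one in $[h';\Or']$, and this curve serves as a backbone allowing the straightening principle to extend the conjugacy across $Z$.

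The main obstacle is this last step: showing that the tangle is a \emph{complete} invariant of the restriction $[h|_Z;\Or\cap Z]$ \emph{relative to the boundary data} imposed by the surrounding translation areas. One cannot simply invoke Handel's classification for two orbits, since that is only up to unrestricted conjugacy, whereas here the conjugating homeomorphism must extend continuously across the walls to the already-fixed homeomorphisms on the adjacent translation areas. This added rigidity is precisely what the tangle encodes, and the horizontal-twist ambiguity reflects the freedom of composing the identification of $Z$ with the standard cylinder by a Dehn twist around the core (a curve parallel to the walls), which can in turn be absorbed by an ambient isotopy of the conjugacy on the translation areas without changing their conjugacy data. Making this last assertion precise---that a horizontal twist of the tangle corresponds to an ambient isotopy of the global conjugacy---is the crux of the argument.
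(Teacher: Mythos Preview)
Your overall architecture (invariance for ``only if'', case split into determinant versus non-determinant for ``if'') is sound and matches the paper, but your proposed execution of the hard case diverges from the paper's and contains a genuine gap.

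The paper does \emph{not} build the conjugacy by gluing a conjugacy on the translation areas to one on the irreducible area $Z$. Instead it uses the deflectors of Section~\ref{section: deflectors}: after arranging (via Lemma~\ref{lemma: existence of beta-i with the right tangle}) that the two classes have the same \emph{relative} tangle with respect to chosen nice families, one conjugates both to the standard picture $(\R^2,\Z\times\{1,2\})$ by homeomorphisms $\phi,\psi$, aligns $\phi(Z)=\psi(Z')$, and then applies Proposition~\ref{prop: deflector} to produce a single compactly supported product of free half twists $\mu$ such that $[\phi^{-1}\mu\phi\, h;\Or]$ and $[\psi^{-1}\mu\psi\, h';\Or']$ are both \emph{flow classes} (Claim~2 in the proof of Lemma~\ref{lemma:total invariant}). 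These two flow classes share the same diagram and are therefore conjugate by Proposition~\ref{prop:flow implies conjugate}; undoing $\mu$ yields the conjugacy between $[h;\Or]$ and $[h';\Or']$. The point is that the deflector simultaneously straightens both classes, so no relative-boundary conjugacy problem on $Z$ ever arises.

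Your Step~3 has two concrete problems. First, $Z$ is not ``the standard cylinder with two marked points'': it is a strip containing two full orbits, and the cylinder $\C_2$ in the definition of the tangle is the quotient of the whole plane by the translation (relative to $\Or_1\cup\Or_2$ only), not a model for $Z$. The tangle is read off after forgetting $\Or_3,\Or_4$ and passing to this quotient; it does not live in $Z$. Second, the ``backbone'' argument is where the real content lies, and you have not supplied it: equality of tangles gives equality of certain isotopy classes of curves in $\C_2$, but turning this into a conjugating homeomorphism on $Z$ that matches prescribed boundary data on $\partial Z$ requires exactly the kind of inverse construction that Proposition~\ref{prop: deflector} provides and that you have not invoked. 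You correctly identify this as the crux, but the paper's route through deflectors is how it is actually resolved.
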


We describe which tangles are realized by Brouwer mapping classes and call them \emph{adapted tangles}.

Finally, every couple $ \texttt{(Diagram with walls, Adapted tangle)} $ is realized by:
\begin{itemize}
\item A flow if the diagram is determinant or if the tangle is trivial;
\item A product of a flow and finitely many free half twists if the diagram is not determinant and the tangle is not trivial.
\end{itemize} 
This gives a complete description of the Brouwer mapping classes relatively to $4$ orbits.

\section{Adjacency areas, diagrams and special nice families} \label{section:Adjacency areas, diagrams and special nice families}

\subsection{Adjacency areas}

 Let $[h;\Or]$ be a Brouwer mapping class. Let $(\alpha_i\pm)_{1 \leq i \leq r}$ be a nice family. Let $\{\alpha_{i_1}^+,...,\alpha_{i_n}^+\}$ be a sub-family of adjacency for $[h;\Or]$. For simplicity of notation, we assume that $i_k=k$ for every $1\leq k \leq n$. Choose a complete hyperbolic metric on $\R^2 - \Or$. Let $L$ be a geodesic topological line in $\R^2 - \Or$ such that (see figure \ref{figu:adjacency_area}):
 \begin{itemize}
 \item One connected component of $\R^2 - L$, denoted by $A$, contains an infinite component of each $T_i^+$ for $1\leq i \leq n$;
  \item For every $1 \leq i\leq n$, $L$ intersects $T_i^+:=T^+(\alpha_i^+,h,\Or)$ in exactly one point;
 \item $A$ does not contain any point of $\Or$ outside those $n$ half streamlines.
 \end{itemize}

\begin{figure}[h]
\labellist
\small\hair 2pt	
\pinlabel $A$ at 495 100
\pinlabel $L$ at 545 240
\pinlabel $h(L)_\#$ at 563 214
\endlabellist
\centering
\includegraphics[scale=0.6]{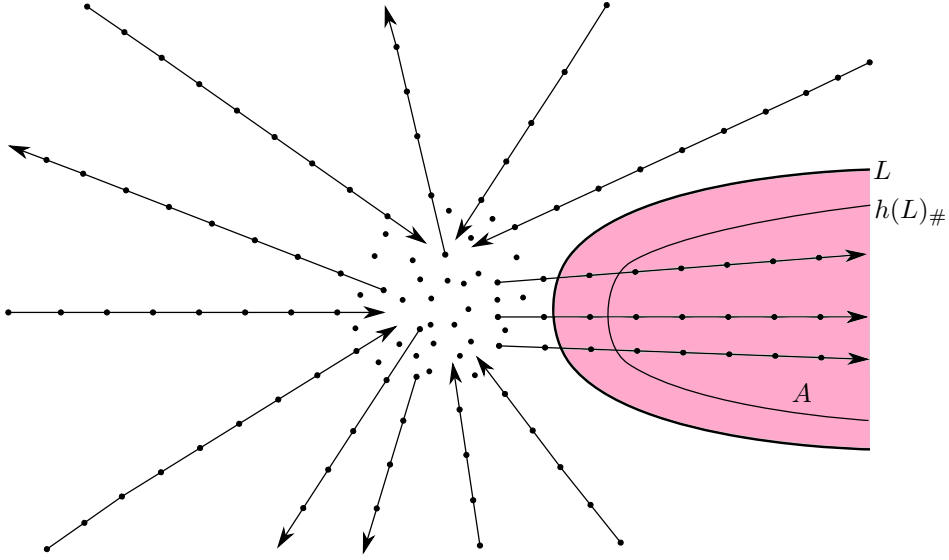}
\caption{Example of a forward adjacency area.}
\label{figu:adjacency_area}
\end{figure}

\begin{definition}[Adjacency area] With the previous notations, we say that $A$ is a \emph{forward adjacency area} for $[h;\Or]$. A forward adjacency area for $[h^{-1};\Or]$ is said to be a \emph{backward adjacency area} for $[h;\Or]$.
\end{definition}

Note that every adjacency area can be obtained with the following construction. \\

\noindent \textit{Construction of an adjacency area.} \label{construction of adjacency areas}
With the previous notations, suppose that the $\alpha_i^+$'s for $i=1...n$ are ordered from $\alpha_1^+$ to $\alpha_n^+$ in the cyclic order of the nice family. Choose one point $x_i \in \Or_i$ in every $T_i^+$. Denote by $L_i$ the unbounded component of $T_i^+ - \{x_i\}$. 
\begin{itemize}
\item If $n=1$, let $\mathcal{U}$ be an open neighborhood of $L_1$ which is homotopic to $L_1$ relatively to $\Or$;
\item If $n>1$: for every $i \leq n-1$, denote by $\gamma_i$ a geodesic arc of $\R^2 - \Or$ which admits $x_i$ and $x_{i+1}$ for endpoints, and such that one connected component of $L_i \cup \gamma_i \cup L_{i+1}$ does not intersect $\Or$. In particular, note that $\{h^n(\gamma_i)_\#\}_{n\geq 0}$ is locally finite. Now consider the line $\tilde L := L_1 \cup \gamma_1 \cup ... \cup \gamma_n \cup L_n$. Let $\mathcal{V}$ be the connected component of $\R^2 -\tilde L$ which contains $L_2$ if $n\geq 3$ and which does not intersect $\Or$ if $n=2$. Let $\mathcal{\U}$ be an open neighborhood of $\mathcal{V}$, isotopic to $\mathcal{V}$ relatively to $\Or$. 
\end{itemize}
Assume that the boundary component $L$ of the closure of $\mathcal{U}$ is geodesic: then $\mathcal{U}$ is an adjacency area. Since there exist pairwise disjoint half homotopy streamlines (theorem \ref{theo: Handel}), there exist pairwise disjoint adjacency areas whose union contain an infinite component of every half orbit (see figure \ref{figu:adjacency_areas}).

\begin{definition}
Choose an adjacency area for every subfamily of adjacency such that the chosen areas are mutually disjoint. Such a family is said to be a \emph{complete family of adjacency areas for $[h;\Or]$}.
\end{definition}

\begin{figure}[h]
\centering
\includegraphics[scale=0.6]{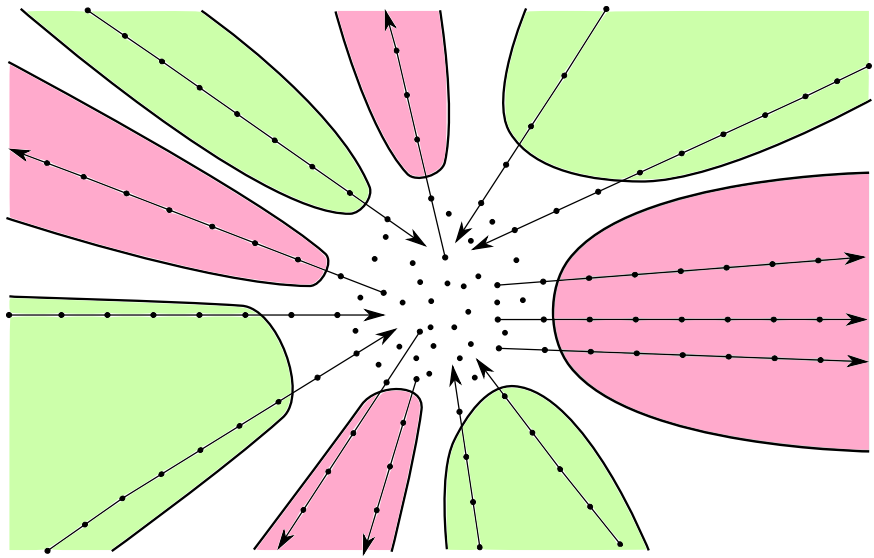}
\caption{Example of a complete family of backward and forward adjacency areas.}
\label{figu:adjacency_areas}
\end{figure}

In particular, the union of the elements of a complete family of adjacency areas contains every point of $\Or$ but a finite number. Moreover, if we consider two complete families of adjacency areas, then there exists a compact $K$ such that these two families are isotopic relatively to $\Or$ in $\R^2 -K$.

The following theorem is essentially due to Handel \cite{Handel}. This statement is a reformulation of proposition $3.1$ of \cite{FLR2013}.
\begin{theorem}[Handel] \label{theo:reducing line disjoint from backward adjacency areas} Let $[h;\Or]$ be a Brouwer class which is not a translation class. Choose a complete family of adjacency. There exists a reducing line disjoint from all the backward adjacency areas of the complete family. \end{theorem}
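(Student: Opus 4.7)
The plan is to deduce the theorem from Le Roux's Proposition~3.1 of \cite{FLR2013}, via the dictionary between backward adjacency areas and generalized backward homotopy half streamlines discussed in Remark~\ref{remark:generalized homotopy streamlines}.

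First, I would observe that each backward adjacency area $A_k$ is, by the construction recalled just before the statement, an open neighborhood (isotopic to it relatively to $\Or$) of the properly embedded tree-like set $L_{i_1}\cup\gamma^k_1\cup L_{i_2}\cup\cdots\cup\gamma^k_{n_k-1}\cup L_{i_{n_k}}$, where the $L_{i_j}$ are the unbounded tails of the backward half streamlines in the subfamily of adjacency defining $A_k$ and the $\gamma^k_j$ are the connecting geodesic arcs. Such a set is precisely a generalized backward homotopy half streamline in the sense of \cite{FLR2013}, and every point of $\Or$ lying in $A_k$ lies on one of the $L_{i_j}$. Thus the complete family of backward adjacency areas corresponds to a disjoint family of generalized backward homotopy half streamlines.

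Next, I would apply Le Roux's Proposition~3.1 (the generalized-half-streamline version of Handel's Theorem~\ref{theo:existence of reducing line}) to this family: since $[h;\Or]$ is not a translation class, it produces a reducing line $\Delta$ homotopically disjoint, in $\R^2-\Or$, from every generalized backward homotopy half streamline under consideration. I would then apply the straightening principle (Lemma~\ref{lemma: straightening principle}) simultaneously to $\Delta$ and to the geodesic boundaries $L_k$ of the $A_k$'s; this provides a geodesic representative $\Delta_\#$ in minimal position with each $L_k$. A standard innermost-arc argument concludes: if $\beta$ were an innermost arc of $\Delta_\#\cap A_k$ (with endpoints on $L_k$), then by homotopic disjointness of $\Delta$ from the generalized half streamline that carries all of $\Or\cap A_k$, this generalized streamline would lie entirely on the non-innermost side of $\beta$, so the innermost subregion of $A_k$ cut off by $\beta$ would be $\Or$-free. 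Hence $\beta$ together with a subarc of $L_k$ would bound a topological disk in $\R^2-\Or$, contradicting the minimality of $\Delta_\#\cap L_k$. The same dichotomy rules out the possibility that $\Delta_\#$ is entirely contained in $A_k$, since that would put all orbits outside $A_k$ on one side of $\Delta_\#$, forcing $\Delta_\#$ to fail to separate $\Or$.

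The main obstacle, in my view, is the formal identification in the first step between a backward adjacency area and a generalized backward homotopy half streamline: this is essentially bookkeeping, but must be carried out carefully (checking in particular that the connecting geodesic arcs $\gamma^k_j$ play the role prescribed by the Le Roux formulation, and that the neighborhood $A_k$ is isotopic to its underlying generalized streamline relatively to $\Or$). Once this identification is secured, the rest of the proof is a routine application of Le Roux's result, the straightening principle, and the innermost-arc technique in minimal position.
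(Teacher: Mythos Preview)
Your approach is essentially the same as the paper's: both identify each backward adjacency area with (a neighborhood of) a generalized backward homotopy half streamline containing $\Or\cap A$, then invoke Proposition~3.1 of \cite{FLR2013} to produce a reducing line disjoint from those streamlines. The paper's proof is three sentences and simply writes ``The result follows'' after obtaining $\Delta$ disjoint from the generalized streamlines; you spell out the passage from ``disjoint from the streamlines'' to ``disjoint from the adjacency areas'' via the innermost-arc argument, which is a reasonable elaboration of what the paper leaves implicit.
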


\begin{proof} There exists a family of generalized homotopy half streamline such that for every backward adjacency area $A$ of the complete family, $\Or \cap A$ is included in one of the backward generalized homotopy half streamlines. Proposition $3.1$ of \cite{FLR2013} gives a reducing line disjoint from all the backward generalized homotopy half streamlines of the family. The result follows.
\end{proof}

\begin{prop} \label{prop:first properties of the adjacency areas}
Let $A$ be an adjacency area for $[h;\Or]$ and let $L$ be its boundary component.
\begin{enumerate}
\item $L$ is a homotopy Brouwer line;
\item The family $(h^k(L)_\#)_{k\in \N}$ is locally finite;
\item (Handel) Let $\beta^+$ be any forward proper arc with endpoints in an orbit of $\Or$ which intersects $A$. There exists $k_0$ such that for every $k>k_0$, $h^k(\beta^+)_\#$ is included in $A$.
\end{enumerate}
\end{prop}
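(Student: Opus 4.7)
The plan is to establish the three statements one after another, using the nested structure $h(A)\subset A$ coming from the construction of $A$, together with the straightening principle \ref{lemma: straightening principle} and the properness of the forward half streamlines.

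For (1), I would apply the straightening principle to the family $\{L, h(L)\}$ together with the forward half streamlines $T_i^+$ for the indices $i$ in the adjacency subfamily, replacing $h$ by an isotopic representative $h'$ for which $L$ and $h(L)$ are geodesic and $h'$ permutes the geodesic iterates of the arcs $\alpha_i^+$. Then $h(L)$ intersects each $T_i^+$ in a single point, strictly further along the streamline than the intersection with $L$, so $h(L)$ is contained in the interior of $A$ and $h(A)\subset A$ is a proper inclusion. The four defining properties of a homotopy Brouwer line with $V=A$ follow immediately: $L$ and $h(L)$ are disjoint because $h(L)_\#\subset A$; they are non-isotopic because $A\setminus h(A)$ contains exactly one point of each $\Or_i$ of the subfamily; $h(V)_\#\subset V$ is just $h(A)\subset A$; and if $x\in\Or\cap(V-h(V)_\#)$, then $x$ is the unique point of some $\Or_i$ sitting in $A\setminus h(A)$, so $h(x)$ is the next point along $T_i^+$, which lies in $h(A)=h(V)_\#$.

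For (2), the idea is that the sets $h^k(A)$ form a strictly decreasing sequence of adjacency sub-areas whose orbit content recedes to infinity. Each curve $h^k(L)_\#$ crosses $T_i^+$ in exactly the point $h^k(y_i)$, where $y_i=L\cap T_i^+$; since $T_i^+$ is a proper embedding of $\R^+$, these crossing points leave every compact set. I would decompose $h^k(L)_\#$ into the geodesic arcs between consecutive crossings: each such arc is co-bounded by infinite tails of two consecutive half streamlines $T_i^+,T_{i+1}^+$ (or, for $n=1$, wraps to infinity along a single tail), and the minimal-position property together with the fact that the corresponding transversal region contains no points of $\Or$ forces the geodesic arc to lie in a neighborhood of these tails. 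As the tails concerned recede to infinity with $k$, so does each piece of $h^k(L)_\#$, yielding local finiteness.

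For (3), I would first observe that since $\beta^+$ has endpoints in an orbit $\Or_j$ intersecting $A$, the orbit $\Or_j$ belongs to the adjacency subfamily generating $A$, hence $T_j^+\cap A$ contains all but finitely many points of $\Or_j$. The endpoints of $h^k(\beta^+)$ are consecutive points of $\Or_j$, so for $k$ large they lie deep along $T_j^+$ inside $A$. It remains to promote this to the statement $h^k(\beta^+)_\#\subset A$. Applying the straightening principle to $\{L,\ h^k(\beta^+)\}$ puts $L_\#$ and $h^k(\beta^+)_\#$ in minimal position simultaneously, so it suffices to show that $h^k(\beta^+)$ can be isotoped into $A$ rel $\Or$. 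For this, compare $\beta^+$ to the unique arc $\alpha'_j$ of $T_j^+$ between the same consecutive points of $\Or_j$; both are forward proper arcs based at $\Or_j$, so the geodesic streamlines $T^+(\beta^+,h,\Or)$ and $T_j^+$ share a common tail of orbit points and are each proper, which forces $h^k(\beta^+)_\#$ and $h^{k+N}(\alpha_j^+)_\#$ to become isotopic rel $\Or$ for $k$ large enough (the only possible obstruction is twisting around other orbits, and these orbits are absent from the deep part of $A$). I expect this last point to be the main obstacle: converting the asymptotic coincidence of two proper half streamlines through the same orbit into a rigorous eventual isotopy. Once it is established, minimal position of $L_\#$ and $h^k(\beta^+)_\#$ yields $h^k(\beta^+)_\#\subset A$.
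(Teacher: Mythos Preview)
Your arguments for (1) and (2) are in the right spirit, but the paper takes a shorter route: it invokes the explicit construction of the adjacency area, where the boundary $L$ is isotopic to the broken line $\tilde L = L_1\cup\gamma_1\cup\dots\cup\gamma_{n-1}\cup L_n$ made of tails $L_i$ of the forward half streamlines together with the connecting arcs $\gamma_i$. The families $\{h^k(L_i)_\#\}_{k\ge 0}$ are locally finite because the $T_i^+$ are proper, and the families $\{h^k(\gamma_i)_\#\}_{k\ge 0}$ are locally finite by the very choice of the $\gamma_i$ in the construction. Both (1) and (2) then fall out at once. Your decomposition of $h^k(L)_\#$ into geodesic pieces between consecutive crossings with the $T_i^+$ can be made to work, but the step ``minimal position forces the geodesic arc to lie in a neighborhood of these tails'' is doing real work that you have not justified; the paper sidesteps this by tracking the iterates of the original pieces $\gamma_i$ rather than trying to control the geodesic representative globally.

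For (3) there is a genuine gap, and you have correctly located it. The assertion that two forward proper arcs $\beta^+$ and $\alpha_j^+$ on the same orbit have eventually isotopic iterates is precisely the content of Handel's Lemma~4.6 in \cite{Handel} (stated there for generalized homotopy half streamlines, of which the adjacency area is a neighborhood). This is a nontrivial result: the ``twisting around other orbits'' that you dismiss can in principle persist arbitrarily far along the streamline, and ruling it out requires the fitted-family machinery of \cite{Handel}. The paper does not attempt an independent proof here; it simply observes that $A$ is a neighborhood of a generalized homotopy half streamline and cites Handel's lemma. You should do the same rather than try to reprove it from scratch.
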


\begin{proof}
Every adjacency area can be seen as in the construction done before. Thus $(1)$ and $(2)$ follow, because $\{h^n(\gamma_i)_\# \}_{n\geq 0}$ is locally finite for every $i$, as well as $\{ h^n(L_i)_\#\}_{n\geq 0}$. The constructed $\mathcal{U}$ is a neighborhood of a ''generalized homotopy streamline`` (see \cite{Handel} and figure \ref{figu:generalized_half_streamlines2}), hence property $(3)$ holds, according to lemma 4.6 of \cite{Handel}.
\end{proof}

\begin{figure}[h] 

\centering
\vspace{0.2cm}
\includegraphics[scale=0.8]{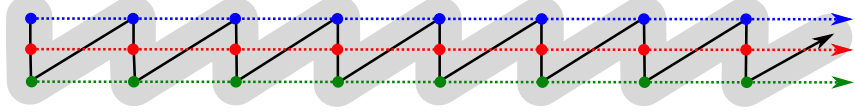}
\caption{Neighborhoods of generalized homotopy half streamlines are homotopic to adjacency areas.}
\label{figu:generalized_half_streamlines2}
\end{figure}

\begin{corollary} \label{cor:translation in adjacency areas}
Let $[h;\Or]$ be a Brouwer mapping class, and let $A$ be an adjacency area for $[h;\Or]$. Then there exists a homeomorphism $\chi$ of the plane, preserving the orientation, such that $\chi h \chi^{-1}$ is isotopic relatively to $\chi(\Or)$ to a homeomorphism which coincides with a translation on $\chi(A)$.
\end{corollary}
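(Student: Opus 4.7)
The plan is to use the Brouwer-line structure of the boundary $L$ of $A$ to tile $A$ by fundamental domains, and then to construct $\chi$ by transporting a standard ``half-plane with translation'' model to $A$.

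First, I would apply the straightening principle (Lemma~\ref{lemma: straightening principle}) to the locally finite family $\{h^k(L)\}_{k \geq 0}$, which is locally finite by Proposition~\ref{prop:first properties of the adjacency areas}.2: this yields $h' \in [h;\Or]$ such that, writing $L_k := h^k(L)_\#$, we have $h'(L_k) = L_{k+1}$ for every $k \geq 0$. Since $L$ is a homotopy Brouwer line (Proposition~\ref{prop:first properties of the adjacency areas}.1), up to isotopy $h(A) \subset A$, so we may also arrange that $h'(A) \subset A$. The lines $L_k$ then partition $A$ into topological strips $F_k$, where $F_k$ is bounded by $L_k$ and $L_{k+1}$. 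By property~(4) in the definition of a homotopy Brouwer line, applied to each $L_k$, each $F_k$ contains at most one point from each of the $n$ orbits meeting $A$. Since $A$ contains an infinite component of every forward half-streamline $T_i^+$ by definition of a forward adjacency area, for $k$ sufficiently large every $F_k$ meets each of these $n$ orbits exactly once; replacing $L$ by $h^{k_0}(L)$ for suitable $k_0$ and shrinking $A$ accordingly, I may assume this holds for all $k \geq 0$.

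Next I would set up the target model: let $A_0 := [0,\infty) \times \R$ with the translation $T(u,v) := (u+1,v)$ and $n$ marked orbits $\{(k, y_i) : k \in \N\}$ at distinct heights $y_1, \dots, y_n$. Choose a homeomorphism $\chi_0 : \overline{F_0} \to [0,1] \times \R$ sending the $n$ marked points of $\Or \cap F_0$ to $\{(0, y_i)\}_{1\leq i \leq n}$ and the $n$ points of $\Or \cap L_1 \cap \overline{F_0}$ to $\{(1, y_i)\}_{1\leq i \leq n}$, with matching indices chosen so that if $x$ is a marked point on $L_0$ then $\chi_0(h'(x)) = T(\chi_0(x))$. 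Extend $\chi_0$ to $A$ by $h'$-equivariance: for $x \in F_k$, set
\[
\chi_0(x) \;:=\; T^k \circ \chi_0 \circ h'^{-k}(x).
\]
The two definitions agree on the shared boundary $L_k = \overline{F_{k-1}} \cap \overline{F_k}$ precisely because of the matching condition imposed on $L_1$, which propagates to every $L_k$ by induction. The resulting $\chi_0 : A \to A_0$ is then a homeomorphism conjugating $h'|_A$ to $T|_{A_0}$.

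Finally, I would extend $\chi_0$ to an orientation-preserving homeomorphism $\chi$ of $\R^2$. Since $L$ and $\{0\} \times \R$ are topological lines bounding topological half-planes $\R^2 \setminus A$ and $\R^2 \setminus \mathring A_0$, and $\chi_0|_L$ is an orientation-compatible homeomorphism between them, such an extension exists by a Schoenflies-type argument (and no dynamical constraint is placed on $\chi$ off $A$). Then $\chi h' \chi^{-1}$ restricts to $T$ on $\chi(A)$, and because $h$ and $h'$ represent the same Brouwer mapping class, $\chi h \chi^{-1}$ is isotopic to $\chi h' \chi^{-1}$ relative to $\chi(\Or)$, which yields the conclusion.

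The main obstacle will be the equivariant gluing step: verifying that the inductively defined $\chi_0$ is continuous across each interior boundary $L_k$. This reduces to matching two homeomorphic parameterizations of the line $L_1$ with $n$ marked points -- one coming from $\chi_0|_{F_0}$ and one coming from $T \circ \chi_0 \circ h'^{-1}|_{F_1}$. By choosing $\chi_0$ on $F_0$ so that these two parameterizations coincide on the $n$ marked points of $\Or \cap L_1$, the two homeomorphisms of $L_1$ (which are orientation-preserving between two lines matching on a common set of marked points) can be pre-composed on one side with an isotopy within $F_0$ to make them agree globally on $L_1$; the gluing then propagates to all $L_k$ by the $h'$-equivariance.
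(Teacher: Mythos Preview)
Your approach is exactly the one the paper has in mind: its proof is the single line ``This follows from (1) and (2) of Proposition~\ref{prop:first properties of the adjacency areas}'', and what you wrote is precisely the standard unpacking of that sentence (Brouwer line $\Rightarrow$ nested images, local finiteness $\Rightarrow$ the strips $F_k$ tile $A$, then build $\chi$ by equivariance on a fundamental domain and Schoenflies to extend).

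One point to clean up: the lines $L_k = h^k(L)_\#$ are lines in $\R^2 - \Or$, so $\Or \cap L_k = \emptyset$. Thus your phrases ``the $n$ points of $\Or \cap L_1 \cap \overline{F_0}$'' and ``sending the $n$ marked points of $\Or \cap F_0$ to $\{(0,y_i)\}$'' are not quite right: the orbit points sit strictly in the interior of each strip $F_k$, and the gluing condition along $L_1$ has nothing to do with marked points. The correct constraint is purely the equivariance relation $\chi_0|_{L_1} = T \circ \chi_0|_{L_0} \circ (h'|_{L_0})^{-1}$: first pick any orientation-preserving homeomorphism $\chi_0 : L_0 \to \{0\}\times\R$, then \emph{define} $\chi_0|_{L_1}$ by that formula, and finally extend over the closed strip $\overline{F_0}$ sending the $n$ interior points of $\Or \cap F_0$ to $\{(\tfrac12,y_i)\}$ (Schoenflies on the strip with finitely many marked interior points). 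With this correction your inductive gluing goes through without the extra ``isotopy within $F_0$'' step you mention at the end, since the two boundary parametrizations of $L_1$ coincide by construction.
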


\begin{proof}
This follows from $(1)$ and $(2)$ of proposition \ref{prop:first properties of the adjacency areas}.
\end{proof}

\subsection{Diagrams}

\begin{proof}[Proofs of propositions \ref{prop: adjacency for nice families} and \ref{prop: diagram is a conjugacy invariant}.] Let $\al$ and $\bet$ be two nice families for a Brouwer class $[h;\Or]$. According to  $(3)$ of proposition \ref{prop:first properties of the adjacency areas}, for every $i$ there exists an adjacency area $A$ and there exists $k_0 \in \N$ such that for every $k>k_0$, $h^k(\alpha_i^+)$ and $h^k(\beta_i^+)$ are included in $A$. We have a similar result for backward arcs. It follows that $\al$ and $\bet$ have the same cyclic order up to permutation of arcs of $\bet$ inside the same sub-families of adjacency, which is proposition \ref{prop: adjacency for nice families}.

As a corollary we get proposition \ref{prop: diagram is a conjugacy invariant}: if two Brouwer mapping classes are conjugated, then they have the same diagram.
\end{proof}

\subsection{Special nice families} \label{section: nice families disjoint from reducing sets}
The aim of this section is to prove proposition \ref{prop: nice families disjoint from reducing lines}, i.e. that for every reducing set $\mathcal{R}$, there exists a nice family which is disjoint from $\mathcal{R}$.

\subsubsection{Intersections between reducing lines and adjacency areas}

\begin{lemma} \label{lemma: adjacency areas and reducing lines}
Let $[h;\Or]$ be a Brouwer mapping class, with a complete family of adjacency areas. Let $\Delta$ be a reducing line. Then $\Delta$ is isotopic relatively to $\Or$ to a topological line $\Delta'$ which intersects at most two adjacency areas of the family. 

Moreover, for any complete hyperbolic metric on $\R^2 - \Or$, the intersection between the geodesic representative of $\Delta$ for this metric and any adjacency area has at most finitely many connected components.
\end{lemma}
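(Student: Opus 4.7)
My approach is to use the Straightening principle \ref{lemma: straightening principle} to place $\Delta$ and the adjacency area boundaries in minimal position simultaneously, and then apply a bigon argument. After an ambient isotopy of $\R^2 - \Or$, we may assume the boundaries $L_i = \partial A_i$ of the complete family are geodesic; the Straightening principle then lets us replace $\Delta$ by its geodesic representative $\Delta_\#$, which is in minimal position with every $L_i$. Hence it suffices to show that $\Delta_\#$ meets only two of the $A_i$; setting $\Delta' = \Delta_\#$ (and transporting back if the initial family was non-geodesic) then gives the first claim.

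Since $\Delta_\#$ is a proper line in $\R^2$, its two ends $p_s, p_e$ at infinity eventually lie, outside a large compact set containing the core $\R^2 \setminus \bigcup_i A_i$, in two (possibly equal) adjacency areas $A_s$ and $A_e$. The key observation is that for every $i \neq s,e$, all orbits having a half-streamline in $A_i$ lie on the same side of $\Delta$, say in $\Or_+$. This follows from the cyclic picture at infinity: the two ends of $\Delta_\#$ cut the circle at infinity of $\R^2$ into two arcs; the cyclic arc of half-streamline ends corresponding to $A_i$ lies entirely in one of them (as $A_i \neq A_s, A_e$), and both the forward and backward half-streamlines of any orbit have their endpoints in the same arc (since each orbit lies entirely on one side of $\Delta$).

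Now suppose $\Delta_\# \cap L_i \neq \emptyset$ for some $i \neq s,e$. Every component of $\Delta_\# \cap A_i$ is a bounded arc with both endpoints on $L_i$, as both ends of $\Delta_\#$ are outside $A_i$. Pick an innermost such component $\sigma$, i.e., one whose bounded region $B$ in $A_i$ contains no other component of $\Delta_\# \cap A_i$; then the arc $\lambda$ of $L_i$ joining the endpoints of $\sigma$ and cobounding $B$ has no interior intersection with $\Delta_\#$, so $\sigma \cup \lambda$ is the boundary of a genuine embedded bigon. The complementary region $U = A_i \setminus (B \cup \sigma)$ is unbounded and contains the iterates $h^k(x_j)$ for large $k$ (these escape to infinity along the half-streamlines of $A_i$, eventually leaving the bounded set $B$); since these iterates are in $\Or_+$, the region $U$ lies on the $\Or_+$-side of $\Delta$, so $B$ lies on the $\Or_-$-side. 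Any orbit point in $B$ would belong to an orbit $\Or_j \subset \Or_+$ with one iterate on the $\Or_-$-side and other iterates on the $\Or_+$-side, contradicting the fact that $\Delta$ is a reducing line. So $B \cap \Or = \emptyset$, and we obtain an empty bigon between the two geodesic lines $\Delta_\#$ and $L_i$, contradicting their minimal position. We conclude that $\Delta_\# \cap A_i = \emptyset$ for every $i \neq s,e$, proving the first claim.

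For the \emph{moreover}, the bigon argument also gives $|\Delta_\# \cap L_{i\#}| \leq 2$ for every $i$ (at most one entry each into $A_s$ and $A_e$), whence $\Delta_\# \cap A_{i\#}$ has at most two connected components when $A_{i\#}$ has geodesic boundary. For a general, not-necessarily-geodesic adjacency area $A$ in the chosen family, $L = \partial A$ and its geodesic representative $L_\#$ are properly isotopic proper lines with the same asymptotic behavior, so the symmetric difference $A \triangle A_\#$ is swept by a deformation of finite extent away from infinity, and the proper geodesic $\Delta_\#$ crosses this region in only finitely many arcs, yielding finiteness of components. The main obstacle is the bigon argument itself, and in particular correctly identifying which side of $\Delta$ each of $B$ and $U$ lies on; this depends crucially on the structural description of every adjacency area as a neighborhood of a generalized half-streamline, ensuring that all but finitely many iterates of the half-streamline base points lie in the unbounded component.
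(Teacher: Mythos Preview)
Your approach is genuinely different from the paper's, and it contains two real gaps.

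\textbf{Gap 1: the ends of $\Delta_\#$.} You assert that the two ends of $\Delta_\#$ ``eventually lie, outside a large compact set containing the core $\R^2 \setminus \bigcup_i A_i$, in two (possibly equal) adjacency areas $A_s$ and $A_e$.'' But $\R^2 \setminus \bigcup_i A_i$ is \emph{not} compact: the adjacency areas are disjoint half-planes and their complement is an unbounded star-shaped region. Nothing prevents an end of $\Delta_\#$ from going to infinity in this complement, or, more seriously, from crossing some $L_i$ infinitely many times and oscillating in and out of $A_i$. Ruling out such oscillation is exactly the heart of the lemma, and the paper does it dynamically: it takes $h'\in[h;\Or]$ with $h'(\Delta_\#)=\Delta_\#$, shows (Claim~1) that $\Delta_\#\cap\partial A\neq\emptyset$ forces $\Delta_\#\cap h'^n(\partial A)_\#\neq\emptyset$ for all $n$, deduces (Claim~2) that each nonempty intersection $\Delta_\#\cap A$ is unbounded, and then (Claim~3) uses that $h'$ translates along the half-streamlines to derive a self-intersection of $\Delta_\#$ from any segment of $\Delta_\#$ running between two distinct adjacency areas outside a large disk. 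Your argument never uses the $h$-invariance of $\Delta$, and without it I do not see how to exclude oscillation.

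\textbf{Gap 2: the bigon step.} Even granting that $A_s,A_e$ are well defined, the sentence ``since these iterates are in $\Or_+$, the region $U$ lies on the $\Or_+$-side of $\Delta$, so $B$ lies on the $\Or_-$-side'' is not justified. The complementary region $U=A_i\setminus(B\cup\sigma)$ may contain \emph{other} arcs of $\Delta_\#\cap A_i$, so $U$ is cut by $\Delta$ into several pieces; only the unbounded piece is known to be on the $\Or_+$-side. The piece of $U$ adjacent to $\sigma$ could well be on the $\Or_-$-side, forcing $B$ to be on the $\Or_+$-side, where it may legitimately contain orbit points and no contradiction arises. One would need instead to locate an arc $\sigma$ that bounds, together with a subarc of $L_i$, a region on the $\Or_-$-side; this requires a more careful analysis of how the arcs of $\Delta_\#\cap A_i$ sit in $A_i$, and it is not clear it can be done without first bounding the number of arcs (i.e., without something like Claim~3). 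The same issue undermines your ``moreover'': the bigon argument does not apply to $A_s$ or $A_e$ (there the half-streamlines may come from orbits on both sides of $\Delta$), so you have no bound on $|\Delta_\#\cap L_s|$.
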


\begin{proof}
Let $\al$ be a nice family for $[h;\Or]$. Choose a complete hyperbolic metric on $\R^2 -\Or$. For every $i$, we denote by $T_i^+$, respectively $T_i^-$, the homotopy half streamline $T^+(\alpha_i^+,h,\Or)$, respectively $T^-(\alpha_i^+,h,\Or)$. According to the straightening principle \ref{lemma: straightening principle}, there exists $h' \in [h;\Or]$ such that $h'(T_i^+) \subset T_i^+$, $T_i^- \subset h'(T_i^-)$ and $h'(\Delta_\#) = \Delta_\#$. \\

\noindent \textbf{Claim 1.} Let $A$ be an adjacency area. We denote by $\partial A$ its boundary component. If $\Delta_\# \cap \partial A$ is non empty, then $\Delta_\# \cap h'^n(\partial A)_\#$ is non empty for every $n \in \Z$.\\

\noindent \textit{Proof of claim 1.} Since $\Delta_\# \cap \partial A$ is non empty, $h'^n(\Delta_\#) \cap h'^n(\partial A)$ is non empty for every $n \in \Z$. Since $\Delta_\#$ and $\partial A$ are geodesic, they are in minimal position. Hence for every $n$, $h'^n(\Delta_\#)$ and $h'^n(\partial A)$ are also in minimal position. It follows that $h'^n(\Delta_\#) \cap h'^n(\partial A)_\#$ is non empty. \qed \\

\noindent \textbf{Claim 2.} Let $A$ be an adjacency area. If $\Delta_\# \cap A$ is non empty, then for every compact subset $K$ of the plane, $(\Delta_\# \cap A) - K$ is non empty. \\

\noindent \textit{Proof of claim 2.}
Assume $A$ is a forward adjacency area (if not, consider $h'^{-1}$ instead of $h$). Let $K$ be any compact subset of the plane. Assume that $\Delta_\# \cap A$ is non empty. Since $(h'^n(\partial A)_\#)_{n\in \N}$ is locally finite (according to $(2)$ of proposition \ref{prop:first properties of the adjacency areas}), there exists $k \in \N$ such that $h'^k(\partial A)_\#$ does not intersect $K$. Since $\partial A$ is a homotopy Brouwer line (according to $(1)$ of proposition \ref{prop:first properties of the adjacency areas}), $h'^k(\partial A)_\#$ is included in $A$. According to claim $1$, $h'^k(\partial A)_\#$ intersects $\Delta_\#$. Claim $2$ follows. \qed \\

Denote by $(A_i)_{1\leq i \leq l}$ the adjacency areas of the chosen complete family. According to claim $1$, if we prove that for some $(n_i)_i \in \Z^l$, $\Delta_\#$ intersects at most two of the $h'^{n_i}(\partial A_i)_\#$'s, then $\Delta_\#$ intersects at most two of the $\partial A_i$'s. Hence, up to replacing $(A_i)_{1\leq i \leq l}$ by $(h'^{n_i}(\partial A_i)_\#)_{1\leq i \leq l}$ such that $\Delta_\#$ intersects at most two of the $h'^{n_i}(\partial A_i)_\#$'s,  we can assume that for every $i,j$, $A_i$ is disjoint from $\alpha_j^-$.\\

\noindent \textbf{Claim 3.} There exists a topological disk $K$ of the plane such that every connected component of $\Delta_\# - K$ intersects at most one adjacency area. \\

\noindent \textit{Proof of claim 3.} Assume by contradiction that for every $K$, one connected component of $\Delta_\# - K$ intersects two adjacency areas. Then there exist two adjacency areas, say $A_i^-$ and $A_j^+$, such that $\Delta_\# \cap A_i^-$ and $\Delta_\# \cap A_j^+$ have infinitely many connected components. Moreover, taking $K$ which intersects every adjacency area of the complete family, we can suppose that $A_i^-$ follows $A_j^+$ in the cyclic order at infinity of the adjacency areas. Hence we can suppose that $A_i^-$ is a backward adjacency area, and $A_j^+$ is a forward adjacency area.

\begin{figure}[h]
\labellist
\small\hair 2pt	
\pinlabel $K$ at 130 130
\pinlabel $\gamma$ at 505 220
\endlabellist
\centering
\vspace{0.2cm}
\includegraphics[scale=0.5]{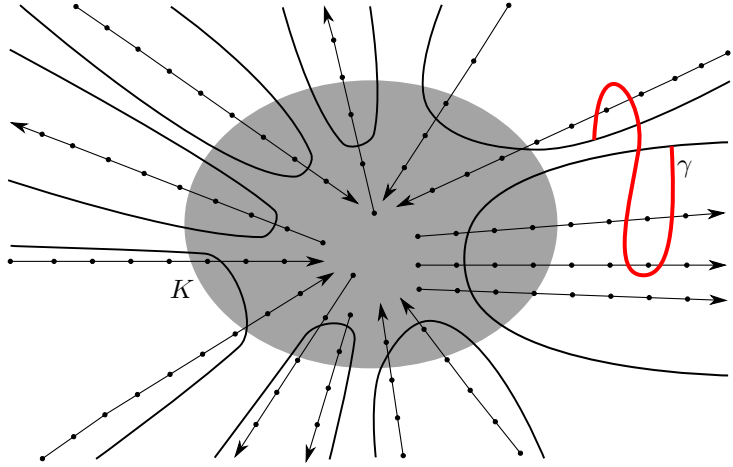}
\vspace{0.2cm}
\caption{Example of a configuration with some $K$ and some $\gamma$.}
\label{figu:gamma}
\end{figure}

It follows that there exists a subsegment $\gamma$ of $\Delta_\#$ such that $\gamma$ is the concatenation of three smaller subsegments $\gamma_1 \star \gamma_2 \star \gamma_3$ such that (see figure \ref{figu:gamma}):
\begin{itemize}
\item $\gamma_1 \subset A_i^-$ and its endpoints are included in $\partial A_i^-$;
\item $\gamma_3 \subset A_j^+$ and its endpoints are included in $\partial A_j^+$;
\item $\gamma_2$ does not intersect $\mathring A_i^-$ not $\mathring A_j^+$, where $\mathring A$ denotes the interior $A - \partial A$ of $A$.
\end{itemize}
Moreover, we can choose $\gamma$ outside any chosen topological disk of the plane: in particular, we choose it disjoint from the $\alpha_i^-$'s.
Since $\Delta_\#$ and the boundary components of the adjacency areas are in minimal position, it follows that:
\begin{itemize}
\item $\gamma_1$ intersects a backward half homotopy streamline $T_i^-$ of $A_i^-$;
\item $\gamma_2$ does not intersect any half homotopy streamline of the family $(T_k^\pm)_k$;
\item $\gamma_3$ intersects a forward half homotopy streamline $T_j^+$ of $A_j^+$.
\end{itemize}

\begin{figure}[h]
\labellist
\small\hair 2pt	
\pinlabel $\delta$ at 90 16
\pinlabel $h'(\delta)$ at 150 16
\endlabellist
\centering
\vspace{0.2cm}
\includegraphics[scale=1]{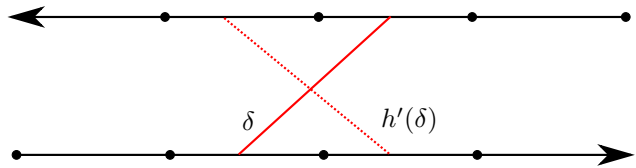}
\vspace{0.2cm}
\caption{Image of $\delta$ under $h'$.}
\label{figu:config-reduc}
\end{figure}

Hence there exists a subsegment $\delta$ of $\gamma$, which contains $\gamma_2$, such that its endpoints are in $T_i^-$ and $T_j^+$ but its interior does not intersect any $T_k^\pm$. Since $h'$ acts as a translation on the $T_k^\pm$'s, it follows that $h'(\delta)$ intersects $\delta$. This gives a contradiction because $\Delta_\#$ is invariant by $h'$ and without self-intersection (see figure \ref{figu:config-reduc}).\qed \\

Let $K$ be a topological disk given by claim $3$. Since $\Delta_\#$ is proper, there are only two unbounded connected components in $\Delta_\# - K$. According to claim $2$, as $\Delta_\#$ and the boundary components of the adjacency areas are in minimal position, every connected component of $\Delta_\# - K$ which intersects an adjacency area is unbounded. Hence $\Delta_\#$ intersects at most two adjacency areas. Moreover, since $\Delta_\#$ and $\partial A_k$ are geodesics, the second part of the lemma follows.
\end{proof}

\begin{figure}[h]
\labellist
\small\hair 2pt	
\pinlabel $\Delta$ at 95 77
\endlabellist
\centering
\vspace{0.2cm}
\includegraphics[scale=1]{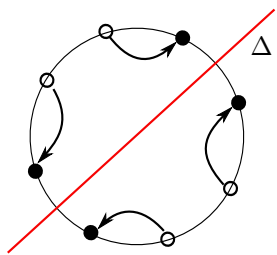}
\vspace{0.2cm}
\caption{Example. Let $[f;\Or]$ be a flow class with this diagram: the reducing line $\Delta$ intersects two forward adjacency areas.}
\label{figu:carre}
\end{figure}

\noindent \textbf{Remark:} If a reducing line intersects two adjacency areas, it does not necessary intersects one backward adjacency area and one forward adjacency area: some reducing lines intersect two adjacency areas of the same type (see figure \ref{figu:carre} for an example).\\

The following lemma will be used in the proof of $(2)$ of proposition \ref{prop:combinatoric of an irreducible area}.
\begin{lemma}\label{lemma:existence of a reducing line almost included in forward adjacency areas}
Let $[h;\Or]$ be a Brouwer class which is not a translation class. Choose a complete family of adjacency areas. There exists a reducing line $\Delta$ such that the intersection of $\Delta$ with the complement of the forward adjacency areas is bounded.
\end{lemma}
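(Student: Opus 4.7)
The plan is to start with a reducing line provided by theorem \ref{theo:reducing line disjoint from backward adjacency areas} and then bend its two ends at infinity into neighboring forward adjacency areas. First I would apply theorem \ref{theo:reducing line disjoint from backward adjacency areas} to the given complete family of adjacency areas $(A_i)_i$, obtaining a reducing line $\Delta_0$ disjoint from every backward adjacency area of the family.

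Next I would establish the key combinatorial observation that the $2r'$ adjacency areas of the complete family alternate between backward type and forward type in the cyclic order at infinity. Indeed, a sub-family of adjacency is by definition a \emph{maximal} block of consecutive arcs of the same type (all backward or all forward); hence the last arc of any sub-family and the first arc of the next sub-family in the cyclic order must be of different types. As a consequence, every unbounded connected component of $\R^2 - \bigcup_i A_i$ which meets infinity between two consecutive adjacency areas has in its closure exactly one forward and one backward adjacency area.

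Now I would analyse the two ends of $\Delta_0$ at infinity. Since $\Delta_0$ avoids every backward adjacency area, each of its two ends either enters a forward adjacency area and stays there beyond some compact set (in which case no modification is needed for that end), or escapes to infinity through an unbounded component $U$ of $\R^2 - \bigcup_i A_i$ bounded at infinity by one forward adjacency area $F$ and one backward adjacency area $B$. In this second case, since every point of $\Or$ outside a large enough compact set lies in some adjacency area, I can construct a proper isotopy in $\R^2 - \Or$ which pushes the end of $\Delta_0$ across $\partial F$ into $F$ without crossing any point of $\Or$. The resulting line $\Delta$ lies in the same isotopy class as $\Delta_0$ and is therefore still a reducing line for $[h;\Or]$.

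Performing this bending on each end that needs it yields a reducing line $\Delta$ whose two ends are contained in forward adjacency areas: outside a large enough compact set, $\Delta$ is entirely contained in the union of the forward adjacency areas, so the intersection of $\Delta$ with the complement of the forward adjacency areas is bounded, which is what we want. The main difficulty I anticipate is justifying rigorously that bending an end across $\partial F$ into $F$ is a valid proper isotopy in $\R^2 - \Or$; for this I would rely on the classification of proper lines in $\R^2 - \Or$ up to proper isotopy (two such lines are isotopic as soon as they induce the same partition of $\Or$), combined with the fact that no point of $\Or$ lies in the relevant part of the gap $U$ near infinity, so that the partition of $\Or$ is unchanged by the bending.
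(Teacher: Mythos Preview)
Your proposal is correct and follows essentially the same approach as the paper's proof: both start from the reducing line $\Delta_0$ disjoint from the backward adjacency areas provided by theorem \ref{theo:reducing line disjoint from backward adjacency areas}, observe that outside a large compact the complement of the adjacency areas contains no points of $\Or$, and then isotope each end of $\Delta_0$ into a neighbouring forward adjacency area. The paper organises this last step slightly differently, by first choosing a compact disk $K$ whose union with the adjacency areas contains all of $\Or$ and then pushing the two unbounded components of $\Delta_0 - K$, but the content is the same.

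One small remark: your final justification via ``the classification of proper lines in $\R^2 - \Or$ by the partition of $\Or$ they induce'' is more than you need and would require its own proof; the simpler observation you already made---that near infinity the gap $U$ contains no point of $\Or$, so the end can be pushed across $\partial F$ by an isotopy supported in a region disjoint from $\Or$---is exactly what the paper uses and is sufficient on its own.
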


\begin{proof} Let $\al$ be a nice family such that each $\alpha_i^-$ intersects the boundary component of a backward adjacency area of the chosen family (such a family exists, up to take iterates of arcs of any nice family). Since $[h;\Or]$ is not a translation class, according to theorem \ref{theo:reducing line disjoint from backward adjacency areas}, there exists a reducing line $\Delta$ which is disjoint from every backward adjacency area. Choose a complete hyperbolic metric on $\R^2 - \Or$. Suppose $\Delta$ is geodesic. Let $K$ be a topological compact disk of the plane such that:
\begin{itemize}
\item The boundary component $\partial K$ of $K$ is geodesic;
\item $K$ intersects every adjacency areas of the family;
\item The union $\mathcal{A}$ of $K$ with all the adjacency areas of the family contains $\Or$.
\end{itemize}
Since $\Delta$ is proper, it intersects $\partial K$ only finitely many times. Denote by $\Delta^+$ and $\Delta^-$ the two unbounded components of $\Delta - K$. Since $\Delta^+$ and $\Delta^-$ are disjoint from the backward adjacency areas, we can isotopy each of them if necessary to include $\Delta$ in a forward adjacency area.
\end{proof}

\subsubsection{Proof of proposition \ref{prop: nice families disjoint from reducing lines}}
\begin{prop*} \emph{\textbf{\ref{prop: nice families disjoint from reducing lines}}}
Let $[h;\Or]$ be a Brouwer mapping class. Let $(\Delta^k)_{k}$ be a reducing set. There exists a nice family $\al$ for $[h;\Or]$ such that for every $k$, for every $i$, $\alpha_i^-$ and $\alpha_i^+$ are homotopically disjoint from $\Delta^k$.
\end{prop*}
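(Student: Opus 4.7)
My plan is to start from an arbitrary nice family given by Theorem~\ref{theo: Handel}, straighten the reducing lines via Lemma~\ref{lemma: straightening principle} so that some $h'\in[h;\Or]$ preserves every $\Delta^k_\#$ setwise, and then rebuild the family inside the adjacency areas of an associated complete family, using their translation structure.

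The first step is a structural observation: for each adjacency area $A$, each geodesic reducing line $\Delta^k_\#$ meets $A$ in finitely many components (Lemma~\ref{lemma: adjacency areas and reducing lines}), all of them \emph{unbounded} rays in $A$. The unboundedness follows by combining finiteness with the forward invariance $h'(\Delta^k_\# \cap A) \subseteq \Delta^k_\# \cap A$, which is a consequence of $h'(A)\subseteq A$ together with Proposition~\ref{prop:first properties of the adjacency areas}: a bounded component $C$ would have $h'$-iterates that are either all distinct (contradicting finiteness) or eventually overlap (contradicting boundedness). In the translation coordinates provided by Corollary~\ref{cor:translation in adjacency areas}, each such ray is asymptotic to a horizontal line at a height distinct from every orbit height, since $\Delta^k_\# \cap \Or = \emptyset$.

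Using this, for each orbit $\Or_i$ I would choose a basepoint $x_i \in \Or_i$ deep in its forward adjacency area $A_i^+$, past all asymptotic reducing-line rays that enter $A_i^+$, and define both $\beta_i^+$ and $\beta_i^-$ to be the same short horizontal segment $\beta_i$ from $x_i$ to $h'(x_i)$ in the translation coordinates of $A_i^+$. By construction $\beta_i$ is disjoint from every $\Delta^k_\#$; by $h'$-invariance of each $\Delta^k_\#$, so is every iterate $h'^n(\beta_i)$. This single arc is automatically both forward- and backward-proper: the forward iterates escape horizontally to infinity in $A_i^+$, and the backward iterates track the properly embedded orbit $\Or_i$ out to infinity in $\R^2$.

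The main obstacle will be verifying mutual disjointness of the backward half-streamlines $T^-(\beta_i,h',\Or)$ across different $i$'s. Within a shared forward adjacency area the $\beta_i$'s sit at distinct heights and so are mutually disjoint, but the backward iterates leave $A_i^+$ before settling into $A_i^-$, and during this transient phase one must check they do not cross. I would resolve this with a final isotopy of each backward half-streamline inside the stable area of $\R^2 - \bigcup_k \Delta^k_\#$ containing $\Or_i$; since such an isotopy is confined to a stable area it cannot reintroduce intersections with any reducing line, and it allows us to align the transient portion with the original nice family's backward half-streamline, which was already mutually disjoint from the others.
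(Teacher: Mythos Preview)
Your proposal has two genuine gaps.

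First, the claim that the short horizontal segment $\beta_i$ chosen in the forward adjacency area $A_i^+$ is automatically \emph{backward} proper is unjustified. That the endpoints $h'^n(x_i)$ of the backward iterates escape to infinity (because the orbit is properly embedded) says nothing about the arcs themselves: the geodesic representatives $(h'^n\beta_i)_\#$ for $n<0$ may become arbitrarily long or accumulate. Example~B in Section~\ref{section: First tools} exhibits exactly this phenomenon in the forward direction, and nothing in your construction rules out its mirror image here. The paper avoids this issue entirely by building the forward arcs $\beta_i^+$ inside the forward adjacency areas and, separately and symmetrically, the backward arcs $\beta_i^-$ inside the backward adjacency areas; iterating then aligns their endpoints.

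Second, and more immediately, the assertion that in the translation coordinates of Corollary~\ref{cor:translation in adjacency areas} each reducing-line ray is ``asymptotic to a horizontal line at a height distinct from every orbit height'' is unfounded. What you actually know is that each unbounded component of $\Delta^k_\#\cap A$ is invariant under the translation $h'$, hence projects to an essential simple closed curve in the quotient cylinder $A/h'$. Such a curve need not be horizontal; it can wind around the marked points and will typically meet every horizontal circle. You therefore cannot go ``deep'' into $A_i^+$ to avoid it: translating further merely shifts you along the ray, not past it. The paper's proof confronts this head-on by working in the quotient cylinder: the projections $\pi(\Delta^k\cap A)$ are pairwise disjoint separating circles avoiding the marked points $\hat x_i$, so a single homeomorphism $\phi$ of the cylinder straightens them all to horizontal circles while putting the $\hat x_i$ at distinct heights; the horizontal circle through each $\phi(\hat x_i)$ is then genuinely disjoint from every $\phi\pi(\Delta^k)$, and its lift is the desired translation arc. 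Your final isotopy step, meant to repair mutual disjointness of the backward half-streamlines, is also too vague to carry weight, but the two issues above already break the argument before you get there.
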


\begin{proof}
Choose a complete hyperbolic metric on $\R^2 - \Or$. As usual, we denote by $L_\#$ the geodesic representative of any line or arc $L$. Choose a complete family of adjacency areas for $[h;\Or]$. In every adjacency area, we will construct pairwise disjoint backward or forward proper arcs for each orbit which intersects the area, such that every constructed arc is disjoint from the reducing set. By iterating those arcs so that the backward and forward arcs of a given orbit have the same endpoints, we get the needed nice family. Let $A$ be an adjacency area of the complete family of adjacency. Suppose $A$ is a forward adjacency area (if not, consider $h^{-1}$). Note that $\partial A$ is geodesic (by definition). For simplicity of notations, we assume that $\Delta^1,...,\Delta^N$ are the reducing lines of $(\Delta^k)_k$ intersecting $A$. We assume that these reducing lines are geodesic. According to lemma \ref{lemma: adjacency areas and reducing lines}, each of them has an unbounded connected component included in $A$. Denote by $\Delta^k_+$ this unbounded component for $\Delta^k$. Suppose that $\Or_1,...,\Or_m$ are the orbits of $\Or$ which intersect $A$. We will find $m$ mutually disjoint forward proper arcs $(\beta_i^+)_{1\leq i \leq m}$ for $[h;\Or]$, included in $A$ and homotopically disjoint from $\Delta^k$ for every $k$.

Applying the straightening principle \ref{lemma: straightening principle} to the families $(\Delta^k)_k$ and $(h^n(\partial A))_{n\geq 0}$, we can find $h' \in [h;\Or]$ such that $h'(\Delta^k)= \Delta^k$ for every $k$ and $(h')^n(\partial A)=h^n(\partial A)_\#$ for every $n$. Note that $h'$ is conjugate to the translation on $A$ (according to corollary \ref{cor:translation in adjacency areas}).

Let $\C$ be the quotient of $A$ by $h'$. Denote by $\pi$ the quotient map. In particular:
\begin{itemize}
\item $\C$ is a topological cylinder;
\item $\pi(\Or \cap A)$ is a set of $m$ points $\hat x_1,...,\hat x_m$ on $\C$;
\item For every $k$, $\pi (\Delta^k \cap A)=\pi (\Delta^k_+)$ is a separating topological circle of $\C -\{\hat x_1,...,\hat x_m\}$;
\item The $\pi (\Delta^k \cap A)$'s are mutually disjoint.
\end{itemize} 
For simplicity of notation, we see $\C$ as a vertical cylinder. There exists a homeomorphism $\phi$ of $\C$ sending each $\pi (\Delta^k \cap A)$ on a horizontal circle $\gamma_k$ and the family $(\hat x_i)_i$ on points of $\C$ with mutually distinct heights. Now for every $1 \leq i \leq m$, consider the horizontal circle $\gamma'_i$ containing $\phi(\hat x_i)$. Every $\gamma'_i$ is disjoint from $\phi \pi (\Delta^k \cap A)$ for every $k$. Hence $(\phi^{-1}(\gamma'_i))_i$ is a family of mutually disjoint curves disjoint from $\pi (\Delta^k \cap A)$ for every $k$. For every $i$, choose a lift $\beta_i^+$ of $\phi^{-1}(\gamma'_i)$ included in $A$, i.e. an arc included in $A$ and such that $\pi(\beta_i^+)=\phi^{-1}(\gamma'_i)$. Such a $\beta_i^+$ is a translation arc. Since $\{h'^n(\partial A)_\# \}_{n\geq 0}$ is locally finite (proposition \ref{prop:first properties of the adjacency areas}), the $\beta_i^+$'s are forward proper . They are disjoint from the $\Delta^k$'s, as wanted. \end{proof}

\section{Walls for a Brouwer mapping class} \label{section: walls}

The main aim of this section is to prove that the set of walls splits $\R^2$ into translation areas, irreducible areas and stable areas that do not intersect $\Or$ (theorem \ref{theorem: walls}).

\subsection{Translation areas} \label{subsection:translation areas}

\begin{lemma} \label{lemma:fitted family} Let $Z$ be a stable area of a Brouwer class such that all the orbits of $Z$ are forward adjacent. Then every backward proper arc included in $Z$ is forward proper.
\end{lemma}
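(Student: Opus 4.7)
The plan is to realize $Z$ as containing a forward adjacency area $A$ on which $h$ is conjugate to a translation, and to show that for any backward proper arc $\alpha\subset Z$ the forward geodesic iterates $h^n(\alpha)_\#$ are eventually confined to $A$; the translation structure on $A$ will then force forward propriety.

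\textbf{Setup.} By Proposition~\ref{prop: nice families disjoint from reducing lines}, pick a nice family $(\alpha_j^\pm)_j$ for $[h;\Or]$ disjoint from the reducing lines bounding $Z$. Since all orbits of $Z$ are forward adjacent, the arcs $\alpha_j^+$ with $\Or_j\subset Z$ lie in a single forward subfamily of adjacency, and the construction recalled in Section~\ref{section:Adjacency areas, diagrams and special nice families} yields a forward adjacency area $A\subset Z$ containing an infinite component of each $T^+(\alpha_j^+,h,\Or)$ with $\Or_j\subset Z$; in particular $A$ contains every point of $\Or\cap Z$ except finitely many. By Corollary~\ref{cor:translation in adjacency areas} we may conjugate so that $h$ is isotopic, relatively to $\Or$, to a homeomorphism acting as a translation on $A$, and the straightening principle (Lemma~\ref{lemma: straightening principle}), applied to $\alpha$, $\partial A$, the nice family and their iterates, gives $h'\in [h;\Or]$ sending each of these lines/arcs to its geodesic representative.

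\textbf{Trapping $h^n(\alpha)_\#$ inside $A$.} Let $\alpha\subset Z$ be backward proper with endpoints $x,h(x)\in\Or_i\subset Z$. The key step is to prove that $h^n(\alpha)_\#\subset A$ for all $n$ large enough. For large $n$ the endpoints $h^n(x),h^{n+1}(x)$ of $h^n(\alpha)_\#$ lie deep inside $A$, and by Proposition~\ref{prop:first properties of the adjacency areas}(3) the arc $h^n(\alpha_i^+)_\#$ (shifted by the appropriate iterate so as to share endpoints with $h^n(\alpha)_\#$) is itself contained in $A$. The two arcs $h^n(\alpha)_\#$ and $h^n(\alpha_i^+)_\#$ cobound a disk $D_n\subset \R^2$, and the problem reduces to showing $D_n\cap\Or=\emptyset$ for $n$ large, so that $h^n(\alpha)_\#$ is isotopic to $h^n(\alpha_i^+)_\#$ and hence equal to it as a geodesic. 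Any orbit point in $D_n$ would have to belong to $\Or\cap Z$, hence (with only finitely many exceptions) to $A$. Using the local finiteness of $(h^k(\partial A)_\#)_{k\geq 0}$ (Proposition~\ref{prop:first properties of the adjacency areas}(2)) together with the pairwise disjointness of the geodesic iterates of $\alpha$ (valid since $\alpha$ is a homotopy translation arc) and the minimality of the intersection of $h^n(\alpha)_\#$ with $\partial A$, one checks that a persistent orbit point inside $D_n$ for infinitely many $n$ would produce either a self-intersection among the iterates $\{h^n(\alpha)_\#\}_n$ or a pair of nested intersections with $\partial A$ that cannot coexist with the local finiteness of the $h^k(\partial A)_\#$.

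\textbf{Conclusion and main obstacle.} Once $h^n(\alpha)_\#\subset A$ for all $n\geq N$, project $h^N(\alpha)_\#$ to the quotient cylinder $\C=A/h'$, where it becomes a closed arc at one of the marked points. Each subsequent iterate $h^{N+k}(\alpha)_\#$ is the translate, in the translation $h'|_A$, of the corresponding lift by $k$ fundamental domains of $\C$, and therefore eventually leaves every compact set of $\R^2$; this is exactly forward propriety of $\alpha$. The main obstacle is the trapping step: the geodesic $h^n(\alpha)_\#$ is a priori constrained only by its isotopy class in $\R^2-\Or$, so one must rule out persistent loops in $D_n$ around orbit points of $\Or\cap(Z\setminus A)$. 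This is where the local finiteness of $(h^k(\partial A)_\#)_{k\geq 0}$ and the homotopy translation property of $\alpha$ (its geodesic iterates being pairwise disjoint) do the heavy lifting, forcing any such obstruction to be pushed out of $D_n$ under forward iteration.
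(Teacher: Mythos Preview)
Your setup and conclusion are fine, but the trapping step --- showing that $h^n(\alpha)_\#\subset A$ for all large $n$ --- is precisely the heart of the matter, and your argument there is not a proof. You write that ``one checks'' that a persistent orbit point in $D_n$ would force a self-intersection among the $h^n(\alpha)_\#$ or a contradiction with local finiteness of $(h^k(\partial A)_\#)_{k\geq 0}$, but neither claim is established, and neither follows easily from what you have set up. The pairwise disjointness of the geodesic iterates of $\alpha$ does \emph{not} by itself prevent them from winding persistently around orbit points of $\Or\cap Z$: the example of Figure~\ref{figu:ReebB2} shows exactly this phenomenon (a backward proper arc whose forward iterates get ``stuck'' around an orbit and never become confined to a forward adjacency area). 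To rule this out you must use the hypothesis that \emph{all} orbits of $Z$ are forward adjacent, i.e.\ that $\partial A$ is the \emph{only} positive boundary component visible from within $Z$; your sketch does not exploit this in any essential way. Note also that the local finiteness in Proposition~\ref{prop:first properties of the adjacency areas}(2) is only for $k\geq 0$, so you cannot transfer intersection information backwards to $\alpha$ via $h^{-n}(\partial A)_\#$.

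The paper's proof avoids this difficulty entirely by invoking Handel's fitted family machinery (Theorem~5.5 of \cite{Handel}). One sets $W=\R^2-\bigcup_k A_k^+$ and observes that if $h^n(\alpha_i^-)_\#\cap W\neq\emptyset$ for all $n\geq 0$, then there is a fitted family $T\subset RH(W,\delta_+W)$ consisting of subsegments of iterates of $\alpha_i^-$; since $h(Z)=Z$ these all lie in $Z$. Handel's Theorem~5.5(c) then produces some $t\in T$ with endpoints on \emph{distinct} components of $\delta_+W$, which is impossible because $\delta_+W\cap Z$ has only one component (namely $\partial A$). This is where the single-adjacency-area hypothesis actually bites. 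The fitted family theorem is doing real work here that your elementary disk argument does not replace.
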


\begin{proof} Let $[h;\Or]$ be a Brouwer class with a complete family of adjacency areas. Let $Z$ be a stable area for $[h;\Or]$ which intersects only one forward adjacency area. Denote by $A$ this adjacency area. Up to replace $h$ by $h'\in [h;\Or]$, according to the straightening principle \ref{lemma: straightening principle}, we can assume that $h(Z)=Z$. Let $\al$ be a nice family for $[h;\Or]$ disjoint from the boundary components of $Z$ (such a family exists, according to proposition \ref{prop: nice families disjoint from reducing lines}). We prove the following claim, which is a consequence of theorem $5.5$ of Handel \cite{Handel}. \\

\noindent \textbf{Claim.} For every $i$ such that $\alpha_i^-$ is in $Z$, there exists $n$ such that $h^n(\alpha_i^-)_\#$ is in $A$.\\

We use the definitions and notations of Handel \cite{Handel}, section $5$ ("Fitted family"). We denote by $W$ the Brouwer subsurface $\R^2 - \cup_k A^+_k$, where $\cup_k A^+_k$ is the union of forward adjacency areas. If $\alpha_i^- \in Z$ is such that for every $n\geq 0$, $h^n(\alpha_i^-)_\# \cap W \neq \emptyset$, then there exists a fitted family $T \subset RH(W,\delta_+W)$ such that:
\begin{itemize}
\item Every $s \in T$ is included in $Z$ (because the elements of $T$ are subsegments of iterates of $\alpha_i^-$, which is included in $Z$, and we have $h(Z)=Z)$;
\item There exists $t \in T$ whose endpoints lie on distinct components of $\delta_+ W$ (this is theorem $(5.5.c)$ of \cite{Handel}).
\end{itemize}  
Since $\delta_+W \cap Z$ has only one component (the boundary component of $A$), the last point does not hold, and thus every $\alpha_i^- \in Z$ is such that for every sufficiently big $n$, $h^n(\alpha_i^-)$ is homotopically included in $A$. It follows that every $\alpha_i^- \in Z$ is forward proper.
\end{proof}

\begin{prop*} \emph{\textbf{\ref{prop:translation area}}}
If $Z$ is a translation area, every backward (respectively forward) arc of a nice family which is included in $Z$ is also forward (respectively backward).
\end{prop*}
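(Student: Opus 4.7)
The statement is essentially a direct corollary of the preceding Lemma \ref{lemma:fitted family}, exploited symmetrically via the involution $h \leftrightarrow h^{-1}$. The plan has two short steps.

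First, I would handle the backward-to-forward direction. Since $Z$ is a translation area, by definition every orbit of $\Or$ meeting $Z$ is forward adjacent for $[h;\Or]$. Thus $Z$ satisfies exactly the hypothesis of Lemma \ref{lemma:fitted family}, and any backward proper arc of a nice family which is contained in $Z$ is therefore forward proper. In particular each $\alpha_i^-$ lying in $Z$ is both backward and forward proper, as required.

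Second, I would deduce the forward-to-backward direction by applying Lemma \ref{lemma:fitted family} to the inverse class $[h^{-1};\Or]$. The key observation is that the notions involved behave well under inversion: (a) a backward proper arc for $[h;\Or]$ is a forward proper arc for $[h^{-1};\Or]$ and vice versa, so a nice family $(\alpha_i^{\pm})_i$ for $[h;\Or]$, after swapping the superscripts $-$ and $+$, is a nice family for $[h^{-1};\Or]$; (b) forward adjacency for $[h^{-1};\Or]$ coincides with backward adjacency for $[h;\Or]$; (c) the class of a stable area is preserved (since $h(Z)$ is isotopic to $Z$ iff $h^{-1}(Z)$ is). Consequently $Z$ is a translation area for $[h^{-1};\Or]$ as well, so Lemma \ref{lemma:fitted family} applied to $[h^{-1};\Or]$ and to the swapped nice family shows that every $\alpha_i^+ \subset Z$ (a backward proper arc for $[h^{-1};\Or]$) is forward proper for $[h^{-1};\Or]$, i.e. backward proper for $[h;\Or]$.

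There is no real obstacle here beyond verifying the symmetry between $h$ and $h^{-1}$; the content of the proposition is entirely carried by Lemma \ref{lemma:fitted family}, which itself relies on Handel's fitted families argument. The only point demanding a line of care is that the boundary components of a translation area $Z$ are reducing lines for both $[h;\Or]$ and $[h^{-1};\Or]$ (they are fixed up to isotopy by $h$, hence by $h^{-1}$), so the existence of a nice family disjoint from $\partial Z$ used implicitly in Lemma \ref{lemma:fitted family} is available in the dual setting through Proposition \ref{prop: nice families disjoint from reducing lines} applied to $[h^{-1};\Or]$.
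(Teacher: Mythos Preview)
Your proposal is correct and follows essentially the same approach as the paper: the paper's proof simply states that the result follows from Lemma~\ref{lemma:fitted family} applied to the Brouwer class and to its inverse, which is exactly what you spell out in more detail. Your additional remarks on the $h \leftrightarrow h^{-1}$ symmetry and on the availability of a nice family disjoint from $\partial Z$ for the inverse class are sound elaborations of points the paper leaves implicit.
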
 

\begin{proof}
By definition, all the orbits of a translation area are backward adjacent and forward adjacent. The result is a consequence of lemma \ref{lemma:fitted family} applied to the Brouwer class, and respectively to its inverse.
\end{proof}

\subsection{Intersections between reducing lines}

\begin{lemma}[Intersection of two reducing lines] \label{lemma: Intersection of two reducing lines}
Let $[h;\Or]$ be a Brouwer mapping class and let $\Delta$ and $\Delta'$ be two reducing lines for $[h;\Or]$. We assume that $\Delta$ and $\Delta'$ are in minimal position. Then one of the following situation holds.
\begin{enumerate}
\item $\Delta \cap \Delta' = \emptyset$.
\item $\Delta \cap \Delta'$ contains exactly one point.
\item $\Delta \cap \Delta'$ is an infinite set.
\end{enumerate}
\end{lemma}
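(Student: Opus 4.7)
The plan is to assume $\Delta \cap \Delta'$ is finite with $n \geq 2$ points and derive a contradiction by producing an $h$-invariant compact disk that conflicts with the Brouwer property. First, I would apply the straightening principle (Lemma~\ref{lemma: straightening principle}) with $\F_1 = \{\Delta\}$ and $\F_2 = \{\Delta'\}$: since $\Delta$ and $\Delta'$ are reducing lines, $h(\Delta)_\# = \Delta_\#$ and $h(\Delta')_\# = \Delta'_\#$, so the lemma yields $h' \in [h;\Or]$ with $h'(\Delta_\#) = \Delta_\#$ and $h'(\Delta'_\#) = \Delta'_\#$. Replacing $\Delta$ and $\Delta'$ by their geodesic representatives, I may assume $h'$ preserves each line setwise. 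Moreover $h'|_\Or = h|_\Or$: the isotopy from $h$ to $h'$ is through homeomorphisms globally fixing $\Or$, and $\Or$ is discrete, so each path $t \mapsto h_t(x)$ with $x \in \Or$ is constant.

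The next step is to show $h'$ fixes every intersection point. Each side of $\Delta$ is a union of complete $h$-orbits (as noted right after the definition of reducing line), so $h'|_\Or = h|_\Or$ implies that $h'$ sends each side of $\Delta$ to itself; combined with global orientation preservation, this forces $h'$ to preserve the orientation of $\Delta$. An orientation-preserving self-homeomorphism of $\R \cong \Delta$ permuting the finite ordered set $\{p_1, \ldots, p_n\}$ must fix every element, and the same argument applies to $\Delta'$.

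Now choose $p_i, p_{i+1}$ consecutive along $\Delta$, which exists because $n \geq 2$. Any other intersection point $p_j$ lying on the interior of $[p_i, p_{i+1}]_{\Delta'}$ lies outside $[p_i, p_{i+1}]_\Delta$ by consecutivity, so the two arcs $[p_i, p_{i+1}]_\Delta$ and $[p_i, p_{i+1}]_{\Delta'}$ share only the endpoints $p_i, p_{i+1}$. Their union is therefore a Jordan curve, bounding a compact disk $D \subset \R^2$. Since $h'$ fixes $p_i$ and $p_{i+1}$ and preserves each line setwise, it preserves each of the two arcs (each being the unique bounded component of the corresponding line minus its two fixed points), hence $h'(\partial D) = \partial D$; and because $D$ is the bounded component of the complement, $h'(D) = D$.

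To conclude, I would split on whether $D$ meets $\Or$. If $D \cap \Or = \emptyset$, then the two geodesic arcs with common endpoints bound a disk in $\R^2 \setminus \Or$, so they are homotopic rel endpoints in the hyperbolic surface; uniqueness of geodesic representatives forces them to coincide, contradicting $\Delta \neq \Delta'$ (equivalently, this is a forbidden bigon between geodesics in minimal position). If $D \cap \Or \neq \emptyset$, then $D \cap \Or$ is a finite nonempty subset of $\Or$, invariant under $h'$ and hence under $h$, contradicting the fact recalled in the introduction that every orbit of the Brouwer homeomorphism $h$ is properly embedded and thus infinite. The step I expect to be the most delicate is the combinatorial verification that the two chosen arcs form a genuine Jordan curve and that $h'$ preserves each of them setwise; once that is in place, the Brouwer dichotomy closes the proof immediately.
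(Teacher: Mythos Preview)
Your proof is correct, and it follows a genuinely different route from the paper's. Both arguments begin identically: pass to geodesic representatives and use the straightening principle to obtain $h' \in [h;\Or]$ preserving both $\Delta$ and $\Delta'$. From there the strategies diverge.

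The paper argues dynamically: assuming at least two intersection points, it locates a bounded complementary region $C_x$ of $\R^2 - (\Delta \cup \Delta')$ containing some $x \in \Or$ (this implicitly uses the no-bigon property of minimal position), then shows $h'(C_x) \neq C_x$ since otherwise the $h$-orbit of $x$ would be bounded. Iterating, the $h'^n(C_x)$ are pairwise distinct bounded regions, forcing infinitely many intersection points.

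Your argument is static: from the finiteness hypothesis you deduce that \emph{every} intersection point is fixed by $h'$ (via the orientation argument on $h'|_\Delta$), then manufacture an $h'$-invariant Jordan disk $D$ from arcs between two points consecutive along $\Delta$. The dichotomy $D \cap \Or = \emptyset$ versus $D \cap \Or \neq \emptyset$ then closes the proof, the first case contradicting uniqueness of geodesic arcs and the second producing a finite $h$-invariant subset of $\Or$, hence an $h$-periodic point.

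What each approach buys: the paper's argument is shorter and more directly dynamical, but leaves the existence of a bounded region meeting $\Or$ somewhat implicit. Your argument is more explicit about the geometry and makes the role of minimal position (the bigon case) fully visible; the price is the extra orientation step to pin down that $h'|_\Delta$ is increasing. Both ultimately rest on the same Brouwer fact that no point of $\Or$ can have bounded forward orbit.
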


\begin{proof} 
Choose a complete hyperbolic metric on $\R^2 - \Or$. Taking their images by isotopies if necessary, we can suppose that $\Delta$ and $\Delta'$ are geodesic. We use the straightening principle \ref{lemma: straightening principle} to find a homeomorphism $h' \in [h;\Or]$ such that $h'$ preserves $\Delta$ and $\Delta'$.

If $\Delta \cap \Delta'$ contains more than one point, then there exists a bounded connected component of $\R^2 - (\Delta \cup \Delta')$ which contains one point $x \in \Or$. Denote by $C_x$ this component. Then $h'(C_x)$ is a bounded component of $\R^2-(\Delta \cup \Delta')$ different from $C_x$. Indeed, if $h'(C_x)$ coincides with $C_x$, then $h'^n(C_x)=C_x$ for every $n\geq 0$. Hence $\{h'^n(x)\}_{n\geq 0}$ is included in $C_x$. This is not possible because $h'^n(x)=h^n(x)$ for every $n$: since$h$ is a Brouwer homeomorphism, $\{h^n(x)\}_{n\geq 0}$ is not bounded (proposition $3.5$ of \cite{Guillou}).

For the same reasons, for every $k < n \in \N$, $h^n(C_x)$ is disjoint from $h^k(C_x)$. Thus there exist infinitely many bounded connected component of $\R^2 - (\Delta \cup \Delta')$. Hence $\Delta$ and $\Delta'$ have infinitely many intersection points.
\end{proof}

\begin{lemma}[Intersection between a reducing line and a translation area] \label{lemma: Intersection between a reducing line and a translation area}
Let $[h;\Or]$ be a Brouwer mapping class and let $Z$ be a translation area for $[h;\Or]$. Let $\Delta$ be a reducing line. If there exists a boundary component $L$ of $Z$ such that $L$ and $\Delta$ are not homotopically disjoint, then $\Delta \cap L$ is an infinite set.
\end{lemma}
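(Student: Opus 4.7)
I argue by contradiction. By Lemma~\ref{lemma: Intersection of two reducing lines}, if $L$ and $\Delta$ are in minimal position then $|L\cap\Delta|\in\{0,1,\infty\}$, and the hypothesis that $L$ and $\Delta$ are not homotopically disjoint rules out $0$. It therefore suffices to exclude the case $|L\cap\Delta|=1$. Assume for contradiction that $L\cap\Delta=\{p\}$, a transverse crossing.

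\textbf{Step 1 (Straightening).} Apply the straightening principle (Lemma~\ref{lemma: straightening principle}) to the singleton families $\F_1=\{L\}$ and $\F_2=\{\Delta\}$ (trivially internally disjoint, in minimal position with each other). This yields a representative $h'\in[h;\Or]$ with $h'(L)=L$ and $h'(\Delta)=\Delta$, so $h'(p)=p$. Let $Z_L$ be the component of $\R^2\setminus L$ containing $Z$; orientation preservation together with $h'(Z)=Z$ gives $h'(Z_L)=Z_L$. Set $\Delta^+:=\Delta\cap\overline{Z_L}$: a proper ray with endpoint $p$, preserved set-wise by $h'$. Since $L$ bounds $Z$ on the $Z_L$-side, an initial segment of $\Delta^+$ from $p$ lies in $Z$.

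\textbf{Step 2 (Translation model on $Z$).} Since $Z$ is a translation area, Proposition~\ref{prop:translation area} together with the construction of adjacency areas containing $Z$ and Corollary~\ref{cor:translation in adjacency areas} provides a representative $h''\in[h;\Or]$ and a homeomorphism $\chi$ of the plane such that $\chi h''\chi^{-1}$ acts as the translation $T(x,y)=(x+1,y)$ on $\chi(Z)$. Arrange $\chi$ so that $\chi(L)$ is the horizontal line $\{y=0\}$ and $\chi(Z)$ lies in $\{y>0\}$. The orbits of $\chi(\Or\cap Z)$ are horizontal $T$-orbits at distinct positive levels $y_0^1<y_0^2<\cdots$ having no accumulation.

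\textbf{Step 3 (Restricting $\Delta^+$).} Let $C$ be the component of $\chi(\Delta^+\cap Z)$ whose closure contains $\chi(p)=(x_0,0)$; it is an embedded connected arc starting at $(x_0,0)$. Because $\Delta$ is a reducing line, each orbit lies entirely on one side of $\chi(\Delta)$, and hence on one side of $C$. Any transverse crossing of the horizontal level $\{y=y_0^i\}$ by the connected arc $C$ would place orbit points of the $\Z$-invariant $i$-th orbit on both sides of $C$ (the orbit being $\Z$-spread along that horizontal line), violating reducibility. Therefore $C$ crosses no $\{y=y_0^i\}$, and being connected and starting from $(x_0,0)$, it must remain in the strip $\R\times[0,y_0^1)$, which contains no orbit of $\chi(\Or)$.

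\textbf{Step 4 (Deriving the contradiction).} Inside the orbit-free strip $\R\times(0,y_0^1)$, the arc $C$ from $(x_0,0)$ to infinity is isotopic rel $\chi(\Or)$ to a push-off of one half of $\chi(L)$ starting at $(x_0,0)$; transporting back by $\chi^{-1}$, $\Delta^+$ is isotopic rel $\Or$ in $\overline{Z_L}$ to a small push-off of one half of $L$ from $p$. The analogous analysis on the $\R^2\setminus\overline{Z_L}$ side forces $\Delta^-$ to lie in an orbit-free neighbourhood of $L$ there: $\Delta^-$ is an embedded connected ray from $p$ which may not split any orbit of $\Or$ (again by reducibility of $\Delta$), and the same topological argument — a connected ray starting on $L$ must stay in the unique connected component of $(\R^2\setminus\overline{Z_L})\setminus\Or$ adjacent to $L$ — shows it is isotopic rel $\Or$ to a push-off of the opposite half of $L$. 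Gluing the two isotopies exhibits $\Delta$ as isotopic rel $\Or$ to $L$. But then $\Delta$ admits a representative (a parallel translate of $L$) disjoint from $L$, contradicting the hypothesis that $L$ and $\Delta$ are not homotopically disjoint.

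\textbf{Main obstacle.} The delicate point is the analogous restriction of $\Delta^-$ on the non-$Z$ side of $L$: that stable area need not be a translation area, so the clean horizontal-strip picture is unavailable and one must argue topologically from the connectedness of $\Delta^-$, its starting point on $L$, and the non-splitting of orbits, to conclude that $\Delta^-$ lies in an orbit-free tubular neighbourhood of $L$. An alternative, more symmetric route is to exploit the $h'$-invariance of both $L$ and $\Delta^+$ together with the translation dynamics on $Z$: since $h'|_{\Delta^+}$ fixes $p$ whereas in the translation model the map $h''|_{\chi(L)}$ acts as a free translation, carefully tracking the isotopy from $h'$ to $h''$ relative to the configuration $L\cup\Delta$ produces the required contradiction directly from Step~3.
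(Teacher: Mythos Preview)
Your argument has a genuine gap, and the paper's proof takes a much shorter route.

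\textbf{Step 3 does not work.} The claim ``any transverse crossing of $\{y=y_0^i\}$ by $C$ would place orbit points on both sides of $C$, violating reducibility'' is not valid. An arc $C$ does not separate the plane, so ``both sides of $C$'' has no global meaning. More importantly, even the full line $\chi(\Delta)$ can cross a horizontal level $\{y=y_0^i\}$ (say, an even number of times) while keeping all points of $\Z\times\{y_0^i\}$ on one side. Reducibility only says each orbit lies in one component of $\R^2\setminus\Delta$; it does \emph{not} forbid $\Delta$ from crossing the line that carries an orbit. So nothing prevents $C$ from leaving the strip $\R\times[0,y_0^1)$.

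\textbf{Step 4 fails outright.} On the non-$Z$ side of $L$ there are orbits (since $L$ splits $\Or$), and $\Delta^-$ is a proper ray to infinity there. The assertion that $\Delta^-$ ``must stay in the unique connected component of $(\R^2\setminus\overline{Z_L})\setminus\Or$ adjacent to $L$'' is vacuous: $\Or$ is discrete, so that complement is a single connected set, and $\Delta^-$ trivially lies in it. This says nothing about $\Delta^-$ being isotopic to a push-off of half of $L$. In fact, generically $\Delta^-$ will separate the orbits on that side nontrivially, and there is no mechanism in your argument to rule this out. You flag this yourself as the ``main obstacle'' but do not resolve it.

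\textbf{What the paper does instead.} The paper avoids both problems by never leaving $Z$ and never arguing about isotopy classes of rays. After straightening so that $h'$ preserves $L$, $\Delta$, and the proper streamlines $T_{\alpha_j}\subset Z$, and assuming $L\cap\Delta=\{x\}$, one has $h'(x)=x$. Since $\Delta$ intersects $L$ essentially, it must also intersect some streamline $T_{\alpha_i}\subset Z$. Let $y$ be the \emph{first} point of $\Delta\cap T_{\alpha_i}$ after $x$ along $\Delta$. The open segment $]xy[\subset\Delta$ is disjoint from $L\cup T_{\alpha_i}$; since $h'$ preserves $\Delta$, $L$, $T_{\alpha_i}$ and fixes $x$, it must send $]xy[$ to itself, hence $h'(y)=y$. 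But $y$ lies on a proper homotopy streamline on which $h'$ acts by translation, so $h'$ has no fixed point there: contradiction. Your ``alternative route'' in the final paragraph gestures toward exactly this idea, but the contradiction must be located on a streamline $T_{\alpha_i}$ inside $Z$, not on $L$ itself.
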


\begin{proof}The line $L$ is a reducing line, hence it is isotopic to its image by $h$. Choose a complete hyperbolic metric on $\R^2 - \Or$. Suppose that $L$ and $\Delta$ are geodesic. For every orbit $\Or_i$ of $\Or$ included in $Z$, we choose a homotopic proper translation arc $\alpha_i$ included in $Z$ such that the $\alpha_i$'s are mutually disjoint (given by proposition \ref{prop: nice families disjoint from reducing lines}). If $\alpha$ is one of these homotopy translation arcs, we denote by $T_\alpha$ the proper streamline $\cup_{n \in \Z} h^n(\alpha)_\#$. Since $L$ and $\Delta$ are not homotopically disjoint, there exists $\alpha_i$ such that $T_{\alpha_i} \cap \Delta \neq \emptyset$. Since $T_{\alpha_j}$ and $L$ are disjoint for every $j$, the straightening principle \ref{lemma: straightening principle} gives us a homeomorphism $h' \in [h;\Or]$ which preserves $L$, $\Delta$ and $T_{\alpha_j}$ for every $i$.

Suppose that $\Delta \cap L$ is not infinite. According to lemma \ref{lemma: Intersection of two reducing lines}, since this intersection is not empty, it contains only one point, say $x$. In particular, we have $h'(x)=x$. Choose an orientation on $\Delta$. Let $y$ be the first intersection point between $\Delta$ and $T_{\alpha_i}$ after $x$ on $\Delta$. Denote by $[xy]$ the segment of $\Delta$ between $x$ and $y$. We have $h'(y) \in T_{\alpha_i}$ and $h'(]xy[) \cap (L \cup T_{\alpha_i}) = \emptyset$, hence $y=h'(y)$. This gives a contradiction because $y$ is contained in a proper translation arc for $h'$.
\end{proof}

\begin{lemma}[Intersection between reducing lines: infinite set case] \label{lemma: Intersection between reducing lines: infinite set case}
Let $[h;\Or]$ be a Brouwer mapping class and let $\Delta$ and $\Delta'$ be two reducing lines for $[h;\Or]$. We assume that $\Delta$ and $\Delta'$ are in minimal position. 

\noindent If $\Delta \cap \Delta'$ is an infinite set, then $\Delta \cup \Delta'$ is included in a translation area.
\end{lemma}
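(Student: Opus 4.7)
The plan is to apply the straightening principle (Lemma \ref{lemma: straightening principle}) to $\{\Delta,\Delta'\}$ in order to find $h' \in [h;\Or]$ preserving both geodesic representatives, still denoted $\Delta$ and $\Delta'$. The proof of Lemma \ref{lemma: Intersection of two reducing lines} already establishes that the Brouwer property prevents any point of $\Or$ from lying in a bounded component of $\R^2 \setminus (\Delta \cup \Delta')$ and that the infinitely many bounded components are pairwise disjoint and permuted by $h'$. Since $\Delta$ and $\Delta'$ are geodesic for a complete hyperbolic metric of the first kind, their intersection is discrete; consequently the infinitely many intersection points can only accumulate at the ends of $\Delta$ and $\Delta'$.

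Next I would choose a nice family $\al$ disjoint from $\Delta \cup \Delta'$ (Proposition \ref{prop: nice families disjoint from reducing lines}), together with an associated complete family of adjacency areas. By Lemma \ref{lemma: adjacency areas and reducing lines}, each of $\Delta$ and $\Delta'$ meets at most two adjacency areas, and each unbounded end of $\Delta$ (resp.\ $\Delta'$) is confined in one such area. The accumulation of $\Delta \cap \Delta'$ at infinity forces a pairing of ends: an end of $\Delta$ at which intersections accumulate must be asymptotic to an end of $\Delta'$, and hence lies in the same adjacency area as that end of $\Delta'$. I would then argue, again using that no orbit of $\Or$ can remain in a bounded region, that the two adjacency areas at the two paired ends are necessarily of opposite types: one backward adjacency area $A^-$ and one forward adjacency area $A^+$.

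Once the unbounded parts of both $\Delta$ and $\Delta'$ are confined to the common $A^-$ and $A^+$, I would build the desired translation area as a stable area $Z$ bounded by a reducing line disjoint from $\Delta \cup \Delta'$ whose two unbounded components enter the interiors of $A^-$ and $A^+$, so that $Z$ contains $\Delta \cup \Delta'$ as well as every orbit of $\Or$ lying near it. Every orbit of $\Or$ meeting $Z$ then has its backward half-streamline entering $A^-$ and its forward half-streamline entering $A^+$, so that all orbits of $Z$ are simultaneously backward adjacent and forward adjacent. Hence $Z$ is a translation area, and by construction $\Delta \cup \Delta' \subset Z$.

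The hardest step will be the typing step: ruling out the configurations in which the paired ends of $\Delta$ and $\Delta'$ enter adjacency areas of the same type (as may happen for a single reducing line in general, cf. Figure \ref{figu:carre}), so that the two shared adjacency areas really form one backward and one forward class. I expect this to follow from the fact that $h'$ shifts the infinite sequence of intersection points at each paired end in a translation-like fashion, which, combined with the Brouwer property preventing bounded orbits, forbids both paired ends from sharing a same-type adjacency area as soon as $\Delta \cap \Delta'$ is infinite.
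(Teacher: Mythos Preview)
Your outline contains two genuine gaps that block the argument.

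\textbf{First gap.} You assert that the proof of Lemma \ref{lemma: Intersection of two reducing lines} shows that no point of $\Or$ lies in a bounded component of $\R^2 \setminus (\Delta \cup \Delta')$. This is the opposite of what that proof establishes: minimal position forces the \emph{existence} of a bounded component $C_x$ containing some $x \in \Or$, and the Brouwer property then forces the $h'^n(C_x)$ to be pairwise distinct. Such orbit points in bounded components are precisely what the paper exploits.

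\textbf{Second gap.} You invoke Proposition \ref{prop: nice families disjoint from reducing lines} to obtain a nice family disjoint from $\Delta \cup \Delta'$. But that proposition applies to a \emph{reducing set}, which by definition consists of mutually disjoint reducing lines; here $\Delta$ and $\Delta'$ intersect infinitely, so the hypothesis fails. Without this, your ``pairing of ends'' argument has no foothold, and the typing step (ruling out same-type adjacency areas) remains a bare sketch.

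\textbf{How the paper proceeds instead.} The paper takes the orbit point $x \in \Or_i$ in a bounded component $C_x$ and lets $A^-, A^+$ be the backward and forward adjacency areas that $\Or_i$ meets --- so the typing comes for free. It then applies Proposition \ref{prop: nice families disjoint from reducing lines} to $\Delta'$ \emph{alone} to get a forward proper arc $\alpha^+$ for $\Or_i$ disjoint from $\Delta'$; since $\alpha^+$ joins $x \in C_x$ to $h'(x) \notin C_x$, it must cross $\Delta$, and iterating shows $\Delta$ enters $A^+$. A symmetric argument with a backward arc gives $A^-$, and Lemma \ref{lemma: adjacency areas and reducing lines} then pins $\Delta$ (and, interchanging roles, $\Delta'$) to exactly $A^-$ and $A^+$. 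The translation area is then cut out by the outer boundary lines $L_l, L_r$ of the unbounded left and right components of $\R^2 \setminus (\Delta \cup \Delta')$.
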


\begin{proof} Choose a complete hyperbolic metric on $\R^2 - \Or$. Isotopying if necessary, we can assume that $\Delta$ and $\Delta'$ are geodesic. The straightening principle \ref{lemma: straightening principle} gives us $h' \in [h;\Or]$ which preserves $\Delta$ and $\Delta'$.
Choose a complete family of mutually disjoint adjacency areas for $[h;\Or]$. Choose a bounded connected component $C_x$ of the complement of $\Delta \cup \Delta'$ which contains a point $x$ of an orbit $\Or_i$ of $\Or$. Denote by $A^-$ and $A^+$ the backward and forward adjacency areas of the chosen complete family which are intersected by $\Or_i$. As shown in the proof of lemma \ref{lemma: Intersection of two reducing lines}, $h'(C_x)$ is a bounded connected component of $R^2 - (\Delta \cup \Delta')$ different from $C_x$. Hence every path from $x$ to $h'(x)$ intersects $\Delta \cup \Delta'$.

According to proposition \ref{prop: nice families disjoint from reducing lines}, there exists a forward proper arc $\alpha^+$ for $\Or_i$ which joints $x$ to $h'(x)$ and which is disjoint from $\Delta'$. Denote by $T^+(\alpha^+)$ the forward half streamline $\cup_{n\geq 0} h^n(\alpha^+)_\#$. Note that $T^+(\alpha^+)$ is disjoint from $\Delta'$. According to proposition \ref{prop:first properties of the adjacency areas}, there exists an unbounded component of $T^+(\alpha^+)$ which is included in $A^+$.
Since $T^+(\alpha^+)$ is proper and disjoint from $\Delta'$, the straightening principle \ref{lemma: straightening principle} give us $h_1 \in [h;\Or]$ which preserves $T^+(\alpha^+)$, $\Delta$ and $\Delta'$. The arc $\alpha^+$ intersects $\Delta$, hence $h_1^n(\alpha^+)$ also intersects $\Delta$ for every $n\in \N$. It follows that $\Delta$ intersects $A^+$. 

The same argument with a backward proper arc $\alpha^-$ disjoint from $\Delta'$ shows that $\Delta$ also intersects $A^-$. According to lemma \ref{lemma: adjacency areas and reducing lines}, every geodesic reducing line intersects at most two adjacency areas: for $\Delta$, this adjacency areas are $A^-$ and $A^+$. Interchanging $\Delta$ and $\Delta'$, we get by the same arguments that $\Delta'$ also intersects $A^-$ and $A^+$.

We choose an orientation on $\Delta$ and $\Delta'$ such that they are oriented from $A^-$ to $A^+$. There exists an unbounded connected component of the complementary of $\Delta \cup \Delta'$ which is on the left of $\Delta$ and $\Delta'$. We denote by $L_l$ its boundary component. Likewise, we denote by $L_r$ the boundary component of the unbounded connected component of the complementary of $\Delta \cup \Delta'$ which is on the right of $\Delta$ and $\Delta'$. The two lines $L_l$ and $L_r$ are proper, because they are unions of segments of two topological lines. Moreover, they are preserved by $h'$.

Now we have the following cases, depending on the positions of the orbits:
\begin{itemize}
\item If $L_r$ and $L_l$ split the set of orbits, then their geodesic representatives $(L_r)_\#$ and $(L_g)_\#$ are disjoint reducing lines which intersect the same adjacency areas. Denote by $Z$ the stable area bounded by $(L_r)_\#$ and $(L_g)_\#$. Thus $Z$ intersects only two adjacency areas, $A^-$ and $A^+$, hence $Z$ is a translation area, which contains $\Delta$ and $\Delta'$;
\item If none of $L_r$ and $L_l$ split the set of orbits, then there exist only two adjacency areas. Hence $[h;\Or]$ is a translation, and the whole plane is a translation area;
\item If only one of $L_r$ and $L_l$ splits the set of orbits, $L_r$ for example, then $L_r$ is a reducing line for $[h;\Or]$. The connected component of $\R^2 - (L_r)_\#$ which contains $\Delta$ and $\Delta'$ is a translation area.
\end{itemize}
\end{proof}

\begin{lemma} [Intersection between reducing lines: case with exactly one point] \label{lemma: Intersection between reducing lines: case with exactly one point}
Let $[h;\Or]$ be a Brouwer mapping class and let $\Delta$ and $\Delta'$ be two reducing lines for $[h;\Or]$. We assume that $\Delta$ and $\Delta'$ are in minimal position. 

\noindent If $\Delta \cap \Delta'$ contains exactly one point, then there exist four reducing lines that are mutually non isotopic and homotopically disjoint and disjoint from $\Delta$ and $\Delta'$. 

Moreover, if we denote by $p$ the intersection point and by $\Delta_1$ and $\Delta_2$, respectively $\Delta'_1$ and $\Delta'_2$, the two half-lines of $\Delta-\{p\}$, respectively of $\Delta'-\{p\}$, these four reducing lines are isotopic relatively to $\Or$ to $\Delta_1 \cup \Delta_2$, $\Delta'_1 \cup \Delta_2$, $\Delta_1 \cup \Delta'_2$ and $\Delta'_1 \cup \Delta'_2$.
\end{lemma}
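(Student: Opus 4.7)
The approach is to resolve the transverse crossing at $p$ by constructing one ``L-shaped'' topological line in each of the four quadrants of $\R^2 \setminus (\Delta \cup \Delta')$, giving four mutually disjoint reducing lines in four distinct isotopy classes, each disjoint from $\Delta$ and $\Delta'$.

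First I would fix a complete hyperbolic metric, assume $\Delta, \Delta'$ are geodesic, and apply the straightening principle (Lemma \ref{lemma: straightening principle}) to $\{\Delta,\Delta'\}$ to get $h' \in [h;\Or]$ preserving both lines setwise, so that $h'$ fixes $p$. The complement $\R^2 \setminus (\Delta \cup \Delta')$ has four unbounded components (``quadrants'') $Q_1,\ldots,Q_4$ cyclically arranged around $p$, each bordered by one half-line of $\Delta$ and one of $\Delta'$. Since $\Delta$ and $\Delta'$ are reducing, every orbit of $\Or$ lies entirely on one side of each line, hence in exactly one quadrant: this gives a decomposition $\Or = A_1 \sqcup \cdots \sqcup A_4$ with $A_i \subset Q_i$, and each $A_i$ is $h$-invariant as the intersection of two $h$-invariant subsets of $\Or$.

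The crucial combinatorial step is to prove every $A_i$ is nonempty. If, say, $A_1 = \emptyset$, the partitions of $\Or$ induced by $\Delta$ and $\Delta'$ are $\{A_2, A_3 \cup A_4\}$ and $\{A_4, A_2 \cup A_3\}$, which are nested since $A_2 \subsetneq A_2 \cup A_3$. In a planar surface, two nested partitions of a discrete subset are realized by disjoint topological lines, so $[\Delta]$ and $[\Delta']$ would admit disjoint representatives, contradicting the minimal position hypothesis (which forces at least the intersection $\Delta \cap \Delta' = \{p\}$ to be present for every pair of representatives). I expect this to be the main obstacle.

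Once all $A_i$ are nonempty, in each $Q_i$ I would construct a proper topological line $L_i$ that coincides with the two bounding half-lines outside a small disk around $p$ and takes a short detour strictly inside $Q_i$ near $p$. Then $L_i$ is isotopic to the union of these two half-lines (recovering the isotopy types listed in the statement), and by construction the four $L_i$'s are pairwise disjoint and disjoint from $\Delta$ and $\Delta'$. Each $L_i$ separates the nonempty $A_i$ from the nonempty union of the other three $A_j$'s, so splits $\Or$. Since $A_i$ is $h$-invariant, $L_i$ and $h(L_i)$ induce the same partition of $\Or$; as isotopy classes of topological lines in the planar surface $\R^2 \setminus \Or$ are determined by the partition they induce on $\Or$, $L_i$ is properly isotopic to $h(L_i)$ relative to $\Or$, so $L_i$ is a reducing line. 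The four induced partitions are pairwise distinct (each singles out one nonempty $A_i$), so the four $L_i$'s are mutually non-isotopic, as required.
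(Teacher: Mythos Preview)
Your proposal is correct and follows essentially the same approach as the paper: both construct the four reducing lines as the boundary components of a tubular neighborhood of $\Delta \cup \Delta'$ (equivalently, one ``L-shaped'' line per quadrant). Your argument is somewhat more explicit than the paper's --- you unpack the minimal-position claim that each quadrant contains orbits via the nesting argument, and you justify that each $L_i$ is a reducing line via the partition-determines-isotopy-class principle, whereas the paper simply invokes minimal position and the straightening principle without further comment --- but the underlying idea is identical.
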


\begin{figure}[h]
\centering
\includegraphics[scale=0.8]{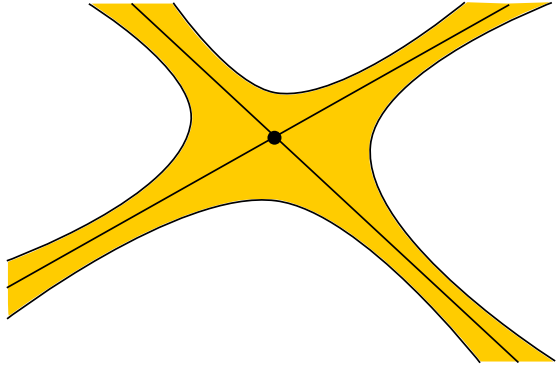}
\caption{Neighborhood of $\Delta \cup \Delta'$.}
\label{figu:prop_1intersection}
\end{figure}

\begin{proof} Choose a complete hyperbolic metric on $\R^2 - \Or$. Isotopying if necessary, we can assume that $\Delta$ and $\Delta'$ are geodesic. The straightening principle \ref{lemma: straightening principle} gives us $h' \in [h;\Or]$ which preserves $\Delta$ and $\Delta'$.

Consider a proper open neighborhood $\mathcal{U}$ of $\Delta \cup \Delta'$ which does not intersect $\Or$ and which is isotopic to $\Delta \cup \Delta'$ relatively to $\Or$ (as in figure \ref{figu:prop_1intersection}). The complement of $\mathcal{U}$ has $4$ connected components. Each of them contains at least one orbit, because $\Delta \cup \Delta'$ are in minimal position. Hence the boundary component of the closure of $\mathcal{U}$ in $\R^2$ is a union of $4$ reducing lines, mutually non isotopic, mutually disjoint, and each of them is disjoint from $\Delta \cup \Delta'$.
\end{proof}

\subsection{Study of the set of walls}

\subsubsection{Maximal translation areas}
We show that there exist finitely many maximal translation areas (proposition \ref{prop: finite number of max. translation areas}), and that the boundary components of this areas are walls (proposition \ref{prop: boundary components of max. translation areas}).

\begin{prop} \label{prop: finite number of max. translation areas}
Let $[h;\Or]$ be a Brouwer mapping class. Up to isotopy, there exist finitely many maximal translation areas. Moreover, they are mutually homotopically disjoint.
\end{prop}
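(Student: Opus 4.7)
The plan is to associate to each maximal translation area its set of orbits and argue that distinct maximal translation areas (up to isotopy) have disjoint orbit sets, which immediately bounds their number by $r$. The homotopic disjointness will then follow from another application of the same intersection analysis to the boundary reducing lines.

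First, I would dispose of the ``empty'' case: a stable area $Z$ with $\Or \cap Z = \emptyset$ is a translation area vacuously, but if it is bounded by reducing lines then it can always be enlarged to the translation area obtained by removing one of its boundary components from the ambient reducing set and absorbing the adjacent stable area (the orbit structure is governed solely by the orbits inside, which are none, so the enlarged region is still a translation area). Therefore, to establish finiteness and disjointness it suffices to treat maximal translation areas containing at least one orbit.

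The central step is to show that if $Z_1$ and $Z_2$ are two maximal translation areas sharing an orbit $\Or_j$, then $Z_1$ is isotopic to $Z_2$. Fix a complete hyperbolic metric on $\R^2 - \Or$ and replace all boundary reducing lines by their geodesic representatives (which is legitimate by the straightening principle \ref{lemma: straightening principle}). For any boundary line $L$ of $Z_1$ and $L'$ of $Z_2$, Lemma \ref{lemma: Intersection of two reducing lines} gives three possibilities. If $L \cap L'$ is infinite, Lemma \ref{lemma: Intersection between reducing lines: infinite set case} places $L \cup L'$ inside a translation area $\tilde Z$; tracking on which side of $L, L'$ the orbit $\Or_j$ sits (on the same side for both, since both $Z_1$ and $Z_2$ contain $\Or_j$), one sees $\tilde Z$ strictly contains either $Z_1$ or $Z_2$, contradicting maximality. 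If $L \cap L'$ consists of a single point, Lemma \ref{lemma: Intersection between reducing lines: case with exactly one point} produces four auxiliary reducing lines isotopic to reconnections of the half-lines of $L$ and $L'$; a case analysis of which of these contain $\Or_j$ on its $Z_1$-side yields a new reducing line bounding a strictly larger translation area than $Z_1$ or $Z_2$, again contradicting maximality. Hence every pair $(L, L')$ must be disjoint, which forces the boundaries to nest or coincide; combined with $\Or_j \in Z_1 \cap Z_2$ and maximality, this gives $Z_1 = Z_2$ up to isotopy.

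With disjoint orbit sets established, finiteness follows from $|\Or/h| = r$. For homotopic disjointness of two distinct (hence orbit-disjoint) maximal translation areas $Z_1, Z_2$, I would rerun the same intersection dichotomy on geodesic boundaries: infinite intersections would enlarge one of the $Z_i$ (contradicting maximality), and single intersection points would again yield auxiliary reducing lines enlarging one of them. Only pairwise disjoint geodesic boundaries remain, giving homotopically disjoint $Z_1, Z_2$. The main obstacle I expect is the single-intersection-point case: one must carefully verify that, in the configurations compatible with maximality of $Z_1, Z_2$, at least one of the four auxiliary reducing lines in Lemma \ref{lemma: Intersection between reducing lines: case with exactly one point} bounds a translation area strictly larger than one of the $Z_i$, which requires tracking the positions of the shared orbit (in step two) or of the disjoint orbit sets (in the last step) relative to the four half-lines cut out at the intersection point.
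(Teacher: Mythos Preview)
Your overall plan --- associate orbits to maximal translation areas, show distinct ones cannot share an orbit, and deduce finiteness --- is the same as the paper's. The gap is in how you treat the single-intersection case. You flag it yourself as ``the main obstacle'' and propose to run a case analysis on the four auxiliary reducing lines from Lemma~\ref{lemma: Intersection between reducing lines: case with exactly one point}, but you never carry it out, and it is not clear that such an analysis produces a strictly larger \emph{translation} area: the auxiliary lines bound the four quadrants, but nothing guarantees that the orbits in the enlarged region are all backward and forward adjacent.

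The paper sidesteps this entirely by invoking Lemma~\ref{lemma: Intersection between a reducing line and a translation area}: since $L$ is a boundary of a translation area, any reducing line meeting it must do so in infinitely many points. So the single-intersection case simply cannot occur here, and Lemma~\ref{lemma: Intersection between reducing lines: case with exactly one point} is irrelevant. Once that is observed, only the infinite-intersection case remains; the paper then argues via adjacency areas that both $Z$ and $Z'$ meet the same backward and forward adjacency areas, whence $Z\cup Z'$ itself is a translation area, contradicting maximality. Your version of this step (``$\tilde Z$ strictly contains either $Z_1$ or $Z_2$'') is also underspecified: Lemma~\ref{lemma: Intersection between reducing lines: infinite set case} gives a translation area containing $L\cup L'$, but you still need to explain why it swallows one of the $Z_i$ --- the adjacency-area argument is what makes that work cleanly.

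A minor point: your handling of the ``empty'' case is incorrect as written. Absorbing an adjacent stable area may introduce orbits that are \emph{not} mutually backward and forward adjacent, so the enlarged region need not be a translation area. The paper does not argue this step; it simply takes for granted that maximal translation areas contain orbits (consistent with how translation areas are used throughout).
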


\begin{rem} The statement \ref{prop: finite number of max. translation areas} is generally false if we replace \emph{maximal translation area} by \emph{translation area}. Indeed, if a translation area $Z$ of a Brouwer class $[h;\Or]$ contains at least two orbits, then there are infinitely many non isotopic sub-translation areas included in $Z$. See figure \ref{figu:reduction-lines_translation} for examples of reducing lines for the translation.
\end{rem}

\begin{figure}[h]
\centering
\includegraphics[scale=1]{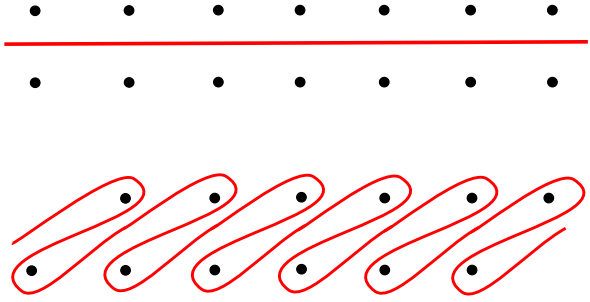}
\vspace{0.2cm}
\caption{Example of two reducing lines for the translation: the complement are translation areas.}
\label{figu:reduction-lines_translation}
\end{figure}

\begin{proof}[Proof of proposition \ref{prop: finite number of max. translation areas}]
We choose a complete hyperbolic metric on $\R^2 - \Or$ and a complete family of adjacency areas for $[h;\Or]$. Let $Z$ and $Z'$ be two non homotopic maximal translation areas. Denote by $A^-,A^+$, respectively $B^-,B^+$, the adjacency areas intersected by $Z$, respectively $Z'$. We claim that:
\begin{enumerate}
\item The boundary components of $Z$ and $Z'$ are homotopically disjoint;
\item No boundary component of $Z$ is included in $Z'$.
\end{enumerate}
\textit{Proof of (1).} If a boundary component $L$ of $Z$ intersects a boundary component $L'$ of $Z'$, then $L$ and $L'$ have infinitely many intersection points (according to lemma \ref{lemma: Intersection between a reducing line and a translation area}). Hence $L$ and $L'$ are included in the same translation area (according to lemma \ref{lemma: Intersection between reducing lines: infinite set case}). Denote by $Z''$ this translation area, and by $C^-$ and $C^+$ the two adjacency areas intersected by $Z''$. Since $L$ and $L'$ are reducing lines, they intersect at most two adjacency areas: $C^-$ and $C^+$. The cyclic order of the adjacency areas at infinity is such that backward areas and forward areas alternate (by definition of adjacency areas). It follows that $A^- =B^-=C^-$ and $A^+=B^+=C^+$. 

According to lemma \ref{lemma: straightening principle}, there exists $h' \in [h;\Or]$ such that $h'(Z)=Z$ and $h'(Z')=Z'$. It follows that the boundary components of $Z\cup Z'$ are reducing lines, hence $Z\cup Z'$ is a stable area. Since $Z\cup Z'$ intersects only two adjacency areas ($C^-$ and $C^+$), it is a translation area. This gives a contradiction with the maximality of $Z$ and $Z'$ as translation areas.\\

\noindent \textit{Proof of (2).} If a boundary component $L$ of $Z$ is included in $Z'$, then there exists an orbit $\Or_i$ included in $Z \cap Z'$. Hence again $A^- =B^-$ and $A^+=B^+$, which gives a contradiction.\\

We complete the proof of proposition \ref{prop: finite number of max. translation areas}. Every maximal translation area contains at least one orbit. Since there are finitely many orbits and since this areas are mutually disjoint, there are finitely many maximal translation areas.
\end{proof}

\begin{prop}\label{prop: boundary components of max. translation areas}
Let $[h;\Or]$ be a Brouwer mapping class and let $Z$ be a maximal translation area. Each isotopy class of a boundary component of $Z$ is a wall of $[h;\Or]$.
\end{prop}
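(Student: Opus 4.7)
Plan: The plan is to argue by contradiction, mimicking the strategy of the proof of proposition \ref{prop: finite number of max. translation areas}. Suppose that $L$ is a boundary component of the maximal translation area $Z$ whose isotopy class is not a wall. Then there exists a reducing line $\Delta$ for $[h;\Or]$ which is not homotopically disjoint from $L$. Replacing $L$ and $\Delta$ by their geodesic representatives, I may assume that they are in minimal position and do intersect. Lemma \ref{lemma: Intersection between a reducing line and a translation area}, applied to $Z$ and $\Delta$, will then force $L\cap \Delta$ to be infinite, and lemma \ref{lemma: Intersection between reducing lines: infinite set case} will produce a translation area $Z'$ containing $L\cup \Delta$.

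The second step is to identify the adjacency data of $Z$ and $Z'$. Fix a complete family of adjacency areas; let $A^-, A^+$ and $B^-, B^+$ be the backward and forward adjacency areas intersected by $Z$ and $Z'$ respectively. By lemma \ref{lemma: adjacency areas and reducing lines} the geodesic reducing line $L$ meets at most two adjacency areas; since $L$ is a boundary component of $Z$ (hence separates the orbits of $Z$ from the remaining orbits of $\Or$) its two ends at infinity must lie in $A^-$ and $A^+$, and since $L\subset Z'$ they must also lie in $B^-$ and $B^+$. The alternation of backward and forward adjacency areas in the cyclic order at infinity then forces $A^-=B^-$ and $A^+=B^+$.

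The third step is to build from this a translation area strictly larger than $Z$, contradicting maximality. Using the straightening principle \ref{lemma: straightening principle} we may choose $h'\in[h;\Or]$ simultaneously preserving $Z$ and $Z'$, hence preserving $Z\cup Z'$. Exactly as at the end of the proof of proposition \ref{prop: finite number of max. translation areas}, the boundary components of $Z\cup Z'$ are reducing lines ($L$ and any other boundary of $Z$ lying inside $Z'$, or boundary of $Z'$ lying inside $Z$, are removed when forming the boundary of the union), so $Z\cup Z'$ is a stable area; since it meets only the adjacency areas $A^-$ and $A^+$ it is a translation area. Finally, because $Z'$ is an open neighborhood of $L$, it contains points on both sides of $L$, and the side of $L$ opposite to $Z$ lies outside $Z$. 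Hence $Z\cup Z'\supsetneq Z$, contradicting the maximality of $Z$.

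The main obstacle I expect is the assertion that $Z\cup Z'$ is a genuine stable area, since the boundaries of $Z$ and of $Z'$ need not be a priori mutually disjoint reducing lines. This is precisely the technicality implicit in the proof of proposition \ref{prop: finite number of max. translation areas}. It will be handled by invoking the trichotomy of lemma \ref{lemma: Intersection of two reducing lines}: any infinite-intersection configuration between a boundary of $Z$ and a boundary of $Z'$ is absorbed into a still larger translation area via lemma \ref{lemma: Intersection between reducing lines: infinite set case}, and any isolated intersection can be resolved by replacing the two lines by four disjoint reducing lines as produced by lemma \ref{lemma: Intersection between reducing lines: case with exactly one point}. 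In either case one obtains an honest reducing set realising $Z\cup Z'$ as one of its connected components, completing the contradiction.
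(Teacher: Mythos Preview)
Your approach is correct and shares the same two opening moves as the paper: apply lemma \ref{lemma: Intersection between a reducing line and a translation area} to force $L\cap\Delta$ infinite, then lemma \ref{lemma: Intersection between reducing lines: infinite set case} to place $L\cup\Delta$ inside a translation area $Z'$. The difference is in how the contradiction is reached. You reproduce inside this proof the argument of proposition \ref{prop: finite number of max. translation areas} (matching adjacency areas, forming $Z\cup Z'$, checking it is a stable translation area). The paper instead simply \emph{cites} proposition \ref{prop: finite number of max. translation areas}: the translation area $Z'$ sits in some maximal translation area, which by that proposition is homotopically disjoint from $Z$ unless it equals $Z$; since it contains the boundary line $L$ of $Z$, it cannot be disjoint from $Z$, so it equals $Z$, whence $\Delta\subset Z$---impossible because $\Delta$ crosses $L$. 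This bypasses your ``main obstacle'' entirely.

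Two remarks on your longer route. First, your worry about single-point intersections between boundaries of $Z$ and $Z'$ is unnecessary: both $Z$ and $Z'$ are translation areas, so lemma \ref{lemma: Intersection between a reducing line and a translation area} already forces any such intersection to be infinite. Second, the resolution you sketch for infinite intersections (absorb into a larger translation area and iterate) is valid but is exactly what proposition \ref{prop: finite number of max. translation areas} packages; invoking it directly is cleaner than redoing it.
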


\begin{proof}
Let $L$ be a boundary component of a maximal translation area $Z$. We need to show that if $\Delta$ is a reducing line which is non isotopic to $L$, then $\Delta \cap L =\emptyset$. According to lemma \ref{lemma: Intersection between a reducing line and a translation area}, if a reducing line $\Delta$ intersects $L$ then $\Delta \cap L$ is an infinite set. According to lemma \ref{lemma: Intersection between reducing lines: infinite set case}, it follows that $L \cup \Delta$ is included in a translation area. Since maximal translation areas are mutually disjoint (according to proposition \ref{prop: finite number of max. translation areas}), $\Delta$ is included in $Z$ (which contains $L$): this is impossible, hence every reducing line $\Delta$ is homotopically disjoint from $L$.
\end{proof}

\subsubsection{Outside the translation areas}

This subsection completes the picture: there are finitely many maximal translation areas, mutually homotopically disjoint, and outside those areas there are only finitely many geodesic reducing lines, which intersect mutually in zero or one point.

\begin{lemma}\label{lemma: disjoint reducing lines}
Let $[h;\Or]$ be a Brouwer mapping class. Let $\Delta$ and $\Delta'$ be two disjoint reducing lines. If $\Delta$ and $\Delta'$ split the orbits into the same two subfamilies, then $\Delta$ and $\Delta'$ are isotopic.
\end{lemma}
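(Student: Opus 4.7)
The plan is topological and ultimately reduces to the Jordan--Schoenflies theorem in the plane. Since $\Delta$ is a proper topological line in $\R^2$ avoiding $\Or$, there is a homeomorphism of $\R^2$ sending $\Delta$ to the horizontal axis $\R \times \{0\}$. Because $\Delta'$ is a proper, connected topological line disjoint from $\Delta$, its image lies entirely in one of the two open half-planes; say $\R \times (0,+\infty)$. A second Schoenflies-type straightening in this half-plane, fixing $\Delta$ pointwise, then sends $\Delta'$ to $\R \times \{1\}$. Up to an ambient homeomorphism of $\R^2$, I may therefore assume $\Delta = \R \times \{0\}$ and $\Delta' = \R \times \{1\}$, so that $\R^2 - (\Delta \cup \Delta')$ has exactly three connected components: the two outer half-planes $\R \times (-\infty, 0)$ and $\R \times (1,+\infty)$, and the open middle strip $S := \R \times (0,1)$.

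The next step is to check that $S$ contains no point of $\Or$. Let $\mathcal{F}_1 \sqcup \mathcal{F}_2$ denote the common partition of $\Or$ induced by the two reducing lines, and assume, after relabeling, that the splitting by $\Delta$ gives $\mathcal{F}_1 \subset \R \times (-\infty,0)$ and $\mathcal{F}_2 \subset \R \times (0,+\infty)$. Since $\Delta'$ must realize the same partition, either $\mathcal{F}_2 \subset \R \times (1,+\infty)$, in which case $S \cap \Or = \emptyset$ immediately, or else $\mathcal{F}_2 \subset \R \times (-\infty,1)$. In the latter case, combined with $\mathcal{F}_1 \subset \R \times (-\infty,0) \subset \R \times (-\infty,1)$, all of $\Or$ would lie on one side of $\Delta'$, contradicting the fact that $\Delta'$ is a reducing line. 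Hence the closed strip $\bar S = \R \times [0,1]$ is contained in $\R^2 - \Or$.

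Finally, the affine isotopy $\Delta_t := \R \times \{t\}$, $t \in [0,1]$, is a proper isotopy from $\Delta$ to $\Delta'$ whose image sits inside $\bar S \subset \R^2 - \Or$. Pulling it back through the straightening homeomorphism produces an isotopy of $\Delta$ onto $\Delta'$ in $\R^2 - \Or$, which proves that the two reducing lines are isotopic relatively to $\Or$.

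The only nontrivial ingredient is the two-step Schoenflies straightening of the pair $(\Delta, \Delta')$ to a pair of parallel lines in $\R^2$, which is the main obstacle in spirit; once this standard planar fact is available, the argument is entirely combinatorial, and everything else falls out of the definition of ``splits $\Or$ into the same two subfamilies''.
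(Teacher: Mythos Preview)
Your proof is correct and follows essentially the same approach as the paper. The paper's own argument is extremely terse---just two sentences noting that $\Delta \cup \Delta'$ cuts the plane into three components and that the middle one is disjoint from $\Or$---while you have spelled out the Schoenflies straightening, the partition case analysis, and the explicit isotopy that the paper leaves implicit.
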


\begin{proof}
The set $\Delta \cup \Delta'$ splits the plane into three connected components. One of them (the one in the middle) is disjoint from $\Or$.
\end{proof}

\begin{prop} \label{prop: outside translation areas}
Let $[h;\Or]$ be a Brouwer mapping class. Outside the maximal translation areas, there exists only finitely many non isotopic reducing lines.
\end{prop}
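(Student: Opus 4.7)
The plan is to bound the number of isotopy classes of reducing lines outside the maximal translation areas by the finite number of partitions of $\Or$ into two non-empty $h$-invariant subsets, and then to show that at most one such isotopy class realises each given partition.

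First I would invoke proposition \ref{prop: finite number of max. translation areas} to get finiteness of maximal translation areas, and recall that any reducing line $\Delta$ induces a partition of $\Or$ into two $h$-invariant subsets $(S_1,S_2)$, of which there are at most $2^{r}$. Hence the proposition reduces to the following key claim: if $\Delta_1$ and $\Delta_2$ are two reducing lines outside the maximal translation areas inducing the same partition of $\Or$, then they are isotopic.

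To establish this key claim, I would take geodesic representatives $\Delta_1,\Delta_2$ (so that they are in minimal position) with common partition $(S_1,S_2)$ and analyse $\Delta_1\cap\Delta_2$ via lemma \ref{lemma: Intersection of two reducing lines}. The case of infinitely many intersection points will be ruled out by lemma \ref{lemma: Intersection between reducing lines: infinite set case}, which would place $\Delta_1\cup\Delta_2$ inside a translation area, and hence (by proposition \ref{prop: finite number of max. translation areas}) inside a maximal one, contradicting the hypothesis. The empty intersection case gives the isotopy directly by lemma \ref{lemma: disjoint reducing lines}.

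The hard part will be the single-intersection case. Here I would apply the neighbourhood construction from the proof of lemma \ref{lemma: Intersection between reducing lines: case with exactly one point}: the complement in $\R^2$ of a small regular neighbourhood $\mathcal{U}$ of $\Delta_1\cup\Delta_2$ splits into four closed components, one per quadrant around the intersection point. Because $\Delta_1$ and $\Delta_2$ induce the same partition, exactly two opposite quadrants meet $\Or$ (one containing $S_1$ and the other $S_2$), while the two remaining quadrants are empty of orbits. The delicate point is to check that the two boundary lines $L_1,L_2$ of the two orbit-containing quadrants are genuine reducing lines: this will rely on the fact that $h$ preserves each orbit (hence each of $S_1$ and $S_2$ setwise) together with the $h$-invariance up to isotopy of $\Delta_1\cup\Delta_2$, which together force each orbit-containing quadrant to be itself $h$-invariant up to isotopy, so that $h(L_k)\sim L_k$. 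Applying lemma \ref{lemma: disjoint reducing lines} to the pairs $(L_1,\Delta_1)$ and $(L_1,\Delta_2)$ then gives $\Delta_1\sim L_1 \sim \Delta_2$, contradicting the fact that distinct geodesics in minimal position with exactly one intersection must lie in different isotopy classes. This closes the key claim, and combined with the finiteness of the set of partitions of $\Or$ it yields the proposition.
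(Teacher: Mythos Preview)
Your proof is correct and follows the same overall strategy as the paper: both argue that outside the maximal translation areas, at most one isotopy class of reducing line can realise each partition of $\Or$, and then use the finiteness of partitions. The infinite-intersection and disjoint cases are handled identically.

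The only difference is in the single-intersection case, where you work harder than necessary. The paper's argument there is one line: two geodesic reducing lines meeting in exactly one point must have all four complementary quadrants meeting $\Or$ (this is exactly the observation made in the proof of lemma~\ref{lemma: Intersection between reducing lines: case with exactly one point}), so they cannot induce the same partition. You reach the same conclusion, but by a detour: you first observe that the same partition forces two opposite quadrants to be empty, then construct the auxiliary line $L_1$, argue it is a reducing line via $h$-invariance of the quadrant, and finally apply lemma~\ref{lemma: disjoint reducing lines} twice to obtain $\Delta_1\sim L_1\sim\Delta_2$. This works, but notice that the moment you have two empty quadrants you already contradict minimal position directly; the construction of $L_1$ and the invocation of lemma~\ref{lemma: disjoint reducing lines} are superfluous.
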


\begin{proof}
We prove that there exists only finitely many non isotopic reducing lines in every connected components of the complement of the union of the translation areas. If two such reducing lines are not homotopically disjoint, then they have only one intersection point (according to lemmas \ref{lemma: Intersection of two reducing lines} and \ref{lemma: Intersection between reducing lines: infinite set case}). Hence they do not split the orbits in the same subfamilies. This remark together with lemma \ref{lemma: disjoint reducing lines} imply that if we choose a partition of the orbits in the chosen component into two subfamilies, there exists at most one reducing line included in the complement which splits the orbits into the same partition. Since there exist only finitely many different partitions of the orbits into two subfamilies, there are only finitely many isotopy classes of reducing lines. 
\end{proof}

\subsubsection{Proof of theorem \ref{theorem: walls}}
\begin{theorem*}\emph{\textbf{\ref{theorem: walls}}}
Let $[h;\Or]$ be a Brouwer mapping class. Let $\mathcal{W}$ be a family of pairwise disjoint reducing lines containing exactly one representative of each wall for $[h;\Or]$. If $Z$ is a connected component of $\R^2 - \mathcal{W}$, then exactly one of the followings holds:
\begin{itemize}
\item $Z$ is an irreducible area;
\item $Z$ is a maximal translation area;
\item $Z$ does not intersect $\Or$.
\end{itemize}
\end{theorem*}

\noindent \textit{Proof.} Choose a complete hyperbolic metric on $\R^2 - \Or$. Up to isotopying relatively to $\Or$, we can assume that the elements of $\mathcal W$ are geodesic. According to proposition \ref{prop: boundary components of max. translation areas}, the isotopy classes of boundary components of maximal translation areas are walls. Let $Z$ be a connected component of $\R^2 - \mathcal{W}$ which is not a translation area. Suppose that $Z$ is not irreducible. Then there exists a reducing line included in $Z$ which is not homotopic to any boundary component of $Z$. According to proposition \ref{prop: outside translation areas}, $Z$ contains a finite number of mutually non isotopic reducing lines. Since they are not walls, each of them intersects another one. In particular there are at least two reducing lines included in $Z$ and not homotopic to any boundary component of $Z$.

\begin{figure}[h]
\centering
\labellist
\small\hair 2pt
\pinlabel $\delta_1$ at 102 15
\pinlabel $\delta_2$ at 93 60
\pinlabel $\delta_3$ at 130 78
\pinlabel $\delta_4$ at 175 84
\pinlabel $\delta_5$ at 207 76
\pinlabel $\delta_6$ at 203 43
\pinlabel $L$ at 182 5
\endlabellist
\includegraphics[scale=0.9]{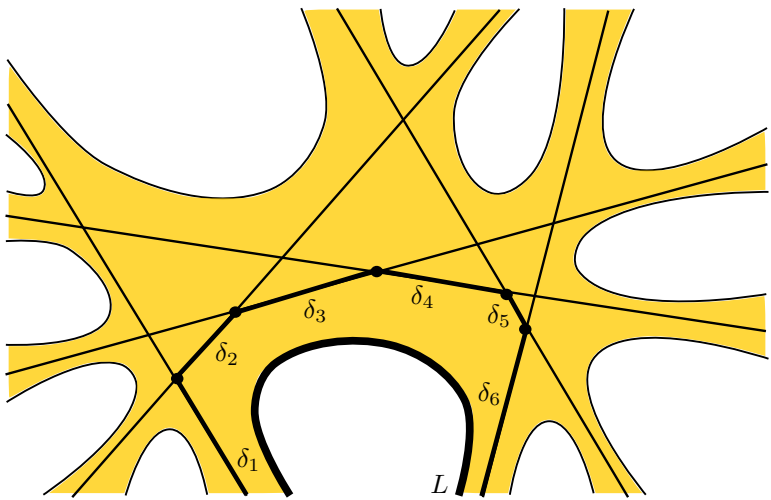}
\caption{Example of $\mathcal{U}$, $\tilde{\mathcal{U}}$ and a boundary component $L$ isotopic to $\delta_1\cup ... \cup \delta_6$.}
\label{figu:theorem_walls}
\end{figure}

 Denote by $\mathcal{U}$ the finite union of geodesic reducing lines included in $Z$ and not homotopic to any boundary component of $Z$. Denote by $\tilde{\mathcal{U}}$ a tubular neighborhood of $\mathcal{U}$ which does not intersect $\Or$ (see figure \ref{figu:theorem_walls}). Choose $\tilde{\mathcal{U}}$ such that the boundary components its closure are geodesic. Denote by $L$ one of them. We claim the following:\\

\noindent \textbf{Claim.} The line $L$ is a reducing line for $[h;\Or]$.\\

\noindent \textit{Proof of the claim.} The line $L$ is homotopic to a union $L'$ of a finite number of segments included in distinct reducing lines (see figure \ref{figu:theorem_walls_claim}). The number of segments is finite because of the following properties.
\begin{enumerate}
\item The area $Z$ is homotopically disjoint from the translation areas;
\item Up to isotopy there are only finitely many reducing lines outside the translation areas (according to proposition \ref{prop: outside translation areas});
\item If two reducing lines outside the translation areas intersect, then their intersection is exactly one point: according to lemma \ref{lemma: Intersection of two reducing lines}, this intersection is either one point or infinite, and according to lemma \ref{lemma: Intersection between reducing lines: infinite set case}, if the intersection is infinite then the reducing lines are included in a translation area.
\end{enumerate} 
 Denote by $n$ this number, and by $\delta_1 \cup ... \cup \delta_n$ the segments whose union is $L'$. We assume that the $\delta_i$'s are in this order on $L$ (as in figure \ref{figu:theorem_walls}). For every $i$, denote by $\Delta_i$ a reducing line of $\mathcal U$ which contains $\delta_i$. Denote by $L_1$ the line obtained as the union of $\delta_1$ and the half-line of $\Delta_2$ whose endpoint is the intersection point between $\delta_1$ and $\delta_2$ and which contains $\delta_2$. According to lemma \ref{lemma: Intersection between reducing lines: case with exactly one point}, $L_1$ is a reducing line for $[h;\Or]$. For every $2 \leq i \leq n-1$, denote inductively by $L_i$ the line obtained as the union of the half-line $L_{i-1}$ which contains $\delta_1$ and the half-line $\Delta_{i+1}$ which contains $\delta_{i+1}$ (both half lines have the intersection point between $\delta_i$ and $\delta_{i+1}$ for endpoint). Applying lemma \ref{lemma: Intersection between reducing lines: case with exactly one point} inductively, we see that $L_i$ is a reducing line for every $i$. Hence $L'=L_{n-1}$ is a reducing line for $[h;\Or]$.\qed \\

\begin{figure}[h]
\labellist
\small\hair 2pt
\pinlabel $L'$ at 70 115
\pinlabel $L_1$ at 230 115
\pinlabel $L_2$ at 390 115
\pinlabel $L_3$ at 70 -15
\pinlabel $L_4$ at 230 -15
\pinlabel $L_5$ at 390 -15
\endlabellist
\centering
\includegraphics[scale=0.8]{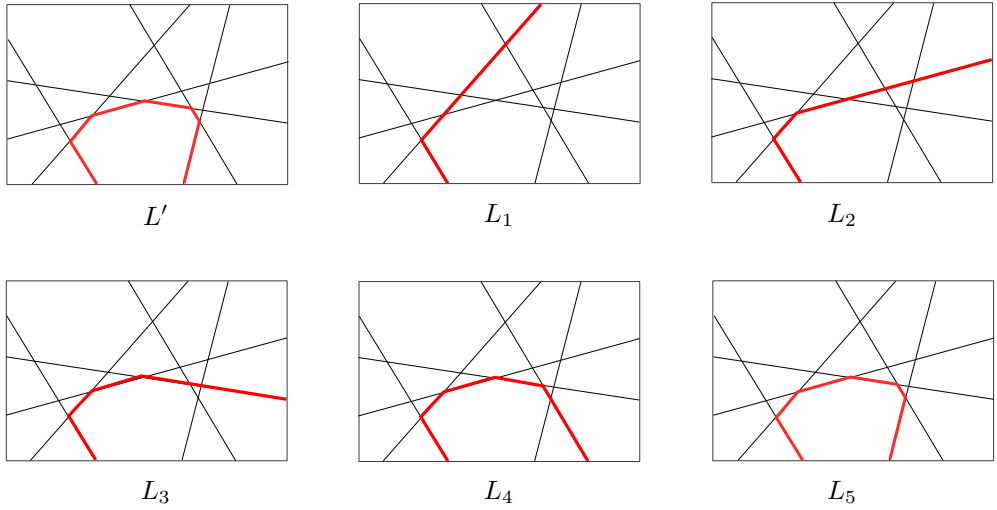}
\vspace{0.6cm}
\caption{Proof of the claim in the case of the example.}
\label{figu:theorem_walls_claim}
\end{figure}

\noindent \textit{End of the proof of theorem \ref{theorem: walls}.} Since $L$ is in $Z$ but not in $\mathcal{U}$, it is homotopic to a boundary component of $Z$ (by definition of $\mathcal{U}$). Hence $\partial \tilde{\mathcal{U}}$ is included in $\partial Z$, thus $Z$ is included in $\tilde{\mathcal{U}}$ (because they both are intersections of half topological planes). It follows that $Z$ is isotopic to $\tilde{\mathcal{U}}$ relatively to $\Or$, hence $Z$ does not intersect $\Or$. \qed\\

\begin{rem}
Note that a stable area is irreducible if and only if it is an irreducible area of the complement of the set of walls. In particular, the isotopy classes of the boundary components of any irreducible area are walls.
\end{rem}

\section{Determinant diagrams and irreducible areas}\label{section:determinant diagrams}

\subsection{Determinant diagrams} \label{subsection:determinant diagrams}

We prove here propositions \ref{prop:flow class iff no irreducible area}, \ref{prop:flow implies conjugate} and \ref{coro:determinant diagrams with walls}. We use the two following lemmas of \cite{FLR2013} in the proofs:
\begin{lemma}[Le Roux \cite{FLR2013}, lemma $1.8$] \label{lemma:generalized Alexander trick}
Let $\F$ be a finite family of pairwise disjoint topological lines in the plane. Let $h_0$ be an orientation preserving homeomorphism of the plane. Let $H_{h_0}$ be the space of orientation preserving homeomorphisms that coincide with $h_0$ on the union of the elements of $\F$. Then $H_{h_0}$ is arcwise connected.
\end{lemma}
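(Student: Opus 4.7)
The plan is to reduce the claim, via two standard reductions, to a generalized Alexander trick applied separately on each complementary region of $\cup \F$.

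First I would reduce to the case $h_0 = \mathrm{id}$ on $\cup \F$. Given $h_1, h_2 \in H_{h_0}$, the compositions $g_i := h_i \circ h_0^{-1}$ are orientation-preserving homeomorphisms fixing $h_0(\cup \F)$ pointwise, and $h_0(\F)$ is still a finite family of pairwise disjoint topological lines (since $h_0$ is a homeomorphism). If each $g_i$ can be joined to the identity through homeomorphisms fixing $h_0(\cup \F)$, then post-composing this path with $h_0$ yields a path in $H_{h_0}$ from $h_i$ to $h_0$, and concatenation connects $h_1$ to $h_2$. So it suffices to prove that for any finite disjoint family of topological lines $\F$, the space $G_\F$ of orientation-preserving homeomorphisms of $\R^2$ that fix $\cup \F$ pointwise is arcwise connected.

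Second, I would decompose $\R^2$ along $\cup \F$ into its finitely many complementary components. Any $g \in G_\F$, being orientation-preserving and pointwise the identity on $\cup \F$, preserves each component setwise and restricts on its closure to a homeomorphism fixing the boundary pointwise. So it is enough to construct, on the closure $\overline{R}$ of each component, an isotopy from $g|_{\overline{R}}$ to $\mathrm{id}_{\overline{R}}$ through homeomorphisms fixing $\partial \overline{R}$ pointwise: such regional isotopies glue automatically to a path in $G_\F$, since on each line of $\F$ both pieces are the identity. Topologically, each $\overline{R}$ is a genus-zero planar surface whose boundary is a finite disjoint union of proper topological lines; up to homeomorphism it is a closed disk (if $\F = \emptyset$), a closed half-plane $\R \times [0,+\infty)$, a closed strip $\R \times [0,1]$, or a more branched surface with three or more boundary lines.

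The heart of the argument is then a generalized Alexander trick on each $\overline{R}$. For a closed disk, the classical formula $H_t(x) = t\, g(x/t)$ with $H_0 = \mathrm{id}$ gives the isotopy directly. For noncompact $\overline{R}$, I would exhaust by an increasing sequence of closed subdisks $D_n$ and patch together Alexander isotopies on successive annular differences, at each stage forcing $g$ to coincide with the identity on a larger subdisk. The main obstacle lies here: the naive scaling analog $g_t(x,y) := t^{-1} g(tx, ty)$ on the half-plane need not extend continuously at $t = 0$ for a merely continuous $g$, since continuity of $g$ does not control the ratio $g(tx,ty)/t$ pointwise. One must therefore exploit the fact that $g$ is the identity on an entire line (and not merely at a single point) to secure uniform convergence on compacta of the patched family. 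Once that is established on every region, assembling the regional isotopies produces the required path in $G_\F$ from $g$ to the identity.
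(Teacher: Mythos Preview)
The paper does not prove this lemma; it is quoted verbatim from Le~Roux \cite{FLR2013}, lemma~1.8, and used as a black box. So there is no proof in the paper to compare against.

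On its own merits, your outline is the standard route and is essentially correct. The reduction to $h_0=\mathrm{id}$ and the decomposition into complementary regions are both fine; since $g$ is orientation-preserving and equals the identity on each line, it does preserve each complementary region and restricts to a homeomorphism of its closure fixing the boundary pointwise. The substantive step is exactly the one you flag: the Alexander-type isotopy on a noncompact region $\overline{R}$ bounded by one or several proper lines.

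Your exhaustion-and-patching idea can be made to work, but a cleaner way around the obstacle you identify is to first make $g$ compactly supported. Each boundary line $L\subset\partial\overline{R}$ has a collar $L\times[0,1)$ in $\overline{R}$; since $g$ fixes $L$ pointwise, a standard collar-pushing isotopy (rel $L$) makes $g$ equal to the identity on $L\times[0,\tfrac12]$. Doing this for each boundary line leaves $g$ supported in the interior of $\overline{R}$, which is an open disk, but a priori the support is still noncompact in the direction of the ends of $\overline{R}$. To handle the ends, observe that between two consecutive boundary lines at a given end the region looks like a half-strip, and once $g$ is the identity along both sides of that half-strip one can push the support inward by a similar collar argument along a transverse arc. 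After finitely many such pushes $g$ is compactly supported in $\mathrm{int}(\overline{R})\cong\R^2$, and the classical Alexander trick on a large disk containing the support finishes. This avoids the convergence issue you raised with the naive scaling formula.
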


\begin{lemma}[Le Roux \cite{FLR2013}, lemma $1.6$] \label{lemma:flow classes iff streamlines} The Brouwer mapping class $[h;\Or]$ is a fixed point free flow class if and only if it admits a family of pairwise disjoint proper geodesic homotopy streamlines whose union contains $\Or$.
\end{lemma}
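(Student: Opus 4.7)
The statement is an "if and only if", so I would treat the two directions separately. The forward direction (flow class $\Rightarrow$ exists a family of disjoint proper geodesic homotopy streamlines covering $\Or$) should be easy; the converse is the substantial content and uses Lemma 4.2 in an essential way.

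\textbf{Forward direction.} Assume $[h;\Or]$ is a fixed-point free flow class, so there is a flow $\phi$ on $\R^2$ without fixed points whose time-one map $\phi_1$ lies in $[h;\Or]$. Each orbit $\Or_i\subset\Or$ sits on a flow streamline $S_i$ of $\phi$. Because $\phi$ is fixed-point free on the plane, its streamlines are pairwise disjoint properly embedded topological lines. For every $i$, pick the subarc $\alpha_i$ of $S_i$ between two consecutive points of $\Or_i$; this is a translation arc for $\phi_1$, hence a homotopy translation arc for $[h;\Or]$, and its homotopy streamline $T(\alpha_i,h,\Or)$ is isotopic to $S_i$. Since the $S_i$'s are pairwise disjoint (hence the isotopy classes of the $T(\alpha_i,h,\Or)$ are pairwise homotopically disjoint) and each $S_i$ is proper, the geodesic representatives are pairwise disjoint and proper, producing the desired family whose union contains $\Or$.

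\textbf{Backward direction.} Assume such a family $T_1,\dots,T_r$ of pairwise disjoint proper geodesic homotopy streamlines is given, with $\Or\subset\bigcup T_i$. By the Straightening Principle (Lemma 1.1, applied to the family $\F_1=\{T_i\}_i$), I can replace $h$ by some $h'\in[h;\Or]$ which leaves each $T_i$ invariant as a set. Because each $T_i$ comes from iterating a homotopy translation arc along one orbit $\Or_i$, the restriction of $h'$ to $T_i$ is conjugate to a translation sending each point of $\Or\cap T_i$ to the next one. From this I would build a fixed-point free flow $\phi$ on $\R^2$ whose time-one map agrees with $h'$ on $\bigcup_i T_i$: parametrize each proper line $T_i$ by $\R$ so that $h'|_{T_i}$ becomes $t\mapsto t+1$, and extend the corresponding unit vector field to the complementary regions of $\R^2-\bigcup_i T_i$ (each such region is homeomorphic to an open disk, half-plane or strip, and admits a nonvanishing vector field parallel to its boundary lines). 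The resulting $\phi$ is fixed-point free and $\phi_1$ coincides with $h'$ on $\bigcup_i T_i$.

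\textbf{Concluding with the Alexander trick.} To obtain $[\phi_1;\Or]=[h';\Or]=[h;\Or]$, I apply Lemma 4.2 (the generalized Alexander trick) to the finite family $\F=\{T_i\}_i$ of pairwise disjoint topological lines: the orientation-preserving homeomorphisms $\phi_1$ and $h'$ both belong to $H_{h'|_{\cup T_i}}$, which is arcwise connected, so they are isotopic through homeomorphisms fixing $\bigcup_i T_i$ pointwise, and in particular rel $\Or$. Hence $[h;\Or]$ is the class of $\phi_1$, a fixed-point free flow class.

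\textbf{Main obstacle.} The delicate step is the construction of $\phi$: one must verify that the vector field along the $T_i$'s (dictated by the translation dynamics of $h'$) does extend smoothly and consistently to the complementary regions of $\R^2-\bigcup_i T_i$, and that the resulting flow is globally fixed-point free and complete. Once this is in place, the rest is a clean application of the Straightening Principle and the Alexander trick.
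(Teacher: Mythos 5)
The paper does not actually prove this lemma: it is imported verbatim from Le Roux \cite{FLR2013} (his Lemma $1.6$), so there is no internal proof to compare against. Your argument is, in substance, the standard proof of that cited result, and it follows the same scheme the paper itself uses for Proposition \ref{prop:flow implies conjugate}: straighten via Lemma \ref{lemma: straightening principle}, build a model flow adapted to the proper lines, and conclude with the generalized Alexander trick (Lemma \ref{lemma:generalized Alexander trick}). Three points deserve tightening. First, in the forward direction the subarc of $S_i$ between two consecutive points of $\Or_i$ need not lie in $\R^2-\Or$: two of the chosen orbits may sit, interleaved, on the same flow streamline, so you must perturb $\alpha_i$ off the streamline and then check that the resulting homotopy streamlines are still pairwise homotopically disjoint (they are, but it is not automatic from ``the $S_i$'s are pairwise disjoint''). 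Second, the flow-extension step you flag as the main obstacle is genuinely where the work lies; it becomes immediate if, instead of extending a vector field region by region, you invoke the Schoenflies theorem to find a single orientation-preserving homeomorphism of the plane carrying the $T_i$'s to horizontal lines and conjugating each $h'|_{T_i}$ to $(x,y)\mapsto(x+1,y)$, and then take the horizontal translation flow as the model. Third, the path supplied by Lemma \ref{lemma:generalized Alexander trick} consists of homeomorphisms that \emph{coincide with $h'$} on $\bigcup_i T_i$, not ones fixing that set pointwise; composing the path with $(h')^{-1}$ yields a path from the identity to $\phi_1(h')^{-1}$ through homeomorphisms fixing $\bigcup_i T_i$ (hence $\Or$) pointwise, which is what gives $[\phi_1;\Or]=[h';\Or]$. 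With these repairs the proposal is a correct proof of the quoted statement.
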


\begin{prop*}\emph{\textbf{\ref{prop:flow class iff no irreducible area}}} A Brouwer mapping class $[h;\Or]$ is a flow class if and only if no connected component of the complement of the set of walls for $[h;\Or]$ is an irreducible area.
\end{prop*}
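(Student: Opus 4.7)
The plan is to treat the two directions separately, leveraging Theorem \ref{theorem: walls}, Proposition \ref{prop:translation area}, Proposition \ref{prop: nice families disjoint from reducing lines}, and Lemma \ref{lemma:flow classes iff streamlines}.

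For the ``only if'' direction, I would argue by contrapositive: assume some connected component $Z$ of $\R^2-\mathcal{W}$ is an irreducible area, and show $[h;\Or]$ cannot be a flow class. Suppose for contradiction that $[h;\Or]$ is a flow class; by Lemma \ref{lemma:flow classes iff streamlines} there exists a family $\mathcal{F}$ of pairwise disjoint proper geodesic homotopy streamlines whose union contains $\Or$. Pick two orbits $\Or_i,\Or_j \subset Z$, lying on distinct streamlines $\sigma_i,\sigma_j \in \mathcal{F}$. The plan is to exhibit a proper geodesic line $\Delta \subset Z$ disjoint from every element of $\mathcal{F}$ and separating $\Or_i$ from $\Or_j$. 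Such a $\Delta$ is automatically a reducing line: using the straightening principle \ref{lemma: straightening principle} applied to $\mathcal F \cup \{\Delta\}$, one obtains a flow representative sending each streamline of $\mathcal F$ onto itself, so the image of $\Delta$ sits in the same connected component of $\R^2-\bigcup \mathcal{F}$ as $\Delta$, hence is isotopic to $\Delta$ rel $\Or$. The existence of $\Delta \subset Z$ with these properties contradicts the irreducibility of $Z$.

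For the ``if'' direction, assume no component of $\R^2-\mathcal{W}$ is irreducible. By Theorem \ref{theorem: walls}, every component of $\R^2-\mathcal{W}$ is then either a maximal translation area or disjoint from $\Or$. Using Proposition \ref{prop: nice families disjoint from reducing lines}, I would pick a nice family $\al$ whose arcs are homotopically disjoint from $\mathcal{W}$; each arc $\alpha_i^\pm$ then lies in a single maximal translation area. By Proposition \ref{prop:translation area}, every $\alpha_i^\pm$ inside a translation area is both backward and forward proper, so the full homotopy streamlines $T(\alpha_i^-,h,\Or)$ are proper geodesic lines. Streamlines inside the same translation area are mutually disjoint by the nice family condition, and streamlines in different translation areas are separated by walls, hence also mutually disjoint. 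Their union covers $\Or$, so Lemma \ref{lemma:flow classes iff streamlines} yields that $[h;\Or]$ is a flow class.

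I expect the main obstacle to be the construction of the separating line $\Delta$ in the ``only if'' direction. One must verify that pushing a curve slightly off $\sigma_i$ inside $Z$ can be done without crossing streamlines of orbits of $\Or$ that do not lie in $Z$ (these streamlines a priori might enter and leave $Z$ by crossing $\mathcal{W}$). The resolution should come from applying the straightening principle \ref{lemma: straightening principle} to $\mathcal{W} \cup \mathcal{F}$ beforehand to put the walls into a form for which each wall lies in the complement of $\bigcup \mathcal{F}$ and each streamline of $\mathcal{F}$ is either contained in $Z$ or disjoint from $Z$; then the required $\Delta$ is obtained by taking any proper geodesic inside $Z$ that separates $\sigma_i$ from $\sigma_j$ in the (infinite-type) surface obtained by cutting $Z$ along the streamlines of $\mathcal F$ that it contains.
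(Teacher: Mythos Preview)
Your proof is correct and follows essentially the same route as the paper. For the ``if'' direction your argument is identical to the paper's (nice family disjoint from $\mathcal{W}$, Proposition \ref{prop:translation area} to upgrade each $\alpha_i^-$ to a backward-and-forward proper arc, then Lemma \ref{lemma:flow classes iff streamlines}); note that the mutual disjointness of the full streamlines $T(\alpha_i^-,h,\Or)$ already follows from the nice-family condition on the $T^-$'s by pushing any intersection backward with $h^{-k}$, so the separation-by-walls argument is unnecessary though harmless. For the ``only if'' direction the paper simply says ``we find reducing lines in the neighborhood of each streamline, hence there is no irreducible area''; what you have written is exactly the unpacking of that sentence, including the point the paper leaves implicit, namely that the walls can be taken disjoint from the streamlines of $\mathcal{F}$ (because each wall, being homotopically disjoint from every reducing line, is in particular homotopically disjoint from the boundary of a thin tube around each streamline). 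Your anticipated obstacle and its resolution are therefore precisely the content hidden behind the paper's one-line argument.
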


\begin{proof}
If $[f;\Or]$ is a flow class, then according to lemma \ref{lemma:flow classes iff streamlines} we can choose a family of pairwise disjoint proper geodesic homotopy streamlines whose union contains $\Or$. We find reducing lines in the neighborhood of each streamline, hence there is no irreducible area.

We now prove that if the set of walls $\mathcal{W}$ of a Brouwer mapping class $[h;\Or]$ is such that no component of the complement of $\mathcal W$ is irreducible, then it is a flow class. According to proposition \ref{prop: nice families disjoint from reducing lines}, there exists a nice family $(\alpha_i^\pm)_i$ for $[h;\Or]$ disjoint from the walls. According to theorem \ref{theorem: walls}, every connected component of the complement of $\mathcal{W}$ which contain orbits is a translation area. According to proposition \ref{prop:translation area}, every backward proper which is included in a translation area is also forward proper: it follows that every  $T(\alpha_i^-,h,\Or)$ is a proper streamline. Lemma \ref{lemma:flow classes iff streamlines} gives us the conclusion.
\end{proof}

\begin{prop*}\emph{\textbf{\ref{prop:flow implies conjugate}}}
If two flow classes have the same diagram, then they are conjugated.
\end{prop*}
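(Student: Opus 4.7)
The plan is to use Lemma \ref{lemma:flow classes iff streamlines} to reduce each flow class to an explicit streamline model, match the two models via an orientation-preserving homeomorphism of the plane, and then invoke Lemma \ref{lemma:generalized Alexander trick} to finish.

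First I would apply Lemma \ref{lemma:flow classes iff streamlines} to each of the two flow classes $[f;\Or]$ and $[f';\Or']$ to obtain pairwise disjoint proper geodesic streamlines $T_1,\ldots,T_r$ and $T'_1,\ldots,T'_r$ whose unions contain $\Or$ and $\Or'$ respectively. On any streamline, an arc joining consecutive orbit points is simultaneously backward and forward proper, hence furnishes a nice family with $\alpha_i^-=\alpha_i^+$ lying on $T_i$. The diagram of such a flow class is therefore determined by the cyclic order at infinity of the $2r$ ends of the $T_i$, together with the pairing of ends into streamlines and the designation of which end of each $T_i$ is the backward one. Assuming the two diagrams coincide, reindexing allows us to match this combinatorial data between the two classes.

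Next I would construct an orientation-preserving homeomorphism $\phi$ of $\R^2$ sending each $T_i$ onto $T'_i$, compatibly with the arrow directions and intertwining the dynamics on the streamlines: $\phi\circ f = f'\circ\phi$ on $\bigcup_i T_i$. On each $T_i$ this is immediate, since $T_i$ and $T'_i$ can be parametrized by $\R$ in such a way that $\Or_i,\Or'_i$ correspond to $\Z$ and $f,f'$ act as the unit shift; then $\phi|_{T_i}$ can be taken to be the identity in this parametrization, which automatically sends $\Or_i$ bijectively onto $\Or'_i$ and conjugates the shifts. To extend $\phi$ from $\bigcup_i T_i$ to the whole plane, one uses the Schoenflies-type matching of the complementary regions: the matching cyclic order at infinity puts the planar regions cut out by $\bigcup_i T_i$ and by $\bigcup_i T'_i$ into a canonical correspondence, each corresponding pair is homeomorphic, and we glue the already-specified boundary behavior. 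In particular $\phi(\Or)=\Or'$.

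Finally, by construction $\phi f \phi^{-1}$ coincides with $f'$ on $\bigcup_i T'_i$, which is a finite family of pairwise disjoint proper topological lines of the plane. Lemma \ref{lemma:generalized Alexander trick} then provides a path of orientation-preserving homeomorphisms from $\phi f \phi^{-1}$ to $f'$ all of which agree with $f'$ on $\bigcup_i T'_i$; since $\Or'\subset \bigcup_i T'_i$, this is an isotopy relative to $\Or'$. Hence $[\phi f \phi^{-1};\Or']=[f';\Or']$, so the two Brouwer mapping classes are conjugate. The main obstacle is the equivariant extension of $\phi$ in the middle step: one must ensure, before extending, that $\phi|_{\bigcup T_i}$ really does respect the arrow directions recorded by the diagram, so that the extension to $\R^2$ can be chosen orientation-preserving; once this is arranged the remainder of the argument is a standard application of Schoenflies and the Alexander trick.
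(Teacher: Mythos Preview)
Your proposal is correct and follows essentially the same route as the paper: use Lemma~\ref{lemma:flow classes iff streamlines} to obtain pairwise disjoint proper streamlines (equivalently nice families with $\alpha_i^-=\alpha_i^+$), match their cyclic orders at infinity via the common diagram, apply Schoenflies to build an orientation-preserving homeomorphism carrying one family of streamlines to the other, and conclude with Lemma~\ref{lemma:generalized Alexander trick}. You are a bit more explicit than the paper about making $\phi$ equivariant on the streamlines and about the orientation issue, but these are exactly the implicit details behind the paper's terse ``Schoenflies \ldots\ Lemma~\ref{lemma:generalized Alexander trick} gives the conclusion.''
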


\begin{proof}
Let $[f;\Or]$ and $[g;\Or']$ be two flow classes with the same diagram. According to lemma \ref{lemma:flow classes iff streamlines}, there exists a nice family $(\alpha_i^\pm)_i$ for $[f;\Or]$ and a nice family $(\beta_i^\pm)_i$ for $[g;\Or']$ such that for every $i$, $\alpha_i^-$ is isotopic to $\alpha_i^+$, and $\beta_i^-$ is isotopic to $\beta_i^+$. We set $\alpha_i:=\alpha_i^-=\alpha_i^+$ and $\beta_i:=\beta_i^-=\beta_i^+$.
Since $[f;\Or]$ and $[g;\Or']$ have the same diagram, $(\alpha_i^\pm)_i$ and $(\beta_i^\pm)_i$ have the same cyclic order at infinity (we permute the numbering of the orbits of $\Or'$ if necessary). Thus the Schoenflies theorem provides a homeomorphism of the plane which sent $T(\alpha_i,h,\Or)$ to $T(\beta_i,h,\Or)$ for every $i$. Lemma \ref{lemma:generalized Alexander trick} gives the conclusion. \end{proof}

\begin{prop*}\emph{\textbf{\ref{coro:determinant diagrams with walls}}}
A diagram with walls without crossing arrows is determinant if and only if the arrows of every family of arrows included in the same connected component of the complement of the walls are backward adjacent and forward adjacent.
\end{prop*}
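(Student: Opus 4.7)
The strategy is to reduce both implications to the previously established results about flow classes, namely propositions \ref{prop:flow class iff no irreducible area} and \ref{prop:flow implies conjugate}, together with the trichotomy given by theorem \ref{theorem: walls}.

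For the forward implication, suppose that in every connected component of $\R^2 - \mathcal{W}$, the arrows form a single sub-family of backward adjacency and a single sub-family of forward adjacency. Then any such component $Z$ that meets $\Or$ is, by the very definition of a translation area, a translation area for $[h;\Or]$. Theorem \ref{theorem: walls} offers only three alternatives for each component of $\R^2-\mathcal{W}$: irreducible area, maximal translation area, or disjoint from $\Or$. The hypothesis rules out the first alternative for components meeting $\Or$, so no irreducible area appears. Proposition \ref{prop:flow class iff no irreducible area} then ensures that every representative $[h;\Or]$ is a flow class, and proposition \ref{prop:flow implies conjugate} concludes that any two flow classes with the same diagram (forgetting walls) are conjugate. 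Hence the diagram with walls is determinant.

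For the converse, suppose that the condition fails: some component $Z$ of $\R^2-\mathcal{W}$ contains arrows that are not all backward adjacent or not all forward adjacent. Then $Z$ is not a translation area, and since it meets $\Or$, theorem \ref{theorem: walls} forces $Z$ to be an irreducible area. By proposition \ref{prop:flow class iff no irreducible area}, no Brouwer mapping class realizing this diagram with walls is a flow class. To show non-determinism, I would exhibit two non-conjugate realizations by starting from a representative $[h;\Or]$ and precomposing $h$ with a non-trivial iterate $\mu^n$ of an $h$-free half twist $\mu$ whose support is a small $h$-free topological disk contained in $Z$ and meeting exactly two points of $\Or$ lying in distinct orbits. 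Such a support can be chosen because $Z$ is irreducible and contains at least two orbits. Since the support is strictly inside $Z$ and avoids all reducing lines, one checks using the straightening principle \ref{lemma: straightening principle} that the walls and the cyclic order at infinity of a nice family (given by proposition \ref{prop: nice families disjoint from reducing lines}) are preserved, so $[h;\Or]$ and $[\mu^n h;\Or]$ share the same diagram with walls; distinct values of $n$ produce genuinely different conjugacy classes, which can be detected, for instance, by the tangle invariant introduced in section \ref{section: 4 orbits} or, more generally, by any finer invariant inside the irreducible area.

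The main obstacle is the backward implication. The forward direction is essentially bookkeeping over theorem \ref{theorem: walls} and the two propositions on flow classes. The hard part is to verify that free half-twist modifications supported in an irreducible area (i) leave the diagram with walls unchanged, which needs a careful choice of the support relative to a nice family disjoint from $\mathcal{W}$, and (ii) genuinely distinguish conjugacy classes. This second point is where an independent invariant is needed, and it is precisely what motivates the later development of the tangle and of theorem \ref{theo:total invariant}; in the generality of this proposition I would appeal to that finer invariant (or its analogue for more than four orbits) to exhibit the required non-conjugate pair.
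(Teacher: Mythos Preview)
Your implication ``adjacency condition $\Rightarrow$ determinant'' is correct and is exactly the paper's argument for that direction.

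For the converse you take the contrapositive route --- if the condition fails, build two non-conjugate realizations via $h$-free half twists --- and you correctly identify the obstacle: step (ii), distinguishing $[\mu^n h;\Or]$ from $[h;\Or]$, requires an invariant finer than the diagram with walls. Appealing to the tangle of section~\ref{section: 4 orbits} is a forward reference that only covers $r=4$; for general $r$ no such invariant is constructed in the paper, so your argument does not close. (Step (i) is also more delicate than you indicate: one must verify that the twist does not create new reducing lines and hence does not alter the wall set.)

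The paper sidesteps all of this by arguing the direct implication rather than the contrapositive, and this is precisely where the hypothesis \emph{without crossing arrows} is used. By lemma~1.7 of \cite{FLR2013}, any diagram without crossing arrows is realized by a flow class $[f;\Or]$. Since $[f;\Or]$ is a flow, proposition~\ref{prop:flow class iff no irreducible area} yields no irreducible area, so theorem~\ref{theorem: walls} forces every wall-complement component meeting $\Or$ to be a maximal translation area, and the adjacency condition follows at once. No twist construction and no finer invariant are needed.
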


\begin{proof} Let $D$ be a diagram with walls without crossing arrows. Suppose $D$ is determinant. Since $D$ is without crossing arrows, there exists a flow class $[f;\Or]$ whose associated diagram is $D$ (this is lemma $1.7$ of \cite{FLR2013}). Since $[f;\Or]$ is a flow class, every orbit of $\Or$ is included in a translation area, hence in a maximal translation area. In this situation, theorem \ref{theorem: walls} imply that every connected component of the complement of walls which contains orbit is a maximal translation area. The result follows.

If a diagram with walls $D$ is such that every family of arrows included in the same connected component of the complement of the walls are backward adjacent and forward adjacent, then for every $[h;\Or]$ whose associated diagram is $D$, the complement of the set of walls in $\R^2$ has no irreducible areas (it has only translation areas and areas without orbits). According to proposition \ref{prop:flow class iff no irreducible area}, $[h;\Or]$ is a flow class. If $[h';\Or']$ is another Brouwer class whose associated diagram with walls is $D$, then $[h';\Or']$ is also a flow class. It follows from proposition \ref{prop:flow implies conjugate} that $[h;\Or]$ and $[h';\Or']$ are conjugated. Hence the diagram with walls $D$ is determinant.
\end{proof}

\subsection{Combinatorics of irreducible areas}\label{subsection:combinatorics of irreducible areas}
We first prove a criterion for reducing lines and then use it to prove proposition \ref{prop:combinatoric of an irreducible area}.

\subsubsection{A criterion for reducing lines}

\begin{lemma} \label{lemma: criterion for reducing lines}
Let $[h;\Or]$ be a Brouwer mapping class and let $\al$ be a nice family for $[h;\Or]$. If $\Delta$ is a topological line of $\R^2 - \Or$ such that:
\begin{enumerate}
\item $\Delta$ is a topological line;
\item Both components of $\R^2 - \Delta$ contain points of $\Or$;
\item For every $i$, for every $k\in \Z$, $\Delta$ is homotopically disjoint from $h^k(\alpha_i^-)$ relatively to $\Or$. \label{H3}
\end{enumerate}
Then $\Delta$ is a reducing line for $[h;\Or]$.
\end{lemma}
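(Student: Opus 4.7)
The plan is to verify the definition of reducing line for $\Delta$: it is given that $\Delta \subset \R^2 - \Or$ and, by hypothesis (2), that $\Delta$ splits $\Or$; it remains only to show that $h(\Delta)$ is properly isotopic to $\Delta$ relatively to $\Or$.

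First I would prove that every orbit of $\Or$ lies entirely in one component of $\R^2 - \Delta_\#$. By hypothesis (3) and the straightening principle, $\Delta_\# \cap h^k(\alpha_i^-)_\# = \emptyset$ for every $k \in \Z$ and every $i$. Writing $x_i$ for the initial endpoint of $\alpha_i^-$, the closure of the geodesic arc $h^k(\alpha_i^-)_\#$ is a continuous path from $h^k(x_i)$ to $h^{k+1}(x_i)$ whose interior is disjoint from $\Delta_\#$, so these two consecutive orbit points lie in the same component of $\R^2 - \Delta_\#$; iterating over $k \in \Z$, the whole orbit $\Or_i$ lies in a single component. Denote the components of $\R^2 - \Delta_\#$ by $V_+$ and $V_-$, and set $\Or_\pm := V_\pm \cap \Or$; by hypothesis (2) both are nonempty, and each is a union of $h$-invariant orbits. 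Since the components of $\R^2 - h(\Delta_\#)$ are $h(V_+)$ and $h(V_-)$, and since $h(V_\pm) \cap \Or = h(V_\pm \cap \Or) = h(\Or_\pm) = \Or_\pm$, the line $h(\Delta)$ induces the very same partition $\{\Or_+,\Or_-\}$ of $\Or$ as $\Delta$.

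To conclude, I would invoke the standard planar fact that two proper simple lines in $\R^2 - \Or$ inducing the same partition of the discrete set $\Or$ are properly isotopic relatively to $\Or$. This can be proved by placing the two lines in minimal position as geodesics and then eliminating innermost bounded complementary regions: the partition hypothesis forces each such region to be disjoint from $\Or$, so a standard ambient isotopy supported near it reduces the intersection number by two; iterating, one reaches the disjoint case, in which the two lines bound an $\Or$-free planar strip and are joined by an obvious proper isotopy. Applied to $\Delta$ and $h(\Delta)$, this produces the required isotopy and completes the verification that $\Delta$ is a reducing line. The main technical obstacle is this last planar step, which, although classical, requires some care at infinity in the proper (non-compact) setting; everything else is straightforward bookkeeping with the straightening principle and the $h$-invariance of the orbits.
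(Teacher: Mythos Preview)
Your argument contains a genuine gap: the ``standard planar fact'' you invoke in the last paragraph---that two proper simple lines in $\R^2 - \Or$ inducing the same partition of $\Or$ must be properly isotopic relative to $\Or$---is false. For a counterexample take $\Or = \Z \times \{0,1\}$, let $L_1 = \{y = 1/2\}$, and let $L_2 = T(L_1)$, where $T$ is the Dehn twist about a simple closed curve in $\R^2 - \Or$ encircling only the two points $(0,0)$ and $(0,1)$. Since $T$ is a homeomorphism of the plane fixing every point of $\Or$, the line $L_2$ induces exactly the same partition of $\Or$ as $L_1$. Yet $L_1$ and $L_2$ are not isotopic rel~$\Or$: the straight arc $\beta$ from $(0,0)$ to $(1,0)$ satisfies $i(\beta,L_1)=0$, whereas $i(\beta,L_2)=i(T^{-1}\beta,L_1)=2$, because $T^{-1}\beta$ is forced to wind once around the puncture $(0,1)$ lying on the far side of $L_1$. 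Your bigon-elimination sketch goes wrong precisely at the assertion that ``the partition hypothesis forces each such region to be disjoint from $\Or$''; this is simply not true, and no amount of care at infinity repairs it.

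What you have correctly extracted from hypothesis~(3) is that $\Delta$ and $h(\Delta)$ induce the same oriented partition of $\Or$; this is indeed how the paper handles the cases where $\Delta_\#$ and $h(\Delta)_\#$ meet in zero or one point. But for the case of several intersection points, the paper uses hypothesis~(3) again and more forcefully. After locating an innermost bigon $D$ between $\Delta_\#$ and $h(\Delta)_\#$, one observes that if $D$ contained some $x \in \Or_i$, then every iterate $h^k(\alpha_i^-)_\#$ is disjoint both from $\Delta$ and from $h(\Delta)$ (the latter because hypothesis~(3) with a shifted index gives disjointness of $\Delta$ from $h^{k-1}(\alpha_i^-)$, hence of $h(\Delta)$ from $h^k(\alpha_i^-)$). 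Consequently the entire backward half-orbit $\{h^k(x)\}_{k \le 0}$ is trapped in the bounded disk $D$, contradicting the properness of Brouwer orbits. It is this dynamical trapping argument, not the partition data alone, that forces the bigon to be $\Or$-free; your static reformulation discards exactly the information needed to rule out the Dehn-twist phenomenon above.
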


\begin{proof}
We need to show that $\Delta$ and $h(\Delta)$ are isotopic relatively to $\Or$. Choose a hyperbolic metric on $\R^2 - \Or$. Taking its image by an isotopy relatively to $\Or$ if necessary, we can assume that $\Delta$ is geodesic. We denote by $f$ a representative of $[h;\Or]$ mapping $\Delta$ on $h(\Delta)_\#$.
Such an $f$ exists, again according to the straightening principle \ref{lemma: straightening principle}. Hence $\Delta$ and $f(\Delta)$ are geodesic. 
We need to show that $\Delta=f(\Delta)$. Suppose that $\Delta \neq f(\Delta)$. We know that this two streamlines are in minimal intersection position (because they are geodesic), and we study separately the three possible cases: either $\Delta$ and $f(\Delta)$ have several intersection points, either they have only one intersection point, or they do not intersect. Those three cases lead us to contradictions.

\begin{figure}[h]
\labellist
\small\hair 2pt
\pinlabel $\Delta$ at 437 135
\pinlabel $f(\Delta)$ at 402 204
\pinlabel $\gamma_1$ at 180 186
\pinlabel $\delta_k$ at 175 67
\pinlabel $\gamma_k$ at 167 101
\pinlabel $a_1$ at 66 54
\pinlabel $b_1$ at 268 93
\endlabellist
\centering
\includegraphics[scale=0.7]{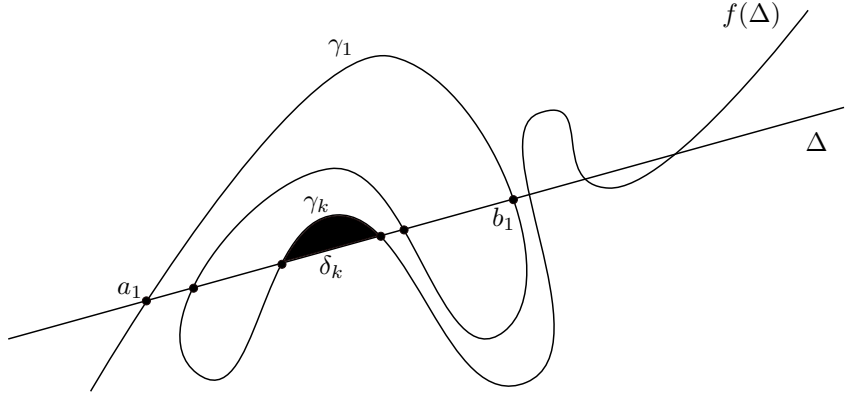}
\vspace{0.6cm}
\caption{Example where $\Delta$ and $f(\Delta)$ have several intersection points.}
\label{figu:delta}
\end{figure}

\noindent \textbf{If $\Delta$ and $f(\Delta)$ have several intersection points.} We consider a subsegment $\gamma_1$ of $f(\Delta)$, whose endpoints are intersection points between $\Delta$ and $f(\Delta)$, denoted by $a_1$ and $b_1$, and such that the open segment $\mathring \gamma_1$ is disjoint from $\Delta$ (see figure \ref{figu:delta}). Denote by $\delta_1$ the subsegment of $\Delta$ between $a_1$ and $b_1$. Since $\delta_1$ is compact, it contains finitely many intersection points between $\Delta$ and $f(\Delta)$. Denote by $n$ the number of intersections between $\Delta$ and $f(\Delta)$. Let us show that we can assume that $n=0$. If $n>0$, choose $a_2$ an intersection point between $\mathring \delta_1$ and $f(\Delta)$.
Consider the half line $f(\Delta)^+$ defined as the connected component of $f(\Delta) - a_2$ which has a subsegment with endpoint $a_2$ and which is included in the topological disk bounded by $\gamma_1 \cup \delta_1$. 
Because $f(\Delta)^+$ is proper, it goes out the topological disk bounded by $\gamma_1 \cup \delta_1$. Hence it intersects again $\delta_1$, because $f(\Delta)$ is without self intersection. Denote by $b_2$ the first intersection point between $f(\Delta)^+$ and $\delta_1$. The subsegments $\delta_2$ and $\gamma_2$ of $\Delta$ and $f(\Delta)$ with endpoints $a_2$ and $b_2$ have the same properties than $\delta_1$ and $\gamma_1$, but the number of intersection points between $\Delta$ and $f(\Delta)$ on $\delta_2$ is less than the one on $\delta_1$. Hence by recurrence there exits $k$ and two subsegments $\delta_k \subset \Delta$ and $\gamma_k \subset f(\Delta)$ with endpoints $a_k$ and $b_k$, and such that $\delta_k \cap f(\Delta) = \gamma_k \cap \Delta = a_k \cup b_k$. 
 
 Denote by $D$ the topological disk bounded by $\delta_k \cup \gamma_k$. We claim that $D$ does not intersect $\Or$: if it contains a point of an orbit $\Or_i$, denote by $x_i$ this point. Let $n \in \Z$ be such that $x_i$ is an endpoint of $h^n(\alpha_i^-)$. The family $(h^k(\alpha_i^-))_{k\leq n}$ is proper, because $\alpha_i^-$ is backward proper. Applying again the straightening principle \ref{lemma: straightening principle}, we find a homeomorphism $g$ isotopic to $h$, which maps $\Delta$ on $h(\Delta)_\#=f(\Delta)$ and $h^k(\alpha_i^-)$ on $(h^{k+1}(\alpha_i^-))_\#$ for every $k\leq n$. Moreover, the family $(h^k(\alpha_i^-))_{k\leq n}$ is homotopically disjoint from $\Delta$, according to the third hypothesis of the lemma. Hence $g^k(\alpha_i)$ is disjoint from $\Delta$ and $f(\Delta)$ for every $k \leq n$. It follows that $\{h^k(x_i)\}_{k\leq n}$ is included in $D$. Since the orbits of a Brouwer homeomorphism are proper, this gives a contradiction: $D$ should be disjoint from $\Or$, hence it is a bigone, which is also not possible because $\Delta$ and $f(\Delta)$ are in minimal position.\\

\noindent \textbf{If $\Delta$ and $f(\Delta)$ have exactly one intersection point.} The set $\Delta \cup f(\Delta)$ splits $\R^2$ into $4$ connected components. Each of those connected components contains at least one orbit of $\Or$: if not, we can find an isotopy relatively to $\Or$ which eliminate the intersection point, hence $\Delta$ and $f(\Delta)$ are not in minimal position. Choose an orientation on $\Delta$ and consider the induced orientation by $f$ on $f(\Delta)$. Then there exists at least one orbit of $\Or$ which is on the left of $\Delta$ and on the right of $f(\Delta)$. We claim that this is not possible: 
\begin{itemize}
\item $\Delta$ splits $\R^2$ into two topological half plane, denoted by $\Pc$ and $\Qc$.
\item $\Delta$ splits the orbits of $\Or$ into two families: the family $P$ is the orbits included in $\Pc$ and the family $Q$ is the orbits included in $\Qc$. 
\item For every orbit $\Or_i$ of $\Or$, we have $f(\Or_i)=\Or_i$.
\end{itemize}
Then $f(\Delta)$ splits $\R^2$ into $f(\Pc)$ and $f(\Qc)$, hence the orbits into $f(P)=P$ and $f(Q)=Q$.\\

\noindent \textbf{If $\Delta$ and $f(\Delta)$ have no intersection point.} The set $\Delta \cup f(\Delta)$ splits $\R^2$ into three connected components. One of them is between $\Delta$ and $f(\Delta)$. This component contains at least one orbit, because $\Delta$ and $f(\Delta)$ are not isotopic. Now the same argument than in the previous case leads us to a contradiction: choose an orientation on $\Delta$ and consider the induced orientation by $f$ on $f(\Delta)$. There exists at least one orbit of $\Or$ which is on the left of $\Delta$ and on the right of $f(\Delta)$. This is not possible. \end{proof}

\begin{corollary} \label{corollary: reducing lines included in stable area are reducing lines relatively to every orbits} Let $[h;\Or]$ be a Brouwer mapping class.
Let $Z$ be a stable area for $[h;\Or]$ which contains at least two orbits of $\Or$. Choose a complete family of adjacency areas for $[h;\Or \cap Z]$. Let $\Delta$ be a reducing line for $[h;\Or\cap Z]$ which is included in $Z$ and disjoint from every chosen backward adjacency area. Then $\Delta$ is a reducing line for $[h;\Or]$.
\end{corollary}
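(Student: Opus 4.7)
The plan is to verify the hypotheses of Lemma \ref{lemma: criterion for reducing lines} for $\Delta$ with respect to $[h;\Or]$. Conditions (1) and (2) are essentially immediate: $\Delta$ is a line in $\R^2 \setminus \Or$, since as a reducing line for $[h;\Or\cap Z]$ it avoids $\Or \cap Z$, while the hypothesis $\Delta \subset Z$ ensures it also avoids $\Or \setminus Z$; and $\Delta$ splits $\Or$ because it already splits $\Or \cap Z$ non-trivially. The real task is to produce a nice family $(\alpha_i^\pm)_i$ for $[h;\Or]$ such that for all $i$ and all $k \in \Z$, the iterate $h^k(\alpha_i^-)$ is homotopically disjoint from $\Delta$ relative to $\Or$.

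I would build this nice family in two complementary pieces, separated by $\partial Z$, which is a union of reducing lines for $[h;\Or]$ since $Z$ is a stable area. For orbits outside $Z$, an application of Proposition \ref{prop: nice families disjoint from reducing lines} to $[h;\Or]$ with the reducing set $\partial Z$ supplies arcs lying in the complement of $Z$; after using the straightening principle to fix a representative preserving $\partial Z$, all iterates of these arcs remain outside $Z$ and hence disjoint from $\Delta$. For orbits inside $Z$, I would imitate the proof of Proposition \ref{prop: nice families disjoint from reducing lines} adjacency area by adjacency area of $\F$, now viewed as constructing a nice family for $[h;\Or\cap Z]$ with the reducing set $\{\Delta\}$: in each backward adjacency area the hypothesis makes disjointness from $\Delta$ automatic, while in each forward adjacency area one obtains arcs disjoint from $\Delta$ via the same cylinder-quotient argument. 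The straightening principle applied to $[h;\Or\cap Z]$ then provides a representative $h_Z$ fixing $\Delta$, under which \emph{every} iterate of such an arc stays disjoint from $\Delta$ as a set, not only its backward iterates.

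The main obstacle is the transition from $[h;\Or\cap Z]$ to $[h;\Or]$ for the inside-$Z$ arcs: one needs them to remain backward or forward proper and their iterates to remain homotopically disjoint from $\Delta$ when one allows isotopies rel $\Or$ rather than only rel $\Or\cap Z$. The key is to fix a hyperbolic metric on $\R^2 \setminus \Or$ with $\partial Z$ and $\Delta$ geodesic, and to pick a representative of $[h;\Or]$ preserving $\partial Z$. The geodesic representative of any arc in $Z$ relative to $\Or$ then lies in $Z$, so the streamline structure and the disjointness from $\Delta$ can be read inside $Z$. Since isotopies inside $Z$ do not meet $\Or \setminus Z$, homotopic disjointness of two curves in $Z$ rel $\Or\cap Z$ implies the same rel $\Or$, and the properness of half-streamlines in $Z$ (escape of compacts of $Z$, and hence of $\R^2$) transfers in the same way. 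Combining the inside and outside pieces yields the sought nice family: mutual disjointness of backward and forward half-streamlines is automatic across $\partial Z$, and condition (3) of Lemma \ref{lemma: criterion for reducing lines} is verified, completing the proof.
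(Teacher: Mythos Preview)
Your approach is correct and follows the same overall strategy as the paper: verify the three hypotheses of Lemma~\ref{lemma: criterion for reducing lines} for $\Delta$ relative to $[h;\Or]$, the substantive one being to find a nice family whose backward arcs have all iterates homotopically disjoint from~$\Delta$.

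The paper's execution is somewhat more economical than yours, and it is worth seeing why. Rather than building inside-$Z$ arcs for $[h;\Or\cap Z]$ and then arguing that they are also backward/forward proper for $[h;\Or]$, the paper takes a nice family for $[h;\Or]$ disjoint from $\partial Z$ (one application of Proposition~\ref{prop: nice families disjoint from reducing lines}) and simply iterates the inside-$Z$ backward arcs far enough so that they land in the chosen backward adjacency areas for $[h;\Or\cap Z]$; since $\Delta$ avoids those areas by hypothesis, disjointness of $\alpha_i^-$ from $\Delta$ is automatic. This sidesteps entirely your ``transfer of properness'' discussion. Second, Lemma~\ref{lemma: criterion for reducing lines} only uses the arcs $\alpha_i^-$, so your construction of forward arcs disjoint from $\Delta$ (via the cylinder-quotient argument in forward adjacency areas) is unnecessary. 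Finally, for the upgrade from ``homotopically disjoint rel $\Or\cap Z$'' to ``rel $\Or$'', the paper simply observes that $h'^k(\alpha_i^-)$ is compact and contained in $Z$, which is preserved by~$h'$; this is morally your last paragraph, stated more briefly. (Your undefined symbol~$\F$ should presumably be the chosen complete family of adjacency areas.)
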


\begin{proof}
Choose a complete hyperbolic metric on $\R^2 - \Or$. We suppose that the boundary components of $Z$ and $\Delta$ are geodesic. The straightening principle \ref{lemma: straightening principle} gives us $h' \in [h;\Or]$ which preserves $Z$. Let $\al$ be a nice family disjoint from the boundary component of $Z$, such that every $\alpha_i^-$ of $Z$ is included in an adjacency area of the chosen family.

Since $\Delta$ is disjoint from every chosen backward adjacency areas, it is disjoint from $\alpha_i^-$ for every $i$ such that $\alpha_i^-$ is in $Z$. Hence for every $k\in \Z$, $\Delta$ is homotopically disjoint from $h'^k(\alpha_i^-)$ relatively to $\Or \cap Z$ (because $\Delta$ is isotopic to its image by $h'$ relatively to $\Or \cap Z$). Since $h'^k(\alpha_i^-)$ is compact and included in $Z$, which is preserved by $h'$, we get that $\Delta$ and $h'^k(\alpha_i^-)$ are homotopically disjoint relatively to $\Or$ (and not only relatively to $\Or \cap Z$. 

It follows from lemma \ref{lemma: criterion for reducing lines} that $\Delta$ is a reducing line for $[h;\Or]$.
\end{proof}

\subsubsection{Proof of proposition \ref{prop:combinatoric of an irreducible area}}

\begin{prop*}\emph{\textbf{\ref{prop:combinatoric of an irreducible area}} (Combinatorics of irreducible areas).}
Let $[h;\Or]$ be a Brouwer mapping class and let $Z$ be an irreducible area for $[h;\Or]$. Then:
\begin{enumerate}
\item The orbits of $Z$ are not all backward adjacent, neither all forward adjacent for $[h;\Or]$;
\item $Z$ has at least two boundary components;
\item The orbits of $[h;\Or \cap Z]$ do not alternate.
\end{enumerate}
\end{prop*}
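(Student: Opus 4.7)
The plan is to prove each of the three conclusions by contradiction, in each case producing a reducing line for $[h;\Or]$ strictly inside $Z$ to contradict irreducibility of $Z$. The common engine throughout will be Corollary \ref{corollary: reducing lines included in stable area are reducing lines relatively to every orbits}: once I have a reducing line for the restricted class $[h;\Or\cap Z]$ which sits inside $Z$ and is disjoint from a chosen complete family of backward adjacency areas of $[h;\Or\cap Z]$, that line is automatically a reducing line for $[h;\Or]$.

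For $(1)$, I would suppose that all orbits of $Z$ are forward adjacent for $[h;\Or]$ (the backward case then follows by replacing $h$ with $h^{-1}$). Lemma \ref{lemma:fitted family} forces every backward proper arc lying in $Z$ to be forward proper. Picking a nice family $\al$ disjoint from the walls via Proposition \ref{prop: nice families disjoint from reducing lines}, each $\alpha_i^-\subset Z$ generates a proper geodesic homotopy streamline; since $Z$ contains at least two orbits, Lemma \ref{lemma:flow classes iff streamlines} identifies $[h;\Or\cap Z]$ as a flow class. Between two consecutive such streamlines I insert a reducing line for $[h;\Or\cap Z]$ lying inside $Z$ and avoiding the backward adjacency areas, and apply the corollary above to conclude.

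For $(3)$, I would suppose that the orbits of $[h;\Or\cap Z]$ alternate, so that $r'=r$ for this restricted class. By lemma $3.6$ of \cite{FLR2013}, $[h;\Or\cap Z]$ is then a flow class, and the ``reducing line inserted between consecutive streamlines'' argument from $(1)$ carries over verbatim.

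For $(2)$, which I expect to be the most delicate step, I would suppose that $\partial Z$ has a single component, so that $Z$ is an open topological half-plane containing the at least two orbits of $\Or\cap Z$. The aim is to build a reducing line for $[h;\Or\cap Z]$ sitting inside $Z$ and disjoint from backward adjacency areas. If $[h;\Or\cap Z]$ is a translation class, I would use the explicit reducing lines of a translation (see Figure \ref{figu:reduction-lines_translation} and the remark following Proposition \ref{prop: finite number of max. translation areas}), choosing one disjoint from the backward adjacency area; otherwise, Theorem \ref{theo:reducing line disjoint from backward adjacency areas}, applied to $[h;\Or\cap Z]$ with a complete family of adjacency areas supported in $Z$, supplies such a line directly. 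The remaining delicate point is the isotopy into $Z$: since $Z$ is simply connected and $\Or\cap Z\subset Z$, any portion of the reducing line lying in $\R^2-Z$ can be pushed across $\partial Z$ into $Z$ without crossing $\Or\cap Z$ and without meeting the backward adjacency areas (which are chosen inside $Z$). Corollary \ref{corollary: reducing lines included in stable area are reducing lines relatively to every orbits} then promotes the resulting line to a reducing line for $[h;\Or]$ strictly inside $Z$, giving the desired contradiction.
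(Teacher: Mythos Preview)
Your treatment of parts $(1)$ and $(3)$ takes an unnecessary detour through Corollary~\ref{corollary: reducing lines included in stable area are reducing lines relatively to every orbits}. Once Lemma~\ref{lemma:fitted family} (or, in $(3)$, the uniqueness of homotopy translation arcs from \cite{FLR2013}) makes the $\alpha_i^-$ both backward and forward proper \emph{for $[h;\Or]$}, the streamline $T(\alpha_i^-,h,\Or)$ is proper relative to the full $\Or$ and lies in $Z$; a boundary component of a tubular neighbourhood of it is then directly a reducing line for $[h;\Or]$ inside $Z$. That is the paper's argument. Routing through $[h;\Or\cap Z]$ and the corollary forces you to verify the extra hypothesis ``disjoint from backward adjacency areas'', which you assert but do not justify (and which is genuinely awkward in the translation case: every reducing line for a translation class meets the backward adjacency area).

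Part $(2)$ has a real gap. Your argument rests on choosing a complete family of adjacency areas for $[h;\Or\cap Z]$ \emph{supported in $Z$}, but this is not always possible. If the unique boundary line $L$ sits in the cyclic order of the restricted nice family between two backward arcs $\beta_i^-,\beta_j^-$ (the paper's case~(a)), then $\beta_i^-$ and $\beta_j^-$ are consecutive backward arcs, hence in the same backward adjacency subfamily; the boundary of the corresponding backward adjacency area goes to infinity in the two slots \emph{outside} this subfamily, so the slot containing the ends of $L$ lies inside the adjacency area. That adjacency area therefore necessarily contains $L$ and all of $\R^2-Z$, and cannot be supported in $Z$. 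Even in the complementary case, your pushing step (``push the portions of $\Delta$ in $\R^2-Z$ across $L$ into $Z$ without meeting the backward adjacency areas'') is not justified: the pushed arcs land in a collar of $L$ inside $Z$, and nothing prevents a backward adjacency area from meeting that collar. The paper resolves this by a genuine case split: in case~(a) it chooses the backward adjacency area to \emph{contain} $L$, so Handel's reducing line is automatically inside $Z$; in case~(b) it invokes Lemma~\ref{lemma:existence of a reducing line almost included in forward adjacency areas} to get a reducing line whose intersection with the complement of the forward adjacency areas is bounded, hence with only finitely many crossings of $L$, which can then be removed by a controlled isotopy. Your uniform approach does not cover case~(a) and is incomplete in case~(b).
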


\noindent \textbf{Proof of $(1)$.} 
Assume that $Z$ intersects only one forward adjacency area. According to lemma \ref{lemma:fitted family}, every $\alpha_i^- \in Z$ is forward proper. Hence $Z$ is not irreducible.\\

\noindent \textbf{Proof of $(2)$.}
Let $[h;\Or]$ be a Brouwer mapping class. Let $Z$ be a stable area for $[h;\Or]$ which has only one boundary component and at least two orbits. Denote by $L$ this boundary component. Suppose that $Z$ is not a translation area. We will find a reducing line for $[h;\Or]$, included in $Z$ and non isotopic to $L$. Let $\al$ be a nice family for $[h;\Or]$ disjoint from $L$. There is a subfamily of $\al$ which is a nice family for $[h;\Or \cap Z]$. Denote by $\bet$ this subfamily. We consider the cyclic order of $\bet$, and look where is $L$ in this cyclic order: the position of $L$ in the cyclic order is the position of $L$ in the plane relatively to the half homotopy streamlines generated by the $\beta_i^\pm$'s. There are two different cases:
\begin{enumerate}
\item [(a).] If $L$ is between two backward proper arcs or between two forward proper arcs in the cyclic order of $\bet$;
\item [(b).] If $L$ is between one backward proper arc and one forward proper arc in the cyclic order of $\bet$.
\end{enumerate}

\noindent \textbf{Case (a).}
If $L$ is between two backward proper arcs or between two forward proper arcs in the cyclic order of $\bet$, we claim that there exists an adjacency area for $[h;\Or\cap Z]$ which contains $L$. Indeed, assume that $L$ is between two backward proper arcs (the same proof holds with two forward proper arcs, replacing $h$ by $h^{-1}$ when it is necessary). Denote by $\beta_i^-$ and $\beta_j^-$ this two backward proper arcs, and suppose their endpoints are respectively $x_i,h(x_i)$ and $x_j,h(x_j)$. Then there exists an arc $\gamma$ disjoint from $L$, whose endpoints are $h(x_i)$ and $h(x_j)$ and such that one connected component of the complement of $T^-(\beta_i^-,h,\Or) \cup \gamma \cup T^-(\beta_j^-,h,\Or)$ does not intersect $\Or \cap Z$. Now Handel's theorem \ref{theo:reducing line disjoint from backward adjacency areas} implies that there exists a reducing line $\Delta$ for $[h;\Or\cap Z]$ which is disjoint from every backward adjacency area. Hence $\Delta$ is included in $Z$, and according to corollary \ref{corollary: reducing lines included in stable area are reducing lines relatively to every orbits}, it is a reducing line for $[h;\Or]$.\\

\noindent \textbf{Case (b).} Assume $L$ is between one backward proper arc and one forward proper arc in the cyclic order of $\bet$. Denote by $\beta_i^-$ and $\beta_j^+$ this two arcs. Following the construction \ref{construction of adjacency areas}, we get a complete family of adjacency areas for $[h;\Or \cap Z]$ disjoint from $L$. Now let $\Delta$ be a reducing line for $[h;\Or \cap Z]$ given by lemma \ref{lemma:existence of a reducing line almost included in forward adjacency areas}, i.e. such that the intersection between $\Delta$ and the complement of the forward adjacency areas of $[h;\Or\cap Z]$ is bounded. It follows that $\Delta$ intersects $L$ at most in a finite set (because $L$ is disjoint from the adjacency areas). Isotopying $\Delta$ if this set is not empty, we can suppose that $\Delta \cap L = \emptyset$. Since $\Delta$ is also disjoint from every backward adjacency areas of $[h;\Or \cap Z]$, according to corollary \ref{corollary: reducing lines included in stable area are reducing lines relatively to every orbits}, it is a reducing line for $[h;\Or]$.\\

\noindent \textbf{Proof of $(3)$.} This is a consequence of lemma $3.6$ of \cite{FLR2013}: every family of alternating orbits satisfies the uniqueness of homotopy translation arcs. Suppose that the orbits of $[h;\Or \cap Z]$ are alternate. According to this lemma and to proposition \ref{prop: nice families disjoint from reducing lines}, if $\al$ is a nice family for $[h;\Or]$ disjoint from the boundary components of $Z$, then for every $i$ such that $\alpha_i^-$ is in $Z$, $\alpha_i^-$ and $\alpha_i^+$ are isotopic relatively to $\Or \cap Z$. Since they are included in $Z$, there are also isotopic relatively to $\Or$. Hence for such an $i$, $T(\alpha_i^-,h,\Or)$ is a proper streamline. At least one of the boundary components of a tubular neighborhood of this streamline is a reducing line included in $Z$. Hence $Z$ is not irreducible. \qed

\subsection{Corollaries of proposition \ref{prop:combinatoric of an irreducible area}} \label{subsection:corollaries of irreducible areas}

\subsubsection{Proof of corollary \ref{corollary:r'=2 implies flow class}}

\begin{corollary*} \emph{\textbf{\ref{corollary:r'=2 implies flow class}}}
Let $[h;\Or]$ be a Brouwer mapping class relatively to $r$ orbits. Denote by $2r'$ the number of adjacency subfamilies of $[h;\Or]$. If $r'= 1,2$ or $r$, then $[h;\Or]$ is a flow class.
\end{corollary*}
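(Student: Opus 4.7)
The plan is to reduce the statement, via Proposition \ref{prop:flow class iff no irreducible area}, to showing that in each of the three cases $r' = 1, 2, r$ no connected component of the complement of the set of walls for $[h;\Or]$ can be an irreducible area. I will argue by contradiction: suppose $Z$ is such an irreducible area, and apply Proposition \ref{prop:combinatoric of an irreducible area} to derive a contradiction from the global adjacency structure.

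The case $r' = 1$ is immediate: every orbit of $\Or$ is globally backward adjacent (and forward adjacent) to every other, so in particular the orbits of $Z$ are all backward adjacent for $[h;\Or]$, contradicting part (1) of Proposition \ref{prop:combinatoric of an irreducible area}. For $r' = r$, I would first invoke Proposition \ref{prop: nice families disjoint from reducing lines} to produce a nice family $(\alpha_i^\pm)_i$ for $[h;\Or]$ disjoint from the walls, and then restrict it to the indices $i$ such that $\Or_i \subset Z$. The key observation is that each global adjacency subfamily contains a single arc, so the cyclic order on the restricted arcs at infinity still has backward and forward arcs strictly alternating. After verifying, by a straightening argument inside the stable area $Z$, that these restricted arcs remain backward or forward proper for $[h;\Or \cap Z]$, one obtains that the orbits of $[h; \Or \cap Z]$ alternate, contradicting part (3) of Proposition \ref{prop:combinatoric of an irreducible area}.

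The genuinely new case is $r' = 2$, where the four adjacency subfamilies of $[h;\Or]$ appear cyclically at infinity in a pattern $B_1, F_1, B_2, F_2$, since backward and forward subfamilies must alternate. Part (1) of Proposition \ref{prop:combinatoric of an irreducible area} forces the orbits of $Z$ to meet both backward subfamilies and both forward subfamilies. I would then transfer this information to the restricted class $[h;\Or \cap Z]$ and aim to show, using Lemma \ref{lemma: adjacency areas and reducing lines} to control how walls interact with adjacency areas, that the restricted adjacency structure collapses: either the orbits of $[h;\Or \cap Z]$ are all backward (or all forward) adjacent in the restricted sense, contradicting part (1) applied inside $Z$, or they alternate, contradicting part (3). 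The main obstacle is precisely this last step --- translating the global constraint $r' = 2$ into a restricted constraint on $[h;\Or \cap Z]$ --- since adjacency for the restricted class need not coincide naively with the restriction of the global adjacency, and the topology of the walls bounding $Z$ (guaranteed by part (2) to be at least two in number) must be carefully analyzed to rule out intermediate configurations.
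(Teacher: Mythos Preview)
Your treatment of $r'=1$ is correct and immediate, and your plan for $r'=r$ is essentially what the paper does (the paper simply cites \cite{FLR2013}, lemma~3.6, for the alternating case). The genuine gap is in the case $r'=2$, and you have correctly identified it yourself: there is no mechanism for translating the global constraint $r'=2$ into a workable constraint on the restricted class $[h;\Or\cap Z]$. The dichotomy you aim for---either all orbits of $Z$ become backward/forward adjacent in the restricted sense, or they alternate---simply need not hold, and Lemma~\ref{lemma: adjacency areas and reducing lines} does not give you enough control to force it.

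The paper avoids this obstacle entirely by looking \emph{outside} the irreducible area rather than restricting to it. The missing idea is to exploit part~(2) of Proposition~\ref{prop:combinatoric of an irreducible area}: the irreducible area $Z$ has at least two boundary walls $\Delta_1,\Delta_2$, so the complement of $\Delta_1\cup\Delta_2$ has two further components $Z_1,Z_2$, each containing at least one orbit of $\Or$. Parts~(1) and~(3), together with the positions of $\Delta_1,\Delta_2$ in the cyclic order, let one find four arcs of a nice family inside $Z$ arranged cyclically as $(\alpha_{i_1}^-,\alpha_{i_2}^+,\alpha_{i_4}^+,\alpha_{i_3}^-)$, with $\Delta_1$ sitting between the two forward arcs and $\Delta_2$ between the two backward arcs. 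Now an orbit in $Z_1$ contributes a backward arc $\alpha_{i_5}^-$ between $\alpha_{i_2}^+$ and $\alpha_{i_4}^+$, and an orbit in $Z_2$ contributes a forward arc $\alpha_{i_6}^+$ between $\alpha_{i_3}^-$ and $\alpha_{i_1}^-$. The resulting global cyclic subfamily $(\alpha_{i_1}^-,\alpha_{i_2}^+,\alpha_{i_5}^-,\alpha_{i_4}^+,\alpha_{i_3}^-,\alpha_{i_6}^+)$ alternates sign six times, forcing $r'\geq 3$. This single argument disposes of both $r'=1$ and $r'=2$ simultaneously.
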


\begin{proof}
If $[h;\Or]$ is not a flow, then it has an irreducible area for some reducing set (according to proposition \ref{prop:flow class iff no irreducible area}). This irreducible area has at least two boundary components (according to proposition \ref{prop:combinatoric of an irreducible area}), which are reducing lines. Denote by $\Delta_1$ and $\Delta_2$ those two boundary components. The complement of $\Delta_1 \cup \Delta_2$ has three components, denoted by $Z_1$, $Z_2$ and $Z$. Assume that $Z$ is the area in the middle, which contains the irreducible area. Choose a nice family $\al$ for $[h;\Or]$ which is disjoint from $\Delta_1$ and $\Delta_2$ (use proposition \ref{prop: nice families disjoint from reducing lines}). Since $Z$ contains an irreducible area, according to proposition \ref{prop:combinatoric of an irreducible area} it intersects at least two different backward adjacency areas of $[h;\Or]$ and at least two different forward adjacency areas of $[h;\Or]$, and the orbits of $[h;\Or \cap Z]$ do not alternate. Hence the situation is the one of figure \ref{figu:irreducible-fig}: there exists a subfamily $(\alpha_{i_1}^-,\alpha_{i_2}^+,\alpha_{i_3}^-,\alpha_{i_4}^+)$ of $\al$ containing only arcs included in $Z$ and such that the cyclic order of this subfamily is $(\alpha_{i_1}^-,\alpha_{i_2}^+,\alpha_{i_4}^+,\alpha_{i_3}^-)$, with $\Delta_1$ between $\alpha_{i_2}^+$ and $\alpha_{i_4}^+$ and $\Delta_2$ between $\alpha_{i_3}^-$ and $\alpha_{i_1}^-$. Since $Z_1$ and $Z_2$ contain at least one orbit (because $\Delta_1$ and $\Delta_2$ are reducing lines), there exist a backward proper arc of $\al$ in $Z_1$, denoted by $\alpha_{i_5}^-$, and a forward proper arc of $\al$ in $Z_2$, denoted by $\alpha_{i_6}^+$. It follows that $\al$ has a subfamily of arcs whose cyclic order at infinity is $(\alpha_{i_1}^-,\alpha_{i_2}^+,\alpha_{i_5}^-,\alpha_{i_4}^+,\alpha_{i_3}^-,\alpha_{i_6}^+)$. Hence $r'\geq 3$. It was shown in \cite{FLR2013}, lemma $6.6$, that $r'<r$.\end{proof}

\begin{figure}[h]
\begin{center}
\labellist
\small\hair 2pt
\pinlabel $-$ at 23 85
\pinlabel $3$ at 22 95

\pinlabel $+$ at 55 85
\pinlabel $4$ at 56 95

\pinlabel $+$ at -25 40
\pinlabel $-$ at -10 40
\pinlabel $6$ at -17 53

\pinlabel $+$ at 105 40
\pinlabel $-$ at 90 40
\pinlabel $5$ at 98 53

\pinlabel $-$ at 24 -5
\pinlabel $1$ at 23 -14

\pinlabel $+$ at 57 -5
\pinlabel $2$ at 58 -14 

\endlabellist
\centering
\vspace{0.2cm}
\includegraphics[scale=1]{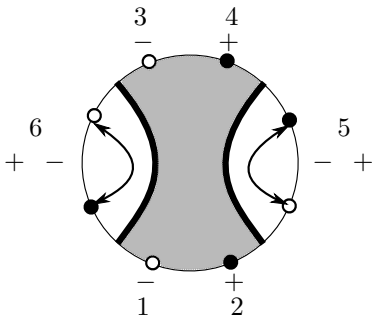}
\vspace{0.5cm}
\caption{Combinatorics of an irreducible area}
\label{figu:irreducible-fig}
\end{center}
\end{figure}

\subsubsection{Proof of corollary \ref{cor:deux orbites bordantes}}

\begin{corollary}\label{cor:deux orbites bordantes}
Let $[h;\Or]$ be a Brouwer mapping class relatively to $r$ orbits.
\begin{enumerate}
\item If $r\geq 3$, there exist at least two disjoint and non isotopic reducing lines for $[h;\Or]$;
\item If $r \geq 2$, there exist at least two translation areas for $[h;\Or]$ which have exactly one boundary component;
\item There exists a nice family $\al$ and $j \neq k$ such that:
\begin{itemize}
\item Relatively to $\Or$, $\alpha_j^-$ is isotopic to $\alpha_j^+$ and $\alpha_k^-$ is isotopic to $\alpha_k^+$;
\item In the cyclic order, $\alpha_j^-$ and $\alpha_j^+$ (respectively $\alpha_k^-$ and $\alpha_k^+$) are neighbors.
\end{itemize}
\end{enumerate}
\end{corollary}

\noindent \textit{Proof of $(1)$.} Let $r$ be greater than $2$. Theorem \ref{theo:existence of reducing line} gives us a first reducing line. This line splits the plane into two stable areas. One of them contains at least two orbits. According to $(2)$ of proposition \ref{prop:combinatoric of an irreducible area}, every stable area with one boundary component and which contains at least two orbits contains at least one reducing line non isotopic to the boundary component: this gives a second reducing line.\qed \\

\noindent \textit{Proof of $(2)$ and $(3)$.} If $r=2$ then $[h;\Or]$ any reducing line split the plane into two translation areas. If $r\geq 3$ we find two reducing lines as done in the proof of $(1)$. In the complement of these two reducing lines we have in particular two adjacency areas with one boundary component. In each of these areas, finding again a reducing line in the area as done in $(1)$ if necessary, inductively we find a stable area with one orbit and one boundary components: this gives $(2)$ and $(3)$.\qed \\

\section{Deflectors} \label{section: deflectors}

This section is independent of sections \ref{section:Adjacency areas, diagrams and special nice families}, \ref{section: walls} and \ref{section:determinant diagrams}. The main result is proposition \ref{prop: deflector}, that we will need to prove theorem \ref{theo:total invariant}.

\begin{prop}\label{prop: deflector}
Let $\tau:\R^2 \rightarrow \R^2$ be the horizontal translation $(x,y)\mapsto (x+1,y)$. Let $n \in \N$. Let $(\alpha_i)_{1 \leq i \leq n}$ and $(\beta_i)_{1 \leq i \leq n}$ be two families of translation arcs for $\tau$ such that:
\begin{itemize}
\item For every $i$, $\alpha_i$ and $\beta_i$ join $(0,i)$ to $(1,i)$.
\item The $\alpha_i$'s (respectively the $\beta_i$'s) are mutually disjoint.
\end{itemize}
Then there exists a homeomorphism $\mu$ of $\R^2$ with a compact support and such that:
\begin{enumerate}
\item $\mu(\Z \times \{1,...,n\})=\Z \times \{1,...,n\}$;
\item For every sufficiently large $k \in \N$, for every $i$, $(\mu \tau)^k(\alpha_i)$ is isotopic relatively to $\Z \times \{1,...,n\}$ to $\tau^k(\beta_i)$;
\item $\mu \tau$ is a Brouwer homeomorphism; More precisely, $\mu$ is a finite product of $\tau$-free half twists disjointly supported.
\end{enumerate}
\end{prop}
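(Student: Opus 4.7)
\emph{Proof plan.} The plan is to identify the combinatorial difference between the two nice families on the quotient cylinder $\C := \R^2/\tau$ and to realize $\mu$ as the lift of a product of half twists on $\C$ that transforms one family into the other.

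First, I would reduce to a normalized configuration. The streamlines $T(\alpha_i) := \bigcup_{k\in\Z} \tau^k(\alpha_i)$ and $T(\beta_i)$ form two families of pairwise disjoint properly embedded $\tau$-invariant topological lines in $\R^2$, each carrying the orbit $\Z \times \{i\}$ with identical vertical ordering at infinity. By a Schoenflies-type argument for such periodic line families, after applying a compactly supported ambient isotopy to the $(\alpha_i)_i$ that fixes $\Z \times \{1,\dots,n\}$ pointwise, I may assume that $T(\alpha_i)$ and $T(\beta_i)$ coincide outside a compact region.

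Second, I would pass to $\C$, on which each family descends to a system $(\bar\alpha_i)_i$ or $(\bar\beta_i)_i$ of pairwise disjoint simple closed curves, each passing once through the corresponding marked point $\bar i \in \{\bar 1,\dots,\bar n\}$ and generating $H_1(\C)$. The two systems define combinatorially equivalent decompositions of $\C$ into horizontal strips, and their difference is captured by a mapping class $\bar\mu$ of the $n$-punctured cylinder sending one to the other. By induction on the number of intersections between $(\bar\alpha_i)$ and $(\bar\beta_i)$ in minimal position for a hyperbolic metric on the punctured cylinder, one writes $\bar\mu$ as a finite product of half twists, each supported in a small disk containing exactly two marked points, with all supporting disks pairwise disjoint.

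Third, I would lift to $\R^2$: each such half twist on $\C$ supported in a disk of diameter less than the circumference of the cylinder lifts to a $\tau$-free half twist on $\R^2$ supported in one preimage disk, and disjointness on $\C$ lifts to disjointness on $\R^2$. Letting $\mu$ be the product of these lifts yields the homeomorphism required by $(1)$ and $(3)$. For $(2)$: choose $k_0$ large enough that $\tau^k(\alpha_i)$ lies outside the support of $\mu$ for every $k \geq k_0$ and every $i$. Then $(\mu\tau)^k(\alpha_i) = \tau^{k-k_0}\bigl((\mu\tau)^{k_0}(\alpha_i)\bigr)$ for $k \geq k_0$, and by the construction on the cylinder, $(\mu\tau)^{k_0}(\alpha_i)$ is isotopic to $\tau^{k_0}(\beta_i)$ relatively to $\Z \times \{1,\dots,n\}$; applying $\tau^{k-k_0}$ yields the desired isotopy.

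The main obstacle is the second step, specifically the decomposition of the cylinder mapping class into \emph{disjointly} supported half twists rather than an arbitrary composition. The key input is that the rigidity of both curve systems being ``horizontal'' generators allows one to localize the braid as simultaneous adjacent-pair swaps in non-overlapping substrips between consecutive curves, which is precisely what disjointly supported half twists implement; the Brouwer property of $\mu\tau$ then follows from the fact that each $\tau$-free half twist individually preserves Brouwerness of $\tau$ and the disjointness of supports makes the effects independent.
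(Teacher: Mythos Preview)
Your overall framework---pass to the quotient cylinder, realize the change of curve systems by a product of half twists, lift to $\R^2$---matches the paper's. But your second step contains a genuine error, and correcting it is exactly where the paper's main idea lives.

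You claim that the cylinder mapping class $\bar\mu$ sending $(\bar\alpha_i)$ to $(\bar\beta_i)$ can be written as a product of half twists \emph{with pairwise disjoint supports on the cylinder}. This is false in general: disjointly supported half twists commute, so their product is a very restricted mapping class (in particular, it acts on the marked points as a product of disjoint transpositions and contains no twisting). The difference between two horizontal curve systems on $\C_n$ can involve arbitrary braiding, which cannot be factored this way. Your ``induction on intersections / simultaneous adjacent swaps'' sketch does not overcome this.

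The paper's resolution is that disjointness is \emph{not} obtained on the cylinder. Lemma~\ref{lemma: braids for deflector} only produces $\phi = \nu_1 \cdots \nu_k$ as an \emph{ordered} product of half twists on $\C_n$, with no disjointness of supports; its proof is a nontrivial braid-group computation (linking numbers, the $A_{i,j}$ generators of the pure braid group) showing that one may avoid twisting around the boundary puncture. Disjointness in $\R^2$ is then obtained by lifting each $\nu_j$ to a local lift $\tilde\nu_j$ supported in a disk $D_j$ placed at a \emph{different horizontal position}, with $D_k, D_{k-1}, \dots, D_1$ arranged from left to right. Because $\tau$ moves points rightward, iterating $\mu\tau$ on an arc starting on the left applies $\tilde\nu_k$, then $\tilde\nu_{k-1}$, and so on, in the correct order of composition; this is what makes $\pi\bigl((\mu\tau)^N \alpha_i\bigr) = \phi(\bar\alpha_i) = \bar\beta_i$. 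So the disjointness of the $\tau$-free supports in $\R^2$ comes from spreading the factors across distinct fundamental domains, not from any disjointness on $\C_n$. Your third step, which simply lifts a disjoint configuration from the cylinder, therefore rests on the wrong mechanism and would not yield property~(2) even if step~2 were salvageable.
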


\begin{definition}
Such a homeomorphism is called a \emph{deflector} associated to $(\alpha_i,\beta_i)_{1 \leq i \leq n}$.
\end{definition}

Let $\C_n$ be the open vertical cylinder with $n$ marked points at distinct heights. 
Recall that $MCG(\C_n)$ is defined as the quotient of the group of homeomorphisms of the cylinder, fixing each boundary puncture and fixing the set of marked points (not necessary point wise), by its connected component of the identity (for the compact-open topology). In particular, it is the subgroup of the braid group of the $(n+2)$-punctured sphere $B_{n+1}(\Sph^2)$ which fixes two punctures.

We use the following lemma in the proof of proposition \ref{prop: deflector}.
\begin{lemma}\label{lemma: braids for deflector}
Let $(\gamma_i)_{1\leq i \leq n}$ be a family of mutually disjoint simple closed curves on $\C_n$, such that each curve contains exactly one marked point, and such that each curve is isotopic in the cylinder without marked point to the separating circle. Let $(\gamma'_i)_{1\leq i \leq n}$ be the family of disjoint horizontal circles on the cylinder, such that each $\gamma'_i$ contains a marked point.

Then there exists $\phi \in MCG(\C_n)$ such that:
\begin{itemize}
\item For every $i$, there exists $j$ such that $\phi(\gamma_i)$ is isotopic to $\gamma'_j$ in $\C_n$;
\item $\phi$ is a finite product of half-twists.
\end{itemize}
\end{lemma}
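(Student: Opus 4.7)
The plan is to reduce the multicurve $(\gamma_i)_i$ to the horizontal multicurve $(\gamma'_j)_j$ by a finite sequence of half-twists, each swapping the marked points on two linearly consecutive curves of the cylinder.

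First I would observe that the mutually disjoint meridians $\gamma_1, \ldots, \gamma_n$ inherit a natural linear order from the cylinder axis; after relabelling, we may assume $\gamma_1 < \cdots < \gamma_n$ in this order. Labelling the marked points $p_1, \ldots, p_n$ by increasing height, so that $\gamma'_j$ contains $p_j$, define $\sigma \in S_n$ by the rule that $\gamma_i$ contains $p_{\sigma(i)}$. The horizontal configuration corresponds to $\sigma = \mathrm{id}$. A standard classification of multicurves in $\C_n$ (provable inductively by peeling off one meridian at a time, using that the complement of such a multicurve consists of $n+1$ annuli containing no marked points) asserts that two multicurves of the type considered here are isotopic in $\C_n$ if and only if they induce the same permutation $\sigma$. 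It therefore suffices to produce an element $\phi \in MCG(\C_n)$, obtained as a product of half-twists, whose action on this combinatorial datum reduces $\sigma$ to $\mathrm{id}$.

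The key step is the following: for any $1 \le i \le n-1$, I claim there is a half-twist $\mu_i$ that swaps the marked points on $\gamma_i$ and on $\gamma_{i+1}$ while leaving all other curves essentially unchanged. To construct $\mu_i$, I use that the annular region of $\C$ bounded by $\gamma_i$ and $\gamma_{i+1}$ contains no marked point in its interior (all marked points lie on the $\gamma_j$'s) and is disjoint from every $\gamma_j$ with $j \neq i, i+1$. Choose an embedded arc in the closure of this annulus from $p_{\sigma(i)}$ to $p_{\sigma(i+1)}$, and let $D$ be a small regular neighborhood of this arc: then $D$ is a topological disk containing exactly these two marked points and disjoint from every other $\gamma_j$. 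Define $\mu_i$ to be a half-twist supported in $D$. A local inspection within $D$ then shows that $\mu_i$ exchanges the two marked points, preserves the linear order of the full multicurve (since outside $D$ nothing moves), and modifies the permutation $\sigma$ precisely by the adjacent transposition of its values at positions $i$ and $i+1$.

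Since every element of $S_n$ is a product of adjacent transpositions, iterating this construction (choosing a fresh disk at each step to accommodate the updated configuration) yields a finite product $\phi$ of half-twists whose net effect reduces $\sigma$ to $\mathrm{id}$. By the classification mentioned above, the image multicurve $(\phi(\gamma_i))_i$ is then isotopic in $\C_n$ to $(\gamma'_i)_i$, and in particular each $\phi(\gamma_i)$ is isotopic to some $\gamma'_j$, as desired. The main obstacle I expect is the detailed local verification that $\mu_i$ acts as claimed: namely, that after the half-twist the curves $\mu_i(\gamma_i)$ and $\mu_i(\gamma_{i+1})$ remain disjoint meridians, occupy the same relative positions as before, and now contain the marked points exchanged, while the remaining $\gamma_j$ are untouched. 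This is essentially a picture in a disk with two marked points, where one checks that the half-twist reroutes the two interior arcs of $\gamma_i$ and $\gamma_{i+1}$ so that each terminates at the other marked point, while the boundary of $D$ and hence the rest of the configuration remains rigid. Granted this local check, the sorting argument is standard.
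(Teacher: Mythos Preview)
Your argument has a genuine gap: the ``standard classification'' you invoke is false. It is \emph{not} true that two multicurves of this type are isotopic in $\C_n$ as soon as they induce the same permutation $\sigma$.

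Here is a concrete counterexample for $n=2$. Identify $\C_2$ with the four-punctured sphere $S_{0,4}$ (the two ends of the cylinder and the two marked points). By the Birman exact sequence, the pure mapping class group $PMod(\C_2)\cong\pi_1(S_{0,3})\cong F_2$, freely generated by the Dehn twists $T_{c_1}$ and $T_{c_2}$, where $c_2$ is the meridian between $p_1$ and $p_2$ and $c_1$ bounds a disk containing both $p_1$ and $p_2$. The stabiliser of the horizontal multicurve $(\gamma'_1,\gamma'_2)$ is exactly $\langle T_{c_2}\rangle\cong\mathbb Z$: after cutting along $\gamma'_1\cup\gamma'_2$, the two outer pieces are half-open annuli (trivial mapping class group rel boundary), and the middle piece contributes one Dehn twist. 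Since $\langle T_{c_2}\rangle$ is a proper subgroup of $F_2$, the orbit of $(\gamma'_1,\gamma'_2)$ under $PMod(\C_2)$ is infinite. Every element of this orbit has $\sigma=\mathrm{id}$ (the pure group fixes each $p_i$ and each end), so there are infinitely many pairwise non-isotopic multicurves with $\sigma=\mathrm{id}$. Explicitly, $T_{c_1}(\gamma'_1,\gamma'_2)$ --- the image of the horizontal pair under a full twist in a disk containing both marked points --- has $\sigma=\mathrm{id}$ but is not isotopic to $(\gamma'_1,\gamma'_2)$. If you feed this multicurve into your procedure, your algorithm outputs $\phi=\mathrm{id}$, which does not satisfy the conclusion.

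So your sorting by adjacent half-twists only arranges the permutation; it does not, and cannot, force isotopy with the horizontal configuration. This is exactly the difficulty the paper's proof is designed to overcome: the paper first picks \emph{any} $\varphi\in MCG(\C_n)$ sending $(\gamma_i)$ to the horizontal family, and then uses the algebra of the braid group (linking numbers around the distinguished strand $x_{n+1}$, and the normal form $\rho=\beta_2\cdots\beta_{n+1}$ of a pure braid in the generators $A_{i,j}$) to show that, after multiplying by horizontal Dehn twists --- which fix the target multicurve --- the element $\varphi$ becomes a product of half-twists supported away from $x_{n+1}$. That algebraic step is precisely what absorbs the $F_2/\mathbb Z$ ambiguity that your argument ignores.
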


\noindent \textit{Proof of lemma \ref{lemma: braids for deflector}.} We denote again by $\gamma_i$, $\gamma'_i$ the isotopy classes of $\gamma_i$, $\gamma'_i$ when there is no confusion. Let $\varphi \in MCG(\C_n)$ such that $(\varphi(\gamma_i))_i=(\gamma'_i)_i$. Then if $T$ is a product of horizontal Dehn twists (around one of the $\gamma'_i$'s or around a boundary component), $T\varphi$ coincides with $\varphi$ on the $\gamma_i$'s.

\noindent \textbf{The group $MCG(\C_n)$ as a quotient of a subgroup of the braid group.} Denote by $B_n$ the usual braid group, i.e. the mapping class group of the disk with $n+1$ marked point, denoted by $x_1,...,x_{n+1}$. Let $G$ be the subgroup of $B_n$ which fixes $x_{n+1}$. Thus $MCG(\C_n)$ is the quotient of $G$ by its center (which is generated by the Dehn twist around the boundary component of the disk). 

We will need the three followings to prove the lemma.\\

\noindent \textbf{(1) Linking number of a pure braid.} Denote by $P_n$ the subgroup of $B_n$ composed by the pure braids, i.e. the braids which fix point wise the marked point $x_1,...,x_n$. Let $\rho \in P_n$ be a pure braid.
In the geometric representation of $\rho$, for every $i\leq n$, the strand from $x_i$ can turn around the strand from $x_{n+1}$ clockwise or counterclockwise. We count $+1$ each time it turns around clockwise, and $-1$ each time it turns around counterclockwise. We call \emph{linking number of $x_i$ around $x_{n+1}$} and denote by $\epsilon_i(\rho)$ the sum obtained when we look over the strand from $x_i$. We define the \emph{total linking number of $\rho$} as the sum $\epsilon(\rho):=\Sigma_{i=1}^n \epsilon_i(\rho)$. Note that $\epsilon$ is a morphism from $P_n$ to $\Z$. \\

\noindent \textbf{(2) Special form of a pure braid (see for example \cite{Kassel-Turaev}).}
The braid $A_{i,j} \in B_n$ is usually defined as in figure \ref{figu:A_ij}. More precisely, if we denote by $\sigma_k$ the usual half twists which generate $B_n$, we have:
$$A_{i,j}=\sigma_{i-1} \sigma_{i-2}...\sigma_{j+1}\sigma_j^2 \sigma_{j+1}^{-1}...\sigma_{i-2}^{-1}\sigma_{i-1}
^{-1}.$$

\vspace{0.3cm}
\begin{figure}[h]
\labellist
\small\hair 2pt
\pinlabel $i$ at 41 -10
\pinlabel $j$ at 104 -10
\endlabellist
\centering
\includegraphics[scale=0.5]{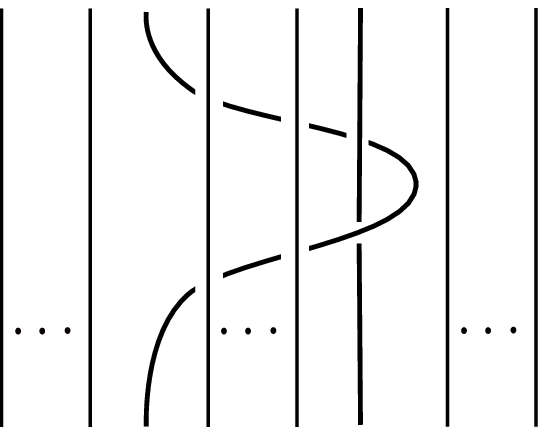}
\vspace{0.5cm}
\caption{The braid $A_{i,j}$.}
\label{figu:A_ij}
\end{figure}

\noindent \textbf{Claim A. (see for example \cite{Kassel-Turaev})} Let $\rho \in P_n$ be a pure braid. Then $\rho$ can be written as: $\rho=\beta_2...\beta_n\beta_{n+1}$, where every $\beta_k$ is in the free group generated by $A_{1,k},...,A_{k-1,k}$.\\

\noindent \textbf{(3) Claim B.} Let $\rho$ be in the group generated by $A_{1,n+1},...,A_{n,n+1}$. Assume that the linking number of $\rho$ is trivial. Then $\rho$ can be written as a finite product of half twists supported in topological disks disjoint from $x_{n+1}$.\\

\noindent \textit{Proof of claim B.} Such an element can be written as:
$$\rho = A_{i_1,n+1}^{k_1}A_{i_2,n+1}^{k_2}...A_{i_l,n+1}^{k_l}A_{i_{l+1},n+1}^{-(k_1+k_2+...+k_l)},$$
where the $k_j$'s are in $\Z$, the $i_j$'s are integers between $1$ and $n$, and $l$ is a non negative integer. Note that $\epsilon(A_{k,n+1})=1$ for every $k$. Since $\epsilon$ is a morphism, $\epsilon(\rho)=0$ implies that the sum of the powers is trivial.

\begin{figure}[h]
\labellist
\small\hair 2pt
\pinlabel $1$ at 20 80
\pinlabel $i$ at 60 80
\pinlabel $j$ at 100 80
\pinlabel $n+1$ at 145 80

\endlabellist
\centering
\vspace{0.3cm}
\includegraphics[scale=0.6]{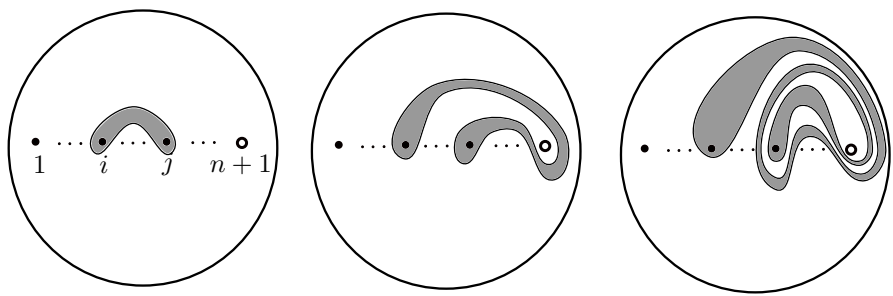}
\vspace{-0.2cm}
\caption{Topological disks $D_{i,j}$, $\sigma_{j,n+1}^{2}(D_{i,j})$ and $\sigma_{j,n+1}^{4}(D_{i,j})$.}
\vspace{0.3cm}
\label{figu:twist}
\end{figure}

\begin{figure}
\labellist
\small\hair 2pt
\pinlabel $i$ at 2 -15
\pinlabel $j$ at 45 -18
\pinlabel $n+1$ at 105 -19
\endlabellist
\centering
\begin{center}
\includegraphics[scale=0.3]{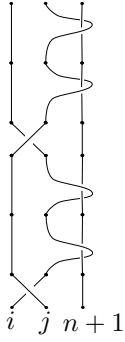}
\vspace{0.3cm}
\caption{$A_{i,n+1}^2A_{j,n+1}^{-2}=\sigma_{j,n+1}^4\sigma_{i,j}\sigma_{j,n+1}^{-4}\sigma_{i,j}^{-1}$.}
\label{figu:chemin}
\end{center}
\end{figure}

Denote by $\sigma_{i,j}$ the half twist supported in a topological disk $D_{i,j}$ containing $x_i$ and $x_j$ and lying above every other marked points, as the left item of figure \ref{figu:twist}. Note that:
\begin{itemize}
\item The conjugate of $\sigma_{i,j}$ by $\sigma_{j,n+1}^{2k}$ is the half twist supported in $\sigma_{j,n+1}^{2k}(D_{i,j})$ (see figure \ref{figu:twist});
\item $A_{i,n+1}^k A_{j,n+1}^{-k}= \sigma_{j,n+1}^{2k} \sigma_{i,j} \sigma_{j,n+1}^{-2k} \sigma_{i,j}^{-1}$ (see figure \ref{figu:chemin}).
\end{itemize}  
Thus we know how to realize every $A_{i,n+1}^k A_{j,n+1}^{-k}$ as a product of $4$ half twists supported in $D_{i,j}$ and $\sigma^{2k}_{j,n+1}(D_{i,j})$, hence as a product of half twists supported in topological disks disjoint from $x_{n+1}$. Now remark that $\rho$ can be written as:
$$\rho = A_{i_1,n+1}^{k_1}A_{i_2,n+1}^{-k_1} A_{i_2,n+1}^{k_1+k_2} A_{i_3,n+1}^{-(k_1+k_2)} ...A_{i_l,n+1}^{k_1+...+k_l}A_{i_{l+1},n+1}^{-(k_1+k_2+...+k_l)}.$$ 
Hence we know how $\rho$ as a finite product of half twists supported in topological disks disjoint from $x_{n+1}$. \qed \\

\noindent \textbf{Back to the proof of lemma \ref{lemma: braids for deflector}.}
Choose a topological disk $K$ of $\C$ which contains every $x_i$ for $i \leq n$. Every element of $B_n$ supported in $K$ is in $G$. Choose a lift $\hat \varphi$ of $\varphi$ in $G$. Compose $\hat \varphi$ with an element $\sigma \in G$ supported in $K$ and such that $\sigma \hat \varphi$ is a pure braid.

(1) Note that the linking number of $\sigma \hat \varphi$ does not depend on $\sigma$, because its supported in $K$. Up to composing $\varphi$ by a finite product of horizontal Dehn twists, we can assume that $\epsilon(\sigma \hat \varphi)=0$.

(2) Since $\sigma  \hat \varphi$ is a pure braid, according to claim A, we can write it as:
$$\sigma \hat \varphi = \beta_2...\beta_n\beta_{n+1},$$
where every $\beta_k$ is in the free group generated by $A_{1,k},...,A_{k-1,k}$.

For every $i \leq k \leq n$, $A_{i,k}$ is supported in $K$, hence $\beta_n...\beta_2$ is supported $K$, hence $\epsilon(\beta_{k})=0$ for every $k\leq n$. Since $\epsilon$ is a morphism, we have also $\epsilon(\beta_{n+1})=0$. 

(3) Because the braid group $B_{n-1}$ is generated by the usual half twists (see for example \cite{Kassel-Turaev}), we know how to write any element supported in $K$ as a product of half twists supported in topological disks disjoint from $x_{n+1}$. Now applying claim B to $\beta_{n+1}$, we also know how to write $\beta_{n+1}$ as a product of half twists supported in topological disks disjoint from $x_{n+1}$. Thus we know how to write $\sigma \hat \varphi$, hence $\hat \varphi$, as a finite product of half twists supported in topological disks disjoint from $x_{n+1}$. This product defines an element of $MCG(\C_n)$ which satisfies the property of the lemma. \qed

\vspace{5mm}

\noindent \textit{Proof of proposition \ref{prop: deflector}.}
Denote by $\C$ the vertical cylinder (quotient of $\R^2$ by $\tau$) and by $\C_n$ the cylinder $\C$ with $n$ marked points (quotient of $\R^2 - \Z \times \{1,...,n\}$ by $\tau$).
Let $\pi$ be the quotient map. For every $i$, we denote by $\hat \alpha_i$, respectively $\hat \beta_i$, the isotopy class of $\pi(\alpha_i)$, respectively $\pi(\beta_i)$, in $\C_n$. There exists $\psi \in MCG(\C_n)$ such that $\psi(\hat \alpha_i)$ is isotopic in $\C_n$ to the horizontal circle containing the marked point $x_i=\pi(\Z \times \{ i\})$. There exists $\chi \in MCG(\C_n)$ such that $\chi(\hat \beta_i)$ is isotopic in $\C_n$ to the horizontal circle containing the marked point $x_i=\pi(\Z \times \{ i\})$. We set $\phi:=\chi^{-1} \psi$. Hence $\phi(\hat \alpha_i)$  is isotopic in $\C_n$ to $\hat \beta_i$. According to lemma \ref{lemma: braids for deflector}, we can assume that $\phi:=\nu_1...\nu_k$ is a finite product of half twists supported in topological disks of $\C$. We want to use $\phi$ to construct the desired homeomorphism $\mu$ (figure \ref{figu:cylindre} gives the main idea of the proof).

\begin{figure}[h!]
\labellist\small\hair 2pt
\pinlabel $\phi$ at 256 61
\pinlabel {Area with the $\tilde \nu_i$'s} at 256 180
\endlabellist
\centering
\vspace{0.4cm}
\includegraphics[scale=0.6]{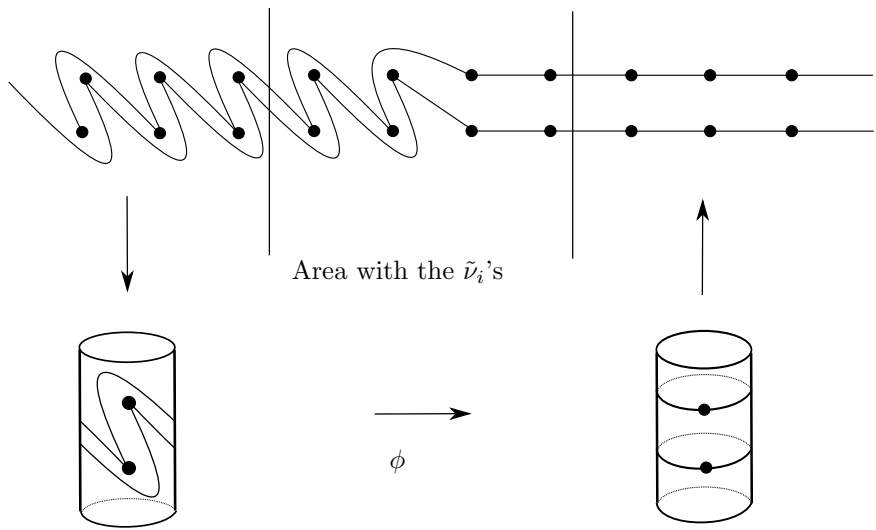}
\caption{Half streamlines $T^+(\alpha_i,\mu \tau,\Z\times \{1,...,n\})$.}
\label{figu:cylindre}
\end{figure}

\noindent \textbf{Local lift of a half twist.} We first construct a homeomorphism of $\R^2$ which lift the action of \emph{one} half twist of $\C_n$ to the lift of the curves. Let $\nu$ be a half twist of $\C_n$. Choose a lift $D_\nu$ of the support of $\nu$ in $\R^2$. Denote by $\tilde \nu$ the homeomorphism of $\R^2$ such that $\pi \tilde \nu_{|D_\nu} := \nu \pi_{|D_\nu}$, and such that $\tilde \nu$ coincides with $Id$ outside $D_\nu$. Note that $\Z\times \{1,...,k\}$ is preserved by $\tilde \nu \tau$. We say that $\tilde \nu$ is a \emph{local lift of $\nu$}.

\noindent \textbf{Choice of disks.} To lift the action of $\phi:=\nu_1...\nu_k$, we choose a local lift $\tilde \nu_k$ of $\nu_k$ supported in a disk $D_k$ on the right of the $\alpha_i$'s and for every $1 \leq j < k$, we choose a local lift $\tilde \nu_j$ of $\nu_j$ supported in a disk $D_j$ on the right of $D_{j+1}$ (and disjoint from $D_{j+1}$).

\noindent \textbf{Conclusion.} Let $\tilde \mu$ be $\tilde \nu_1 ... \tilde \nu_k$. For every $x$ which is on the left of the $D_j$'s, for every $n$ sufficiently large, there exists $(n_j)_j \in \N^{k+1}$ such that:
$$(\tilde \mu \tau)^n=\tau^{n_{k+1}} \tilde \nu_1 ... \tilde \nu_{k-1} \tau^{n_2} \tilde \nu_k \tau^{n_1} (x).$$
Moreover, we have the followings:
$$\pi \tilde \nu = \nu \pi;$$
$$\pi \tau = \pi.$$
Hence we get: $$\pi (\tilde \mu \tau)^n \alpha_i=\pi \beta_i. $$ \qed

\section{Classification relatively to $4$ orbits} \label{section: 4 orbits}

\subsection{Identification of the determinant diagrams}
 Here we want to identify which diagrams are determinant diagrams. In the next section, we will study the diagrams which are not determinant. Note that all the diagrams with four orbits are represented in the appendix \ref{appendix}.

For every Brouwer mapping class relatively to $4$ orbits, we denote by $2r'$ the number of sub-families of adjacency.

\begin{prop} \label{prop: irreducible areas for r=4}
A diagram for a Brouwer mapping class relatively to $4$ orbits is non determinant if and only if it is one of the seven of figure \ref{figu:non_determinant_diag_for_4_orbits}.
\end{prop}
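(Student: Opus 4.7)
The plan is to perform a complete enumeration of diagrams associated to Brouwer classes relatively to four orbits, then apply the combinatorial machinery built in Sections~\ref{section:determinant diagrams} and~\ref{section: walls} to decide determinacy case by case. Recall that for a Brouwer class relatively to $r=4$ orbits we have $r'\in\{1,2,3,4\}$, where $2r'$ is the number of adjacency sub-families. By Corollary~\ref{corollary:r'=2 implies flow class} the cases $r'=1,2,4$ already yield flow classes, and Proposition~\ref{prop:flow implies conjugate} then shows that any two Brouwer classes with the same diagram in this range are conjugated. So every diagram with $r'\in\{1,2,4\}$ is determinant, and only the case $r'=3$ needs careful analysis.

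For $r'=3$, the four backward endpoints split into adjacency sub-families of sizes $(2,1,1)$, and likewise for the forward endpoints; the two triples of sub-families alternate around the disk. First I would list all such cyclic orderings up to the symmetries given by Proposition~\ref{prop: adjacency for nice families} (permutations inside adjacency sub-families) and relabeling of orbits, and draw the associated diagrams using the no-crossing convention of step~(4) of the construction of Section~\ref{section:Adjacency areas, diagrams and special nice families}. For each such diagram I would compute the set of walls: a wall corresponds, by Theorem~\ref{theorem: walls}, to the boundary between an irreducible or maximal translation area and the rest of the plane, and it is detected at the level of the diagram by a pair of arcs that isolate a sub-collection of orbits while being compatible with backward/forward adjacency.

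Given the walls and their complementary components, I would then apply the criterion of Proposition~\ref{coro:determinant diagrams with walls}: a component contributes an irreducible area (and hence forces non-determinacy) exactly when the arrows it contains are not simultaneously backward adjacent and forward adjacent. Using Proposition~\ref{prop:combinatoric of an irreducible area}, such a component must contain at least two orbits, have at least two boundary components, and its orbits must not alternate; for $r=4$ this essentially forces the component to contain exactly two orbits sharing a backward or forward adjacency but not both. Running through the enumeration, precisely seven cyclic orderings produce a diagram whose wall-decomposition contains such a ``bad'' component; these are the seven diagrams displayed in figure~\ref{figu:non_determinant_diag_for_4_orbits}. For all other $r'=3$ diagrams the wall-decomposition contains only translation areas (and areas disjoint from $\Or$), so by Proposition~\ref{prop:flow class iff no irreducible area} any representative is a flow, and Proposition~\ref{prop:flow implies conjugate} yields determinacy.

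It remains to check that the seven candidate diagrams are genuinely non-determinant. For each of them the irreducible area contains two orbits, so there is a $f$-free disk meeting exactly two orbits inside this area on which one may perform a free half twist $\mu$ as in Example~B of Section~\ref{section: First tools}. I would verify that $[f;\Or]$ and $[\mu f;\Or]$ share the same diagram (the half twist is supported away from the nice family used to compute adjacency) but that they are not conjugated: in the flow class every orbit of the irreducible area is included in a streamline (so every backward arc is also a forward arc by Proposition~\ref{prop:translation area}), while in $[\mu f;\Or]$ this fails, since inside the irreducible area a homotopy translation arc from $\mu f$ is forced to wind around one of the orbits, breaking the backward--forward equality. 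The main obstacle is precisely this last non-conjugacy check, because two non-conjugate Brouwer classes may look combinatorially indistinguishable at the level of the diagram with walls alone; however for the seven diagrams above the flow/non-flow dichotomy afforded by Proposition~\ref{prop:flow class iff no irreducible area} is enough to separate the two classes, and the finer tangle invariant of Section~\ref{subsection: tangle} is deferred to Theorem~\ref{theo:total invariant} where the full classification is established.
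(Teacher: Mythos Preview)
Your enumeration strategy is the same as the paper's (which is extremely terse: it simply invokes Proposition~\ref{prop:combinatoric of an irreducible area} to pin down the shape of a possible irreducible area for $r=4$, and then refers to the exhaustive list of diagrams in the appendix). Your explicit use of Corollary~\ref{corollary:r'=2 implies flow class} to dispose of $r'\in\{1,2,4\}$ and your computation of walls for the $r'=3$ cases is a faithful unpacking of what the paper leaves implicit.

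There is, however, a genuine gap in your non-determinacy argument. A single free \emph{half} twist $\mu$ permutes the two marked points it touches, so $[\mu f;\Or]$ typically does \emph{not} have the same diagram as $[f;\Or]$: this is exactly what happens in Example~B, where composing the flow with one half twist produces a diagram with a crossing. What preserves the diagram is the square $\mu^2$ (a full twist); indeed the paper itself flags $[f;\Or']$ versus $[\mu^2 f;\Or']$ as the relevant non-conjugate pair in Section~\ref{section:statements}. So for the non-crossing members of the seven, you should compare $[f;\Or]$ with $[\mu^2 f;\Or]$, not $[\mu f;\Or]$.

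For the members of the seven with crossing arrows the problem is sharper: by lemma~1.7 of \cite{FLR2013} no flow class has crossing arrows, so there is no flow $f$ realizing that diagram and your flow/non-flow dichotomy via Proposition~\ref{prop:flow class iff no irreducible area} cannot separate two classes sharing it. The paper does not attempt to prove non-determinacy at this point by an ad hoc pair; it defers it to Section~\ref{subsection:realized couples}, where it shows that infinitely many distinct adapted tangles are realized for each of the seven diagrams (using the deflectors of Section~\ref{section: deflectors}), and then invokes Corollary~\ref{coro:couple of inavariant}. You should either do the same, or at minimum replace your pair by $[\mu f;\Or]$ versus $[\mu^3 f;\Or]$ and explain why these are not conjugate, which again ultimately requires the tangle.
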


\begin{figure}[h]
\begin{center}
\includegraphics[scale=0.7]{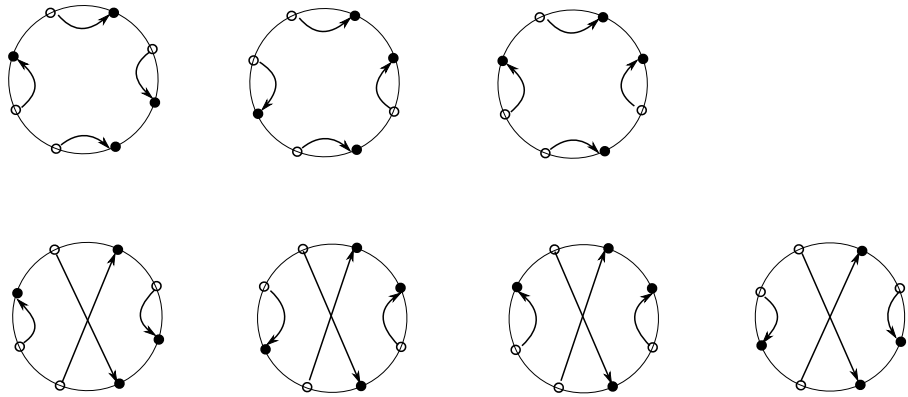}
\vspace{0.2cm}
\caption{Non determinant diagrams for $r=4$}
\label{figu:non_determinant_diag_for_4_orbits}
\end{center}
\end{figure}

\begin{proof}
According to proposition \ref{prop:combinatoric of an irreducible area}, every irreducible area for Brouwer mapping classes relatively to $4$ orbits is as in figure \ref{figu:zones-irr}. \end{proof}

\begin{figure}[h] 
\begin{center}
\includegraphics[scale=1]{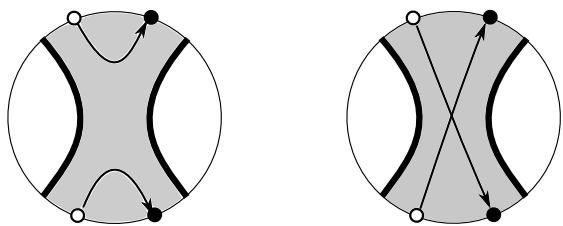}
\vspace{0.2cm}
\caption{Possible irreducible areas for $r=4$}
\label{figu:zones-irr}
\end{center}
\end{figure}

\subsection{Study of the non determinant diagrams}\label{subsection:study of non determinant diagrams}

\subsubsection{Brouwer mapping classes which realize non determinant diagrams}

If $h$ is a homeomorphism of the plane, recall that an $h$-free disk is a topological disk $D$ which is disjoint from every $h^n(D)$, with $n \neq 0$. If $[h;\Or]$ is a Brouwer mapping class and if $D$ is an $h$-free disk containing exactly two points of $\Or$, then we call \emph{free half twist} any half twist supported in $D$ and permuting the two points of $D \cap \Or$.

\begin{rem} \label{rmk:realization of non determinant diagrams} Each non determinant diagram can be realized by a Brouwer mapping class: 
\begin{itemize}
\item For each non determinant diagram without crossing, there exists a flow having this diagram relatively to some of its orbits (see lemma $1.7$ of\cite{FLR2013});
\item For each non determinant diagram with crossing, we can obtain it by composing a flow by a free half twist (as in example B of section \ref{section: First tools}).
\end{itemize}
\end{rem}

\subsubsection{Tangle of the irreducible area}\label{subsection: tangle}

Let $[h;\Or]$ be a Brouwer mapping class relatively to $4$ orbits which is not a flow class: the diagram with walls of $[h;\Or]$ is as in figure \ref{figu:irreducible are for diagram with walls}. To simplify the notations, suppose that the two orbits of the irreducible areas are $\Or_1$ and $\Or_2$, with $\alpha_1^\pm$ and $\alpha_2^\pm$ \ref{figu:irreducible are for diagram with walls}. To define the tangle, we will forget about $\Or_3$ and $\Or_4$ for a while.

\begin{figure}[h]
\begin{center}
\vspace{0.3cm}
\includegraphics[scale=1]{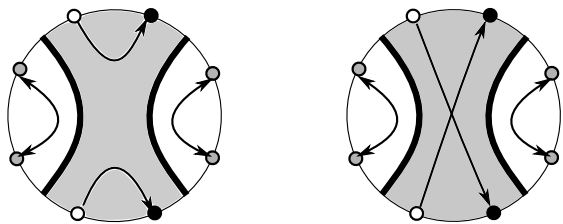}
\vspace{0.2cm}
\caption{Diagrams with an irreducible area for $r=4$.}
\label{figu:irreducible are for diagram with walls}
\end{center}
\end{figure}

Choose a nice family $\al$ for $[h;\Or]$. Choose a complete hyperbolic metric on $\R^2 -(\Or_1 \cup \Or_2)$.
According to corollary \ref{corollary:r'=2 implies flow class} and proposition \ref{prop:flow implies conjugate} (see also Handel \cite{Handel}), the diagram relatively to $2$ orbits is a total conjugacy invariant, hence $[h;\Or_1 \cup \Or_2]$ is a translation class. It follows that the $4$ homotopic trajectories relative to $\Or_1 \cup \Or_2$:
$$ T_1^+:= \bigcup_{{k\in \Z}} h^k(\alpha_1^+)_\# $$
$$ T_1^-:= \bigcup_{{k\in \Z}} h^k(\alpha_1^-)_\# $$
$$ T_2^+:= \bigcup_{{k\in \Z}} h^k(\alpha_2^+)_\# $$
$$ T_2^-:= \bigcup_{{k\in \Z}} h^k(\alpha_2^-)_\# $$
are proper homotopic lines. Moreover, the $T_i^+$'s (respectively the $T_i^-$'s) are mutually disjoint. Let $\phi$ be a homeomorphism of the plane that preserves the orientation and sends, for $i=1,2$:
\begin{itemize}
\item $T_i^-$ on $\R \times \{i\}$ and $T_2^-$ on $\R \times \{2\}$;
\item $\{x_i\}$ on $(0,i)$ and $\{h(x_i)\}$ on  $(1,i)$, where $x_i$ and $h(x_i)$ are the endpoints of $\alpha_i^-$;
\item $\Or_i$ on $\Z \times \{i\}$.
\end{itemize}

Let $\tau$ be the horizontal translation of the plane which maps $(x,y) \in \R^2$ to $(x+1,y)$.

Let $\pi$ be the quotient map which quotients $\R^2 - (\Z \times \{1,2\})$ by $\tau$ and let $\C_2$ denotes the quotient pointed cylinder.
Note that if we consider $\C_2$ as a vertical cylinder, $\pi(\phi(\alpha_i^-))$ is homotopic in $\C_2$ to a horizontal circle for $i=1,2$ (see figure \ref{figu:definition-tangle} for an example).

\begin{figure}[h!] 
\labellist
\small\hair 2pt
\pinlabel $T(\alpha_i^\pm,h,\Or)$ at 30 480
\pinlabel $T(\phi(\alpha_i^\pm),\tau,\Z \times \{1,2\})$ at 60 270
\pinlabel $\phi$ at 215 305
\pinlabel $\pi$ at 215 150
\endlabellist
\centering
\vspace{0.4cm}
\includegraphics[scale=0.8]{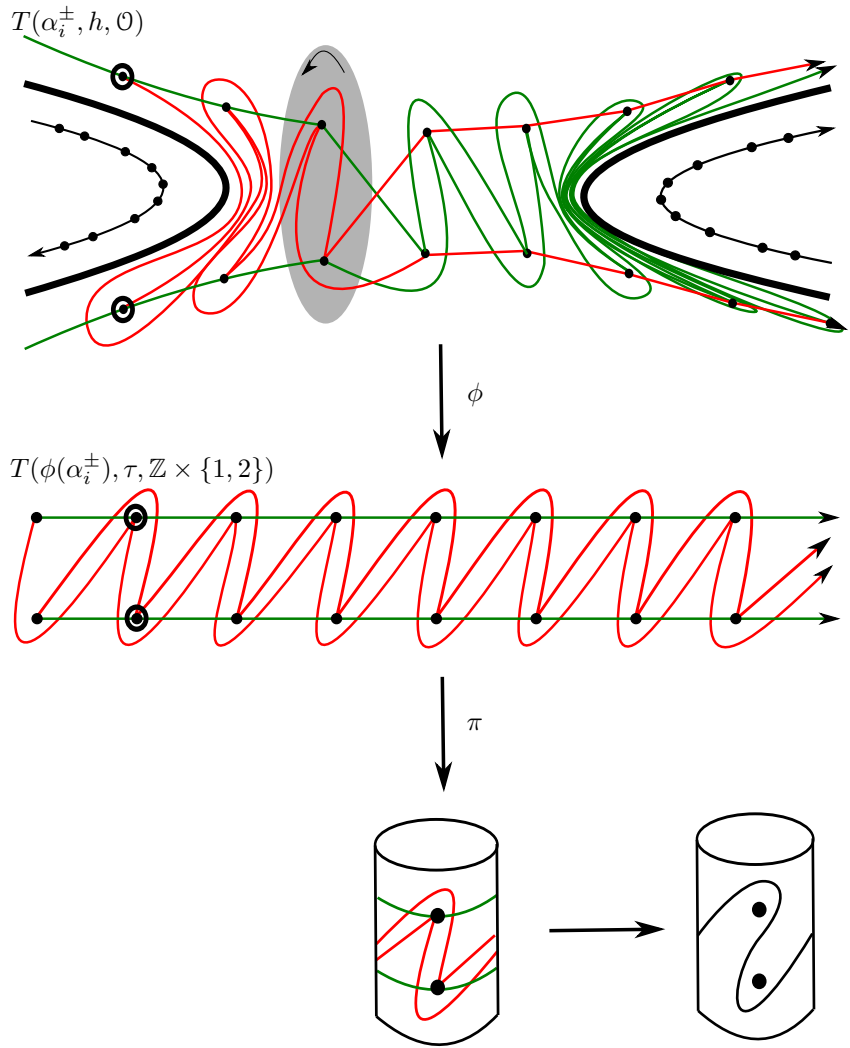}
\caption{Definition of the relative tangle for the example $B$ of section $1$ (Brouwer class of the product of a free half-twist with a flow).}
\label{figu:definition-tangle}
\end{figure}

\begin{lemma}
With the previous notations, the homotopy classes of the arcs $\pi(\phi(\alpha_i^+))$ in $\C_2$ are independent of $\phi$.
\end{lemma}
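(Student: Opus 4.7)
The plan is to take two homeomorphisms $\phi_1, \phi_2$ satisfying the listed properties, set $\psi := \phi_2 \phi_1^{-1}$, and show that $\psi$ is isotopic to the identity of $\R^2$ relative to $\Z \times \{1,2\}$. Since $\phi_2(\alpha_i^+) = \psi(\phi_1(\alpha_i^+))$, applying such an isotopy to $\phi_1(\alpha_i^+)$ and composing with $\pi$ then produces the required homotopy in $\C_2$ between $\pi\phi_1(\alpha_i^+)$ and $\pi\phi_2(\alpha_i^+)$.

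First I would record the properties of $\psi$. By the constraints on $\phi_1$ and $\phi_2$, the map $\psi$ preserves the orientation of $\R^2$, preserves each line $L_i := \R \times \{i\}$ setwise, fixes each of the four points $(0,i), (1,i)$ for $i=1,2$, and restricts to a bijection of $\Z \times \{1,2\}$ with itself. The restriction $\psi|_{L_i}$ is thus an orientation-preserving self-homeomorphism of $\R$ fixing $\{0,1\}$ and mapping $\Z$ to $\Z$; since the only order-preserving bijection of $\Z$ fixing $0$ is the identity, $\psi$ fixes $\Z \times \{1,2\}$ pointwise.

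Next I would isotope $\psi$ to the identity in two stages. In the first stage, on each line $L_i$, the map $\psi|_{L_i}$ is isotopic through homeomorphisms fixing $\Z$ to the identity, since any self-homeomorphism of an interval $[k,k+1]$ fixing its endpoints is isotopic to the identity rel endpoints. Using disjoint thin collar neighborhoods of $L_1$ and $L_2$, I would extend these line-isotopies to an ambient isotopy of $\R^2$, yielding a homeomorphism $\psi'$ equal to the identity on $L_1 \cup L_2$. In the second stage, $\psi'$ preserves each of the three closed complementary regions of $L_1 \cup L_2$ (the two closed half-planes and the strip $\R \times [1,2]$) and restricts to the identity on their boundaries. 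Each such region is homeomorphic to a closed disk with one or two boundary points removed (its ends at infinity); since $\psi'$ fixes the boundary pointwise it fixes each end and extends to a self-homeomorphism of the closed disk fixing the boundary, and then Alexander's trick supplies an isotopy to the identity rel boundary. Gluing these three isotopies yields the desired ambient isotopy of $\psi'$ to the identity rel $L_1 \cup L_2$, hence rel $\Z \times \{1,2\}$.

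The main delicate point is this second stage, where Alexander's trick is applied to the non-compact complementary regions; once $\psi$ has been normalized to be the identity on $L_1 \cup L_2$, the end-compactification is natural because the boundary-fixing hypothesis forces the ends to be preserved, and the argument from there is essentially standard.
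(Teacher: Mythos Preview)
Your proof is correct and follows essentially the same strategy as the paper's. The paper's argument is: if $\psi$ is another homeomorphism with the same properties, then $\phi$ and $\psi$ coincide on $T_1^- \cup T_2^-$, and Lemma~\ref{lemma:generalized Alexander trick} (the generalized Alexander trick of Le~Roux) then gives an isotopy relative to $\Or_1 \cup \Or_2$, so $\phi(\alpha_i^+)$ and $\psi(\alpha_i^+)$ are isotopic.

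Two small comparative remarks. First, the paper's sentence ``$\phi$ and $\psi$ coincide on the two topological lines $T_1^-$ and $T_2^-$'' is slightly loose: the hypotheses only force agreement on the orbit points, not pointwise on the lines. Your Stage~1 (isotoping $\psi|_{L_i}$ to the identity rel $\Z$) is exactly the missing normalization that makes this literal. Second, your Stage~2 is a by-hand proof of the special case of Lemma~\ref{lemma:generalized Alexander trick} needed here, via end-compactification and Alexander's trick on each complementary region; the paper simply cites that lemma. So your argument is more self-contained, while the paper's is shorter by outsourcing the work. The one point worth making explicit in your Stage~2 is why $\psi'$ extends continuously to the end-compactification of the strip: since $\psi'$ fixes the boundary rays $[0,\infty)\times\{1\}$ and $(-\infty,0]\times\{1\}$ pointwise, each end is preserved, and then properness of a homeomorphism gives the continuous extension.
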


\begin{proof}
If $\psi$ is another homeomorphism with the same properties, then $\phi$ and $\psi$ coincide on the two topological lines $T_1^-$ and $T_2^-$. According to lemma \ref{lemma:generalized Alexander trick}, $\phi$ and $\psi$ are isotopic relatively to $\Or_1 \cup \Or_2$, hence $\phi(\alpha_i^+)$ is isotopic to $\psi(\alpha_i^+)$ for $i=1,2$.
\end{proof}

Denote by $\gamma$ a curve which is disjoint from $\pi(\phi(\alpha_1^+))$ and $\pi(\phi(\alpha_2^+))$ and which separates $\C_2$ into two cylinders with puncture, each of them containing one of the $\pi(\phi(\alpha_i^+))$'s. 
Note that $\gamma$ is unique up to isotopy in $\C_2$.

\begin{definition}
We say that the isotopy class of $\gamma \in \C_2$ is the \emph{tangle of the irreducible area of $[h;\Or]$ relative to $\al$}.
\end{definition}

\begin{rem}
Note that $\gamma$ is never a horizontal circle. Indeed, we could get a horizontal curve only if we had $\alpha_i^-$ isotopic to $\alpha_i^+$ relatively to $\Or_1 \cup \Or_2$ for $i=1,2$. In this situation we get proper streamlines for every orbit, hence $[h;\Or]$ is a flow class: this gives a contradiction because we assumed that there exists an irreducible area for $[h;\Or]$.
\end{rem}

This relative tangle depend on the choice of the nice family $\al$, hence it is not a conjugacy invariant. However, we have the following lemma:

\begin{lemma}\label{lemma: unicity of the nice family}
Let $[h;\Or]$ be a Brouwer mapping class relatively to $4$ orbits. Suppose that $[h;\Or]$ is not a flow class. Then if $\al$ and $(\beta_i^\pm)_i$  are two nice families for $[h;\Or]$ disjoint from the walls, then for every $i$ there exists $n_i$ such that $\alpha_i^-$, respectively $\alpha_i^+$, is isotopic to $h^{n_i}(\beta_i^-)$, respectively $h^{n_i}(\beta_i^+)$ relatively to $\Or$.
\end{lemma}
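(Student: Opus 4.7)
The plan is to decompose the problem along the walls $\mathcal{W}$ of $[h;\Or]$ and argue uniqueness inside each stable area separately. Since $[h;\Or]$ is not a flow class, Proposition \ref{prop:flow class iff no irreducible area} combined with Proposition \ref{prop: irreducible areas for r=4} shows that $\R^2-\mathcal{W}$ consists of exactly one irreducible area $Z_{irr}$ containing two orbits, finitely many maximal translation areas containing one or two orbits each, and possibly components disjoint from $\Or$. Because $\al$ and $\bet$ are both disjoint from $\mathcal{W}$, each arc is contained in a single stable area, and the restrictions to each such area form nice families for the restricted Brouwer class.

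I would next observe that the conclusion reduces to showing, in each stable area, that for each orbit $\Or_i$ the half-streamlines $T^-(\alpha_i^-,h,\Or)$ and $T^-(\beta_i^-,h,\Or)$ coincide up to isotopy relative to $\Or$, and similarly for the forward half-streamlines. Once the rays at infinity agree, a choice of fundamental domain on each streamline produces the shift $n_i$; the common endpoints of $\alpha_i^-$ with $\alpha_i^+$ (respectively of $\beta_i^-$ with $\beta_i^+$) then force the same $n_i$ to work simultaneously for the backward and forward arc.

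For a maximal translation area $Z$, Proposition \ref{prop:translation area} gives that every $\alpha_i^-$ in $Z$ is also forward proper, so $T(\alpha_i^-,h,\Or)$ is a full proper streamline (and analogously for $\bet$). I would straighten so that $h$ acts as a translation on $Z$ (Corollary \ref{cor:translation in adjacency areas}) and pass to the quotient cylinder $Z/h$: each streamline descends to an essential simple closed curve encircling exactly one marked point, and the curves are pairwise disjoint. In a cylinder with one or two marked points at distinct heights (the only cases that occur here) such a disjoint system of essential simple closed curves is unique up to isotopy, so the systems coming from $\al$ and $\bet$ match, and lifting back identifies the streamlines.

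The main work is the irreducible area. Applying Corollary \ref{corollary:r'=2 implies flow class} to the restriction $[h;\Or_1\cup\Or_2]$ gives that it is a translation class, so the four half-streamlines $T^\pm(\alpha_j^\pm,h,\Or_1\cup\Or_2)$ are proper. After straightening, I pass to the twice-pointed cylinder $Z_{irr}/h$ and use Proposition \ref{prop: adjacency for nice families}, together with the fact that $\al$ and $\bet$ share the same diagram with walls, to conclude that the descended configurations have the same cyclic order at infinity. A braid-group argument modelled on Lemma \ref{lemma: braids for deflector} then shows that any mapping class sending one configuration to the other is a product of horizontal Dehn twists about separating circles, and each such Dehn twist lifts in $\R^2$ to an iteration by a power of $h$ on the corresponding orbit. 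The principal obstacle is exactly this last step: controlling the residual tangling between the two orbits of $Z_{irr}$ so that the only surviving ambiguity is the independent $h^{n_i}$-shifts per orbit. This is handled by combining the mutual disjointness of the half-streamlines with the prescribed asymptotic cyclic order inherited from the diagram with walls.
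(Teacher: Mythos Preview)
Your decomposition along walls is natural, but the argument for the irreducible area has a genuine gap. You acknowledge that reducing the two cylinder configurations to a product of horizontal Dehn twists is ``the principal obstacle'', and then dispose of it in one sentence by ``combining mutual disjointness with the cyclic order''. That is not a proof: nothing you have recorded so far (disjointness of the half-streamlines, same diagram with walls) pins down the pair of curves in the twice-punctured cylinder beyond a $\Z$-family, and Lemma~\ref{lemma: braids for deflector} goes in the wrong direction (it produces half-twists, it does not rule them out). There is also a confusion of quotients: Corollary~\ref{corollary:r'=2 implies flow class} tells you that $[h;\Or_1\cup\Or_2]$ is a translation class on the whole plane, not that $h$ acts as a translation on the subsurface $Z_{irr}$, so ``$Z_{irr}/h$'' is not obviously the twice-punctured cylinder you want. (Minor point: Corollary~\ref{cor:translation in adjacency areas} is about adjacency areas, not translation areas, so the citation in your translation-area step is off as well.) Finally, note that this lemma is what the paper uses to show the tangle is well-defined, so any argument that implicitly leans on the tangle being an invariant would be circular.

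The paper bypasses all of this with a much shorter argument that does \emph{not} decompose along walls at all. Fix a complete family of adjacency areas and the two walls $\Delta_1,\Delta_2$. By Proposition~\ref{prop:first properties of the adjacency areas}(3), every arc of either nice family can be iterated by a power of $h$ into an adjacency area, still disjoint from $\Delta_1\cup\Delta_2$. The key structural fact for the non-flow four-orbit case is that, by Proposition~\ref{prop:combinatoric of an irreducible area}(1), the two orbits of the irreducible area are neither forward nor backward adjacent; combined with the fact that each wall separates off a single orbit $\Or_3$ or $\Or_4$, this means every adjacency area, once intersected with the complement of $\Delta_1\cup\Delta_2$, meets exactly one orbit of $\Or$. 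Uniqueness of homotopy translation arcs in the one-orbit situation (Handel, Corollary~6.3) then forces $h^{n_i}(\alpha_i^\pm)$ and $\beta_i^\pm$ to be isotopic once their endpoints are matched, and the common endpoints of $\alpha_i^-,\alpha_i^+$ (respectively $\beta_i^-,\beta_i^+$) guarantee the same $n_i$ works for both signs. The whole braid discussion is unnecessary.
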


\begin{proof}
This will follow from the description of the adjacency areas of $[h;\Or]$. Choose a complete family of adjacency areas for $[h;\Or]$ and a representative $\{ \Delta_1,\Delta_2 \}$ of the set of walls. For every nice family $\al$ disjoint from $\Delta_1 \cup \Delta_2$, there exists $(m_i,n_i)_i \in (\Z^2)^4$ such that $h^m_i(\alpha_i^-)$ and $h^n_i(\alpha_i^+)$ are included in adjacency areas. If we fix the endpoints in the adjacency area, there is only one isotopy class of homotopic translation arc included in the chosen adjacency area and disjoint from $\Delta_1$ and $\Delta_2$: indeed there is only one isotopy class of translation arcs for Brouwer class relatively to one orbit (according to corollary $6.3$ of Handel \cite{Handel}).
\end{proof}

We denote by $T$ the left Dehn twist around a separating horizontal circle between the two punctures in $\C_2$.

\begin{lemma} With the previous notations, if $\gamma \in \C_2$ (respectively $\gamma' \in \C_2$) is the tangle of the irreducible area of $[h;\Or]$ relative to $\al$ (respectively $\bet$), then there exists $n\in \Z$ such that $\gamma = T^n \gamma'$.
\end{lemma}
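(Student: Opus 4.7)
By lemma \ref{lemma: unicity of the nice family} there exist $n_1, n_2 \in \Z$ such that $\alpha_i^\pm$ is isotopic to $h^{n_i}(\beta_i^\pm)$ rel $\Or$ for $i = 1, 2$; in particular the half streamlines $T_i^-$ used in the two constructions coincide as subsets of the plane. The goal is to establish that $\gamma' = T^{n_1 - n_2}(\gamma)$, so that $n := n_2 - n_1$ fulfils the conclusion.

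Let $\phi, \phi'$ be the normalizing homeomorphisms defining $\gamma$ and $\gamma'$. Writing $x_i, y_i$ for the endpoints of $\alpha_i^-, \beta_i^-$, we have $x_i = h^{n_i}(y_i)$ and $\phi(x_i) = \phi'(y_i) = (0, i)$; after modifying $\phi$ and $\phi'$ within their defining constraints if necessary, we may further assume that $\phi h \phi^{-1}$ and $\phi' h \phi'^{-1}$ each coincide with $\tau$ on $\R \times \{1,2\}$. Then $\sigma := \phi' \circ \phi^{-1}$ preserves each horizontal line $\R \times \{i\}$ and, since it sends $\phi(x_i) = (0,i)$ to $\phi'(x_i) = \phi'(h^{n_i}(y_i)) = (n_i, i)$, restricts to $\tau^{n_i}$ on $\R \times \{i\}$. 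Since $[h;\Or_1 \cup \Or_2]$ is a translation class by corollary \ref{corollary:r'=2 implies flow class}, the generalized Alexander trick (lemma \ref{lemma:generalized Alexander trick}) applied to the family $\{\R \times \{1\}, \R \times \{2\}\}$ yields $\phi h \phi^{-1} \sim \tau$ rel $\R \times \{1,2\}$, whence $\phi(\beta_i^+) \sim \phi\bigl(h^{-n_i}(\alpha_i^+)\bigr) \sim \tau^{-n_i}\phi(\alpha_i^+)$ rel $\Z \times \{1,2\}$.

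Next I introduce the model shear $\tilde D(x,y) := (x + m(y), y)$, with $m$ continuous, $m \equiv n_1 - n_2$ on $(-\infty, 1]$, $m \equiv 0$ on $[2, +\infty)$, and linear in between. Since $\tilde D$ commutes with $\tau$, it descends to a homeomorphism $D$ of $\C_2$, and one checks directly that $D$ is isotopic to $T^{n_1 - n_2}$. Because $\sigma$ and $\tilde D \circ \tau^{n_2}$ both restrict to $\tau^{n_1}$ on $\R \times \{1\}$ and to $\tau^{n_2}$ on $\R \times \{2\}$, a second application of the generalized Alexander trick yields $\sigma \sim \tilde D \circ \tau^{n_2}$ rel $\R \times \{1,2\}$.

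Combining these two isotopies and projecting to $\C_2$, using that $\pi \circ \tau = \pi$ and $\pi \circ \tilde D = D \circ \pi$, one computes
\[
\pi \phi'(\beta_i^+) = \pi \sigma \phi(\beta_i^+) \sim \pi \, \tilde D \, \tau^{n_2 - n_i} \phi(\alpha_i^+) = D\bigl(\pi \phi(\alpha_i^+)\bigr)
\]
for $i = 1, 2$. Hence $D$ sends the pair of arcs used to define $\gamma$ onto the pair used to define $\gamma'$, so by uniqueness (up to isotopy in $\C_2$) of the separating curve between two such arcs, $D(\gamma)$ is isotopic to $\gamma'$, giving $\gamma' = T^{n_1 - n_2}(\gamma)$ as wanted. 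The delicate point is the repeated invocation of the generalized Alexander trick: each time it is used, the relevant pair of homeomorphisms already coincide on the two horizontal lines $\R \times \{1,2\}$, so the lemma legitimately produces an isotopy through homeomorphisms that fix the marked points $\Z \times \{1,2\}$ pointwise.
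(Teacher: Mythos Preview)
Your proof is correct and follows essentially the same route as the paper: both invoke lemma~\ref{lemma: unicity of the nice family} to obtain the shifts $n_1,n_2$, and both arrive at the formula $\gamma' = T^{n_1-n_2}(\gamma)$. The paper's argument is a two-line sketch that simply asserts the final identity $\pi(\phi(\beta_i^\pm))=T^{n_1-n_2}(\pi(\phi(\alpha_i^\pm)))$; your version makes the mechanism explicit by introducing the shear $\tilde D$, identifying it with $T^{n_1-n_2}$ on the quotient, and justifying each isotopy via lemma~\ref{lemma:generalized Alexander trick}.

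One small imprecision: from the assumptions that $\phi h\phi^{-1}$ and $\phi' h\phi'^{-1}$ both equal $\tau$ on $\R\times\{i\}$ you only get that $\sigma|_{\R\times\{i\}}$ \emph{commutes} with $\tau$ and sends $(0,i)$ to $(n_i,i)$; this does not force $\sigma|_{\R\times\{i\}}=\tau^{n_i}$ on the nose. The clean fix, which your phrase ``within their defining constraints'' presumably intends, is simply to \emph{define} $\phi'$ on each $T_i^-$ by $\phi'|_{T_i^-}:=\tau^{n_i}\circ\phi|_{T_i^-}$ and extend; this choice satisfies the normalization for $\beta$ and makes $\sigma|_{\R\times\{i\}}=\tau^{n_i}$ exactly. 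With that adjustment everything goes through.
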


\begin{proof}
This is a consequence of lemma \ref{lemma: unicity of the nice family}: this lemma implies that if $\phi$ is as in the previous notations, for $i=1,2$ we have: $$T(\phi(\alpha_i^\pm),\tau,\Z\times \{1,2\})=T(\phi(h^{n_i}(\beta_i^\pm)),\tau,\Z\times \{1,2\})=T(\phi(\beta_i^\pm),\tau,\Z\times \{1,2\}).$$
Moreover, we have $\phi(h^{n_i}(x_i))=\tau^{n_i}(\phi(x_i))$. \\
Since $\phi(x_i)=(0,i)$, it follows that $\phi(h^{n_i}(x_i))=(n_i,i)$, hence: $$\pi(\phi(\beta_i^\pm))=T^{n_1-n_2}(\pi(\phi(\alpha_i^\pm))).$$
\end{proof}

\begin{definition}
With the previous notations, we define the \emph{tangle of $[h;\Or]$} to be the isotopy class $\gamma \in \C_2$ up to composition by $T$.
\end{definition}
By convention, we set that every flow mapping class has trivial tangle.

\begin{corollary} \label{coro:couple of inavariant}
The couple $\texttt{(Diagram with walls, Tangle)}$ is a conjugacy invariant for Brouwer mapping classes relatively to $4$ orbits.
\end{corollary}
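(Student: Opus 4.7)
The diagram with walls is already a conjugacy invariant: the diagram itself is one by Proposition \ref{prop: diagram is a conjugacy invariant}, and the set of walls is manifestly one (a conjugacy sends reducing lines to reducing lines and respects homotopy disjointness). The remaining task is to show that the tangle, as defined in section \ref{subsection: tangle}, does not depend on the representative of the conjugacy class.

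The plan is as follows. Let $\psi$ be an orientation-preserving homeomorphism of $\R^2$ with $\psi(\Or)=\Or'$ and $[\psi h \psi^{-1};\Or']=[h';\Or']$. If $[h;\Or]$ is a flow class, so is $[h';\Or']$, and both tangles are the empty set by convention. So assume $[h;\Or]$ is not a flow class. Since the diagram with walls is preserved by $\psi$, the irreducible area $Z$ for $[h;\Or]$ is sent to the irreducible area $Z'$ for $[h';\Or']$, and, up to relabeling (which is built into the diagram), the two orbits $\Or_1,\Or_2$ of $Z$ are mapped to the two orbits of $Z'$. If $(\alpha_i^\pm)_i$ is a nice family for $[h;\Or]$ disjoint from the walls (proposition \ref{prop: nice families disjoint from reducing lines}), then $(\beta_i^\pm)_i := (\psi(\alpha_i^\pm))_i$ is a nice family for $[h';\Or']$ disjoint from the walls of $[h';\Or']$.

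Now I would compute the tangle of $[h';\Or']$ using this nice family. Let $\phi$ be the homeomorphism used in the construction of the tangle of $[h;\Or]$: it sends each $T_i^-$ to $\R\times\{i\}$, the endpoints of $\alpha_i^-$ to $(0,i),(1,i)$ and $\Or_i$ to $\Z\times\{i\}$. Up to replacing $h'$ by an isotopic representative (invoking the straightening principle \ref{lemma: straightening principle}), the half streamlines of $[h';\Or']$ relative to $(\beta_i^\pm)_i$ are literally the $\psi$-images of those of $[h;\Or]$. Setting $\phi':=\phi\circ\psi^{-1}$ then gives a homeomorphism with all the required properties for $[h';\Or']$, and
\[
\pi\bigl(\phi'(\beta_i^+)\bigr) \;=\; \pi\bigl(\phi\circ\psi^{-1}\circ\psi(\alpha_i^+)\bigr) \;=\; \pi\bigl(\phi(\alpha_i^+)\bigr).
\]
Hence the separating curves $\gamma \in \C_2$ produced by the construction coincide on the nose, so their isotopy classes in $\C_2$ agree; a fortiori the tangles (which are isotopy classes modulo the Dehn twist $T$) agree.

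The only genuine subtlety is to make sure that all the ambiguities in the definition of the tangle (choice of nice family, choice of the separating curve $\gamma$ in $\C_2$, choice of orbit labeling $1$ versus $2$) are compatible under the conjugacy. Lemma \ref{lemma: unicity of the nice family} already handles the nice family ambiguity by producing the quotient by $T$, and I would verify in a short paragraph that swapping $\Or_1$ and $\Or_2$, or varying $\phi$ in its isotopy class rel $\Or_1\cup\Or_2$ (using lemma \ref{lemma:generalized Alexander trick}), affects $\gamma$ only by the same Dehn twist identification. The main obstacle is not a deep one but precisely this bookkeeping: carefully tracking how the definition of the tangle is insensitive to each of these choices, and then noticing that $\phi\circ\psi^{-1}$ is admissible as a $\phi'$ for $[h';\Or']$, at which point the invariance is essentially tautological.
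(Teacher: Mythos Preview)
Your argument is correct and is essentially the natural unpacking of the statement; the paper itself states this corollary without proof, treating it as an immediate consequence of the preceding lemmas showing that the tangle is well-defined (independent of the choice of $\phi$, and independent of the nice family modulo the twist $T$). Your transport-under-conjugacy step via $\phi' := \phi\circ\psi^{-1}$ is exactly the computation one would write to make the invariance explicit, and your identification of the bookkeeping issues (choice of metric/geodesic representatives, labeling of $\Or_1$ versus $\Or_2$) is accurate and handled by the lemmas you cite.
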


We will need the following result in the section \ref{subsection:total conjugacy invariant}.
\begin{lemma}\label{lemma: existence of beta-i with the right tangle}
With the previous notations, let $\gamma$ be the tangle of the irreducible area of $[h;\Or]$ relative to $\al$, and let $\gamma'$ be $T^n(\gamma)$ for some $n\in \Z$, where $T$ is the left Dehn twist as above. Then there exists a nice family $(\beta_i^\pm)_i$ for $[h;\Or]$ such that the tangle of the irreducible area of $[h;\Or]$ relative to $(\beta_i^\pm)_i$ is the isotopy class of $\gamma'$.
\end{lemma}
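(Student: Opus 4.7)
The plan is to realize the new tangle by shifting the arcs associated to orbit $1$ by the appropriate power of $h$. Specifically, I would define $\beta_1^\pm := h^n(\alpha_1^\pm)$ and $\beta_i^\pm := \alpha_i^\pm$ for $i = 2, 3, 4$. Once $\bet$ is shown to be a nice family disjoint from the walls, the formula $\pi(\phi(\beta_i^\pm)) = T^{n_1 - n_2}(\pi(\phi(\alpha_i^\pm)))$ established in the proof of the previous lemma, applied with shifts $(n_1, n_2, n_3, n_4) = (n, 0, 0, 0)$, will immediately yield that the tangle of the irreducible area relative to $\bet$ is $T^n \gamma = \gamma'$.

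The routine parts of the verification are that each $\beta_i^\pm$ is a homotopy translation arc of the correct type with matching endpoints, and that $\bet$ remains homotopically disjoint from the walls (since walls are reducing lines, hence $h$-invariant up to isotopy, and $\al$ was chosen disjoint from the walls). The delicate point is to show that the new backward half streamlines $T^-(\beta_i^-)$ are pairwise disjoint, and similarly for the new forward half streamlines. To handle this, I would prove the intermediate statement that the \emph{full} streamlines $T(\alpha_i^-) = \bigcup_{k \in \Z} h^k(\alpha_i^-)_\#$ are pairwise disjoint for distinct orbits $i \neq j$, and likewise for the full forward streamlines $T(\alpha_i^+)$. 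Since $T^-(\beta_1^-) = \bigcup_{k \leq n} h^k(\alpha_1^-)_\# \subseteq T(\alpha_1^-)$ and $T^-(\beta_j^-) \subseteq T(\alpha_j^-)$ for $j \neq 1$, the disjointness of the new backward half streamlines follows from this intermediate statement, and analogously for the forward half streamlines.

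The intermediate statement can be proved by induction on $N \geq 0$, showing that $h^a(\alpha_i^-)_\# \cap h^b(\alpha_j^-)_\# = \emptyset$ whenever $a, b \leq N$ and $i \neq j$. The base case $N = 0$ is exactly the disjointness of backward half streamlines from the nice family property. For the inductive step from $N$ to $N+1$, I would apply Lemma \ref{lemma: straightening principle} to the locally finite and, by the inductive hypothesis, mutually disjoint family $\{h^a(\alpha_i^-) : 1 \leq i \leq 4, \ a \leq N\}$ to produce a homeomorphism $h_0 \in [h;\Or]$ satisfying $h_0(h^a(\alpha_i^-)_\#) = h^{a+1}(\alpha_i^-)_\#$ throughout this range. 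Applying $h_0$ to a disjoint pair $(h^N(\alpha_i^-)_\#, h^{b-1}(\alpha_j^-)_\#)$ with $b \leq N+1$ produces the desired disjoint pair at level $N+1$; the symmetric case exchanging $i$ and $j$ is analogous, and negative shifts are handled by the same argument using $h_0^{-1}$. This inductive step is the principal technical obstacle, but it reduces to a direct application of the straightening principle once the inductive hypothesis is in place; the corresponding statement for forward proper arcs is established by the symmetric induction, completing the verification that $\bet$ is a nice family.
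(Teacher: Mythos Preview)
Your construction is exactly the paper's: the proof there consists of the single line ``Define $\bet$ as $\beta_1^\pm := h^n(\alpha_1^\pm)$ and $\beta_i^\pm:=\alpha_i^\pm$ for $i=2,3,4$'', leaving all verification implicit. Your additional inductive argument via the straightening principle, showing that the full streamlines $T(\alpha_i^-)$ (and $T(\alpha_i^+)$) for distinct $i$ are pairwise disjoint, is a correct and clean way to fill in the one nontrivial check that $\bet$ is again a nice family; the paper simply omits this.
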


\begin{proof}
Define $\bet$ as $\beta_1^\pm := h^n(\alpha_1^\pm)$ and $\beta_i^\pm:=\alpha_i^\pm$ for $i=2,3,4$.
\end{proof}

\subsubsection{Realized couples (diagram with walls, tangle)}\label{subsection:realized couples}

In section \ref{subsection:total conjugacy invariant}, we will show that the couple $\texttt{(Diagram with walls, Tangle)}$ is a total conjugacy invariant. He we find which couples are realized by Brouwer mapping classes relatively to four orbits. 

\paragraph{Necessary condition to be realized.}
 Not every couple (Diagram with walls, Tangle) can be realized by a Brouwer mapping class. Indeed, some tangles are associated to non determinant diagram with crossing arrows, and some other tangles are associated to non determinant diagram without crossing arrows. To be more precise, denote by $p$ and $q$ the two marked points of $\C_2$, an suppose that $p$ is above $q$. If $\gamma$ is a curve of $\C_2$ representing the tangle, the marked point of $\C_2$ which is \emph{above} $\gamma$ (i.e. in the connected component of the complementary of $\gamma$ which contains the top of the cylinder $\C_2$) can be $p$ or $q$, depending on the tangles.
 
  Moreover, this point represents the orbit whose forward half streamline is above on the picture, hence whose arrow ends above the other on the diagram. It follows that if this point is $p$, then the diagram is without crossing, and if this point is $q$, the diagram has a crossing. We say that such a tangle \emph{adapted} to the diagram. See figure \ref{figu:examples-tangles} for two examples: 
\begin{itemize}
\item On the tangle of the left, $p$ is above $\gamma$, hence it is the tangle of a diagram without crossing;
\item On the tangle of the right, $q$ is above $\gamma$, hence it is the tangle of a diagram with crossing.
\end{itemize}

\begin{figure}[h!] 
\labellist
\small\hair 2pt
\pinlabel $p$ at 158 77
\pinlabel $q$ at 165 61
\pinlabel $p$ at 423 77
\pinlabel $q$ at 433 60
\endlabellist
\centering
\includegraphics[scale=0.8]{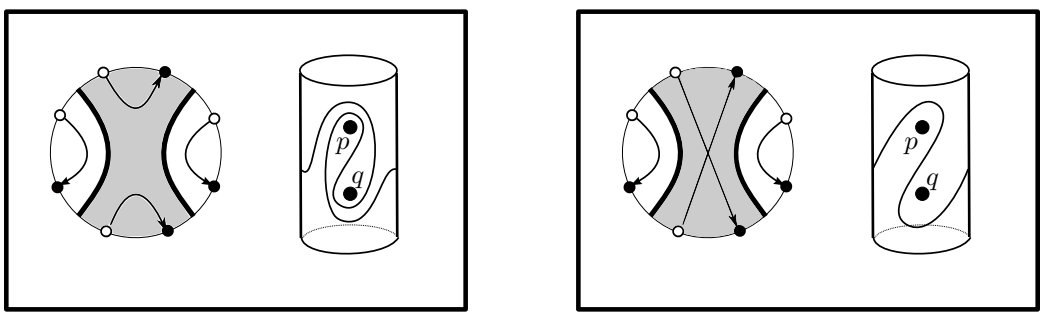}
\caption{Examples of two diagrams with adapted tangles.}
\label{figu:examples-tangles}
\end{figure}

Note that there are infinitely many tangles adapted to each diagram.

\paragraph{Realizing the adapted tangles.} Given a couple (diagram, tangle) such that the tangle is adapted to the diagram, we can produce a Brouwer homeomorphism which realizes this couple as follow. Denote by $(D,\tau)$ the given couple.
\begin{enumerate}
\item \begin{enumerate}
\item If the diagram $D$ does not have crossing arrows, then we define $D'=D$;
\item If the diagram $D$ has crossing arrows, we consider the diagram $D'$ obtained by exchanging the ends of the two crossing arrows. This is a diagram without crossing arrows;
\end{enumerate}
\item We choose a flow $f$ which realizes the diagram $D'$ without walls and such that there exists a $f$-free disk which contains one point of each orbit $\Or_1$ and $\Or_2$ (as in example $A$ of section \ref{section: First tools}), where $\Or_1$ and $\Or_2$ are the two orbits of the irreducible area of $D$;
\item By reversing the process of the definition of the tangle given in section \ref{subsection: tangle}, we get two families of translation arcs. Proposition \ref{prop: deflector} provides us a finite product $\mu$ of mutually disjointly supported $f$-free half twists such that the tangle of $[\mu f;\Or]$ is $\tau$. Note also that the diagram associated to $[\mu f;\Or]$ is $D$.
\end{enumerate}

\subsubsection{Infinitely many Brouwer mapping classes relatively to four orbits}
\begin{prop}
Up to conjugacy, there are (countably) infinitely many Brouwer mapping classes relatively to $4$ orbits.
\end{prop}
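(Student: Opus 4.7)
The plan is to fix a non-determinant diagram with walls $D$ (which exists by Proposition~\ref{prop: irreducible areas for r=4}) and to construct a sequence of Brouwer mapping classes realizing $D$ whose tangles are pairwise distinct; by Corollary~\ref{coro:couple of inavariant}, such classes will then be pairwise non-conjugate, which is what we want.

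Concretely, I would take the flow $f$ and the $f$-free half twist $\mu$ of Examples A and B, viewed relatively to the four orbits $\Or'$, so that $[f;\Or']$ realizes the non-determinant diagram $D$ of Example B. For every integer $n \geq 0$, set $h_n := \mu^{2n} f$. Since $\mu^2$ is supported in an $f$-free disk $D_0$ meeting two of the four orbits of $\Or'$ and fixes each of these orbits setwise, the map $h_n$ is a Brouwer homeomorphism with the same orbit set $\Or'$ as $f$; and because $\mu^{2n}$ coincides with the identity outside $D_0$, the asymptotic arc structure is unchanged, so the diagram with walls of $[h_n;\Or']$ equals $D$ for every $n \geq 0$.

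It remains to show that the tangles $\gamma_n$ of $[h_n;\Or']$ are pairwise distinct modulo $\langle T \rangle$. Unwinding the definition of Section~\ref{subsection: tangle}, $\gamma_n$ is represented by $T_{\delta_0}^n(\gamma_\ast) \subset \C_2$, where $\delta_0$ is the image under the quotient map of $\partial D_0$ (a simple closed curve bounding a disk in $\C_2$ that contains both marked points) and $\gamma_\ast$ is a horizontal separating circle (the trivial tangle of the flow class $[f;\Or']$). The curves $\delta_0$ and $\gamma_\ast$ are in minimal position with $i(\delta_0,\gamma_\ast) \geq 2$, hence the standard Dehn-twist intersection estimate gives $i(T_{\delta_0}^n(\gamma_\ast),\gamma_\ast) \to \infty$ as $n \to \infty$. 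On the other hand, $T$ is a Dehn twist around a curve isotopic to $\gamma_\ast$ and so fixes $\gamma_\ast$ up to isotopy, which gives $i(T^k\alpha,\gamma_\ast) = i(\alpha,\gamma_\ast)$ for every $k$ and every simple closed curve $\alpha$. Therefore $\gamma_n \equiv \gamma_m \pmod{\langle T \rangle}$ would force $i(T_{\delta_0}^n(\gamma_\ast),\gamma_\ast) = i(T_{\delta_0}^m(\gamma_\ast),\gamma_\ast)$, which fails for infinitely many pairs $n \neq m$, producing infinitely many non-conjugate Brouwer mapping classes.

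The main obstacle is the last step: the intersection-number estimate for iterated Dehn twists in the twice-punctured annulus $\C_2$. This is a classical calculation in mapping class group theory, but one that needs to be carried out carefully in our setting (in particular, one must verify that $\delta_0$ and $\gamma_\ast$ are truly in minimal position, and that the growth of intersection with $\gamma_\ast$ is enough to produce an infinite subsequence of pairwise distinct values). Once that is in place, the remainder of the argument is bookkeeping using the invariance of the couple (Diagram with walls, Tangle) established in Corollary~\ref{coro:couple of inavariant}.
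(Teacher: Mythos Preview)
Your route is different from the paper's and, as written, has a genuine gap.

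\medskip

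\textbf{What the paper does.} The paper argues in the opposite direction: it first notes that there are infinitely many adapted tangles (isotopy classes of essential separating curves in $\C_2$ modulo the action of $T$), then invokes the deflector construction (Proposition~\ref{prop: deflector} together with Section~\ref{subsection:realized couples}) to realise each such tangle by a Brouwer mapping class with the chosen non-determinant diagram, and finally concludes via Corollary~\ref{coro:couple of inavariant}. No tangle ever needs to be computed; the existence of infinitely many tangles is immediate, and the realisation step is already available.

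\medskip

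\textbf{Where your argument stalls.} Your scheme is to build explicit classes $h_n=\mu^{2n}f$ first and then identify their tangles. Two points need attention. First, for $n=0$ the class $[f;\Or']$ is a flow class, hence by Proposition~\ref{prop:flow class iff no irreducible area} it has no irreducible area and its diagram with walls is \emph{not} the non-determinant $D$; you must restrict to $n\geq 1$ and, independently, argue that $[h_n;\Or']$ is not a flow class for those $n$ (otherwise the tangle is undefined and the claim that all $h_n$ share the same diagram with walls fails). Second, and more seriously, the assertion that the tangle of $h_n$ is $T_{\delta_0}^{\,n}(\gamma_\ast)$ is the heart of the matter and is not justified. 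The curve you call $\delta_0$ is supposed to be the image of $\partial D_0$ under $\pi\circ\phi$, but $\phi$ is built from the streamlines $T_i^-=\bigcup_{k\in\Z} h_n^k(\alpha_i^-)_\#$ taken relative to $\Or_1\cup\Or_2$, and these streamlines themselves depend on $n$ (forward $h_n$-iterates of a flow arc pass through the support of $\mu$). So it is not clear that $\delta_0$ is the same curve for all $n$, nor that composing $f$ with $\mu^{2n}$ translates, after the conjugation by $\phi$ and the projection $\pi$, into the $n$-th power of a single Dehn twist on $\C_2$. Establishing this identification is precisely the content one would extract from the deflector argument; once you have it, the intersection-number estimate you flag as the ``main obstacle'' is indeed routine. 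In other words, you have located the difficulty in the wrong place: the classical growth of $i(T_{\delta_0}^n\gamma_\ast,\gamma_\ast)$ is standard, but the formula $\gamma_n=T_{\delta_0}^n(\gamma_\ast)$ is what actually requires work.

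\medskip

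\textbf{Comparison.} Your approach would yield a more hands-on family of examples, but to make it rigorous you essentially need to reprove a special case of Proposition~\ref{prop: deflector} (that a product of $\tau$-free half twists supported in a single disk acts on the tangle by the corresponding Dehn twist). The paper sidesteps this by using Proposition~\ref{prop: deflector} as a black box to realise prescribed tangles, which is both shorter and avoids the bookkeeping above.
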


\begin{proof}
There are infinitely many tangles adapted to each non determinant diagram (see figure \ref{figu:examples-tangles} for examples). Each of them is realized by a product of a flow with finitely many free half twist disjointly supported (using proposition \ref{prop: deflector}, see the second paragraph of section \ref{subsection:realized couples}). It follows from corollary \ref{coro:couple of inavariant} that there are infinitely many Brouwer mapping classes relatively to $4$ orbits. 
\end{proof}

\subsection{A total conjugacy invariant} \label{subsection:total conjugacy invariant}

In this section we want to show theorem \ref{theo:total invariant}, namely that two Brouwer mapping classes relatively to four orbits are conjugated if and only if they have the same invariant couple:
$$ \texttt{(Diagram with walls, Tangle)}.$$

The following lemma \ref{lemma:total invariant} together with lemma \ref{lemma: existence of beta-i with the right tangle} give the proof of theorem \ref{theo:total invariant}: indeed if $[h;\Or]$ and $[h';\Or']$ have the same invariant couple, then lemma \ref{lemma: existence of beta-i with the right tangle} provides us a nice family $\al$ for $[h;\Or]$ and a nice family $\bet$ for $[h';\Or']$ such that $[h;\Or] $ and $[h';\Or']$ have the same tangle relative to their nice family. Hence they satisfy the hypothesis of lemma \ref{lemma:total invariant}, which says that they are conjugated.

\begin{lemma}\label{lemma:total invariant}
Let $[h;\Or]$ and $[h';\Or']$ be two Brouwer mapping classes relatively to $4$ orbits such that:
\begin{itemize}
\item They have the same diagram with walls;
\item There exist two nice families $\al$ for $[h;\Or]$ and $(\beta_i^\pm)_i$ for $[h';\Or']$ such that the two Brouwer mapping classes have the same relative tangle relative to their nice family;
\end{itemize}
Then $[h;\Or]$ and $[h';\Or']$ are conjugated.
\end{lemma}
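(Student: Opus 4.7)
My plan is to split into two cases according to whether the common diagram with walls contains an irreducible area. If it does not, then by Proposition \ref{prop:flow class iff no irreducible area} both classes are flow classes and both tangles are trivial by convention; Proposition \ref{prop:flow implies conjugate} then gives the desired conjugacy with no further work. So assume the diagram does have an irreducible area. By Proposition \ref{prop:combinatoric of an irreducible area} combined with the enumeration in Proposition \ref{prop: irreducible areas for r=4}, this area contains exactly two orbits, which I relabel $\Or_1,\Or_2$ (and $\Or'_1,\Or'_2$ on the other side), while $\Or_3,\Or_4$ lie in maximal translation areas.

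The strategy is to realize the combinatorial identification of the diagrams with walls by an actual plane homeomorphism $\phi$, then adjust it with a deflector so that the tangles also match, and finally invoke the Alexander trick. I first apply the straightening principle \ref{lemma: straightening principle} to choose representatives of $h$ and $h'$ preserving the chosen wall representatives and the eight half-streamlines $T^\pm(\alpha_i^\pm,h,\Or)$, $T^\pm(\beta_i^\pm,h',\Or')$; using Proposition \ref{prop:translation area}, I may further assume $\alpha_i^-=\alpha_i^+$ and $\beta_i^-=\beta_i^+$ for $i=3,4$, so that the associated streamlines $T_3,T_4$ and $T_3',T_4'$ are proper. Because the diagrams with walls agree, the cyclic order at infinity of all half-streamlines and the position of the walls coincide, and a Schoenflies-type construction then produces an orientation-preserving homeomorphism $\phi$ of $\R^2$ sending $\Or$ to $\Or'$, sending the four backward half-streamlines $T^-(\alpha_i^-,h,\Or)$ to $T^-(\beta_i^-,h',\Or')$, sending the full streamlines $T_3,T_4$ to $T_3',T_4'$, and sending the wall representatives onto each other.

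It remains to align the forward half-streamlines of $\Or_1$ and $\Or_2$ with their primed counterparts. This is precisely the content of the equality of tangles: after pushing the picture to the quotient cylinder $\C_2$ as in Section \ref{subsection: tangle}, the arcs $\pi\phi(\alpha_i^+)$ and $\pi(\beta_i^+)$ represent the same isotopy class, up to a power of the horizontal Dehn twist. Absorbing that power by replacing $(\alpha_i^\pm)_i$ with $(h^{n}(\alpha_1^\pm),\alpha_2^\pm,\alpha_3^\pm,\alpha_4^\pm)$ as in Lemma \ref{lemma: existence of beta-i with the right tangle}, I reduce to the case where the two tangles are represented by isotopic curves in $\C_2$. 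Proposition \ref{prop: deflector} now supplies a compactly supported homeomorphism $\mu$, built from $\tau$-free half-twists in the cylinder model, such that after replacing $\phi$ by $\mu\circ\phi$ the forward half-streamlines of $\Or_1,\Or_2$ also match those of $\Or'_1,\Or'_2$, while the already-matched backward streamlines and the streamlines of the translation areas are untouched.

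At that point $\phi h\phi^{-1}$ and $h'$ coincide on the union of the eight half-streamlines $T^\pm(\beta_i^\pm,h',\Or')$, a finite disjoint family of proper topological lines. Lemma \ref{lemma:generalized Alexander trick} then gives that the two homeomorphisms are isotopic relatively to $\Or'$, so $[h';\Or']=[\phi h\phi^{-1};\phi(\Or)]$, which is the desired conjugacy. The main obstacle, and the reason Section \ref{section: deflectors} was introduced, is this conversion of the tangle equality into a concrete plane homeomorphism that corrects the forward behaviour inside the irreducible area without disturbing the exterior: the Schoenflies and Alexander steps are essentially routine, but the deflector construction of Proposition \ref{prop: deflector} is exactly what makes this local forward adjustment possible.
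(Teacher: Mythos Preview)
Your overall strategy --- build a conjugating map matching the backward streamlines and walls via Schoenflies, then correct the forward data using the tangle equality, then apply the Alexander trick --- is natural, but the deflector step does not do what you claim. Proposition~\ref{prop: deflector} produces a $\mu$ such that the \emph{dynamics} $\mu\tau$ eventually carries $\alpha_i$ into the isotopy class of $\tau^k(\beta_i)$; it says nothing about $\mu$ as a static map sending the $h$-forward half-streamline $\phi(T^+(\alpha_i^+,h,\Or))$ onto $T^+(\beta_i^+,h',\Or')$. These are different assertions: the first modifies which homeomorphism you iterate, the second would require $\mu$ to intertwine two already-fixed dynamical pictures, and the proposition does not provide that. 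So ``replacing $\phi$ by $\mu\circ\phi$'' does not achieve the claimed matching of forward half-streamlines.

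Even granting such a matching, your final step invokes Lemma~\ref{lemma:generalized Alexander trick} on ``the eight half-streamlines'', but that lemma requires a finite family of pairwise disjoint topological \emph{lines}. For $i=1,2$ the backward and forward half-streamlines share orbit points, and $\beta_i^-,\beta_i^+$ are in general distinct arcs with the same endpoints, so the union is neither a single embedded line nor a disjoint pair of lines. The paper avoids both problems by using the deflector dynamically rather than as part of the conjugacy: it composes $h$ with $\phi^{-1}\mu\phi$ and $h'$ with $\psi^{-1}\mu\psi$ (the \emph{same} $\mu$, after arranging $\phi(Z)=\psi(Z')$) to turn both classes into flow classes, which then have four honest proper disjoint streamlines, and only at that point applies Lemma~\ref{lemma:generalized Alexander trick}. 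The key idea you are missing is that the deflector is a correction to the Brouwer homeomorphism, not to the conjugating homeomorphism.
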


\begin{proof}
Note that if the diagram with walls of $[h;\Or]$ has crossing arrows, then this two crossing arrows are in an irreducible area: indeed they are in the same connected component of the walls, which is not a translation area, hence it is an irreducible area (according to theorem \ref{theorem: walls}). If the diagram with walls is without any irreducible area, then $[h;\Or]$ and $[h';\Or']$ are conjugated (this is proposition \ref{coro:determinant diagrams with walls}).

Let us consider the case when there exists an irreducible area. We suppose that the orbits which intersect this area are indexed by $1$ and $2$. Denote by $Z$ and $Z'$ the irreducible areas of $[h;\Or]$ and $[h';\Or']$ respectively. We assume that $h$ preserves $Z$ and $h'$ preserves $Z'$. Denote by $\tau$ the translation of the plane which maps every $(x,y)\in \R^2$ to $(x+1,y)$. Denote by $\phi$ a homeomorphism of the plane as needed to define the tangle for $[h;\Or]$, i.e. which maps:
\begin{itemize}
\item $T_1^- \cup T_2^-$ on $\R \times \{1,2\}$;
\item $\{x_1,x_2\}$ on $\{0\} \times \{1,2\}$, where $x_i$ and $h(x_i)$ are the endpoints of $\alpha_i^-$;
\item $\Or_1 \cup \Or_2$ on $\Z \times \{1,2\}$;
\end{itemize}
where $ T_i^\pm:= \bigcup_{{k\in \Z}} h^k(\alpha_i^\pm)_\# $.

\noindent Likewise, denote by $\psi$ a homeomorphism which maps:
\begin{itemize}
\item $T_1^{'-} \cup T_2^{'-}$ on $\R \times \{1,2\}$;
\item $\{x'_1,x'_2\}$ on $\{0\} \times \{1,2\}$, where $x'_i$ and $h(x'_i)$ are the endpoints of $\beta_i^-$;
\item $\Or'_1 \cup \Or'_2$ on $\Z \times \{1,2\}$;
\end{itemize}
where $ T_i^{'\pm}:= \bigcup_{{k\in \Z}} h^{'k}(\alpha_i^{'\pm})_\# $.

Since the two classes have the same tangle relatively to their nice families, by definition of $\phi$ and $\psi$ we have: $(\phi \alpha_i^\pm)_\#=(\psi \beta_i^\pm)_\#$ for $i=1,2$. Denote by $\gamma_i^\pm$ these arcs.\\

\noindent \textbf{Claim $1$.} There exists $\phi' \in [\phi;\Or_1 \cup \Or_2]$ and $\psi' \in [\psi';\Or'_1 \cup \Or'_2]$ such that $\phi' Z=\psi' Z'$.

\begin{proof}[Proof of the claim] Denote by $\Delta_1$, respectively $\Delta_2$, the boundary component of $Z$ which is disjoint from $T^-_1 \cup T_2^-$, respectively disjoint from $T^+_1 \cup T_2^+$. Since they are disjoint, we may assume that $\phi(\Delta_1)$ is included in the left half plane and $\phi(\Delta_2)$ is included in the right half plane. Similary, we denote by $\Delta'_1$ and $\Delta'_2$ the boundary components of $Z'$, disjoint respectively from $T_1^{'-} \cup T_2^{'-}$ and $T_1^{'+} \cup T_2^{'+}$, and we assume that $\psi(\Delta'_1)$ is included in the left half plane and $\psi(\Delta'_2)$ is included in the right half plane. Now $\phi(\Delta_1)$ and $\psi(\Delta'_1)$ are lines included in the half left strip between $\R \times \{1\}$ and $\R \times \{2\}$: there exists a homeomorphism $\lambda_1$ supported in this half strip and which sends $\phi(\Delta_1)$ on $\psi(\Delta'_1)$. Similary there exists a homeomorphism $\lambda_2$ supported in the right half strip between $\bigcup_{n\geq 0} \tau^n(\gamma_1)$ and $\bigcup_{n\geq 0} \tau^n(\gamma_2)$ and sending $\phi(\Delta_2)$ on $\psi(\Delta'_2)$. It follows that $\lambda_1 \lambda_2 \phi (Z)=\psi(Z')$. Since $\lambda_1 \lambda_2$ is isotopic to the identity relatively to $\R \times \{1,2\}$, $\lambda_1 \lambda_2 \phi$ is isotopic to $\phi$. \end{proof} 

\noindent \textit{Back to the proof of lemma \ref{lemma:total invariant}.} Up to isotopying $\phi$ relatively to $\Or_1 \cup \Or_2$ and $\psi$ relatively to $\Or'_1 \cup \Or'_2$ as in claim $1$, we may assume that $\phi Z=\psi Z'$.

 According to proposition \ref{prop: deflector}, there exists a finite product of $\tau$-free half twists disjointly supported and such that for every sufficiently large $k \in \N$, $(\mu \tau)^k(\gamma_i^-)$ is isotopic relatively to $\Z \times \{1,2\}$ to $\tau^k(\gamma_i^+)$ for $i=1,2$. Since $\mu$ is compactly supported, we can suppose that this support is included in $\phi(Z)=\psi(Z')$.\\

\noindent \textbf{Claim $2$.} With the previous notations, we claim that $[\phi^{-1} \mu \phi h;\Or]$ and $[\psi^{-1} \mu \psi h';\Or']$ are flow classes.\\

\noindent \textit{Proof of the claim.} We do the proof for $[\phi^{-1} \mu \phi h;\Or]$: relatively to $\Or_1 \cup \Or_2$, for every sufficiently large $k \in \N$, for $i=1,2$, $(\phi^{-1} \mu \phi h)^k(\alpha_i^-)$ is isotopic to $h^k(\alpha_i^+)$. Because $\mu$ is supported in $\phi(Z)$, $\phi^{-1} \mu \phi$ is supported in $Z$, and since $h$ preserves $Z$, it follows that $\phi^{-1} \mu \phi h$ also preserves $Z$. Since $\alpha_i^-$ is included in $Z$, and since $\Or_3$ and $\Or_4$ do not intersect $Z$, $(\phi^{-1} \mu \phi h)^k(\alpha_i^-)$ is isotopic to $h^k(\alpha_i^+)$ relatively to $\Or$ (and not only relatively to $\Or_1 \cup \Or_2$). Since $\alpha_i^+$ is a forward proper arc for $[h;\Or]$, it follows that $T(\alpha_i^-,\phi^{-1} \mu \phi h,\Or)$ is a proper streamline. Since $\phi^{-1} \mu \phi$ is supported in $Z$, $h$ is equal to $\phi^{-1} \mu \phi h$ outside $Z$, hence for $j=3,4$, $T(\alpha_j^-,\phi^{-1} \mu \phi h,\Or)=T(\alpha_j^-,h,\Or)$, thus is also a proper streamline. By lemma \ref{lemma:flow classes iff streamlines}, it follows that $[\phi^{-1} \mu \phi h;\Or]$ is a flow class.\qed \\

\noindent \textit{Back to the proof of lemma \ref{lemma:total invariant}.}
Denote by $f$ and $g$ two flows such that:
$$[f;\Or]=[\phi^{-1} \mu \phi h;\Or] \texttt{ and } [g;\Or']=[\psi^{-1} \mu \psi h';\Or'].$$

For every $i$, denote by $T_i$, respectively by $T_i'$, the proper streamline $T(\alpha_i^-,f,\Or)$, respectively  $T(\beta_i^-,g,\Or')$. Changing $\psi$ in the complement of $Z'$ if necessary, we assume that $\phi^{-1} \psi$ maps $T_3'$ on $T_3$, $\Or'_3$ on $\Or_3$, $T_4'$ on $T_4$ and $\Or'_4$ on $\Or_4$. This is possible because the diagrams of $[h;\Or]$ and $[h';\Or']$ are the same.
Note that for every $k \in \Z$, $\phi^{-1} \psi(\psi^{-1} \mu \psi h')^k(\beta)$ is isotopic to $(\phi^{-1} \mu \phi h)^k(\alpha_i^-)$ relatively to $\Or$ for $i=1,2$. Hence $T_i=\phi^{-1} \psi (T_i')$ for every $i$. According to lemma \ref{lemma:generalized Alexander trick}, we get:
$$[\phi^{-1}\psi g \psi^{-1} \phi;\Or]=[f;\Or].$$
Composing both parts by $\phi^{-1} \mu^{-1} \phi$, we can check that:
$$[(\phi^{-1} \psi) h'(\phi^{-1} \psi)^{-1};\Or]=[h;\Or].$$
Hence $[h;\Or]$ and $[h';\Or']$ are conjugated. \end{proof}

\appendix

\section{Diagrams with four orbits}\label{appendix}

Here we represent all the diagrams with four orbits (figures \ref{figu:diag4or_1}, \ref{figu:diag4or_2}, \ref{figu:diag4or_3} and \ref{figu:diag4or_4}). If a diagram can be obtained with a Brouwer mapping class, then we also draw the possible sets of walls, and color in grey the eventual irreducible areas. We put together in the same dashed box the diagrams which are the same without walls but which have different possible sets of walls. We get three different types of diagrams:
\begin{enumerate}
\item The full-framed diagrams are the ones with a Handel's cycle: according to Handel's fixed point theorem (theorem $2.3$ of \cite{Handel}), they cannot be obtained with Brouwer homeomorphisms. Also we forget them to describe Brouwer mapping classes relatively to four orbits;
\item The diagrams without irreducible areas are the determinant ones. Every of them can be realized by a Brouwer mapping class (according to lemma $1.7$ of \cite{FLR2013}). Moreover, up to conjugation, this Brouwer mapping class is unique and it is a flow class (propositions \ref{prop:flow class iff no irreducible area} and \ref{prop:flow implies conjugate});
\item The diagrams with an irreducible area (in grey) are the eight non determinant ones. Up to conjugation, every of them can be realized by infinitely many Brouwer mapping classes. For those diagrams, the tangle allows us to differentiate the different Brouwer mapping classes (see sections \ref{subsection:study of non determinant diagrams} and \ref{subsection:total conjugacy invariant}).
\end{enumerate}
We still denote by $2r'$ the number of families of adjacency.

\begin{figure}[h!] 
\centering
\vspace{0.5cm}
\includegraphics[scale=1.2]{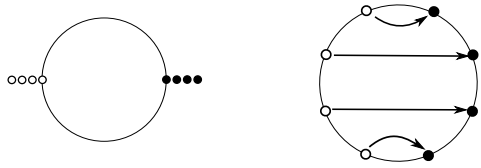}
\caption{Diagram for $r=4$ and $r'=1$.}
\label{figu:diag4or_1}
\end{figure}

\begin{figure}[h!] 
\centering
\vspace{1cm}
\includegraphics[scale=0.9]{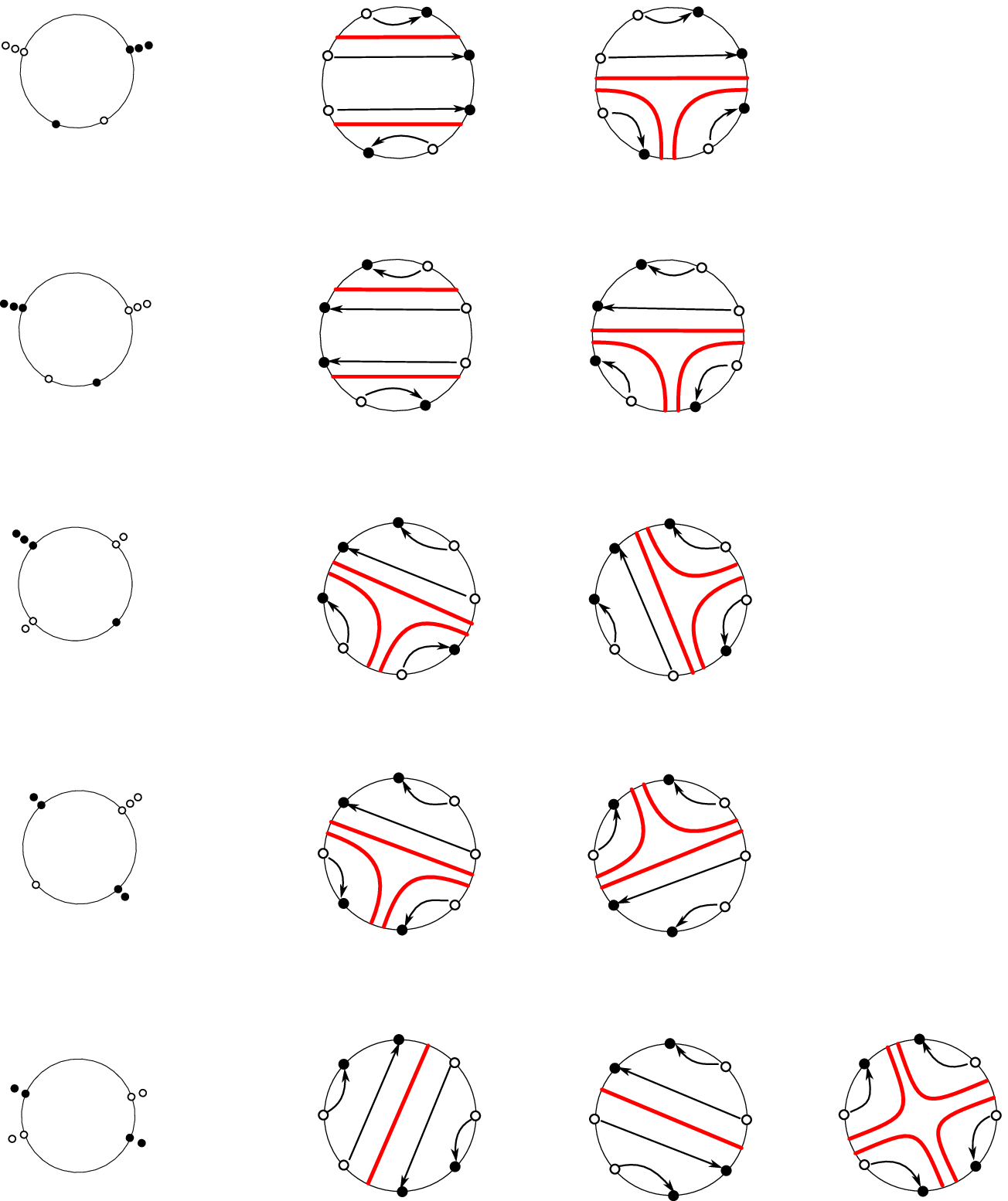}
\caption{Diagram for $r=4$ and $r'=2$.}
\label{figu:diag4or_2}
\end{figure}

\begin{figure}[h!] 
\centering
\includegraphics[scale=0.82]{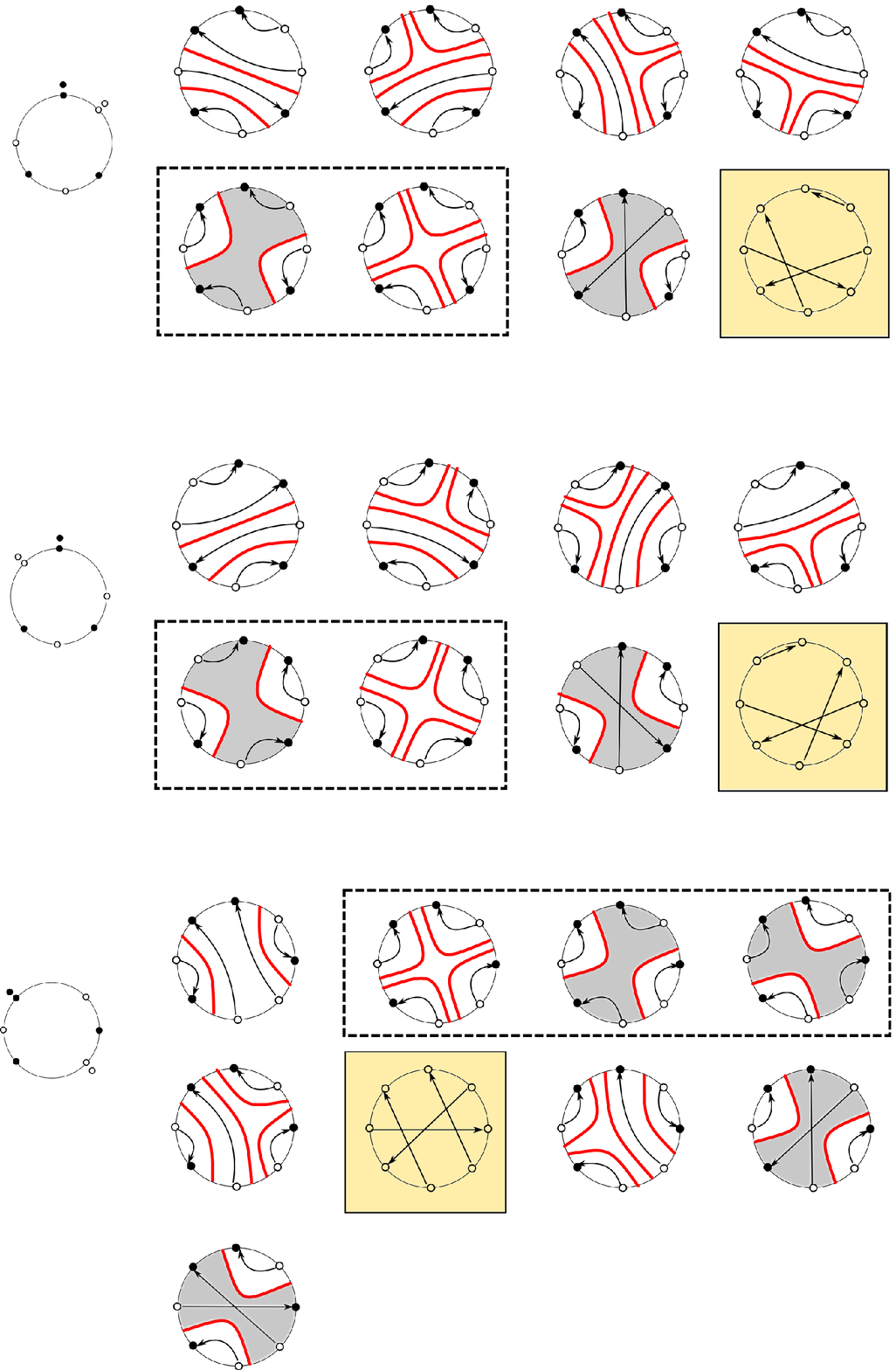}
\caption{Diagram for $r=4$ and $r'=3$.}
\label{figu:diag4or_3}
\end{figure}

\begin{figure}[h!] 
\centering
\includegraphics[scale=0.67]{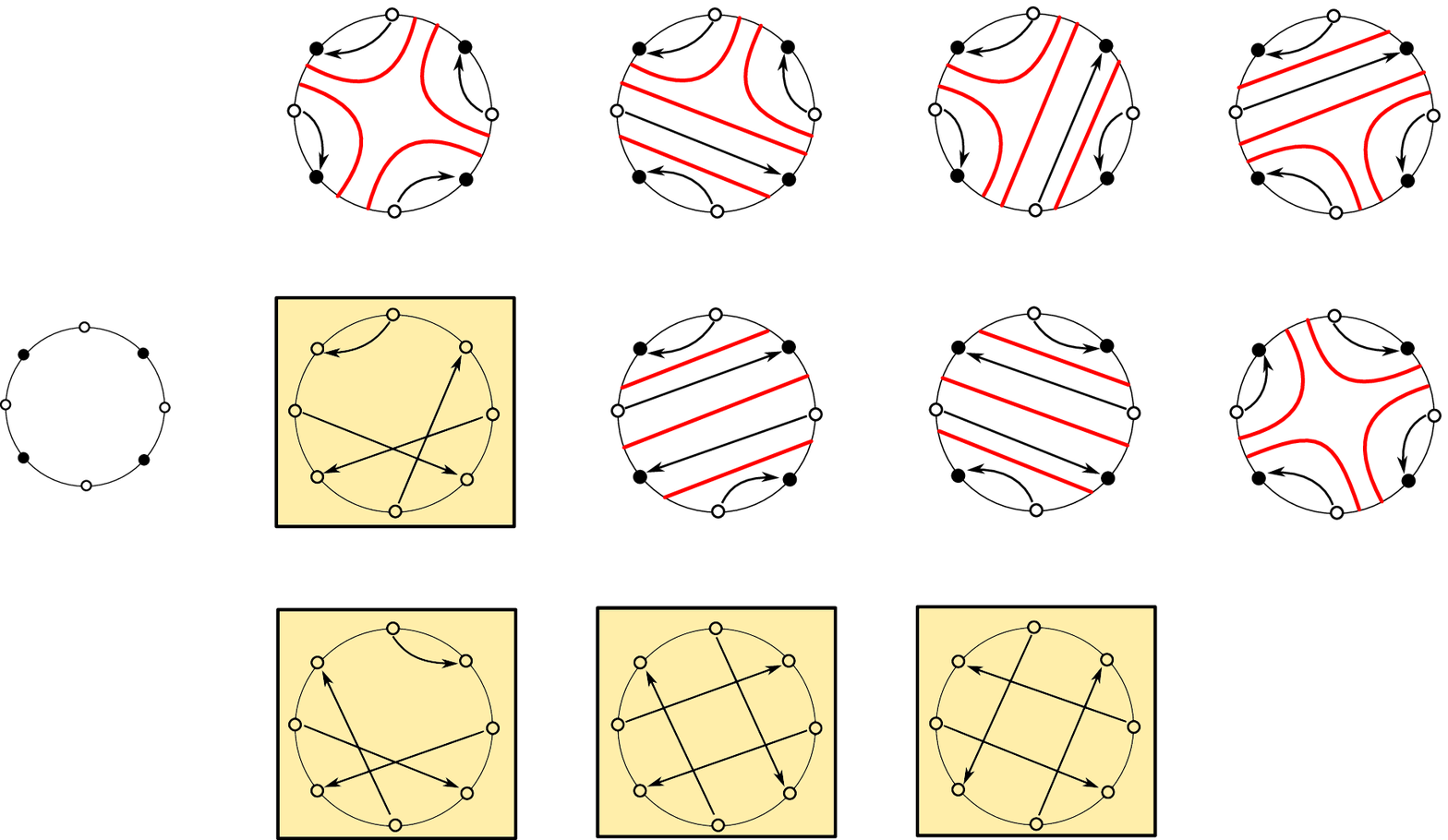}
\caption{Diagram for $r=4$ and $r'=4$.}
\label{figu:diag4or_4}
\end{figure}

\bibliographystyle{alpha}
\bibliography{ref}

\end{document}